\newsavebox{\wideeqbox}
\author{Thorben Kastenholz}
\thanks{The author was supported by the SPP2026 
"Geometry at Infinity" of the DFG, and European Research Council Advanced Grant 
"Moduli" [Grant Number 290399]}
\date{\today}
\title{Homological Stability for Spaces of Subsurfaces 
with Tangential Structure}
\newcommand{\introduce}[1]
  {\textbf{#1}}
\newcommand{\comment}[1]
  {TODO:\@{\LARGE{#1}}}
\newcommand{\apply}[2]
  {{#1}\!\left({#2}\right)}
\newcommand{\at}[2]
  {\left.{#1}\right\rvert_{#2}}
\newcommand{\subsetv}%
  {\rotatebox[origin=c]{-90}{$\subseteq$}}
\newcommand{\EuclideanNorm}[1]
  {\left\lVert{#1}\right\rVert}
\newcommand{\Reals}%
  {\mathbb{R}}
\newcommand{\Integers}%
  {\mathbb{Z}}
\newcommand{\NaturalNumbers}%
  {\mathbb{N}}
\newcommand{\Identity}%
  {\mathrm{Id}}
\newcommand{\Projection}[1]
  {\pi_{#1}}
\newcommand{\DimensionIndex}%
  {n}
\newcommand{\BigDimensionIndex}%
  {N}
\newcommand{\Complement}[1]
  {\left(#1\right)^{c}}
\newcommand{\TopologicalSpace}%
  {X}
\newcommand{\Point}%
  {x}
\newcommand{\Neighbourhood}[1]
  {U_{#1}}
\newcommand{\ContinuousMap}%
  {g}
\newcommand{\Homotopy}%
  {H}
\newcommand{\Fiber}[2]
  {\mathrm{Fib}_{#1}\!\left(#2\right)}
\newcommand{\HomotopyFiber}[2]
  {\mathrm{HoFib}_{#1}\!\left(#2\right)}
\newcommand{\FiberSpace}%
  {F}
\newcommand{\ClassifyingSpace}[1]
  {B#1}
\newcommand{\MappingPair}[1]
  {C\!\left(#1\right)}
\newcommand{\MappingCone}[1]
  {\bar{C}\!\left(#1\right)}
\newcommand{\Suspension}[1]
  {\Sigma #1}
\newcommand{\Path}%
  {\gamma}
\newcommand{\Manifold}%
  {M}
\newcommand{\Ball}[1]
  {D^{#1}}
\newcommand{\Sphere}[1]
  {S^{#1}}
\newcommand{\BallRadius}[2]
  {B_{#1}\!\left(#2\right)}
\newcommand{\CutOut}[2]
  {\apply{#1}{#2}}
\newcommand{\BoundaryManifold}[1] 
  {\partial #1}
\newcommand{\DistBoundaryManifold}[2]
  {\partial^{#1}#2}
\newcommand{\TubularNeighbourhood}[2]
  {N_{#1,#2}}
\newcommand{\TubularNeighbourhoodMap}[2]
  {\Embedding^{T}_{#1,#2}}
\newcommand{\Submanifold}%
  {A}
\newcommand{\Collar}%
  {c}
\newcommand{\Elongation}[2]
  {#1^{#2}}
\newcommand{\SmoothMap}%
  {f}
\newcommand{\Embedding}%
  {e}
\newcommand{\Genus}%
  {g}
\newcommand{\BoundaryComponents}%
  {b}
\newcommand{\SurfaceGB}[2]
  {\Sigma_{#1,#2}}
\newcommand{\SurfaceG}[1]
  {\Sigma_{#1}}
\newcommand{\Subsurface}%
  {W}
\newcommand{\TwoSphere}%
  {S^2}
\newcommand{\Circle}%
  {S^1}
\newcommand{\SurfaceGBOneSkeleton}[2]
  {\Sigma^{1}_{#1,#2}}
\newcommand{\HalfDisk}%
  {D^{2}_{+}}
\newcommand{\ThickenedHalfDisk}[1]
  {\mathbf{#1}}
\newcommand{\CellBoundaryCondition}%
  {\mathfrak{d}}
\newcommand{\HomologyOfSpace}[2]
  {H_{#1}\!\left(#2\right)}
\newcommand{\HomologyOfMap}[1]
  {\left(#1\right)_{*}}
\newcommand{\HomotopyGroup}[2]
  {\pi_{#1}\!\left(#2\right)}
\newcommand{\HomotopyGroupMap}[1]
  {\left(#1\right)_{*}}
\newcommand{\Surgery}[2]
  {#1\natural#2}
\newcommand{\SurgeryEmbedding}%
  {e_{\HalfDisk}}
\DeclareMathOperator{\Map}
  {Map}
\newcommand{\MappingSpace}[2]
  {\Map\!\left(#1,#2\right)}
\newcommand{\Evaluation}%
  {\mathrm{ev}}
\newcommand{\ModuliSpace}[2]
  {\mathcal{M}^{#1}\left(#2\right)}
\newcommand{\SmoothMappingSpace}[2]
  {C^{\infty}\!\left(#1,#2\right)}
\DeclareMathOperator{\Tub}%
  {Tub}
\newcommand{\TubularNeighbourhoodClosedSpace}[2]
  {\overline{\Tub}\!\left(#1,#2\right)}
\DeclareMathOperator{\Emb}%
  {Emb}
\newcommand{\EmbeddingSpace}[2]
  {\Emb\!\left(#1,#2\right)}
\newcommand{\EmbeddingSpaceBoundaryCondition}[3]
  {{\Emb}\!\left(#1,#2;#3\right)}
\newcommand{\BoundaryConditionPar}%
  {d}
\DeclareMathOperator{\Diff}%
  {Diff}
\newcommand{\DiffeomorphismGroup}[1]
  {{\Diff^{+}}\!\left(#1\right)}
\newcommand{\DiffeomorphismGroupBoundary}[1]
  {{\Diff^{+}_{\partial}}\!\left(#1\right)}
\newcommand{\DiffeomorphismGroupCompactSupport}[1]
  {{\Diff^{+}_{c}}\!\left(#1\right)}
\newcommand{\DiffeomorphismGroupArcs}[2]
  {{\Diff^{+}_{#1}}\!\left(#2\right)}
\newcommand{\SpaceSubsurfaces}[3]
  {{\mathcal{E}^{+}_{#1,#2}}\!\left(#3\right)}
\newcommand{\SpaceSubsurfaceBoundary}[4]
  {{\mathcal{E}^{+}_{#1,#2}}\!\left(#3;#4\right)}
\newcommand{\BoundaryConditionSpace}[2]
  {{\mathcal{J}_{\partial}}\!\left(#1,#2\right)}
\newcommand{\BoundaryConditionSpacePar}[2]
  {J_{\partial}\!\left(#1,#2\right)}
\newcommand{\BoundaryConditionSpaceTangential}[3]
  {{\mathcal{J}^{#1}_{\partial}}\!\left(#2,#3\right)}
\newcommand{\BoundaryConditionMap}%
  {\mathcal{J}}
\newcommand{\BoundaryCondition}%
  {\delta}
\DeclareMathOperator{\TEmb}%
  {TEmb}
\newcommand{\ThickenedEmbeddingSpaceBoundaryCondition}[3]
  {\overline{\TEmb}\!\left( #1 , #2 ; #3 \right)}
\newcommand{\StabilizingBordism}[4]
  {{\mathcal{C}^{#1}}\!\left(#2;#3,#4\right)}
\newcommand{\StabilizingBordismAlt}[3]
  {\mathcal{C}^{#1}\!\left(#2,#3\right)}
\newcommand{\StabilizingManifold}%
  {W^{s}}
\newcommand{\SpaceAllSubsurfaceBoundary}[2]
  {{\mathcal{E}^{+}}\!\left(#1;#2\right)}
\newcommand{\EmbeddingSpaceTangential}[3]
  {\Emb^{#1}\!\left(#2,#3\right)}
\newcommand{\SpaceSubsurfacesTangential}[4]
  {\mathcal{E}^{#1}_{#2,#3}\!\left(#4\right)}
\newcommand{\SpaceSubsurfaceBoundaryTangential}[5]
  {{\mathcal{E}^{#1}_{#2,#3}}\!\left(#4;#5\right)}
\newcommand{\SpaceAllSubsurfaceBoundaryTangential}[3]
  {{\mathcal{E}^{#1}}\!\left(#2;#3\right)}
\newcommand{\BoundaryConditionTangential}[1]
  {#1^{T}}
\newcommand{\EmbeddingSpaceBoundaryConditionTangential}[4]
  {{\Emb}^{#1}\!\left(#2,#3;#4\right)}
\newcommand{\ThickenedEmbeddingSpaceBoundaryConditionTangential}[4]
  {\overline{\TEmb}^{#1}\!\left(#2,#3;#4\right)}
\newcommand{\SpaceOfSections}[1]
  {\Gamma\!\left(#1\right)}
\newcommand{\HomeoTangential}[2]
  {\mathrm{Homeo}_{#1}\!\left(#2\right)}
\newcommand{\StabilizationAlpha}[2]
  {\alpha_{#1,#2}}
\newcommand{\StabilizationBeta}[2]
  {\beta_{#1,#2}}
\newcommand{\StabilizationGamma}[2]
  {\gamma_{#1,#2}}
\newcommand{\StabilizationMap}[2]
  {\sigma_{#1}}
\newcommand{\StabilizationBordism}%
  {P}
\newcommand{\SubsetBordism}%
  {U}
\newcommand{\BoundaryBordism}%
  {P}
\newcommand{\AuxBordism}%
  {Z}
\newcommand{\TrivialityIndex}%
  {k}
\newcommand{\PiZeroStabilization}%
  {m}
\newcommand{\InductionIndex}%
  {h}
\newcommand{\AlphaBound}%
  {A}
\newcommand{\BetaBound}%
  {B}
\newcommand{\AlphaAuxBound}%
  {SA}
\newcommand{\BetaAuxBound}%
  {SB}
\newcommand{\kAugmentation}[3]
  {\sigma^{#1}_{#2,#3}}
\newcommand{\AlphakAugmentation}[3]
  {\mathfrak{a}^{#1}_{#2,#3}}
\newcommand{\BetakAugmentation}[3]
  {\mathfrak{b}^{#1}_{#2,#3}}
\newcommand{\DistBall}%
  {\ell}
\newcommand{\DistIntervall}[1]
  {\DistBall_{#1}}
\newcommand{\DiskEmbedding}%
  {a}
\newcommand{\ArcResolution}[7]
  {\mathcal{O}^{#1}_{#2,#3}\!\left(#4,#5;#6\right)_{#7}}
\newcommand{\ArcResolutionNoDisk}[7]
  {\overline{\mathcal{O}}^{#1}_{#2,#3}\!\left(#4,#5;#6\right)_{#7}}
\newcommand{\ArcResolutionEmbedding}[7]
  {O^{#1}_{#2,#3}\!\left(#4,#5;#6\right)_{#7}}
\newcommand{\ArcResolutionSingle}[7]
  {\mathcal{O}'^{#1}_{#2,#3}\!\left(#4,#5;#6\right)_{#7}}
\newcommand{\ArcResolutionTwo}[7]
  {\mathcal{O}''^{#1}_{#2,#3}\!\left(#4,#5;#6\right)_{#7}}
\newcommand{\SpaceOfStrips}[5]
  {A^{#1}\!\left(#2;#3,#4\right)_{#5}}
\newcommand{\BoundaryPath}%
  {y}
\newcommand{\ThickenedBoundaryPath}%
  {\mathbf{\BoundaryPath}}
\newcommand{\BoundaryPathResolution}[7]
  {\mathcal{Q}^{#1}_{#2,#3}\!\left(#4;#5,#6\right)_{#7}}
\newcommand{\SpaceOfBoundaryPaths}[4]
  {R^{#1}\!\left(#2,#3 \right)_{#4}}
\newcommand{\OrthogonalComplement}[1]
  {#1^{\perp}}
\newcommand{\Coordinate}%
  {x}
\newcommand{\TwoPlane}%
  {B}
\newcommand{\SOrthogonalGroup}[1]
  {SO\left(#1\right)}
\newcommand{\NormalBundle}[2]
  {N\!\left(#1,#2\right)}
\DeclareMathOperator{\Gr}%
  {Gr}
\newcommand{\Grassmannian}[2]
  {\Gr_{#1}\!\left(#2\right)}
\newcommand{\ProjectionGrassmannian}%
  {\Projection{\Gr}}
\newcommand{\TautologicalBundle}%
  {\gamma}
\newcommand{\TangentBundle}[1]
  {T#1}
\newcommand{\TangentSpace}[2]
  {T_{#1}#2}
\newcommand{\SphereBundle}[1]
  {S\!\left(#1\right)}
\newcommand{\TangentialFibration}%
  {\theta}
\newcommand{\TangentialSpace}[1]
  {B_{2}\!\left(#1\right)}
\newcommand{\Differential}[1]
  {D#1}
\newcommand{\GrassmannianDifferential}[1]
  {\Gr\!\left(\Differential{#1}\right)}
\newcommand{\TangentialStructure}[1]
  {T_{#1}}
\newcommand{\LineBundle}%
  {L}
\newcommand{\Group}%
  {G}
\newcommand{\LocalRetraction}%
  {\xi}
\newcommand{\SemiSimplicialCategory}%
  {\Delta^{op}_{inj}}
\newcommand{\SemiSimplicialSpace}%
  {X_{\bullet}}
\newcommand{\SemiSimplicialSpaceIndex}[1]
  {X_{#1}}
\newcommand{\SemiSimplicialIndex}%
  {i}
\newcommand{\Simplex}[1]
  {\left[#1\right]}
\newcommand{\IndexSimplex}[1]
  {\sigma_{#1}}
\newcommand{\FaceIndex}%
  {j}
\newcommand{\FaceMap}[2]
  {\partial_{#1,#2}}
\newcommand{\Augmentation}[1]
  {\epsilon_{#1}}
\newcommand{\GeometricRealization}[1]
  {\left\vert{#1}\right\vert}
\newcommand{\DiffeoMorphismGroupPointed}[2]
  {\Diff_{#1}\!\left(#2\right)}
\newcommand{\EmbeddingSpacePointedTangential}[4]
  {\Emb^{#1}_{#2}\!\left(#3,#4\right)}
\newcommand{\SpaceSubsurfacesPointedTangential}[4]
  {\mathcal{E}^{#1}_{#2}\!\left(#3,#4\right)}
\newcommand{\SpaceSubsurfacesPointedFlatTangential}[4]
  {\mathcal{E}^{#1}_{#2,\text{flat}}\!\left(#3,#4\right)}
\newcommand{\EmbeddingSpacePointedRadiusTangenital}[5]
  {\Emb^{#1}_{#2,#3}\!\left(#4,#5\right)}
\newcommand{\SpaceSubsurfacesPointedRadiusTangential}[5]
  {\mathcal{E}^{#1}_{#2,#3}\!\left(#4,#5\right)}
\newcommand{\SpaceAlmostFlatSubsurfaces}[4]
  {\mathcal{E}^{#1}_{#2,\text{almost}}\!\left(#3,#4\right)}
\newcommand{\SpaceAlmostFlatEmbeddings}[4]
  {\Emb^{#1}_{#2,\text{almost}}\!\left(#3,#4\right)}
\newcommand{\PointedStabilizationMap}[1]
  {\sigma_{#1}}
\newcommand{\AuxiliaryFunction}%
  {\varphi}
\newcommand{\AuxiliaryFunctionDiffeomorphism}[1]
  {\psi_{#1}}
\newcommand{\HomotopyInverse}[1]
  {\Psi_{#1}}%
\newcommand{\CoordinateChart}%
  {V}
\newcommand{\FixedRadius}%
  {R}
\newcommand{\Radius}%
  {r}
\newcommand{\SubsurfaceDisk}[3]
  {#1_{#2,#3}}
\newcommand{\ProjectedPoint}[2]
  {\apply{#1}{#2}}
\newcommand{\DiskRadius}[1]
  {D^2_{#1}}
\newcommand{\BallRemovalMap}%
  {\Phi}
\newcommand{\SymplecticForm}%
  {\omega}
\newcommand{\SpaceSymplecticEmbeddings}[3]
  {\text{SEmb}_{#1}\!\left(#2,#3\right)}
\newcommand{\FormalSymplecticEmbedding}%
  {F}
\newcommand{\SolutionHomotopy}%
  {H}
\newcommand{\SymplecticTwoPlane}%
  {B_{\SymplecticForm}}
\newcommand{\SymplecticGrassmannian}[2]
  {\Gr^{\SymplecticForm}_{#1}\!\left(#2\right)}
\newcommand{\SymplecticPathSpace}[2]
  {\mathcal{P}_{#1}\!\left(#2\right)}
\newcommand{\SymplecticTangentialStructure}%
  {\TangentialFibration_{\SymplecticForm}}
\newcommand{\SpaceSymplecticSubsurfaces}[3]
  {\mathcal{S}_{#1}\!\left(#2,#3\right)}
\begin{document}
\begin{abstract}
  Given a manifold with boundary, one can consider the space of 
  subsurfaces of this manifold meeting the boundary in a prescribed fashion. 
  It is known that these spaces of subsurfaces satisfy homological stability if 
  the manifold has at least dimension five and is simply-connected.
  We introduce a notion of tangential structure for subsurfaces and give a 
  general criterion for when the space of subsurfaces with tangential 
  structure satisfies homological stability provided that the manifold is 
  simply-connected and has dimension $n\geq 5$. 
  Examples of tangential structures such that the spaces of subsurface with 
  that tangential structure satisfy homological stability are
  framings or spin structures of their tangent 
  bundle, or $k$-frames of the normal bundle provided that $k\leq n-2$.
  
  Furthermore we introduce spaces of pointedly embedded subsurfaces and 
  construct stabilization maps, as well as prove homological stability for 
  these. This is used to prove homological stability for spaces of symplectic 
  subsurfaces.
\end{abstract}

\maketitle
\section{Introduction}
    The term homological stability refers to the following phenomenon in 
  algebraic topology:
  Consider a sequence of spaces $\TopologicalSpace_{n}$ indexed by the natural 
  numbers together 
  with continuous maps 
  $
    \ContinuousMap_{n}
    \colon 
    \TopologicalSpace_{n}
    \to 
    \TopologicalSpace_{n+1}
  $%
  , called \introduce{stabilization maps}. 
  Such a sequence satisfies \emph{homological stability} if 
  $
    \HomologyOfMap{\ContinuousMap_{n}}
    \colon
    \HomologyOfSpace{\ast}{\TopologicalSpace_{n}}
    \to
    \HomologyOfSpace{\ast}{\TopologicalSpace_{n+1}}
  $
  is an isomorphism for $\ast\leq \apply{\AuxiliaryFunction}{n}$, where 
  $
    \AuxiliaryFunction
    \colon 
    \NaturalNumbers
    \to 
    \NaturalNumbers
  $
  is a non-decreasing function such that 
  $
    \lim_{n\to \infty} 
    \apply{\AuxiliaryFunction}{n}
    =
    \infty
  $%
  . This phenomenon has been studied intensely in the context of group 
  homology. 
  For example it was proven in \cite{Naka} that the classifying spaces of the 
  symmetric groups $\ClassifyingSpace{S_n}$ together with the 
  maps induced from the inclusion $\{0,\ldots,n\} \to \{0,\ldots,n,n+1\}$ 
  satisfy homological stability.
  
  Harer showed in \cite{Harer} that homological stability holds true for 
  the mapping class group of closed oriented surfaces of genus $\Genus$ denoted 
  by $\text{Mod}_{\Genus}$. 
  Even though in this situation we do not have stabilization maps 
  $\text{Mod}_{\Genus}\to \text{Mod}_{\Genus+1}$, there are geometrically 
  motivated maps 
  $
    \HomologyOfSpace{\ast}{\text{Mod}_{\Genus}}
    \to
    \HomologyOfSpace{\ast}{\text{Mod}_{\Genus+1}}
  $
  that are only defined in a range of degrees, where they turn out to be 
  isomorphisms. Actually, homological stability for the mapping class group of 
  surfaces is more naturally formulated for surfaces with boundary, since their 
  mapping class groups have stabilization maps stemming from the inclusions of 
  the surfaces. Then the homology of the mapping class group of closed surfaces 
  can be related to the homology of the mapping class group of surfaces with 
  boundary in a range of degrees increasing with the genus. 
  A similar situation will repeat itself in the present text as well.
  
  Similar results were obtained for the mapping class groups of non-orientable 
  surfaces (see \cite{WahlHomologicalStability}), the spin mapping class groups 
  (see 
  \cite{HarerSpin}) and the mapping class groups of $3$-manifolds (see 
  \cite{HatcherWahlHomologicalStability}).  In \cite{RWFramed} it was shown, 
  among other things, that homological stability holds for the framed mapping 
  class group and generalized spin mapping class groups of surfaces.
  
  Since models for the classifying space of the mapping class group resemble 
  the spaces studied in this paper, the example of homological stability of the 
  mapping class groups of a surface is of particular interest for this paper.
  Let $\SurfaceG{\Genus}$ denote a closed, orientable and connected 
  surface of genus $\Genus$, and $\DiffeomorphismGroup{\SurfaceG{\Genus}}$ 
  its orientation preserving diffeomorphism group. 
  We will denote the space of smooth embeddings of 
  $\SurfaceG{\Genus}$ into a possibly infinite-dimensional manifold $\Manifold$ 
  by $\EmbeddingSpace{\SurfaceG{\Genus}}{\Manifold}$. 
  A model for the classifying space $\ClassifyingSpace{\text{Mod}_{\Genus}}$ is 
  given as follows:
  \[
    \ClassifyingSpace{\text{Mod}_{\Genus}}
    \coloneqq
    \EmbeddingSpace{\SurfaceG{\Genus}}{\Reals^{\infty}}
    /
    \DiffeomorphismGroup{\SurfaceG{\Genus}}
  \]
  Here the diffeomorphism group of the surface acts via precomposition.
  Replacing $\Reals^\infty$ in this construction by a finite dimensional 
  manifold $\Manifold$ yields the moduli space of 
  submanifolds of $\Manifold$ diffeomorphic to $\SurfaceG{\Genus}$. 
  Hence points in this quotient will be refered to as \introduce{subsurfaces}.
  This space classifies $\SurfaceG{\Genus}$-bundles together with an embedding 
  in a trivial $\Manifold$-bundle.
  These spaces and their relatives will be the main focus of this paper.
  
  Analogously to homological stability of the mapping class group of 
  surfaces, it will be more natural to define stabilization maps for 
  subsurfaces with boundary:
  Let $\SurfaceGB{\Genus}{\BoundaryComponents}$ denote an orientable, compact, 
  connected surface of genus $\Genus$ with $\BoundaryComponents$ boundary 
  components.
  Suppose $\Manifold$ has non-empty boundary, then we can consider subsurfaces 
  diffeomorphic to $\SurfaceGB{\Genus}{\BoundaryComponents}$ that intersect 
  $\BoundaryManifold{\Manifold}$ in a fixed set of embedded circles 
  $\BoundaryCondition$. One can define stabilization 
  maps: 
  \[
    \EmbeddingSpaceBoundaryCondition
      {\SurfaceGB{\Genus}{\BoundaryComponents}}
      {\Manifold}
      {\BoundaryCondition}
    /
    \DiffeomorphismGroup{\SurfaceGB{\Genus}{\BoundaryComponents}}
    \to
    \EmbeddingSpaceBoundaryCondition
      {\SurfaceGB{\Genus'}{\BoundaryComponents'}}
      {\Manifold}
      {\BoundaryCondition}
    /
    \DiffeomorphismGroup{\SurfaceGB{\Genus'}{\BoundaryComponents'}}
  \] 
  These change the number of boundary components and the genus.
  It was shown in \cite{CRW16} that if $\Manifold$ is at least $5$-dimensional 
  and simply-connected, then these stabilization maps induce isomorphism 
  in integral homology in a range of degrees that depends on the genus 
  $\Genus$.
  
  In this paper we will combine the techniques of \cite{CRW16} with the methods 
  introduces in \cite{RW16}, a paper where a unified approach to homological 
  stability for mapping class groups of surfaces with tangential structures was 
  given, to prove homological stability for spaces of subsurfaces that are 
  equipped with certain relative tangential structures.
  Furthermore we will extend these results to yield homological stability 
  results for certain spaces of closed subsurfaces of closed manifolds. Let 
  us proceed by introducing these relative tangential structures.
  
\paragraph{Subsurfaces with Tangential Structure:} 
  Let $\Manifold$ denote a smooth manifold possibly with boundary, let 
  $\Grassmannian{2}{\TangentBundle{\Manifold}}$ denote the Grassmannian of 
  oriented $2$-bundles in $\TangentBundle{\Manifold}$ and note that an 
  embedding 
  $
    \Embedding
    \colon
    \SurfaceGB{\Genus}{\BoundaryComponents}
    \to 
    \Manifold
  $ 
  lifts to the Grassmannian as a map 
  \begin{align*}
    \GrassmannianDifferential{\Embedding}
     \colon
     \SurfaceGB{\Genus}{\BoundaryComponents}
     &
     \to
     \Grassmannian{2}{\TangentBundle{\Manifold}}
     \\
     \Point
     &
     \mapsto
     \apply
      {\Differential{\Embedding}}
      {\TangentSpace{\Point}{\SurfaceGB{\Genus}{\BoundaryComponents}}}
  \end{align*}
  Let $\TangentialSpace{\Manifold}$ denote a topological space together with a 
  continuous Hurewicz-fibration 
  $
    \TangentialFibration
    \colon
    \TangentialSpace{\Manifold}
    \to 
    \Grassmannian{2}{\TangentBundle{\Manifold}}
  $%
  . We will call a lift $\TangentialStructure{\Embedding}$ of 
  $\GrassmannianDifferential{\Embedding}$ to $\TangentialSpace{\Manifold}$ a 
  \introduce{$\TangentialFibration$-structure for $\Embedding$} and refer to 
  the pair $(\Embedding,\TangentialStructure{\Embedding})$ as an 
  \introduce{embedding with tangential structure}. This is encapsulated in the 
  following diagram:
  \begin{equation}
    \label{eqt:TangentialStructureDiagram}
    \begin{tikzcd}
      &
      \TangentialSpace{\Manifold}
        \ar[d,"\TangentialFibration"]
      \\
      &
      \Grassmannian{2}{\TangentBundle{\Manifold}}
        \ar[d]
      \\
      \SurfaceGB{\Genus}{\BoundaryComponents}
        \ar[uur,"\TangentialStructure{\Embedding}"]
        \ar[ur] 
        \ar[r,"\Embedding"'] 
      &
      \Manifold
    \end{tikzcd}
  \end{equation}
  
  We will refer to 
  $
    \TangentialFibration
    \colon 
    \TangentialSpace{\Manifold}
    \to
    \Grassmannian{2}{\TangentBundle{\Manifold}}
  $
  as a \introduce{space of $\TangentialFibration$-structures of subplanes of 
  $\TangentBundle{\Manifold}$}.
  \begin{example}
    Fix a metric and an orientation on $\Manifold$, then there is an 
    isomorphism 
    $
      \Grassmannian{2}{\TangentBundle{\Manifold}}
      \cong
      \Grassmannian{\DimensionIndex-2}{\TangentBundle{\Manifold}}
    $%
    . This isomorphism corresponds to taking the normal bundle of the 
    subsurfaces and a tangential structure corresponds to a tangential 
    structure on the normal bundle. 

    Particular examples of these tangential structures are provided by the 
    various Stiefel-manifolds associated to the tautological bundle over 
    $\Grassmannian{\DimensionIndex-2}{\TangentBundle{\Manifold}}$.
    In this case an embedding with tangential structure is an embedding 
    together with a $k$-frame of its normal bundle. 
  
    Other examples are given by
    $
      \TangentialSpace{\Manifold}
      =
      \Grassmannian{2}{\TangentBundle{\Manifold}}
      \times
      \TopologicalSpace
    $
    where $\TopologicalSpace$ is an arbitrary topological space.

    A different example will be introduced in
    Section~\ref{scn:SymplecticSubsurfaces}.
  \end{example}
  \begin{example}
  \label{exm:NonRelativeTangentialStructures}
    This definition of tangential structures should be thought of as a relative 
    tangential structure in contrast to the usual definition of a tangential 
    structure in terms of a fibration 
    $
      \TangentialFibration'
      \colon
      B
      \to 
      \Grassmannian{2}{\Reals^{\infty}}
    $%
    . Nevertheless one can recover the usual definition as a special 
    case of the present definition by pulling back 
    $
      \TangentialFibration'
      \colon
      B
      \to 
      \Grassmannian{2}{\Reals^{\infty}}
    $ 
    via a classifying map of the tautological bundle over 
    $\Grassmannian{2}{\TangentBundle{\Manifold}}$ to get a tangential structure 
    $\TangentialFibration$ in our sense. 
  \end{example}
  We define the \introduce{space of embeddings with 
  $\TangentialFibration$-structure} as follows:
  \[
    \EmbeddingSpaceTangential
      {\TangentialFibration}
      {\SurfaceGB{\Genus}{\BoundaryComponents}}
      {\Manifold}
    \coloneqq
    \left\{
      (\Embedding,\TangentialStructure{\Embedding})
    \middle|
      \begin{aligned}
        &
        \Embedding
        \colon 
        \SurfaceGB{\Genus}{\BoundaryComponents}
        \to
        \Manifold 
        \text{ is an embedding and }
        \\ 
        &
        \TangentialStructure{\Embedding}
        \colon 
        \SurfaceGB{\Genus}{\BoundaryComponents}
        \to 
        \TangentialSpace{\Manifold}
        \text{
           makes Diagram~(\ref{eqt:TangentialStructureDiagram}) commute
         }
      \end{aligned}
    \right\}
  \]
  The definition of the topology of this space will be given in 
  Section~\ref{scn:SpacesOfSubsurfaces}. 
  Let $\AuxiliaryFunction$ denote an orientation preserving diffeomorphism of 
  $\SurfaceGB{\Genus}{\BoundaryComponents}$, note
  $
    \GrassmannianDifferential{\Embedding\circ \AuxiliaryFunction}
    =
    \GrassmannianDifferential{\Embedding}
    \circ
    \AuxiliaryFunction
  $%
  . Therefore 
  $\DiffeomorphismGroup{\SurfaceGB{\Genus}{\BoundaryComponents}}$ has a natural 
  and free action on 
  $
    \EmbeddingSpaceTangential
      {\TangentialFibration}
      {\SurfaceGB{\Genus}{\BoundaryComponents}}
      {\Manifold}
  $
  by precomposition. 
  
  In order to have a reasonable notion of stabilization, we will need to 
  restrict 
  ourselves to embeddings that intersect the boundary of $\Manifold$ in a fixed 
  fashion: 
  
  Suppose that $\BoundaryManifold{\Manifold}$ is non-empty and that we have 
  specified a connected component $\DistBoundaryManifold{0}{\Manifold}$ of 
  $\BoundaryManifold{\Manifold}$.
  
  We say that two embeddings with tangential structure
  $
    \Embedding_{1},\Embedding_{2}
    \in
    \EmbeddingSpaceTangential
      {\TangentialFibration}
      {\SurfaceGB{\Genus}{\BoundaryComponents}}
      {\Manifold}
  $ 
  \introduce{have the same jet along $\BoundaryManifold{\Manifold}$} if the 
  $\infty$-jet at 
  $
    \BoundaryManifold{\SurfaceGB{\Genus}{\BoundaryComponents}}
  $ 
  of $\Embedding_{1}$ and $\Embedding_{2}$ agree, and 
  $
    \at
      {\TangentialStructure{\Embedding_{1}}}
      {\BoundaryManifold{\SurfaceGB{\Genus}{\BoundaryComponents}}}
    =
     \at
      {\TangentialStructure{\Embedding_{2}}}
      {\BoundaryManifold{\SurfaceGB{\Genus}{\BoundaryComponents}}}
  $%
  .
  
  Fix an $\infty$-jet at 
  $\BoundaryManifold{\SurfaceGB{\Genus}{\BoundaryComponents}}$ of embeddings 
  with tangential structure of 
  $\SurfaceGB{\Genus}{\BoundaryComponents}$ into $\Manifold$, denoted by 
  $\BoundaryConditionTangential{\BoundaryCondition}$. 
  Consider the space of embeddings, whose $\infty$-jet at 
  $
    \BoundaryManifold{\SurfaceGB{\Genus}{\BoundaryComponents}}
  $
  agrees with $\BoundaryConditionTangential{\BoundaryCondition}$.
  Note that the group of diffeomorphisms that fixes the $\infty$-jet of the 
  identity at $\BoundaryManifold{\SurfaceGB{\Genus}{\BoundaryComponents}}$, 
  denoted by 
  $
    \DiffeomorphismGroupBoundary{\SurfaceGB{\Genus}{\BoundaryComponents}}
  $%
  , acts freely on this subspace of the space of embeddings.
  We denote the quotient by 
  $
    \SpaceSubsurfaceBoundaryTangential
      {\TangentialFibration}
      {\Genus}
      {\BoundaryComponents}
      {\Manifold}
      {\BoundaryConditionTangential{\BoundaryCondition}}
  $
  and call it the \introduce{space of subsurfaces with 
  $\TangentialFibration$-structure}, elements of this space are called
  \introduce{subsurfaces with $\TangentialFibration$-structure}. 
  The fixed boundary condition enables us to stabilize these spaces in the 
  following way:
  
  We can embed $\DistBoundaryManifold{0}{\Manifold} \times [0,1]$ into 
  $\Manifold$ via a collar that identifies 
  $
    \DistBoundaryManifold{0}{\Manifold}
    \times
    \{1\}
  $ 
  with $\DistBoundaryManifold{0}{\Manifold}$ via the identity and we pullback 
  $
    \TangentialFibration
    \colon
    \TangentialSpace{\Manifold}
    \to
    \Grassmannian{2}{\TangentBundle{\Manifold}}
  $ 
  to $\Grassmannian{2}{\DistBoundaryManifold{0}{\Manifold}\times[0,1]}$. This 
  yields a space of $\TangentialFibration$-structures on subplanes in 
  $\DistBoundaryManifold{0}{\Manifold}\times[0,1]$.
  Let $\StabilizationBordism$ denote a subsurface with tangential structure of 
  $\DistBoundaryManifold{0}{\Manifold}\times[0,1]$ such that every connected 
  component of $\StabilizationBordism$ has a non-empty intersection with 
  $\DistBoundaryManifold{0}{\Manifold}\times\{0\}$. 
  If we assume that 
  $
    \StabilizationBordism
    \cap
    \DistBoundaryManifold{0}{\Manifold}
    \times
    \{0\}
  $
  agrees with the image of the jet 
  $\BoundaryConditionTangential{\BoundaryCondition}$ and that the higher jets
  are compatible as well, then the following map is well-defined:
  \begin{align*}
    -\cup \StabilizationBordism
    \colon 
    \SpaceSubsurfaceBoundaryTangential
      {\TangentialFibration}
      {\Genus}
      {\BoundaryComponents}
      {\Manifold}
      {\BoundaryConditionTangential{\BoundaryCondition}}
    &
    \to 
    \SpaceSubsurfaceBoundaryTangential
      {\TangentialFibration}
      {\Genus'}
      {\BoundaryComponents'}
      {
        \Manifold\
        \cup_{\DistBoundaryManifold{0}{\Manifold}\times\{0\}}
        \DistBoundaryManifold{0}{\Manifold}
        \times
        [0,1]
      }
      {\BoundaryConditionTangential{\BoundaryCondition'}}
    \\
    \Subsurface \subset \Manifold 
    &
    \mapsto 
    \Subsurface \cup \StabilizationBordism 
    \subset
    \Manifold\
    \cup_{\DistBoundaryManifold{0}{\Manifold}\times\{0\}}
    \DistBoundaryManifold{0}{\Manifold}
    \times
    [0,1]
  \end{align*}
  Here $\Genus',\BoundaryComponents'$ and 
  $\BoundaryConditionTangential{\BoundaryCondition'}$ depend on the topology of 
  $\StabilizationBordism$. 
  By identifying 
  $
    \Manifold\
    \cup_{\DistBoundaryManifold{0}{\Manifold}\times\{0\}}
    \DistBoundaryManifold{0}{\Manifold}
    \times
    [0,1]
  $ 
  with $\Manifold$, we get stabilization maps of spaces of subsurfaces with 
  tangential structure of $\Manifold$. 
  We say that a tangential structure $\TangentialFibration$ 
  \introduce{satisfies homological stability} if $-\cup \StabilizationBordism$ 
  induces isomorphisms in integral homology in a range of degrees that 
  increases with $\Genus$ and an epimorphism in the next degree i.e. the 
  reduced homology of the mapping cone of $-\cup \StabilizationBordism$ 
  vanishes in a range of degrees increasing with $\Genus$.
  
  With these notions we can paraphrase our main result, which can be understood 
  as an adaptation of Theorem~1.2 in \cite{RW16} to spaces of subsurfaces:
  \begin{theorem}
\label{thm:HomologicalStabilityParaphrased}
  Suppose $\Manifold$ is an at least $5$-dimensional, simply-connected 
  manifold with non-empty boundary and 
  $\DistBoundaryManifold{0}{\Manifold}$ is a codimension $0$ simply-connected  
  submanifold of $\BoundaryManifold{\Manifold}$ together with a space of 
  $\TangentialFibration$-structures on subplanes of 
  $\TangentBundle{\Manifold}$. 
  Suppose further that $\TangentialFibration$ stabilizes on connected 
  components, i.e. 
  \[
    \HomotopyGroupMap{- \cup \StabilizationBordism}
    \colon 
    \HomotopyGroup
      {0}
      {
        \SpaceSubsurfaceBoundaryTangential
        {\TangentialFibration}
        {\Genus}
        {\BoundaryComponents}
        {\Manifold}
        {\BoundaryConditionTangential{\BoundaryCondition}}
      }
    \to
    \HomotopyGroup
      {0}
      { 
        \SpaceSubsurfaceBoundaryTangential
        {\TangentialFibration}
        {\Genus'}
        {\BoundaryComponents'}
        {
          \Manifold\
          \cup_{\DistBoundaryManifold{0}{\Manifold}\times\{0\}}
          \DistBoundaryManifold{0}{\Manifold}
          \times
          [0,1]
        }
        {\BoundaryConditionTangential{\BoundaryCondition'}}
      }
  \]
  and 
  \[
    \HomotopyGroupMap{- \cup \StabilizationBordism}
    \colon 
    \HomotopyGroup
    {0}
    {
      \SpaceSubsurfaceBoundaryTangential
        {\TangentialFibration}
        {\Genus}
        {\BoundaryComponents}
        {\DistBoundaryManifold{0}{\Manifold}\times[-1,0]}
        {\BoundaryConditionTangential{\BoundaryCondition}}
    }
    \to
    \HomotopyGroup
    {0}
    { 
      \SpaceSubsurfaceBoundaryTangential
        {\TangentialFibration}
        {\Genus'}
        {\BoundaryComponents'}
        {\DistBoundaryManifold{0}{\Manifold}\times[-1,1]}
        {\BoundaryConditionTangential{\BoundaryCondition'}}
    }
  \]
  are isomorphisms if $\Genus$ is high enough, then 
  $\TangentialFibration$ satisfies homological stability.
\end{theorem}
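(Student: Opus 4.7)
The plan is to adapt the arc-resolution strategy of \cite{CRW16}, which proves homological stability for plain spaces of subsurfaces, and to combine it with the lifting technique of \cite{RW16} for carrying tangential structures through a resolution argument. Write $\mathcal{E}_{\Genus,\BoundaryComponents}$ for $\SpaceSubsurfaceBoundaryTangential{\TangentialFibration}{\Genus}{\BoundaryComponents}{\Manifold}{\BoundaryConditionTangential{\BoundaryCondition}}$; the argument proceeds by a double induction on $\Genus$ and on homological degree. The central object is an augmented semi-simplicial space $\SemiSimplicialSpace \to \mathcal{E}_{\Genus,\BoundaryComponents}$ whose $p$-simplices over a subsurface $\Subsurface$ parametrize $(p{+}1)$-tuples of disjoint embedded half-disks in $\Manifold$ with $\TangentialFibration$-structure, each attached on one side to the prescribed boundary condition and on the opposite side to $\Subsurface$, such that surgering $\Subsurface$ along any of them strictly decreases the genus. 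Face maps forget one half-disk and the augmentation forgets all of them.

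The first step is to show that $\GeometricRealization{\SemiSimplicialSpace} \to \mathcal{E}_{\Genus,\BoundaryComponents}$ is roughly $(\Genus-c)/2$-connected. For the underlying embeddings this is essentially the arc-contractibility statement proved in \cite{CRW16}, which uses general position arguments that genuinely need $\DimensionIndex \geq 5$ together with the simple-connectivity of $\Manifold$ and of $\DistBoundaryManifold{0}{\Manifold}$. The tangential structure is then threaded through these constructions using that $\TangentialFibration$ is a Hurewicz fibration: once a half-disk has been isotoped into the desired position, its tangential data on the boundary extends over the disk via the homotopy lifting property, and the same construction can be carried out in families parametrized by simplices.

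The second step is the spectral sequence of the resolution. Standard manipulations (tubular neighbourhoods, restriction to the complement of an open disk attached to the boundary) identify the space of $p$-simplices of $\SemiSimplicialSpace$, up to weak homotopy equivalence, with a space of the form $\mathcal{E}_{\Genus-p-1,\BoundaryComponents+p+1}$ over a slightly enlarged manifold, and realize the face maps as the stabilization map $-\cup\StabilizationBordism$ (iterated). The $\pi_{0}$-hypotheses in both formulations of the theorem are precisely what ensures that these face maps are bijective on connected components; the two cases are required because the resolving half-disks naturally live in a collar of $\DistBoundaryManifold{0}{\Manifold}$, which is why stability in $\DistBoundaryManifold{0}{\Manifold}\times[-1,1]$ must also be assumed. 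Feeding the inductive hypothesis into the spectral sequence then forces $\HomologyOfMap{-\cup\StabilizationBordism}$ to be an isomorphism in a range of degrees growing linearly in $\Genus$, and a surjection one degree above, which is the desired conclusion.

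The principal obstacle, as already in \cite{RW16}, is the high-connectivity statement of step one in the tangential setting: one must show that half-disks with tangential structure can be found and deformed in continuous families, so that the arc-contractibility constructions from \cite{CRW16} can be lifted across $\TangentialFibration$ uniformly in parameters. Once this is established, step two is comparatively formal and follows the same architecture as \cite{CRW16,RW16}, with fibrancy of $\TangentialFibration$ ensuring that all tangential choices can be made consistently.
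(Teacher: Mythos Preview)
Your outline has the right architecture—arc resolution, spectral sequence, induction on genus—and correctly points to \cite{CRW16} and \cite{RW16}. A few smaller inaccuracies first: the space of $p$-simplices is not itself homotopy equivalent to some $\mathcal{E}_{\Genus-p-1,\BoundaryComponents+p+1}$ but rather fibres over a space of strips with such a fibre (Lemma~\ref{lem:HomotopyFiberArcResolution}); and the face maps merely forget a half-disk—they are not stabilization maps. What behaves like a stabilization map is the induced map \emph{between these fibres} when $-\cup\StabilizationBordism$ is lifted to the resolution (Corollary~\ref{crl:Fiber}).

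The substantive gap is in how the $\pi_0$-hypotheses enter. Feeding the inductive hypothesis into Lemma~\ref{lem:StabilityCriterion} only shows that the relative $0$-approximate augmentation $\HomologyOfSpace{*}{\MappingPair{\StabilizationBeta{\Genus}{\BoundaryComponents-1}}} \to \HomologyOfSpace{*}{\MappingPair{\StabilizationAlpha{\Genus}{\BoundaryComponents}}}$ is an \emph{epimorphism} in a range (the auxiliary statements $\AlphaAuxBound_h$, $\BetaAuxBound_h$ of Section~\ref{scn:Proof}); it does not show the target vanishes. For that one needs this epimorphism to be zero, i.e.\ a composition of approximate augmentations on mapping cones must be nullhomotopic. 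This is exactly what $\TrivialityIndex$-triviality (Definition~\ref{dfn:kTriviality}, adapted from \cite{RW16}) provides: the augmentation composition of length $\TrivialityIndex$ must \emph{absorb} $\StabilizationBordism$, producing a diagonal filler $\AuxBordism$ which, via Lemma~\ref{lem:Nullhomotopic}, gives the required nullhomotopy. The $\pi_0$-hypothesis on $\DistBoundaryManifold{0}{\Manifold}\times[0,1]$ is used precisely to construct $\AuxBordism$ (Proposition~\ref{prp:ConnectedKTriviality}: $\pi_0$-stabilization at the boundary at genus $\PiZeroStabilization$ gives $(2\PiZeroStabilization{+}1)$-triviality); the $\pi_0$-hypothesis on $\Manifold$ serves as the base of the induction. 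Your proposal omits the absorbing mechanism entirely, and without it the spectral sequence yields surjectivity but not vanishing, so the inductive step does not close.
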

  See Theorem~\ref{thm:HomologicalStability} for the most general statement 
  including the bounds and a wider class of tangential structures. Furthermore 
  Theorem~\ref{thm:HomologicalStability} does not need that 
  $\DistBoundaryManifold{0}{\Manifold}$ is simply-connected.
  Theorem~\ref{thm:HomologicalStabilityConnectedComponents} is the more precise 
  version of \ref{thm:HomologicalStabilityParaphrased}. 
  
  The following theorem (See
  Theorem~\ref{thm:HomologicalStabilityFiberSimplyConnected} for a more precise 
  statement, that also includes the bounds) 
  yields a large number of tangential structures which satisfy the assumptions 
  in Theorem~\ref{thm:HomologicalStabilityParaphrased}. 
  \begin{theorem}
\label{thm:TangentialStructure0TrivialParaphrased}
  In the setting of Theorem~\ref{thm:HomologicalStabilityParaphrased},
  a space of $\TangentialFibration$-structures of subplanes 
  $
    \TangentialFibration
    \colon
    \TangentialSpace{\Manifold}
    \to
    \Grassmannian{2}{\TangentBundle{\Manifold}}
  $
  fulfils the assumptions in 
  Theorem~\ref{thm:HomologicalStabilityParaphrased} i.e. 
  \[
    \HomotopyGroupMap{- \cup \StabilizationBordism}
    \colon 
    \HomotopyGroup
    {0}
    {
      \SpaceSubsurfaceBoundaryTangential
        {\TangentialFibration}
        {\Genus}
        {\BoundaryComponents}
        {\Manifold}
        {\BoundaryConditionTangential{\BoundaryCondition}}
    }
    \to
    \HomotopyGroup
    {0}
    { 
      \SpaceSubsurfaceBoundaryTangential
        {\TangentialFibration}
        {\Genus'}
        {\BoundaryComponents'}
        {
          \Manifold\
          \cup_{\DistBoundaryManifold{0}{\Manifold}\times\{0\}}
          \DistBoundaryManifold{0}{\Manifold}
          \times
          [0,1]
        }
        {\BoundaryConditionTangential{\BoundaryCondition'}}
    }
  \]
  and
  \[
    \HomotopyGroupMap{- \cup \StabilizationBordism}
    \colon 
    \HomotopyGroup
    {0}
    {
      \SpaceSubsurfaceBoundaryTangential
        {\TangentialFibration}
        {\Genus}
        {\BoundaryComponents}
        {\DistBoundaryManifold{0}{\Manifold}\times[-1,0]}
        {\BoundaryConditionTangential{\BoundaryCondition}}
    }
    \to
    \HomotopyGroup
    {0}
    { 
      \SpaceSubsurfaceBoundaryTangential
        {\TangentialFibration}
        {\Genus'}
        {\BoundaryComponents'}
        {\DistBoundaryManifold{0}{\Manifold}\times[-1,1]}
        {\BoundaryConditionTangential{\BoundaryCondition'}}
    }
  \]
  is an isomorphism if $\Genus$ is high enough, if the homotopy fiber of 
  $
    \TangentialFibration
    \colon
    \TangentialSpace{\Manifold}
    \to
    \Grassmannian{2}{\TangentBundle{\Manifold}}
  $
  is simply-connected. 
  
  In particular such a tangential structure satisfies homological stability.
\end{theorem}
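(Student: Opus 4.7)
The plan is to reduce both $\pi_0$-isomorphism statements in the hypothesis of Theorem~\ref{thm:HomologicalStabilityParaphrased} to the corresponding statements without tangential structure, which follow from \cite{CRW16}, by analyzing the forgetful fibration and controlling its fibers using that $\pi_1$ of the homotopy fiber $F$ of $\TangentialFibration$ vanishes.

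First I would show that the forgetful map
\[
  \SpaceSubsurfaceBoundaryTangential
    {\TangentialFibration}
    {\Genus}
    {\BoundaryComponents}
    {\Manifold}
    {\BoundaryConditionTangential{\BoundaryCondition}}
  \longrightarrow
  \SpaceSubsurfaceBoundary
    {\Genus}
    {\BoundaryComponents}
    {\Manifold}
    {\BoundaryCondition}
\]
is a Serre fibration whose fiber over a subsurface $\Subsurface$ is the space of lifts of the Gauss map of $\Subsurface$ to $\TangentialSpace{\Manifold}$ restricting to the prescribed boundary lift; this fiber is a space of sections of a fibration over $\SurfaceGB{\Genus}{\BoundaryComponents}$ with structural fiber $F$ and prescribed boundary values. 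By \cite{CRW16}, the non-tangential stabilization map induces homological stability, hence in particular a bijection on $\pi_0$ of the base for $\Genus$ large, since $H_0$ equals $\Integers[\pi_0]$.

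Next, since $F$ is $1$-connected and $\SurfaceGB{\Genus}{\BoundaryComponents}$ is two-dimensional, relative obstruction theory applied to this section space shows that all obstructions to existence (living in $H^{i+1}(\SurfaceGB{\Genus}{\BoundaryComponents}, \partial; \HomotopyGroup{i}{F})$) vanish, and two sections agreeing on the boundary are homotopic rel boundary precisely when their difference class in $H^2(\SurfaceGB{\Genus}{\BoundaryComponents}, \BoundaryManifold{\SurfaceGB{\Genus}{\BoundaryComponents}}; \HomotopyGroup{2}{F})$ vanishes. Poincaré–Lefschetz duality identifies this cohomology group with $\HomotopyGroup{2}{F}$ whenever the surface is connected, and I would arrange that the stabilizing bordism $\StabilizingManifold$ (for instance a pair of pants) preserves connectedness. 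Under this identification, the extension-by-fixed-section stabilization on fibers is the inclusion-induced map on top relative cohomology with $\HomotopyGroup{2}{F}$-coefficients, which is an isomorphism.

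Finally, using these fiberwise and basewise $\pi_0$-bijections, a component-wise diagram chase on the long exact sequences of pointed sets for the two fibrations (before and after stabilization) yields the required $\pi_0$-bijection on the total space. The identical argument applied to the ambient manifold $\DistBoundaryManifold{0}{\Manifold}\times[-1,0]$---which is simply-connected since $\DistBoundaryManifold{0}{\Manifold}$ is---gives the second $\pi_0$-isomorphism, and invoking Theorem~\ref{thm:HomologicalStabilityParaphrased} then yields homological stability. The main obstacle will be the set-level diagram chase for pointed sets: since $\pi_0$ of the total space is a quotient of $\pi_0$ of the fiber by the action of $\pi_1$ of the base, one must check that this action is compatible with stabilization, or alternatively extend the exact sequence one term further to the left and invoke a five-lemma for pointed sets. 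A secondary subtlety is to verify that the identification of the fiber's $\pi_0$ with a $\HomotopyGroup{2}{F}$-torsor can be made naturally in the subsurface, so that the chase commutes on the nose.
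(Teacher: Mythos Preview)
Your approach via the forgetful fibration
\[
  \SpaceSubsurfaceBoundaryTangential{\TangentialFibration}{\Genus}{\BoundaryComponents}{\Manifold}{\BoundaryConditionTangential{\BoundaryCondition}}
  \longrightarrow
  \SpaceSubsurfaceBoundary{\Genus}{\BoundaryComponents}{\Manifold}{\BoundaryCondition}
\]
is genuinely different from what the paper does. The paper never runs a comparison of long exact sequences; instead it computes $\pi_0$ of the $\TangentialFibration$-space directly. Concretely, it shows that
$\HomotopyGroup{0}{\EmbeddingSpaceTangential{\TangentialFibration}{\TwoSphere}{\Manifold}}$
carries a natural group structure (as a subgroup of $\HomotopyGroup{2}{\TangentialSpace{\Manifold}}$) and that this group acts simply transitively on $\HomotopyGroup{0}{\SpaceSubsurfaceBoundaryTangential{\TangentialFibration}{\Genus}{\BoundaryComponents}{\Manifold}{\BoundaryConditionTangential{\BoundaryCondition}}}$ via connected sum. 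Since the stabilization maps are visibly equivariant for this action and both source and target are torsors over the \emph{same} group, they are bijections on $\pi_0$ for every $\Genus\geq 0$. This bypasses any reference to $\pi_1$ of the base entirely.

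Your route, by contrast, runs into exactly the obstacle you flag. In the pointed-set five-lemma applied to
\[
  \pi_1(\text{base}) \to \pi_0(\text{fiber}) \to \pi_0(\text{total}) \to \pi_0(\text{base}) \to \ast,
\]
concluding that the middle map is a bijection requires the left-most vertical map to be \emph{surjective}. That is, you need $\pi_1\bigl(\SpaceSubsurfaceBoundary{\Genus}{\BoundaryComponents}{\Manifold}{\BoundaryCondition}\bigr) \to \pi_1\bigl(\SpaceSubsurfaceBoundary{\Genus'}{\BoundaryComponents'}{\Elongation{\Manifold}{1}}{\BoundaryCondition'}\bigr)$ to be onto. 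Homological stability from \cite{CRW16} gives you control of $H_1$, not of $\pi_1$, and there is no Hurewicz argument available here since these spaces are far from simply connected. Your alternative suggestion---checking that the $\pi_1$-actions on $\pi_0(\text{fiber})\cong\HomotopyGroup{2}{F}$ are compatible and that the orbit map is bijective---is the honest thing to do, but it is genuine work: a loop in the base is an isotopy of embedded subsurfaces in $\Manifold$, and the induced monodromy on sections of $\TangentialSpace{\Subsurface}$ is not obviously trivial on $\pi_0$. So as written the argument has a gap precisely at the point you identified; the paper's torsor computation is what actually closes it.
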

  Examples of such spaces of tangential structures are given by $k$-framings of 
  the normal bundle, where $k\leq\DimensionIndex-2$. It would interesting to 
  see wether one can use 
  Theorem~\ref{thm:HomologicalStabilityConnectedComponents} to prove that 
  homological stability also holds for framings of the normal bundle, similar 
  to the situation in \cite{RWFramed}. Another example, related 
  to symplectic structures, will be introduced in 
  Section~\ref{scn:SymplecticSubsurfaces}.
  
  Furthermore we prove in 
  Theorem~\ref{thm:HomologicalStabilityNonRelativeTangentialStructures} that
  a tangential structure $\TangentialFibration$ as in   
  Example~\ref{exm:NonRelativeTangentialStructures} satisfies the assumptions 
  of Theorem~\ref{thm:HomologicalStabilityParaphrased}, if the corresponding 
  moduli spaces of abstract surfaces 
  $
  \ModuliSpace
    {\TangentialFibration'}
    {\SurfaceGB{\Genus}{\BoundaryComponents}}
  $
  introduced in Definition~1.1 in \cite{RW16} fulfil homological stability (see 
  Theorem~7.1 in 
  \cite{RW16} for assumptions implying homological stability). In particular it 
  is proven in \cite{RW16}, that homological stability for the abstract moduli 
  space holds, if the stabilization maps in \cite{RW16} yield 
  bijections
  \[
    \HomotopyGroup
    {0}
    {
      \ModuliSpace
        {\TangentialFibration'}
        {\SurfaceGB{\Genus}{\BoundaryComponents}}
    }
    \to
    \HomotopyGroup
    {0}
    { 
      \ModuliSpace
        {\TangentialFibration'}
        {\SurfaceGB{\Genus}{\BoundaryComponents}}
    }
  \]
  provided that $\Genus$ is large enough.
  
  Examples of such "non-relative" tangential structures are given by framings 
  of the tangent bundle, or spin structures of the subsurfaces. As was 
  mentioned before, homological stability for these was proven in 
  \cite{RWFramed}.
  
  Since some tangential structures are more naturally defined in the context of 
  closed subsurfaces of closed manifolds, we will prove a version of 
  Theorem~\ref{thm:HomologicalStabilityParaphrased} for closed subsurfaces:
  
  \paragraph{Homological Stability for Pointed Subsurfaces:}
  Let $\Manifold$ denote a closed manifold with space of 
  $\TangentialFibration$-structures of subplanes of $\TangentBundle{\Manifold}$,
  fix $\Point \in \SurfaceG{\Genus}$, $\Point_{\Manifold}\in \Manifold$ and 
  $
    \TwoPlane
    \in
    \apply
      {\TangentialFibration^{-1}}
      {\Grassmannian{2}{\TangentSpace{\Point_{\Manifold}}{\Manifold}}}
  $%
  , define
  \[
    \EmbeddingSpacePointedTangential
      {\TangentialFibration}
      {\TwoPlane}
      {\SurfaceG{\Genus}}
      {\Manifold}
    \coloneqq
    \left\{
      \left(\Embedding,\TangentialStructure{\Embedding}\right)
      \in 
      \EmbeddingSpaceTangential
        {\TangentialFibration}
        {\SurfaceG{\Genus}}
        {\Manifold}
    \middle| 
      \apply{\Embedding}{\Point}
      =
      \Point_{\Manifold}
      \text{, } 
      \apply{\TangentialStructure{\Embedding}}{\Point}
      =
      \TwoPlane
    \right\}
  \]
  As before, the group of orientation-preserving diffeomorphisms of 
  $\SurfaceG{\Genus}$ that fix $\Point$, denoted by 
  $\DiffeoMorphismGroupPointed{\Point}{\SurfaceG{\Genus}}$ admits a natural and 
  free action on this space of embeddings by precomposition. 
  We will denote the quotient of this action by 
  $
    \SpaceSubsurfacesPointedTangential
      {\TangentialFibration}
      {\TwoPlane}
      {\SurfaceG{\Genus}}
      {\Manifold}
  $%
  . This space is called the \introduce{space of pointedly embedded 
  $\TangentialFibration$-subsurfaces of $\Manifold$}. In 
  Section~\ref{scn:PointedStabilization} we will construct a 
  stabilization map 
  \[
    \PointedStabilizationMap{\Genus}
    \colon
    \SpaceSubsurfacesPointedTangential
      {\TangentialFibration}
      {\TwoPlane}
      {\SurfaceG{\Genus}}
      {\Manifold}
    \to
    \SpaceSubsurfacesPointedTangential
      {\TangentialFibration}
      {\TwoPlane}
      {\SurfaceG{\Genus+1}}
      {\Manifold}
  \]
  which is heuristically given by flattening the subsurfaces in a neighborhood 
  of $\Point_{\Manifold}$ and then taking a connected sum with a torus in this 
  neighborhood.
  We also prove that this stabilization map yields homological stability for 
  many kinds of tangential structures   
  (Theorem~\ref{thm:HomologicalStabilityPointed}). Parts of this theorem can be 
  summarized as follows:
  \begin{theorem}
\label{thm:HomologicalStabilityPointedParaphrased}
  Suppose $(\Manifold,\Point_{\Manifold})$ is an at least $5$-dimensional, 
  simply-connected, pointed manifold and suppose further that 
  $\TangentialFibration$ is a space of
  tangential structures such that the homotopy fiber of $\TangentialFibration$ 
  is simply-connected, then $\PointedStabilizationMap{\Genus}$ 
  induces an isomorphism in integral homology in degrees $\ast\leq 
  \frac{2}{3}\Genus-1$ and an epimorphism in degrees less than 
  $\frac{2}{3}\Genus$.
\end{theorem}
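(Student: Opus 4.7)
The plan is to reduce the closed/pointed statement to the already-established bounded version by excising a small ball around $\Point_{\Manifold}$. First I would introduce the auxiliary space $\SpaceSubsurfacesPointedFlatTangential{\TangentialFibration}{\TwoPlane}{\SurfaceG{\Genus}}{\Manifold}$ of \emph{flat} pointed subsurfaces, where flatness is enforced in a fixed chart $\CoordinateChart \cong \Ball{\DimensionIndex}$ centered at $\Point_{\Manifold}$: the embedding restricted to the preimage of $\CoordinateChart$ coincides with the linear disk $\TwoPlane \cap \Ball{\DimensionIndex}$ and the tangential lift $\TangentialStructure{\Embedding}$ is the constant lift along $\TwoPlane$ on this chart. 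Using a parametrized isotopy-extension argument together with the Hurewicz-fibration property of $\TangentialFibration$ to lift flattening homotopies of $\Embedding$ to homotopies of $\TangentialStructure{\Embedding}$, I would prove that the inclusion $\SpaceSubsurfacesPointedFlatTangential{\TangentialFibration}{\TwoPlane}{\SurfaceG{\Genus}}{\Manifold} \hookrightarrow \SpaceSubsurfacesPointedTangential{\TangentialFibration}{\TwoPlane}{\SurfaceG{\Genus}}{\Manifold}$ is a weak equivalence.

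Next, cutting along the boundary sphere of $\CoordinateChart$ yields a homeomorphism between $\SpaceSubsurfacesPointedFlatTangential{\TangentialFibration}{\TwoPlane}{\SurfaceG{\Genus}}{\Manifold}$ and the bounded space $\SpaceSubsurfaceBoundaryTangential{\TangentialFibration}{\Genus}{1}{\Manifold \setminus \mathring{\CoordinateChart}}{\BoundaryConditionTangential{\BoundaryCondition}}$, where the distinguished boundary component $\DistBoundaryManifold{0}{(\Manifold \setminus \mathring{\CoordinateChart})} = \Sphere{\DimensionIndex-1}$ and $\BoundaryConditionTangential{\BoundaryCondition}$ is the $\infty$-jet of the standard linear circle $\TwoPlane \cap \Sphere{\DimensionIndex-1}$ together with the restriction of the constant lift at $\TwoPlane$. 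Composing the two equivalences identifies $\SpaceSubsurfacesPointedTangential{\TangentialFibration}{\TwoPlane}{\SurfaceG{\Genus}}{\Manifold}$ with the bounded space on $\Manifold' \coloneqq \Manifold \setminus \mathring{\CoordinateChart}$.

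Under this identification the map $\PointedStabilizationMap{\Genus}$ corresponds to $- \cup \StabilizationBordism$ for a genus-one bordism $\StabilizationBordism$ living in a collar of $\Sphere{\DimensionIndex-1}$, whose construction mirrors the flatten-then-connected-sum-with-a-torus recipe used to build $\PointedStabilizationMap{\Genus}$ in Section~\ref{scn:PointedStabilization}. Since $\Manifold$ is simply-connected and of dimension $\DimensionIndex \geq 5$, so is $\Manifold'$; and because $\DimensionIndex \geq 5$, the sphere $\Sphere{\DimensionIndex-1}$ is simply-connected, so the hypotheses of Theorem~\ref{thm:TangentialStructure0TrivialParaphrased} on $\Manifold'$ are met. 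Simple-connectivity of the homotopy fiber of $\TangentialFibration$ is preserved under restriction to $\Manifold'$, so Theorem~\ref{thm:TangentialStructure0TrivialParaphrased} furnishes the $\pi_{0}$-stabilization hypothesis of Theorem~\ref{thm:HomologicalStabilityParaphrased} for high genus and gives integral homological stability for $- \cup \StabilizationBordism$ in the range $\ast \leq \frac{2}{3}\Genus - 1$ with epimorphism one degree higher, matching the claim for $\PointedStabilizationMap{\Genus}$ via the equivalence above.

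The principal obstacle is the flattening equivalence in Step~1, which must be tangentially coherent: a family of pointed subsurfaces needs to be straightened by a family of ambient isotopies whose endpoints agree with the prescribed linear germ at $\Point_{\Manifold}$, and each such straightening must be covered by a compatible path in $\TangentialSpace{\Manifold}$. This requires contractibility of the space of germs of linearizations extending $\TwoPlane$ (an $\SOrthogonalGroup{\DimensionIndex}/\SOrthogonalGroup{\DimensionIndex-2}$-bundle argument together with the contractibility of the space of compatible framings), combined with a relative Hurewicz lifting argument for $\TangentialFibration$ to produce the lift. Once the flattening equivalence is in place, the comparison of stabilization maps and the transfer of the stability range along the equivalences are essentially formal.
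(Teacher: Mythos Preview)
Your proposal is correct and follows essentially the same route as the paper. The paper defines $\PointedStabilizationMap{\Genus}$ precisely as the composition you describe: an explicit flattening homotopy inverse $\HomotopyInverse{\text{flat}}$ (Proposition~\ref{prp:FlatHomotopyEquivalence} and Corollary~\ref{crl:FlatHomotopyEquivalenceTangential}), the ball-removal equivalence $\BallRemovalMap$ (Corollary~\ref{crl:TakingOutBallHomotopyEquivalence}), the bounded stabilization $\StabilizationAlpha{\Genus}{2}\circ\StabilizationBeta{\Genus}{1}$, and the inverse equivalences; the homological statement then follows immediately from Theorem~\ref{thm:HomologicalStabilityFiberSimplyConnected}. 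The only noteworthy difference is that the paper insists on an actual homotopy inverse for the flattening step (not merely a weak equivalence), because that inverse is used to \emph{define} $\PointedStabilizationMap{\Genus}$ as a genuine map; your isotopy-extension sketch would need to produce such an inverse, and the paper carries this out via a three-stage explicit construction rather than an abstract lifting argument.
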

  
\paragraph{Symplectic Subsurfaces:}
  As an application of this, we prove in 
  Section~\ref{scn:SymplecticSubsurfaces} a homological stability result 
  for spaces of symplectic subsurfaces:
  Let $(\Manifold,\SymplecticForm)$ 
  denote a simply connected symplectic manifold of dimension at least $6$ with 
  a base point $\Point_{\Manifold}\in\Manifold$. 
  We will call an embedding 
  $
    \Embedding
    \colon 
    \SurfaceG{\Genus} 
    \to 
    \Manifold
  $
  an oriented symplectic embedding if $\Embedding^{\ast}\SymplecticForm$ is a 
  symplectic form on $\SurfaceG{\Genus}$ and 
  $\int_{\SurfaceG{\Genus}} \Embedding^*\SymplecticForm>0$. 
  Fixing a symplectic $2$-plane
  $
    \SymplecticTwoPlane
    \in 
    \Grassmannian{2}{\TangentSpace{\Point_{\Manifold}}{\Manifold}}
  $%
  , i.e. $\at{\SymplecticForm}{\SymplecticTwoPlane}$ is non-degenerate, we can 
  consider 
  $
    \SpaceSymplecticEmbeddings
      {\SymplecticTwoPlane}
      {\SurfaceG{\Genus}}
      {\Manifold}
  $%
  , the space of oriented symplectic embeddings that map the tangent space of 
  the base point of the surface to $\SymplecticTwoPlane$. 
  Being an oriented symplectic embedding is invariant under the action of 
  $\DiffeoMorphismGroupPointed{\Point}{\SurfaceG{\Genus}}$ by precomposition. 
  We denote the quotient by 
  $
    \SpaceSymplecticSubsurfaces
      {\SymplecticTwoPlane}
      {\SurfaceG{\Genus}}
      {\Manifold}
  $%
  and call it the \introduce{space of symplectic subsurfaces which are 
  tangential to $\TwoPlane$}. 
  
  A proposition in the context of the h-principle, namely a 
  variation of Theorem~12.1.1 in \cite{Eliashberg}, allows us to relate the 
  space of symplectic subsurfaces to spaces of subsurfaces with a certain 
  tangential structure (See 
  Proposition~\ref{prp:hPrincipleSpacesOfSubsurfaces}). 
  Using Theorem~\ref{thm:HomologicalStabilityPointedParaphrased}, one obtains:
  \begin{theorem}
\label{thm:SymplecticHomologicalStability}
  Let $(\Manifold,\SymplecticForm)$ denote a simply-connected symplectic 
  manifold of dimension at least $6$. 
  For every $\Point_{\Manifold}\in \Manifold$ and every symplectic $2$-plane 
  $\SymplecticTwoPlane$ in $\TangentSpace{\Point_{\Manifold}}{\Manifold}$. 
  There is a homomorphism of integral homology
  \[
    \PointedStabilizationMap{\Genus}
    \colon
    \HomologyOfSpace
      {\ast}
      {\SpaceSymplecticSubsurfaces
        {\SymplecticTwoPlane}
        {\SurfaceG{\Genus}}
        {\Manifold}
      }
      \to
      \HomologyOfSpace
        {\ast}
        {\SpaceSymplecticSubsurfaces
          {\SymplecticTwoPlane}
          {\SurfaceG{\Genus+1}}
          {\Manifold}
        }
  \]
  And this homomorphism induces an isomorphism for 
  $\ast \leq \frac{2}{3}\Genus-1$ and an epimorphism for 
  $\ast\leq \frac{2}{3}\Genus$.
\end{theorem}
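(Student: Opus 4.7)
The plan is to reduce Theorem~\ref{thm:SymplecticHomologicalStability} to Theorem~\ref{thm:HomologicalStabilityPointedParaphrased} by identifying symplectic subsurfaces with subsurfaces carrying a suitable tangential structure, using an h-principle of Gromov--Eliashberg type. Concretely, I would define a space of tangential structures $\SymplecticTangentialStructure\colon \TangentialSpace{\Manifold}\to \Grassmannian{2}{\TangentBundle{\Manifold}}$ whose total space encodes pairs $(V,\SymplecticForm_V)$ consisting of an oriented $2$-plane $V\subset \TangentBundle{\Manifold}$ together with a path in $\Lambda^2 V^\ast$ from $\at{\SymplecticForm}{V}$ to a chosen positive area form on $V$. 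The fiber over a plane $V$ is then the space of such paths, and is contractible when $V$ is a symplectic $2$-plane (the symplectic condition being open). This makes $\TangentialSpace{\Manifold}$ a suitable model for the Grassmannian of symplectic $2$-planes up to homotopy.

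First, using the variation of Theorem~12.1.1 of \cite{Eliashberg} cited as Proposition~\ref{prp:hPrincipleSpacesOfSubsurfaces}, I would show that the natural forgetful map
\[
  \SpaceSymplecticSubsurfaces
    {\SymplecticTwoPlane}
    {\SurfaceG{\Genus}}
    {\Manifold}
  \longrightarrow
  \SpaceSubsurfacesPointedTangential
    {\SymplecticTangentialStructure}
    {\SymplecticTwoPlane}
    {\SurfaceG{\Genus}}
    {\Manifold}
\]
is a weak homotopy equivalence. The content of the h-principle is that every formal symplectic embedding (an embedding equipped with a pointwise symplectic structure on its tangent planes) can be deformed to an honest symplectic embedding, and this holonomic approximation can be performed in families, which gives the weak equivalence. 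The pointed and tangent-plane conditions at $\Point_{\Manifold}$ are respected because the h-principle is relative and the chosen $\SymplecticTwoPlane$ provides compatible boundary data.

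Next I would check that this equivalence intertwines the symplectic stabilization map, defined by connect-summing with a standard symplectic torus in a Darboux chart around $\Point_{\Manifold}$, with the map $\PointedStabilizationMap{\Genus}$ constructed in Section~\ref{scn:PointedStabilization}. On the tangential-structure side, the fiber of $\SymplecticTangentialStructure$ is (path-)contractible, hence in particular simply-connected, so the hypotheses of Theorem~\ref{thm:HomologicalStabilityPointedParaphrased} are met for $\Manifold$ simply-connected of dimension $\geq 6$. Applying that theorem yields an isomorphism in integral homology for $\ast\leq \frac{2}{3}\Genus-1$ and an epimorphism for $\ast\leq \frac{2}{3}\Genus$, proving the claim.

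The main obstacle will be the parametrised h-principle: one needs the equivalence between symplectic embeddings and embeddings with the tangential structure $\SymplecticTangentialStructure$ to be compatible with the stabilisation construction, which modifies the embedding in a neighbourhood of the basepoint. This requires running the h-principle relative to the behaviour at $\Point_{\Manifold}$ and on the added handle, and keeping track of the tangential data throughout the flattening and connect-sum steps that define $\PointedStabilizationMap{\Genus}$. Once this compatibility is established, the remaining verifications (simply-connectedness of the fiber, and the transfer of the stability range) are essentially formal.
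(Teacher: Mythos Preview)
Your overall strategy---replace symplectic subsurfaces by subsurfaces with a tangential structure via an h-principle, verify the fiber of that structure is simply-connected, and invoke Theorem~\ref{thm:HomologicalStabilityPointedParaphrased}---is exactly the paper's. But your specific tangential structure does not work. The fiber of your $\SymplecticTangentialStructure$ over an oriented $2$-plane $V$ is the space of paths in the one-dimensional vector space $\Lambda^2 V^\ast$ from $\at{\SymplecticForm}{V}$ to a positive form; this is contractible for \emph{every} $V$, symplectic or not, so your $\TangentialSpace{\Manifold}\to\Grassmannian{2}{\TangentBundle{\Manifold}}$ is a homotopy equivalence and the tangential structure is trivial. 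The h-principle would then assert that symplectic subsurfaces are weakly equivalent to arbitrary oriented subsurfaces, which is false. The paper instead takes $\TangentialSpace{\Manifold}=\SymplecticPathSpace{2}{\Manifold}$ to be the mapping path space of the inclusion $\SymplecticGrassmannian{2}{\TangentBundle{\Manifold}}\hookrightarrow\Grassmannian{2}{\TangentBundle{\Manifold}}$: a point over $V\subset\TangentSpace{\Point'}{\Manifold}$ is a path of oriented $2$-planes in $\TangentSpace{\Point'}{\Manifold}$ starting at $V$ and ending at a symplectic plane. The fiber is then the homotopy fiber of $\SymplecticGrassmannian{2}{\TangentSpace{\Point'}{\Manifold}}\to\Grassmannian{2}{\TangentSpace{\Point'}{\Manifold}}$, shown to be simply-connected (in fact $2$-connected) by comparing the Stiefel fibrations over the two Grassmannians; it is not contractible, but simple connectivity is all that is needed.

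There is a second divergence. You propose to build a genuine symplectic stabilization map by connect-summing with a symplectic torus in a Darboux chart, and to check it intertwines with $\PointedStabilizationMap{\Genus}$ under the h-principle equivalence. The paper explicitly does \emph{not} do this and flags it as open. Since Proposition~\ref{prp:hPrincipleSpacesOfSubsurfaces} yields only a weak equivalence $\SpaceSymplecticSubsurfaces{\SymplecticTwoPlane}{\SurfaceG{\Genus}}{\Manifold}\to\SpaceSubsurfacesPointedTangential{\SymplecticTangentialStructure}{\SymplecticTwoPlane}{\SurfaceG{\Genus}}{\Manifold}^{\SymplecticForm}$ (the superscript restricts to components with $\int\SymplecticForm>0$, which $\PointedStabilizationMap{\Genus}$ preserves because it fixes the homology class), the homology homomorphism in the theorem is \emph{defined} by transporting $\HomologyOfMap{\PointedStabilizationMap{\Genus}}$ through these weak equivalences, inverting the induced isomorphisms on homology. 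Your route would give a stronger space-level statement, but it is not required for the theorem as stated and is substantially harder.
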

  Even though the present stabilization map exists only on the level of 
  homology, the author is hopeful that with the presented methods, it is 
  possible to construct an actual map 
  $
    \SpaceSymplecticSubsurfaces
      {\SymplecticTwoPlane}
      {\SurfaceG{\Genus}}
      {\Manifold}
    \to 
    \SpaceSymplecticSubsurfaces
      {\SymplecticTwoPlane}
      {\SurfaceG{\Genus+1}}
      {\Manifold}
  $ that realizes the map on homology (see the end of 
  Section~\ref{scn:SymplecticSubsurfaces} for 
  a more detailed discussion of this).
  
  It was shown in \cite{LiSymplecticSurfaces} that many second homology classes 
  can be represented by symplectic subsurfaces. Furthermore essential tools 
  in the theory of symplectic manifolds are $J$-holomorphic curves and their 
  moduli spaces (See \cite{McDuffSalamon} for an introduction), embedded 
  $J$-holomorphic curves are essentially the same as symplectic subsurfaces and 
  their moduli spaces are connected to the space of symplectic subsurfaces. We 
  hope that the homological stability result might give some new 
  insight into these moduli spaces.
  
\paragraph{Overview:}
  The first two sections describe spaces of subsurfaces with boundary, 
  tangential structures and related concepts used throughout the paper 
  (Section~\ref{scn:SpacesOfSubsurfaces}) and their stabilization maps 
  (Section~\ref{scn:Stabilization}). Furthermore we show that every 
  stabilization map can be decomposed in to simpler maps, given by stabilizing 
  with a pair of pants or a disk) for which we will prove homological stability.
  
  Section~\ref{scn:SemiSimplicial} introduces the most important tool in 
  the proof of Theorem~\ref{thm:HomologicalStabilityParaphrased} and 
  Theorem~\ref{thm:HomologicalStability}: Semi-simplicial resolutions.
  In Section~\ref{scn:Resolution} a semi-simplicial resolution of the space of 
  subsurfaces is built and its behaviour with respect to stabilization maps is 
  studied.
  
  In Section~\ref{scn:kTriviality} we introduce the notion of 
  $\TrivialityIndex$-triviality, a concept developed in \cite{RW16}, and adapt 
  it to spaces of subsurfaces instead of abstract surfaces.
  Using this notion we introduce in Section~\ref{scn:Proof} the actual 
  stability bounds and prove most of Theorem~\ref{thm:HomologicalStability} the 
  stronger version of Theorem~\ref{thm:HomologicalStabilityParaphrased}. 
  Section~\ref{scn:CappingOff} finishes the proof by explaining how to remove 
  boundary components.
  
  In Section~\ref{scn:TangentialkTriviality} we establish criteria for spaces 
  of tangential structures to be suitable for application of 
  Theorem~\ref{thm:HomologicalStability} and in doing so proof among other 
  results the strengthened 
  version of Theorem~\ref{thm:TangentialStructure0TrivialParaphrased}.
  In Section~\ref{scn:PointedStabilization}, we extend the notions of spaces of 
  subsurfaces to spaces of pointedly embedded subsurfaces and construct a 
  stabilization map for these.
  
  Lastly in Section~\ref{scn:SymplecticSubsurfaces}, we apply the results of 
  Section~\ref{scn:TangentialkTriviality} and 
  Section~\ref{scn:PointedStabilization} to prove 
  Theorem~\ref{thm:SymplecticHomologicalStability}, the homological stability 
  result for symplectic subsurfaces.
  
  Most of the setting and the general proof strategy in this paper were 
  inspired by \cite{CRW16}, and represent an adaptation of their ideas to the 
  newly introduced setting of tangential structures for subsurfaces. However 
  the introduction of the concept of $\TrivialityIndex$-triviality (adapted 
  from \cite{RW16}) made the proof more intricate. 
  
  The results of Sections~\ref{scn:TangentialkTriviality} to 
  \ref{scn:SymplecticSubsurfaces} are new and allow a much broader application 
  as well as an extension of the results of \cite{CRW16}.
  
\paragraph{Acknowledgements:}
  I would like to thank my PhD advisor Ursula Hamenst\"adt for many 
  helpful discussions and suggestions and furthermore for introducing me to 
  the topic of homological stability. 
  I would also like to thank Mark Pedron for plenty of discussions about this 
  subject and for helping me making the present text much more accessible. 
  Frederico Cantero helped me understand parts of \cite{CRW16} and he helped me 
  to put things into the correct context, for which I am very grateful. 
  Furthermore I would like to express my gratitude for numerous corrections and 
  hints at possible extensions of the results of this paper by an anonymous 
  referee.
  Lastly I would like to thank Peter Michor for an answer on MathOverflow 
  (\cite{MichorMathoverflow}) without which I would not have been able to 
  finish the proof of Proposition~\ref{prp:FlatHomotopyEquivalence}.
\section{Spaces of Subsurfaces and Tangential Structures}
  \label{scn:SpacesOfSubsurfaces}
The following section will introduce spaces of subsurfaces, tangential 
structures and will also lay most of the technical foundation for the rest of 
this paper.
\subsection{Spaces of Embeddings and Embedded Submanifolds}
Let 
$
  \left(
    \Manifold,\BoundaryManifold{\Manifold}
  \right)
$
denote a compact smooth manifold with boundary and let 
$\DistBoundaryManifold{0}{\Manifold}$ 
denote a codimension $0$ submanifold (possibly with boundary) of 
$\BoundaryManifold$.
We call an embedding 
$
  \Collar
  \colon 
  \BoundaryManifold{\Manifold}
  \times 
  \left[0,1\right)
  \to
  \Manifold
$
such that 
$
  \apply{\Collar}
    {
      \left(\Point,0\right)
    }
  =
  \Point
$
a \introduce{collar of $\Manifold$}.

We say that $\Manifold$ is \introduce{collared} if there is a 
fixed collar $\Collar$ of $\Manifold$. 
Note that if $\Manifold$ is collared, then 
\[
  \Elongation{\Manifold}{k}
  \coloneqq
  \Manifold 
  \cup_{\DistBoundaryManifold{0}{\Manifold}} 
  \DistBoundaryManifold{0}{\Manifold}
  \times 
  \left[0,k\right] 
\]
has a canonical smooth structure.

\begin{definition}
  Let $\Manifold_{1}$ and $\Manifold_{2}$ denote smooth collared 
  manifolds.
  A \introduce{smooth map 
  $
    \SmoothMap
    \colon
    \Manifold_{1}
    \to
    \Manifold_{2}
  $
  between collared manifolds} is a smooth  map that maps 
  $\BoundaryManifold{\Manifold_{1}}$ to 
  $\BoundaryManifold{\Manifold_{2}}$ and there exist neighborhoods of 
  $\BoundaryManifold{\Manifold_{1}}$ and $\BoundaryManifold{\Manifold_{2}}$
  such that in these neighborhoods $\SmoothMap$ is a product of a smooth map 
  and the identity with respect to the collars of $\Manifold_{1}$ and 
  $\Manifold_{2}$.
  
  The definition of \introduce{collared embedding} and 
  \introduce{collared diffemorphism} is completely analogous.
\end{definition}

Since every occuring manifold and map will be assumed to be a collared manifold 
or a collared map we will drop the adjective.

A collared embedding from 
$\Manifold_{1}$ to $\Manifold_{2}$ maps the boundary of 
$\Manifold_{1}$ to the boundary of $\Manifold_{2}$ and the map is 
transverse to the boundary of $\Manifold_{2}$.
If $\Submanifold_{1}\subset \Manifold_{1}$ and 
$\Submanifold_{2}\subset \Manifold_{2}$ are submanifolds, 
we call an embedding 
$
  \Embedding
  \colon
  \left(\Manifold_{1},\Submanifold_{1}\right)
  \to
  \left(\Manifold_{2},\Submanifold_{2}\right)
$ an \introduce{embedding of pairs} if 
$\apply{\Embedding^{-1}}{\Submanifold_{2}}=\Submanifold_{1}$. 
We will be interested in spaces associated to the \introduce{space of 
(collared) embeddings 
$\EmbeddingSpace{\Manifold_{1}}{\Manifold_{2}}$}. This space is 
equipped with the $C^\infty$-topology.
\begin{definition}
  Let 
  $
    \Embedding_{1},\Embedding_{2}
    \colon
    \Manifold_{1}
    \to
    \Manifold_{2}
  $ denote embeddings.
  We say that $\Embedding_{1}$ and $\Embedding_{2}$
  \introduce{have the same jet along 
  $\BoundaryManifold{\Manifold_{2}}$} if 
  there exists an open neighbourhood 
  $\Neighbourhood{\BoundaryManifold{\Manifold_{1}}}$ of 
  $\BoundaryManifold{\Manifold_{1}}$ such that 
  $
    \at{
        \Embedding_{1}
    }
    {
      \Neighbourhood{
        \BoundaryManifold{\Manifold_{1}}
      }
    }
    =
    \at{
      \Embedding_{2}
    }
    {
       \Neighbourhood{\BoundaryManifold{\Manifold_{1}}}
    }
  $.
  This defines an equivalence relation on 
  $\EmbeddingSpace{\Manifold_{1}}{\Manifold_{2}}$ and we denote the 
  quotient 
  map by \[
    \BoundaryConditionMap
    \colon
    \EmbeddingSpace{\Manifold_{1}}{\Manifold_{2}}
    \to 
    \BoundaryConditionSpacePar{\Manifold_{1}}{\Manifold_{2}}
  \]
  We write 
  $
    \EmbeddingSpaceBoundaryCondition
      {\Manifold_{1}}
      {\Manifold_{2}}
      {\BoundaryConditionPar}
    \coloneqq
    \apply{\BoundaryConditionMap^{-1}}
      {\BoundaryConditionPar}$. 
\end{definition}

From here on forth let us specialize to the case where the domain of 
the embedding is a compact connected oriented surface of genus 
$\Genus$ with $\BoundaryComponents$ boundary components denoted by 
$\SurfaceGB{\Genus}{\BoundaryComponents}$. 
We denote by 
$\DiffeomorphismGroup{\SurfaceGB{\Genus}{\BoundaryComponents}}$ the 
\introduce{group of orientation-preserving diffeomorphisms of 
$\SurfaceGB{\Genus}{\BoundaryComponents}$}. 
This group acts freely on the space of embeddings and on $J_\partial$ 
via precomposition. We define 
\begin{align*}
  \SpaceSubsurfaces{\Genus}{\BoundaryComponents}{\Manifold}&
  \coloneqq 
  \EmbeddingSpace{\SurfaceGB{\Genus}{\BoundaryComponents}}
    {\Manifold}
  /
  \DiffeomorphismGroup{\SurfaceGB{\Genus}{\BoundaryComponents}}
  \\
  \BoundaryConditionSpace{
    \SurfaceGB{\Genus}{\BoundaryComponents}
  }{
    \Manifold
  }
  &\coloneqq 
  \BoundaryConditionSpacePar
    {\SurfaceGB{\Genus}{\BoundaryComponents}}
    {\Manifold}
  /
  \DiffeomorphismGroup{\SurfaceGB{\Genus}{\BoundaryComponents}}
\end{align*}
The first space will be called the \introduce{space of subsurfaces of genus 
$\Genus$ and $\BoundaryComponents$ boundary components}.
We also define 
\[
  \SpaceAllSubsurfaceBoundary{\Manifold}{\BoundaryComponents}
  \coloneqq
  \bigsqcup_{\Genus}
  \SpaceSubsurfaceBoundary
    {\Genus}
    {\BoundaryComponents}
    {\Manifold}
    {\BoundaryCondition}
\]
An element 
$
  \Subsurface
  \in
  \SpaceSubsurfaces{\Genus}{\BoundaryComponents}{\Manifold}
$
is an unparametrized embedding or in other words 
a submanifold $\Subsurface\subset\Manifold$ diffeomorphic to 
$\SurfaceGB{\Genus}{\BoundaryComponents}$. We 
will call such an unparametrized embedding a \introduce{subsurface of $M$}.

Since the map $\BoundaryConditionMap$ is equivariant with 
respect to the 
$\DiffeomorphismGroup{\SurfaceGB{\Genus}{\BoundaryComponents}}$
-actions, we get an induced map 
between the quotient spaces also denoted by $\BoundaryConditionMap$. 
We define 
$\SpaceSubsurfaceBoundary
  {\Genus}
  {\BoundaryComponents}
  {\Manifold}
  {\BoundaryCondition}
$
as 
$\apply
  {\BoundaryConditionMap^{-1}}
  {\BoundaryCondition}
$
for some 
$
  \BoundaryCondition
  \in \BoundaryConditionSpace
    {\SurfaceGB{\Genus}{\BoundaryComponents}}
    {\Manifold}
$%
.
Note that if $\BoundaryCondition=\left[\BoundaryConditionPar\right]$ 
then 
\[
  \SpaceSubsurfaceBoundary
    {\Genus}
    {\BoundaryComponents}
    {\Manifold}
    {\BoundaryCondition}
  \cong
  \EmbeddingSpaceBoundaryCondition
    {\SurfaceGB{\Genus}{\BoundaryComponents}}
    {\Manifold}
    {\BoundaryConditionPar}
  /
  \DiffeomorphismGroupBoundary
    {\SurfaceGB{\Genus}{\BoundaryComponents}}
\]
Here 
$
  \DiffeomorphismGroupBoundary
    {\SurfaceGB{\Genus}{\BoundaryComponents}}
$
denotes the group of diffeomorphisms of 
$\SurfaceGB{\Genus}{\BoundaryComponents}$ that agree with the 
identity on a neighbourhood of the boundary.

As a slight abuse of notation we will also write $\BoundaryCondition$ 
for the image of some representative of 
$
  \BoundaryCondition
  \in 
  \BoundaryConditionSpace
    {\SurfaceGB{\Genus}{\BoundaryComponents}}
    {\Manifold}
$
in 
$\BoundaryManifold{\Manifold}$. 
We will call $\BoundaryCondition$ a \introduce{boundary condition}.

\subsection{Tangential structures}
Let $\Manifold$ denote a manifold possibly with boundary. 
There is the fiber bundle 
$
  \Grassmannian{2}{\TangentBundle{\Manifold}}
  \to 
  \Manifold
$%
, where 
$\Grassmannian{2}{\TangentBundle{\Manifold}}$ denotes 
the space of oriented $2$-planes in $\TangentBundle{\Manifold}$.
For the purpose of this paper a \introduce{space of 
$\TangentialFibration$-structures of 
subplanes of $\TangentBundle{\Manifold}$} will denote a continuous 
Hurewicz-fibration 
$
  \TangentialFibration
  \colon
  \TangentialSpace{\Manifold}
  \to 
  \Grassmannian{2}{\TangentBundle{\Manifold}}
$%
. For a submanifold $\Submanifold$ of $\Manifold$ we have an induced map 
$
  \Grassmannian{2}{\TangentBundle{\Submanifold}}
  \to 
  \Grassmannian{2}{\TangentBundle{\Manifold}}
$
and we will denote the pullback of $\TangentialSpace{\Manifold}$
along this inclusion by $\TangentialSpace{\Submanifold}$. 

If $\Subsurface$ is an oriented subsurface, then 
$\Grassmannian{2}{\Subsurface}\to \Subsurface$ is 
a two-sheeted covering with a distinguished section. In this 
particular case we will identify the correctly oriented connected component of 
$\Grassmannian{2}{\Subsurface}$ with $\Subsurface$ and consider 
$\TangentialSpace{\Subsurface}$ as a fibration over $\Subsurface$.

If we have an embedding $\Embedding\colon 
\SurfaceGB{\Genus}{\BoundaryComponents}\to \Manifold$, then 
we get an induced map 
\[
  \GrassmannianDifferential{\Embedding}
  \colon 
  \SurfaceGB{\Genus}{\BoundaryComponents}
  \to
  \Grassmannian{2}{\TangentBundle{\Manifold}}
\]
by mapping $\Point\in\SurfaceGB{\Genus}{\BoundaryComponents}$ 
to 
$
  \apply
    {\Differential{\Embedding}}
    {\TangentSpace
      {\Point}
      {\SurfaceGB{\Genus}{\BoundaryComponents}}
    }
$
considered as an oriented $2$-plane.
We will call this map the \introduce{Grassmannian 
differential of $\Embedding$}.
\begin{definition}
  \label{dfn:TangentialStructure}
  Given an embedding 
  $
    \Embedding
    \colon 
    \SurfaceGB{\Genus}{\BoundaryComponents}
    \to
    \Manifold
  $%
  ,
  we call a map 
  $
    \TangentialStructure{\Embedding}
    \colon
    \SurfaceGB{\Genus}{\BoundaryComponents}
    \to
   \TangentialSpace{\Manifold}
  $
  a \introduce{$\TangentialFibration$-structure of 
  $\Embedding$} if the following diagram commutes:
  \[
    \begin{tikzcd}
      &
      &
      \TangentialSpace{\Manifold}
        \ar[d,"\TangentialFibration"]
      \\
      \SurfaceGB{\Genus}{\BoundaryComponents}
        \ar[urr,"\TangentialStructure{\Embedding}"]
        \ar[rr,"\GrassmannianDifferential{\Embedding}" near end]
      &
      &
      \Grassmannian{2}{\TangentBundle{\Manifold}}
    \end{tikzcd}
  \]
  We will denote by 
  $
  \EmbeddingSpaceTangential
    {\TangentialFibration}
    {\SurfaceGB{\Genus}{\BoundaryComponents}}
    {\Manifold}
  $
  the \introduce{space of embeddings with 
  $\TangentialFibration$-structure}. This space is 
  topologized as a subspace of 
  $
    \MappingSpace
      {\SurfaceGB{\Genus}{\BoundaryComponents}}
      {\TangentialSpace{\Manifold}}
    \times
    \EmbeddingSpace
      {\SurfaceGB{\Genus}{\BoundaryComponents}}
      {\Manifold}
  $, where the first factor is equipped with the compact-open topology and the 
  second factor with the $C^\infty$-topology. Even though elements of 
  $\EmbeddingSpaceTangential
    {\TangentialFibration}
    {\SurfaceGB{\Genus}{\BoundaryComponents}}
    {\Manifold}
  $
  consist of pairs of maps, we will usually say that $\Embedding$ is an 
  embedding with $\TangentialFibration$-structure. In 
  this case we will denote the underlying 
  $\TangentialFibration$-structure by 
  $\TangentialStructure{\Embedding}$.
  We will say that two embeddings with 
  $\TangentialFibration$-structure 
  $\Embedding_{1},\Embedding_{2}$ have \introduce{the 
  same 
  $\TangentialFibration$-jet along 
  $\BoundaryManifold{\Manifold}$} if they have the same jet along 
  $\BoundaryManifold{\Manifold}$ and 
  $
    \at
      {\TangentialStructure{\Embedding_{1}}}
      {\BoundaryManifold{\SurfaceGB{\Genus}{\BoundaryComponents}}}
    =
    \at
      {\TangentialStructure{\Embedding_{2}}}
      {\BoundaryManifold{\SurfaceGB{\Genus}{\BoundaryComponents}}}
  $.

  For a diffeomorphism 
  $
    \phi
    \in 
    \DiffeomorphismGroup{\SurfaceGB{\Genus}{\BoundaryComponents}}
  $%
, we have
  $
    \apply
      {\GrassmannianDifferential{\Embedding}}
      {\Point}
    =
    \apply
      {\GrassmannianDifferential{\Embedding\circ\phi^{-1}}}
      {\apply{\phi}{\Point}}
  $.
  Hence the group
  $\DiffeomorphismGroup{\SurfaceGB{\Genus}{\BoundaryComponents}}$ 
  acts on 
  $
    \EmbeddingSpaceTangential
      {\TangentialFibration}
      {\SurfaceGB{\Genus}{\BoundaryComponents}}
      {\Manifold}
  $ 
  via precomposition, this action preserves the 
  equivalence relation 
  of having the same $\TangentialFibration$-jet.
  If we restrict the embeddings to have some fixed 
  $\TangentialFibration$-jet, then we 
  denote the corresponding space of embeddings with 
  $\TangentialFibration$-structure by 
  $\BoundaryConditionTangential{\BoundaryCondition}$
  to obtain 
  $\EmbeddingSpaceBoundaryConditionTangential
    {\TangentialFibration}
    {\SurfaceGB{\Genus}{\BoundaryComponents}}
    {\Manifold}
    {\BoundaryConditionTangential{\BoundaryCondition}}
  $
  and we denote the quotient of this subspace by the action of
  $
    \DiffeomorphismGroup
      {\SurfaceGB{\Genus}{\BoundaryComponents}}
  $
  by
  $
    \SpaceSubsurfaceBoundaryTangential
      {\TangentialFibration}
      {\Genus}
      {\BoundaryComponents}
      {\Manifold}
      {\BoundaryConditionTangential{\BoundaryCondition}}
   $%
  . Similarly, we obtain 
  $
    \BoundaryConditionSpaceTangential
      {\TangentialFibration}
      {\SurfaceGB{\Genus}{\BoundaryComponents}}
      {\Manifold}
  $
  and
  $
    \SpaceAllSubsurfaceBoundaryTangential
      {\TangentialFibration}
      {\Manifold}
      {\BoundaryConditionTangential{\BoundaryCondition}}
  $%
  . We will call the second space the \introduce{space of 
  subsurfaces with $\TangentialFibration$-structure} 
  and an 
  element in it 
  will be 
  called a \introduce{subsurface with 
  $\TangentialFibration$-structure of 
  $\Manifold$}.  If 
  some $\TangentialFibration$-jet 
  $\BoundaryConditionTangential{\BoundaryCondition}$ of 
  an embedding with $\TangentialFibration$-structure is 
  given, we will 
  sometimes 
  write $\BoundaryCondition$ for the image of the 
  underlying 
  boundary condition without 
  $\TangentialFibration$-structure.
\end{definition}
If $\TangentialFibration$ is the identity map, we will write $+$ instead of 
$\Identity$.
Let us recall parts of an example of the introduction:
\begin{example}
\label{exm:NormalSection}
  Let $\OrthogonalComplement{\TautologicalBundle}$ denote the complement of the 
  tautological $2$-plane bundle on 
  $\Grassmannian{2}{\TangentBundle{\Manifold}}$ i.e. 
  $
    \Projection
      {\Grassmannian{2}{\TangentBundle{\Manifold}}}
    ^*
    \TangentBundle{\Manifold}
    /
    \TautologicalBundle
  $%
  , where 
  $
    \Projection{\Grassmannian{2}{\TangentBundle{\Manifold}}}
    \colon 
    \Grassmannian{2}{\TangentBundle{\Manifold}}
    \to
    \Manifold
  $
  denotes the projection and $\TautologicalBundle$ denotes the tautological 
  bundle which includes into 
  $
    \Projection
      {\Grassmannian{2}{\TangentBundle{\Manifold}}}
    ^*
    \TangentBundle{\Manifold}
  $%
  . Let 
  $
    \SphereBundle{\OrthogonalComplement{\TautologicalBundle}}
    \coloneqq
    \OrthogonalComplement{\TautologicalBundle}
    \setminus
    0
  $
  denote the complement of the image of the zero section of this bundle.
  Then 
  $
    \TangentialFibration 
    \colon 
    \SphereBundle{\OrthogonalComplement{\TautologicalBundle}}
    \to 
    \Grassmannian{2}{\TangentBundle{\Manifold}}
  $
  is a Hurewicz-fibration.
  For an embedding 
  $
    \Embedding
    \colon
    \SurfaceGB{\Genus}{\BoundaryComponents}
    \to 
    \Manifold
  $
  the space 
  $
    \TangentialSpace
      {\apply
        {\Embedding}
        {\SurfaceGB{\Genus}{\BoundaryComponents}}
      }
  $
  is the complement of the zero-section of the normal bundle and a 
  $\TangentialFibration$-structure corresponds to a continuous 
  section of this bundle.
  In this case 
  $
    \EmbeddingSpaceTangential
      {\TangentialFibration}
      {\SurfaceGB{\Genus}{\BoundaryComponents}}
      {\Manifold}
  $
  is the space of embeddings together with a continuous section of the 
  normal bundle.
\end{example}
\begin{remark}
  In some situations it makes sense to replace 
  $
    \MappingSpace
      {\SurfaceGB{\Genus}{\BoundaryComponents}}
      {\TangentialSpace{\Manifold}}
  $
  by
  $
    \SmoothMappingSpace
      {\SurfaceGB{\Genus}{\BoundaryComponents}}
      {\TangentialSpace{\Manifold}}
  $
  in Definition \ref{dfn:TangentialStructure}. Of course this requires some 
  kind of smooth structure on $\TangentialSpace{\Manifold}$. In Example 
  \ref{exm:NormalSection}
  this would make the sections of the normal bundle smooth sections. One also 
  has to alter the definition of having the 
  same $\TangentialFibration$-jet to include the jet of the 
  $\TangentialFibration$-structures.
  Everything proven in this paper works in that case as well.
\end{remark}

\begin{remark}
If we have a subsurface with a tangential structure $\Subsurface$ 
and some isotopy 
$
  \left(\left[0,1\right],0\right)
  \to 
  \left(
    \DiffeomorphismGroupBoundary{\Manifold},\Identity
  \right)
$%
, we can move $\Subsurface$ along $\Manifold$ using this isotopy 
to obtain $\Subsurface_{t}$.
Since $\TangentialFibration$ is a Hurewicz-fibration we can also equip 
$\Subsurface_{t}$ with a tangential structure and if we say nothing about 
tangential structures in this context, we will assume implicitly that a lift of 
the aforementioned path in 
$
  \SpaceSubsurfaceBoundary
    {\Genus}
    {\BoundaryComponents}
    {\Manifold}
    {\BoundaryCondition}
$
is chosen. 
Note that the tangential structure of $\Subsurface_{1}$ is not unique i.e. 
there might be many different tangential structures for $\Subsurface_{t}$.
\end{remark}

Lastly, using a collar $\Elongation{\Manifold}{k}$ and 
$\Manifold$ are diffeomorphic and since collars are unique up to isotopy, the 
associated diffeomorphism is also unique up to isotopy. 
This enables us to equip $\Elongation{\Manifold}{k}$ with a unique space of 
$\TangentialFibration$-structures and we will always assume that 
$\Elongation{\Manifold}{k}$ has this space of 
$\TangentialFibration$-structures.
A space of $\TangentialFibration$-structures 
on a manifold $\Manifold$ induces a space of $\TangentialFibration$-structures 
on every submanifold and we will always assume that submanifolds are equipped 
with the induced structure.
In particular $\BoundaryManifold{\Manifold}\times\left[0,k\right]$ is equipped 
with a space of $\TangentialFibration$-structures.

\subsection{Retractile Spaces and Fibrations}
The following arguments will be used frequently 
throughout this paper to prove that certain maps are locally 
trivial fibrations and the ideas date back to \cite{cerf}.
\begin{definition}
  Let $\Group$ denote a topological group and $\TopologicalSpace$ a 
  $\Group$-space.
  We say that $\TopologicalSpace$ is \introduce{$\Group$-locally retractile} if 
  every point $\Point\in \TopologicalSpace$ possesses an 
  open neighborhood $\Neighbourhood{\Point}$ and a continuous map
  $
    \LocalRetraction
    \colon 
    \left(\Neighbourhood{\Point},\Point\right)
    \to 
    \left(\Group,\Identity\right)
  $%
  , called the 
  \introduce{$\Group$-local retraction around $\Point$}
  , such that 
  $
    \apply{\LocalRetraction}{\Point'}
    \cdot 
    \Point
    =
    \Point'
  $
  for all $\Point\in \Neighbourhood{\Point}$. Note that this implies 
  that $\LocalRetraction$ is a homeomorphism onto its image.
\end{definition}

The following lemma will be used several times 
throughout this paper.
\begin{lemma}
  \label{lem:LocallyRetractile}
  If $\TopologicalSpace$ is $\Group$-locally retractile and 
  $
    \ContinuousMap
    \colon
    \TopologicalSpace'
    \to
    \TopologicalSpace
  $
  denotes a $\Group$-equivariant map, then $\ContinuousMap$ is a locally 
  trivial fibration.
\end{lemma}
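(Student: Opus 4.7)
The plan is to exhibit an explicit local trivialization of $g$ over a neighborhood of an arbitrary point $\Point \in \TopologicalSpace$. Concretely, by hypothesis there is an open neighborhood $\Neighbourhood{\Point}$ together with a continuous local retraction
$\LocalRetraction \colon (\Neighbourhood{\Point},\Point) \to (\Group,\Identity)$
satisfying $\apply{\LocalRetraction}{\Point'}\cdot \Point = \Point'$. The plan is to use $\LocalRetraction$ to transport the fiber $\apply{\ContinuousMap^{-1}}{\Point}$ across $\Neighbourhood{\Point}$.

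First I would define the candidate trivialization
\[
  \Phi \colon \Neighbourhood{\Point} \times \apply{\ContinuousMap^{-1}}{\Point} \to \apply{\ContinuousMap^{-1}}{\Neighbourhood{\Point}},\qquad (\Point',y)\mapsto \apply{\LocalRetraction}{\Point'}\cdot y.
\]
By $\Group$-equivariance of $\ContinuousMap$ one has $\apply{\ContinuousMap}{\apply{\LocalRetraction}{\Point'}\cdot y}=\apply{\LocalRetraction}{\Point'}\cdot\apply{\ContinuousMap}{y}=\apply{\LocalRetraction}{\Point'}\cdot \Point = \Point'$, so $\Phi$ indeed lands in $\apply{\ContinuousMap^{-1}}{\Neighbourhood{\Point}}$ and commutes with the projections to $\Neighbourhood{\Point}$. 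The next step would be to write down the inverse
\[
  \Psi \colon \apply{\ContinuousMap^{-1}}{\Neighbourhood{\Point}} \to \Neighbourhood{\Point} \times \apply{\ContinuousMap^{-1}}{\Point},\qquad y'\mapsto \bigl(\apply{\ContinuousMap}{y'},\apply{\LocalRetraction}{\apply{\ContinuousMap}{y'}}^{-1}\cdot y'\bigr),
\]
which again by equivariance sends $y'$ to an element whose image under $\ContinuousMap$ is $\Point$, hence genuinely lies in the target. The compositions $\Phi\circ\Psi$ and $\Psi\circ\Phi$ are straightforward to check, using only the defining property $\apply{\LocalRetraction}{\Point'}\cdot\Point=\Point'$ and the fact that $\Group$ acts on $\TopologicalSpace'$.

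The main obstacle, such as it is, is continuity: one needs that the action $\Group\times\TopologicalSpace'\to\TopologicalSpace'$ is continuous and that inversion in $\Group$ is continuous, both of which are built into the definition of a topological group acting on a space, so the verification is routine once $\Phi$ and $\Psi$ are written down. Since $\Point$ was arbitrary and the $\Neighbourhood{\Point}$ cover $\TopologicalSpace$, this yields local triviality of $\ContinuousMap$ everywhere, proving the lemma.
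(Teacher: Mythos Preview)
Your proof is correct and follows essentially the same approach as the paper: both use the local retraction $\LocalRetraction$ to define the trivialization $(\Point',y)\mapsto \apply{\LocalRetraction}{\Point'}\cdot y$, with the paper simply asserting that this ``gives the desired local trivialisation as is easily checked'' while you spell out the inverse and the verifications.
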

\begin{proof}
  For $\Point\in \TopologicalSpace$ choose an $\Neighbourhood{\Point}$ and a 
  $\Group$-local retraction
  $
    \LocalRetraction
    \colon
    \Neighbourhood{\Point}
    \to
    \Group
  $
  around $\Point$.
  Then
  \begin{align*}
    \apply{\ContinuousMap^{-1}}{\{\Point\}}
    \times
    \Neighbourhood{\Point}
    &
    \to 
    \apply{\ContinuousMap^{-1}}{\Neighbourhood{\Point}}
    \\
    \left(\Point_{\TopologicalSpace'},\Point_{\TopologicalSpace}\right)
    &
    \mapsto 
    \apply{\LocalRetraction}{\Point_{\TopologicalSpace}}
    \cdot
    \Point_{\TopologicalSpace'}
  \end{align*}
gives the desired local trivialisation as is easily checked.
\end{proof}

As before let $\DiffeomorphismGroupBoundary{\Manifold}$ 
denote the group of diffeomorphisms of $\Manifold$ 
that agree with the identity on a neighbourhood of 
$\BoundaryManifold{\Manifold}$. 
The following proposition can be found in 
\cite{cerf} (Chapter~2.2, Theorem~5).
\begin{proposition}
\label{prp:EmbeddingLocallyRetractile}
  Let $\BoundaryConditionPar$ denote the jet of an embedding from 
  $\Submanifold$ into $\Manifold$, then
  $
    \EmbeddingSpaceBoundaryCondition
      {\Submanifold}
      {\Manifold}
      {\BoundaryConditionPar}
  $
  is $\DiffeomorphismGroupBoundary{\Manifold}$-locally retractile.
\end{proposition}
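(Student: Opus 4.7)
The plan is to prove local retractility by explicitly producing, for any fixed embedding $\Embedding \in \EmbeddingSpaceBoundaryCondition{\Submanifold}{\Manifold}{\BoundaryConditionPar}$, a neighborhood $\Neighbourhood{\Embedding}$ and a continuous map $\LocalRetraction \colon \Neighbourhood{\Embedding} \to \DiffeomorphismGroupBoundary{\Manifold}$ with $\apply{\LocalRetraction}{\Embedding} = \Identity$ and $\apply{\LocalRetraction}{\Embedding'}\circ\Embedding = \Embedding'$. The idea is standard: nearby embeddings $\Embedding'$ differ from $\Embedding$ by a small normal displacement, and this displacement can be realized by a compactly supported ambient diffeomorphism.

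First, I would choose a tubular neighborhood of $\apply{\Embedding}{\Submanifold}$ in $\Manifold$, i.e.\ a diffeomorphism from an open neighborhood of the zero section of the normal bundle $\NormalBundle{\Submanifold}{\Manifold}$ onto an open subset $\TubularNeighbourhood{\Embedding}{} \subset \Manifold$, carrying the zero section to $\Embedding$. Near $\BoundaryManifold{\Manifold}$ the tubular neighborhood can be chosen to respect the fixed collar structure, so that the tubular coordinates are compatible with the boundary jet $\BoundaryConditionPar$.

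Next, by the openness of embeddings in the $C^\infty$-topology and the local character of being a graph, there is a neighborhood $\Neighbourhood{\Embedding}$ of $\Embedding$ in $\EmbeddingSpaceBoundaryCondition{\Submanifold}{\Manifold}{\BoundaryConditionPar}$ such that for every $\Embedding' \in \Neighbourhood{\Embedding}$, the image $\apply{\Embedding'}{\Submanifold}$ lies in $\TubularNeighbourhood{\Embedding}{}$ and, read off in the tubular coordinates, is the graph of a smooth section $s_{\Embedding'}$ of $\NormalBundle{\Submanifold}{\Manifold}$ depending $C^\infty$-continuously on $\Embedding'$. Because $\Embedding$ and $\Embedding'$ share the jet $\BoundaryConditionPar$, the embeddings agree on an open neighborhood of $\BoundaryManifold{\Submanifold}$; hence $s_{\Embedding'}$ vanishes identically on such a neighborhood (after possibly shrinking $\Neighbourhood{\Embedding}$ uniformly so that the same neighborhood works for all $\Embedding'$ nearby — this is the main technical point and can be arranged by compactness of $\Submanifold$ and the $C^\infty$-continuity of $s_{\Embedding'}$).

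Define $\apply{\LocalRetraction}{\Embedding'}$ to be the identity outside $\TubularNeighbourhood{\Embedding}{}$, and, inside $\TubularNeighbourhood{\Embedding}{}$, the fibrewise translation by $s_{\Embedding'}$ cut off by a radial bump function which equals $1$ on a smaller neighborhood of the zero section and vanishes near the boundary of $\TubularNeighbourhood{\Embedding}{}$. Since $s_{\Embedding'}$ is $C^1$-small for $\Embedding'$ close to $\Embedding$, after one more shrinking of $\Neighbourhood{\Embedding}$ this yields a diffeomorphism of $\Manifold$; it is orientation preserving because $\apply{\LocalRetraction}{\Embedding} = \Identity$ and $\Neighbourhood{\Embedding}$ is connected; and it lies in $\DiffeomorphismGroupBoundary{\Manifold}$ because $s_{\Embedding'}$ vanishes near $\BoundaryManifold{\Submanifold}$ and the bump function ensures support away from the rest of $\BoundaryManifold{\Manifold}$. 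By construction $\apply{\LocalRetraction}{\Embedding'}\circ\Embedding = \Embedding'$, and continuity of $\LocalRetraction$ in the $C^\infty$-topology follows from the continuous dependence of $s_{\Embedding'}$ on $\Embedding'$. The main obstacle in executing this plan is establishing the uniform vanishing neighborhood for $s_{\Embedding'}$ together with a careful verification that the cut-off construction produces a diffeomorphism rather than merely a smooth self-map, but both are standard consequences of the $C^\infty$-continuity and compactness, and this is precisely the content of Cerf's original argument in \cite{cerf}.
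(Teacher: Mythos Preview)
The paper does not give its own proof of this proposition; it simply cites \cite{cerf}, Chapter~2.2, Theorem~5. Your sketch captures the essential geometric idea of that argument, but the explicit construction you wrote down has a gap.

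Fibrewise translation by $s_{\Embedding'}$ sends $\Embedding(\Submanifold)$ to $\Embedding'(\Submanifold)$ only as \emph{subsets}, not as \emph{parametrized} embeddings: in tubular coordinates $\Embedding'(a)$ need not lie in the fibre over $\Embedding(a)$. Writing $\Embedding'(a) = (h(a), v(a))$ with $h$ a diffeomorphism of $\Submanifold$ close to the identity, your $\LocalRetraction(\Embedding')$ yields $\LocalRetraction(\Embedding')\circ \Embedding = \Embedding'\circ h^{-1}$ rather than $\LocalRetraction(\Embedding')\circ \Embedding = \Embedding'$. One must also absorb the tangential drift $h$, for instance by extending the full displacement $a\mapsto \Embedding'(a)$ to an ambient isotopy via the parametrized isotopy extension theorem, which is what Cerf's argument actually does. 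Since you ultimately defer to \cite{cerf} for the details, this is a correctable oversight in the sketch rather than a wrong approach.
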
 

The following proposition can be found in \cite{CRW16} 
with a proof that relies on a reference to \cite{B81} 
or \cite{M80}.
We will give a sketch of a proof of the proposition, 
because similar ideas will occur later on.
\begin{proposition}
\label{prp:SpaceSubsurfacesLocallyRetractile}
  The space 
  $
    \SpaceSubsurfaceBoundary
      {\Genus}
      {\BoundaryComponents}
      {\Manifold}
      {\BoundaryCondition}
  $
  is $\DiffeomorphismGroupBoundary{\Manifold}$-locally 
  retractile.
\end{proposition}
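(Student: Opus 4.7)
The plan is to promote the $\DiffeomorphismGroupBoundary{\Manifold}$-local retractility from Proposition~\ref{prp:EmbeddingLocallyRetractile} through the quotient by $\DiffeomorphismGroupBoundary{\SurfaceGB{\Genus}{\BoundaryComponents}}$. Since the $\DiffeomorphismGroupBoundary{\Manifold}$-action on the embedding space commutes with the precomposition action of $\DiffeomorphismGroupBoundary{\SurfaceGB{\Genus}{\BoundaryComponents}}$, it suffices to construct a continuous local slice of the quotient map around any chosen subsurface and transport the retraction upstairs along it.

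Fix $\Subsurface \in \SpaceSubsurfaceBoundary{\Genus}{\BoundaryComponents}{\Manifold}{\BoundaryCondition}$ with a chosen parametrisation $\Embedding_{0} \colon \SurfaceGB{\Genus}{\BoundaryComponents} \to \Manifold$. First I would pick a tubular neighbourhood $\tau \colon \NormalBundle{\Subsurface}{\Manifold} \to \Manifold$ of $\Subsurface$ that is product-like near the collar of $\BoundaryManifold{\Manifold}$ and compatible with $\BoundaryCondition$. A small smooth section $s$ of $\NormalBundle{\Subsurface}{\Manifold}$ vanishing to infinite order along $\BoundaryManifold{\Subsurface}$ then gives a nearby subsurface $\Subsurface_{s} \coloneqq \apply{\tau}{\apply{s}{\Subsurface}}$ sharing the jet $\BoundaryCondition$. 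Conversely, if $\Embedding$ is sufficiently $C^{\infty}$-close to $\Embedding_{0}$ with the same $\infty$-jet, the composition $\Embedding_{0}^{-1} \circ \pi \circ \tau^{-1} \circ \Embedding$ is a diffeomorphism $\phi \in \DiffeomorphismGroupBoundary{\SurfaceGB{\Genus}{\BoundaryComponents}}$, and one has $\Embedding \circ \phi^{-1} = \tau \circ s \circ \Embedding_{0}$ for a unique small $s$; thus $s \mapsto \Subsurface_{s}$ is a homeomorphism from a neighbourhood of the zero section onto an open neighbourhood $\Neighbourhood{\Subsurface}$ of $\Subsurface$ in $\SpaceSubsurfaceBoundary{\Genus}{\BoundaryComponents}{\Manifold}{\BoundaryCondition}$.

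Second, I would extend each such $s$ to an ambient diffeomorphism via a cut-off. Choose a smooth bump function $\rho$ on the total space of $\NormalBundle{\Subsurface}{\Manifold}$ that equals $1$ near the zero section, vanishes outside a strictly smaller disk subbundle, and is product-like near $\BoundaryManifold{\Manifold}$. For sufficiently small $s$, the map that on the tubular neighbourhood sends $\apply{\tau}{v}$ to $\apply{\tau}{v + \apply{\rho}{v} \cdot \apply{s}{\apply{\pi}{v}}}$ and extends by the identity outside is a diffeomorphism in $\DiffeomorphismGroupBoundary{\Manifold}$ carrying $\Subsurface$ to $\Subsurface_{s}$, depending continuously on $s$ and equal to $\Identity$ when $s = 0$. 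Declaring this diffeomorphism to be $\apply{\LocalRetraction}{\Subsurface_{s}}$ yields the required $\DiffeomorphismGroupBoundary{\Manifold}$-local retraction on $\Neighbourhood{\Subsurface}$.

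The main obstacle is the slice statement in the first step: verifying that every subsurface $C^{\infty}$-close to $\Subsurface$ in the quotient topology is represented by a unique small section $s$ vanishing to infinite order along $\BoundaryManifold{\Subsurface}$. Upstairs in the embedding space this is a standard consequence of the tubular neighbourhood theorem together with the inverse function theorem applied to $\Embedding_{0}^{-1} \circ \pi \circ \tau^{-1} \circ \Embedding$; the technical care is in choosing $\tau$ and $\rho$ product-like in the collar direction so that the boundary jet $\BoundaryCondition$ is preserved throughout the retraction and so that the resulting ambient diffeomorphism actually lies in $\DiffeomorphismGroupBoundary{\Manifold}$ rather than merely in $\Diff\!\left(\Manifold\right)$.
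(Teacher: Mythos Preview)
Your proposal is correct and follows the same underlying strategy as the paper: identify a neighbourhood of $\Subsurface$ with small sections of its normal bundle via a tubular neighbourhood, then produce ambient diffeomorphisms carrying $\Subsurface$ to each nearby $\Subsurface'$. The paper's proof is terser on the first step (it simply takes $\Neighbourhood{\Subsurface}$ to be those $\Subsurface'$ lying in the tubular neighbourhood on which the projection restricts to a diffeomorphism) and differs from yours in the second step: rather than writing down an explicit bump-function formula, it observes that pushing $\Subsurface'$ along the fibres onto $\Subsurface$ gives an isotopy of submanifolds and then invokes the parametrised isotopy extension theorem to obtain the ambient diffeomorphisms continuously in $\Subsurface'$. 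Your explicit construction has the advantage that continuity in $\Subsurface'$ is manifest from the formula, at the cost of having to check by hand that $v \mapsto v + \rho(v)\,s(\pi(v))$ is a diffeomorphism for $C^{1}$-small $s$; the paper's invocation of isotopy extension hides this in a black box but requires the parametric version to get continuous dependence. Both routes are standard and equally valid.
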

\begin{proof}
  Let $\Subsurface \subset \Manifold$ denote an element of 
  $
    \SpaceSubsurfaceBoundary
      {\Genus}
      {\BoundaryComponents}
      {\Manifold}
      {\BoundaryComponents}
  $
  and $\TubularNeighbourhood{\Subsurface}{\Manifold}$ a 
  tubular neighborhood of 
  $\Subsurface$ with corresponding projection
  $
    \Projection{\TubularNeighbourhood{\Subsurface}{\Manifold}}
    \colon
    \TubularNeighbourhood{\Subsurface}{\Manifold}
    \to
    \Subsurface
  $%
  . We define 
  \[
    \Neighbourhood{\Subsurface}
    =
    \left\{
      \Subsurface'\in 
      \SpaceSubsurfaceBoundary
        {\Genus}
        {\BoundaryComponents}
        {\Manifold}
        {\BoundaryCondition}
      \mid 
      \Subsurface'
      \subset
      \TubularNeighbourhood{\Subsurface}{\Manifold}
      \text{ and }
      \at
      {\Projection
        {\TubularNeighbourhood{\Subsurface}{\Manifold}}
      }
      {\Subsurface'}
      \text{ is a diffeomorphism from } 
      \Subsurface'
      \text{ to }
      \Subsurface
    \right\}
  \]
  which is an open subset of 
  $
    \SpaceSubsurfaceBoundary
      {\Genus}
      {\BoundaryComponents}
      {\Manifold}
      {\BoundaryCondition}
  $%
  , because the diffeomorphisms form an open subset of the maps from 
  $\Subsurface'$ to $\Subsurface$ (See Proposition~1.4.2 in Chapter~2 in 
  \cite{cerf}). 
  For $\Subsurface'$ in $\Neighbourhood{\Subsurface}$ we can 
  define 
  an isotopy of the embedded subsets by pushing 
  $\Subsurface'$ along 
  the fiber of 
  $
    \TubularNeighbourhood
      {\Subsurface}
      {\Manifold}
  $
  onto $\Subsurface$.
  Furthermore we can use the isotopy extension theorem (See 
  Theorem 6.1.1 in \cite{WallDifferentialTopology}), 
  to extend these isotopies continuously to isotopies of $\Manifold$.
  The resulting diffeomorphisms at time $1$ of these isotopies give us the 
  desired $\DiffeomorphismGroupBoundary{\Manifold}$-local retraction.
\end{proof} 

The last Proposition implies the following useful observation:
\begin{corollary}
  The two maps
  \begin{align*}
    \EmbeddingSpaceBoundaryCondition
      {\SurfaceGB{\Genus}{\BoundaryComponents}}
      {\Manifold}
      {\BoundaryConditionPar}
    &
    \to
    \SpaceSubsurfaceBoundary
      {\Genus}
      {\BoundaryComponents}
      {\Manifold}
      {\BoundaryCondition}
    \\
    \EmbeddingSpaceBoundaryConditionTangential
      {\TangentialFibration}
      {\SurfaceGB{\Genus}{\BoundaryComponents}}
      {\Manifold}
      {\BoundaryConditionTangential{\BoundaryConditionPar}}
    &
    \to
    \SpaceSubsurfaceBoundaryTangential
      {\TangentialFibration}
      {\Genus}
      {\BoundaryComponents}
      {\Manifold}
      {\BoundaryConditionTangential{\BoundaryCondition}}
  \end{align*}
  are locally trivial fibrations.
\end{corollary}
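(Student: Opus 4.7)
The plan is to deduce both statements from Lemma~\ref{lem:LocallyRetractile} combined with Proposition~\ref{prp:SpaceSubsurfacesLocallyRetractile}. Both maps arise as quotients by the free action of $\DiffeomorphismGroupBoundary{\SurfaceGB{\Genus}{\BoundaryComponents}}$ by precomposition, so establishing local triviality amounts to producing continuous local sections, and this is exactly what Lemma~\ref{lem:LocallyRetractile} produces from an equivariant action of a locally retractile group.

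For the first map, take $\Group = \DiffeomorphismGroupBoundary{\Manifold}$, which acts on both source and target by postcomposition; the action on the quotient $\SpaceSubsurfaceBoundary{\Genus}{\BoundaryComponents}{\Manifold}{\BoundaryCondition}$ is well-defined because postcomposition by a diffeomorphism of $\Manifold$ commutes with precomposition by a diffeomorphism of $\SurfaceGB{\Genus}{\BoundaryComponents}$. The quotient map is manifestly $\Group$-equivariant, and Proposition~\ref{prp:SpaceSubsurfacesLocallyRetractile} says precisely that the target is $\Group$-locally retractile. Lemma~\ref{lem:LocallyRetractile} then yields the conclusion.

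For the second map the same strategy applies, but one first needs to promote Proposition~\ref{prp:SpaceSubsurfacesLocallyRetractile} to the tangential setting, showing that $\SpaceSubsurfaceBoundaryTangential{\TangentialFibration}{\Genus}{\BoundaryComponents}{\Manifold}{\BoundaryConditionTangential{\BoundaryCondition}}$ is locally retractile for a suitable group acting compatibly on $\EmbeddingSpaceBoundaryConditionTangential{\TangentialFibration}{\SurfaceGB{\Genus}{\BoundaryComponents}}{\Manifold}{\BoundaryConditionTangential{\BoundaryConditionPar}}$. Given $(\Subsurface,\TangentialStructure{\Subsurface})$, the $\DiffeomorphismGroupBoundary{\Manifold}$-local retraction $\LocalRetraction$ around $\Subsurface$ from the non-tangential case produces, for each nearby $\Subsurface'$, a diffeomorphism $\apply{\LocalRetraction}{\Subsurface'}$ sending $\Subsurface$ to $\Subsurface'$, and a canonical isotopy from the identity obtained by composing $\LocalRetraction$ with a straight-line path in a coordinate neighbourhood. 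Since $\TangentialFibration\colon\TangentialSpace{\Manifold}\to\Grassmannian{2}{\TangentBundle{\Manifold}}$ is a Hurewicz fibration, one can lift the resulting family of Grassmannian-differential paths to transport $\TangentialStructure{\Subsurface}$ continuously to a tangential structure on $\Subsurface'$, yielding a local retraction (now valued in the path-space-enlarged group of pairs consisting of a boundary-fixing diffeomorphism and a compatible lift on $\TangentialSpace{\Manifold}$).

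The main obstacle, and the only place where real work is needed, is verifying that these lifted transports depend continuously on $\Subsurface'$ and on the tangential structure, i.e.\ that the path-lifting supplied by the Hurewicz fibration can be performed parametrically; here I would invoke the standard fact that the homotopy lifting property for Hurewicz fibrations holds with parameters, so the lifts assemble into a continuous map from a neighbourhood of $(\Subsurface,\TangentialStructure{\Subsurface})$ to the enlarged group. Once this refined version of Proposition~\ref{prp:SpaceSubsurfacesLocallyRetractile} is in hand, Lemma~\ref{lem:LocallyRetractile} applied to the $\Group$-equivariant quotient map delivers the second fibration exactly as in the first case.
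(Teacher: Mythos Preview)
Your argument for the first map is exactly the paper's: apply Lemma~\ref{lem:LocallyRetractile} with $\Group=\DiffeomorphismGroupBoundary{\Manifold}$ acting by postcomposition, using Proposition~\ref{prp:SpaceSubsurfacesLocallyRetractile} for local retractility of the target.

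For the second map the paper takes a much shorter route. Rather than enlarging the group and re-establishing local retractility in the tangential setting, one simply observes that
\[
\begin{tikzcd}
  \EmbeddingSpaceBoundaryConditionTangential
    {\TangentialFibration}
    {\SurfaceGB{\Genus}{\BoundaryComponents}}
    {\Manifold}
    {\BoundaryConditionTangential{\BoundaryConditionPar}}
    \ar[d]\ar[r]
  &
  \EmbeddingSpaceBoundaryCondition
    {\SurfaceGB{\Genus}{\BoundaryComponents}}
    {\Manifold}
    {\BoundaryConditionPar}
    \ar[d]
  \\
  \SpaceSubsurfaceBoundaryTangential
    {\TangentialFibration}
    {\Genus}
    {\BoundaryComponents}
    {\Manifold}
    {\BoundaryConditionTangential{\BoundaryCondition}}
    \ar[r]
  &
  \SpaceSubsurfaceBoundary
    {\Genus}
    {\BoundaryComponents}
    {\Manifold}
    {\BoundaryCondition}
\end{tikzcd}
\]
is a pullback square (because the $\DiffeomorphismGroupBoundary{\SurfaceGB{\Genus}{\BoundaryComponents}}$-action is free, a point of the pullback determines a unique representative embedding together with its tangential structure). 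Since the right-hand vertical map is a locally trivial fibration by the first part, so is its pullback.

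Your approach is not wrong, and indeed it is essentially the machinery the paper develops \emph{later}, in the proof of Lemma~\ref{lem:ForgettingTangentialFibration}, where the group $\HomeoTangential{\Manifold}{\TangentialSpace{\Manifold}}$ of homeomorphisms of $\TangentialSpace{\Manifold}$ covering diffeomorphisms of $\Manifold$ is introduced and local retractility is shown via parametric lifting through the Hurewicz fibration $\TangentialFibration$. That argument requires some care (local contractibility and paracompactness of $\DiffeomorphismGroupBoundary{\Manifold}$, shrinking the retraction neighbourhood so that the associated family of Grassmannian differentials is contractible, etc.). For the present corollary all of this is avoidable: the pullback observation reduces the tangential case immediately to the non-tangential one.
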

\begin{proof}
  The only thing that might need clarification is the second assertion, but to 
  prove this one only has to note that 
  \[
    \begin{tikzcd}
      \EmbeddingSpaceBoundaryConditionTangential
        {\TangentialFibration}
        {\SurfaceGB{\Genus}{\BoundaryComponents}}
        {\Manifold}
        {\BoundaryConditionTangential{\BoundaryConditionPar}}
        \ar[d]
        \ar[r]
      &
      \EmbeddingSpaceBoundaryCondition
        {\SurfaceGB{\Genus}{\BoundaryComponents}}
        {\Manifold}
        {\BoundaryConditionPar}
        \ar[d]
      \\
      \SpaceSubsurfaceBoundaryTangential
        {\TangentialFibration}
        {\Genus}
        {\BoundaryComponents}
        {\Manifold}
        {\BoundaryConditionTangential{\BoundaryCondition}}
        \ar[r]
      &
      \SpaceSubsurfaceBoundary
        {\Genus}
        {\BoundaryComponents}
        {\Manifold}
        {\BoundaryCondition}
    \end{tikzcd}
  \]
  is a pullback diagram.
\end{proof}

Moreover we have the following central lemma:
\begin{lemma}
\label{lem:ForgettingTangentialFibration}
  The forgetful map 
  $
    \Projection{\TangentialFibration}
    \colon 
    \SpaceSubsurfaceBoundaryTangential
      {\TangentialFibration}
      {\Genus}
      {\BoundaryComponents}
      {\Manifold}
      {\BoundaryConditionTangential{\BoundaryCondition}}
    \to 
    \SpaceSubsurfaceBoundary
      {\Genus}
      {\BoundaryComponents}
      {\Manifold}
      {\BoundaryCondition}
  $
  is a Hurewicz fibration.
  The fiber over a surface $\Subsurface$ is given by 
  $\SpaceOfSections{\TangentialSpace{\Subsurface}}$, the space of sections of 
  $\TangentialSpace{\Subsurface}$. 
  The same holds true for the forgetful map
  $
    \EmbeddingSpaceBoundaryConditionTangential
      {\TangentialFibration}
      {\SurfaceGB{\Genus}{\BoundaryComponents}}
      {\Manifold}
      {\BoundaryConditionTangential{\BoundaryConditionPar}}
    \to
    \EmbeddingSpaceBoundaryConditionTangential
      {+}
      {\SurfaceGB{\Genus}{\BoundaryComponents}}
      {\Manifold}
      {\BoundaryConditionTangential{\BoundaryConditionPar}}
  $
\end{lemma}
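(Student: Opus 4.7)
The plan is to establish both assertions of the lemma by first proving the embedding-level statement via a pullback argument, and then deducing the subsurface-level statement by descending along the free action of $\DiffeomorphismGroupBoundary{\SurfaceGB{\Genus}{\BoundaryComponents}}$, using the local retractility from Proposition~\ref{prp:SpaceSubsurfacesLocallyRetractile} to obtain local trivialisations that match the embedding-level fibration.

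For the embedding-level claim, I would first observe that by its very definition
$\EmbeddingSpaceBoundaryConditionTangential{\TangentialFibration}{\SurfaceGB{\Genus}{\BoundaryComponents}}{\Manifold}{\BoundaryConditionTangential{\BoundaryConditionPar}}$
is realised as a pullback: on the level of unrestricted spaces it is the pullback of the post-composition map
\[
  \TangentialFibration_{\ast}
  \colon
  \MappingSpace{\SurfaceGB{\Genus}{\BoundaryComponents}}{\TangentialSpace{\Manifold}}
  \to
  \MappingSpace{\SurfaceGB{\Genus}{\BoundaryComponents}}{\Grassmannian{2}{\TangentBundle{\Manifold}}}
\]
along the Grassmannian differential $\Embedding \mapsto \GrassmannianDifferential{\Embedding}$, and imposing the prescribed boundary jet $\BoundaryConditionTangential{\BoundaryConditionPar}$ corresponds to restricting both sides compatibly to the subspaces of mapping spaces with fixed boundary values. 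Because $\SurfaceGB{\Genus}{\BoundaryComponents}$ is compact Hausdorff and $\TangentialFibration$ is a Hurewicz fibration, the exponential law for compact-open mapping spaces shows that $\TangentialFibration_{\ast}$ is itself a Hurewicz fibration; the same persists after restricting to the fixed boundary jet. Pullbacks of Hurewicz fibrations are Hurewicz, which yields the second assertion.

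To descend to the subsurface spaces, I would combine this with Proposition~\ref{prp:SpaceSubsurfacesLocallyRetractile}: the base $\SpaceSubsurfaceBoundary{\Genus}{\BoundaryComponents}{\Manifold}{\BoundaryCondition}$ is $\DiffeomorphismGroupBoundary{\Manifold}$-locally retractile, so Lemma~\ref{lem:LocallyRetractile} applied to the $\DiffeomorphismGroupBoundary{\Manifold}$-equivariant quotient map $\EmbeddingSpaceBoundaryCondition{\SurfaceGB{\Genus}{\BoundaryComponents}}{\Manifold}{\BoundaryConditionPar} \to \SpaceSubsurfaceBoundary{\Genus}{\BoundaryComponents}{\Manifold}{\BoundaryCondition}$ exhibits the latter as a locally trivial fibration; in particular it admits local sections $s\colon U \to \EmbeddingSpaceBoundaryCondition{\SurfaceGB{\Genus}{\BoundaryComponents}}{\Manifold}{\BoundaryConditionPar}$ around every point. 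Every element of the preimage of $U$ in $\SpaceSubsurfaceBoundaryTangential{\TangentialFibration}{\Genus}{\BoundaryComponents}{\Manifold}{\BoundaryConditionTangential{\BoundaryCondition}}$ admits a unique representative whose underlying embedding lies in the image of $s$, so the restriction $\Projection{\TangentialFibration}^{-1}(U) \to U$ is canonically homeomorphic over $U$ to the pullback of the embedding-level Hurewicz fibration along $s$. Thus $\Projection{\TangentialFibration}$ is a local Hurewicz fibration over a paracompact Hausdorff base, and by Dold's theorem on numerable covers this promotes to a global Hurewicz fibration. The fibre identification is then immediate: over a fixed $\Subsurface$, an element of the fibre is a lift of the Grassmannian differential of the inclusion $\Subsurface \hookrightarrow \Manifold$ through $\TangentialFibration$ with prescribed boundary values, and by the defining pullback of $\TangentialSpace{\Subsurface}$ this is exactly an element of $\SpaceOfSections{\TangentialSpace{\Subsurface}}$ with prescribed boundary values.

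The hard part will be the descent step. Because $\TangentialFibration$ is not assumed to carry any action of $\DiffeomorphismGroupBoundary{\Manifold}$, there is no canonical way to push a tangential structure on one subsurface to one on a nearby subsurface by acting with a local retraction, so a direct trivialisation of $\Projection{\TangentialFibration}$ obtained merely from the local retractions of Proposition~\ref{prp:SpaceSubsurfacesLocallyRetractile} is unavailable. The workaround is to avoid transporting tangential structures along ambient isotopies altogether: the local section $s$ parametrises each nearby subsurface canonically, reducing tangential structures on subsurfaces to tangential structures on embeddings, to which the embedding-level Hurewicz fibration can be pulled back directly.
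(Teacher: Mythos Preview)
Your approach is correct and takes a genuinely different route from the paper's. The paper does not first establish the embedding-level statement and then descend; instead it introduces an auxiliary group $\HomeoTangential{\Manifold}{\TangentialSpace{\Manifold}}$ of homeomorphisms of $\TangentialSpace{\Manifold}$ covering elements of $\DiffeomorphismGroupBoundary{\Manifold}$, shows that the $\DiffeomorphismGroupBoundary{\Manifold}$-local retraction on $\SpaceSubsurfaceBoundary{\Genus}{\BoundaryComponents}{\Manifold}{\BoundaryCondition}$ lifts to a $\HomeoTangential{\Manifold}{\TangentialSpace{\Manifold}}$-local retraction (using that $\DiffeomorphismGroupBoundary{\Manifold}$ is locally contractible, so that after shrinking $\Neighbourhood{\Subsurface}$ the pullback of $\TangentialFibration$ along the evaluation of the retraction trivialises), and then applies Lemma~\ref{lem:LocallyRetractile} directly to $\Projection{\TangentialFibration}$. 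Your route is more elementary in that it avoids this enlarged group and the lifting construction, relying instead on the standard fact that post-composition with a Hurewicz fibration is Hurewicz together with Dold's theorem; the paper's route has the advantage of producing local triviality directly and of being reusable verbatim (the paper invokes it again for Lemma~\ref{lem:ForgettingTangentialFibrationPointed}). One step in your descent deserves a word more: the passage from ``unique representative in the image of $s$'' to ``canonically homeomorphic to the pullback along $s$'' needs continuity of $\Embedding\mapsto\phi$ with $\Embedding=s([\Embedding])\circ\phi$, not just the bijection; this holds because the explicit trivialisation in the proof of Lemma~\ref{lem:LocallyRetractile} is $(\Embedding_{0},\Subsurface')\mapsto\apply{\LocalRetraction}{\Subsurface'}\circ\Embedding_{0}$, so the fibre coordinate in $\DiffeomorphismGroupBoundary{\SurfaceGB{\Genus}{\BoundaryComponents}}$ is recovered continuously.
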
 
\begin{proof}
  By Theorem~42.3 and the corollary in 27.4 in \cite{CSGA} (together with 
  the fact that the quotient of a nuclear spaces by a closed subspaces
  is a nuclear space) we conclude that 
  $
  \SpaceSubsurfaceBoundary
    {\Genus}
    {\BoundaryComponents}
    {\Manifold}
    {\BoundaryCondition}
  $
  is paracompact and Hausdorff, hence we 
  conclude that a locally trivial fiber bundle over this space is actually a 
  Hurewicz fibration by Theorem~13 in Section~2.7 of \cite{spanier}. Hence it 
  suffices to prove that $\Projection{\TangentialFibration}$ is a locally 
  trivial fiber bundle. In order to do this we want to apply 
  Lemma~\ref{lem:LocallyRetractile}.
  
  Let $\HomeoTangential{\Manifold}{\TangentialSpace{\Manifold}}$ denote the 
  group of homeomorphisms of $\TangentialSpace{\Manifold}$ 
  which cover a diffeomorphism of 
  $
    \Grassmannian{2}{\TangentBundle{\Manifold}}
  $
  induced from a diffeomorphism of $\Manifold$ that fixes a neighbourhood of 
  the boundary i.e. the following 
  diagram commutes and the lower map is a diffeomorphism 
  \[
    \begin{tikzcd}
      \TangentialSpace{\Manifold}
        \ar[d]
        \ar[r]
      &
      \TangentialSpace{\Manifold}
        \ar[d]
      \\
      \Grassmannian{2}{\TangentBundle{\Manifold}}
        \ar[d]
        \ar[r,"\GrassmannianDifferential{\SmoothMap}"]
      &
      \Grassmannian{2}{\TangentBundle{\Manifold}}
        \ar[d]
      \\
      \Manifold
        \ar[r,"\SmoothMap"]
      &
      \Manifold
    \end{tikzcd}
  \]
  This group acts on 
  $
    \SpaceSubsurfaceBoundaryTangential
      {\TangentialFibration}
      {\Genus}
      {\BoundaryComponents}
      {\Manifold}
      {\BoundaryConditionTangential{\BoundaryCondition}}
  $
  and
  $
    \SpaceSubsurfaceBoundary
      {\Genus}
      {\BoundaryComponents}
      {\Manifold}
      {\BoundaryCondition}
  $
  by post-composition. 
  Proposition~\ref{prp:SpaceSubsurfacesLocallyRetractile} states that 
  $
    \SpaceSubsurfaceBoundary
      {\Genus}
      {\BoundaryComponents}
      {\Manifold}
      {\BoundaryCondition}
  $ 
  is $\DiffeomorphismGroupBoundary{\Manifold}$-locally 
  retractile. We will show that it is also 
  $\HomeoTangential{\Manifold}{\TangentialSpace{\Manifold}}$-locally 
  retractile. 
  For a 
  $
    \Subsurface
    \in 
    \SpaceSubsurfaceBoundary
      {\Genus}
      {\BoundaryComponents}
      {\Manifold}
      {\BoundaryCondition}
  $%
  , let
  $
    \LocalRetraction
    \colon 
    \Neighbourhood{\Subsurface}
    \to
    \DiffeomorphismGroupBoundary{\Manifold}
  $ 
  denote a 
  $\DiffeomorphismGroupBoundary{\Manifold}$-local 
  retraction around $\Subsurface$.
  Note that $\DiffeomorphismGroupBoundary{\Manifold}$ is locally contractible 
  and it is paracompact (For paracompactness see Lemma 41.11 of \cite{CSGA} to 
  conclude that $\DiffeomorphismGroupBoundary{\Manifold}$ with the $C^\infty$ 
  topology is metrizable). Hence there is a contractible, closed 
  neighbourhood of the identity in $\DiffeomorphismGroupBoundary{\Manifold}$. 
  By shrinking $\Neighbourhood{\Subsurface}$ we can assume that 
  $\apply{\LocalRetraction}{\Neighbourhood{\Subsurface}}$ maps to such a 
  contractible neighbourhood of the identity.
  
  We get a map 
  $
    \GrassmannianDifferential{\mathrm{ev}}
    \colon
    \Grassmannian{2}{\TangentBundle{\Manifold}}
    \times 
    \Neighbourhood{\Subsurface}
    \to
    \Grassmannian{2}{\TangentBundle{\Manifold}}
  $
  which maps $(\TwoPlane,\Subsurface)$ to 
  $
    \apply
      {\GrassmannianDifferential
        {\apply{\LocalRetraction}{\Subsurface}}
      }
      {\TwoPlane}
  $
  and we can use the contractibility of $\Neighbourhood{\Subsurface}$ together 
  with its paracompactness and the compactness of $\Manifold$ to conclude that 
  the pullback
  $
    \GrassmannianDifferential{\mathrm{ev}}^*
    \TangentialSpace{\Manifold}
  $ 
  is isomorphic to the pullback
  $
    \Projection{\Neighbourhood{\Subsurface}}^* 
    \TangentialSpace{\Manifold}
  $%
  , where 
  $
    \Projection{\Neighbourhood{\Subsurface}}
    \colon 
    \Grassmannian{2}{\TangentBundle{\Manifold}}
    \times 
    \Neighbourhood{\Subsurface}
    \to 
    \Grassmannian{2}{\TangentBundle{\Manifold}}
  $
  denotes the projection.
  Using this isomorphism, one can construct a lift of $\LocalRetraction$ 
  to conclude that 
  $
    \SpaceSubsurfaceBoundary
      {\Genus}
      {\BoundaryComponents}
      {\Manifold}
      {\BoundaryCondition}
  $
  is 
  $
    \HomeoTangential{\Manifold}{\TangentialSpace{\Manifold}}
  $%
  -locally retractile.
  
  By Lemma~\ref{lem:LocallyRetractile} we conclude that the forgetful map 
  $\Projection{\TangentialFibration}$ is a locally trivial fiber bundle.
  
  The specification of the fibre is evident. The proof for the second map is 
  completely analogous.
\end{proof}
We will also need the following standard lemma from algebraic topology, whose 
proof is omitted:
\begin{lemma}
  \label{lem:FibrationComposition}
  Suppose we have the following diagram of topological spaces
  \[
    \begin{tikzcd}
      \TopologicalSpace_{1}
        \ar[r,"\ContinuousMap_{1}"]
      &
      \TopologicalSpace_{2}
        \ar[r,"\ContinuousMap_{2}"]
      &
      \TopologicalSpace_{3}
    \end{tikzcd}
  \]
  where $\ContinuousMap_{1}$ is surjective. If $\ContinuousMap_{1}$ and 
  $\ContinuousMap_{2}\circ\ContinuousMap_{1}$ are fibrations, then 
  $\ContinuousMap_{2}$ is a fibration as well.
\end{lemma}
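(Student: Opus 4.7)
To verify that $\ContinuousMap_{2}$ is a fibration, I check the homotopy lifting property. Given a space $Y$, a map $h_{0} \colon Y \to \TopologicalSpace_{2}$, and a homotopy $H \colon Y \times [0,1] \to \TopologicalSpace_{3}$ with $\ContinuousMap_{2} \circ h_{0} = H(-, 0)$, my plan is to first lift $h_{0}$ to a map $s \colon Y \to \TopologicalSpace_{1}$ with $\ContinuousMap_{1} \circ s = h_{0}$, then lift the homotopy $H$ to $\TopologicalSpace_{1}$ using the fibration $\ContinuousMap_{2} \circ \ContinuousMap_{1}$, and finally push the resulting homotopy down to $\TopologicalSpace_{2}$ via $\ContinuousMap_{1}$.

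For the lift of $h_{0}$, it suffices (for the Serre fibration property) to reduce to the case that $Y$ is contractible, for instance a cube. The pullback $h_{0}^{*}\TopologicalSpace_{1} \to Y$ is then a fibration over a contractible base, whose fibers are of the form $\ContinuousMap_{1}^{-1}(h_{0}(y))$ and hence non-empty by surjectivity of $\ContinuousMap_{1}$. Picking a single point in one fiber and lifting a contracting homotopy of $Y$ through the pullback fibration produces a section, which after composition with the projection to $\TopologicalSpace_{1}$ yields the desired $s$.

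For the second step, the homotopy lifting property of $\ContinuousMap_{2} \circ \ContinuousMap_{1}$ is applied to the map $s$ and the homotopy $H$, noting that $\ContinuousMap_{2} \circ \ContinuousMap_{1} \circ s = \ContinuousMap_{2} \circ h_{0} = H(-, 0)$. This yields $\hat H \colon Y \times [0,1] \to \TopologicalSpace_{1}$ with $\hat H(-, 0) = s$ and $\ContinuousMap_{2} \circ \ContinuousMap_{1} \circ \hat H = H$. Setting $\tilde H \coloneqq \ContinuousMap_{1} \circ \hat H$ gives the desired lift, since $\tilde H(-, 0) = \ContinuousMap_{1} \circ s = h_{0}$ and $\ContinuousMap_{2} \circ \tilde H = \ContinuousMap_{2} \circ \ContinuousMap_{1} \circ \hat H = H$.

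The main obstacle is the construction of the partial section $s$: surjectivity of $\ContinuousMap_{1}$ only gives set-theoretic preimages, not a continuous lift. For test spaces that are contractible, the contracting homotopy argument sketched above works cleanly; for the full Hurewicz property over arbitrary test spaces, one upgrades this to a local construction, assembling local sections over a numerable cover via a Dold-type argument, which is standard and I omit the details.
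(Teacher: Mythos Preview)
The paper omits the proof entirely, calling this a ``standard lemma from algebraic topology,'' so there is no argument to compare against. Your approach is the standard one, and for Serre fibrations it is complete and correct: when $Y$ is a cube, the pullback $h_0^*\TopologicalSpace_1 \to Y$ is a fibration over a contractible base with non-empty fibers, hence admits a section exactly by the contracting-homotopy trick you describe; the rest of the argument (lifting $H$ through $\ContinuousMap_2\circ\ContinuousMap_1$ and pushing down via $\ContinuousMap_1$) is routine. Since the only place the paper invokes this lemma is in the proof of Lemma~\ref{lem:HomotopyFiberArcResolution}, where the conclusion is explicitly that a certain map is a \emph{Serre} fibration, your argument already covers what is needed.

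One small caution: your final paragraph, upgrading to the Hurewicz case ``via a Dold-type argument,'' is not convincing as stated. Dold's theorem asserts that a map which is a Hurewicz fibration over each set of a numerable open cover of the \emph{base} is a global Hurewicz fibration; it says nothing about patching local sections over a cover of the test space $Y$. The obstruction you correctly identify --- that surjectivity of $\ContinuousMap_1$ does not yield a continuous lift of $h_0$ for general $Y$ (think of $\ContinuousMap_1$ the universal cover $\mathbb{R}\to S^1$ and $h_0=\mathrm{id}_{S^1}$) --- is genuine, and the Hurewicz case requires a different maneuver than the one you sketch. For the purposes of this paper the Serre statement suffices, so I would either restrict the claim to Serre fibrations or supply an actual Hurewicz argument rather than the Dold handwave.
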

\subsection{Tubular neighborhoods and Thickened Embeddings}
If $\Submanifold\subset \Manifold$ denotes a 
submanifold, we will write 
$\NormalBundle{\Submanifold}{\Manifold}$ for the 
normal bundle of $\Subsurface$ i.e.
$
  \at
    {\TangentBundle{\Manifold}}
    {\Submanifold}
  /
  \TangentBundle{\Submanifold}
$%
. A tubular neighbourhood of $\Submanifold$ is an 
embedding 
$
  \TubularNeighbourhoodMap{\Submanifold}{\Manifold}
  \colon 
  \NormalBundle{\Submanifold}{\Manifold}
  \to
  \Manifold
$
such that 
$
  \at
    {\TubularNeighbourhoodMap{\Submanifold}{\Manifold}}
    {\Submanifold}
$
is the identity and the composition 
  \[
    \begin{tikzcd}
      \TangentBundle{\Submanifold}
      \oplus 
      \NormalBundle{\Submanifold}{\Manifold}
        \ar[r,"\cong"]
      &
      \at
        {\TangentBundle
          {(\NormalBundle{\Submanifold}{\Manifold})}
        }
        {\Submanifold}
        \ar[
          r
          ,
          "
            \Differential
               {\TubularNeighbourhoodMap{\Submanifold}{\Manifold}}
          "
        ]
      &
      \at
        {\TangentBundle{\Manifold}}
        {\Submanifold}
        \ar[
          r,
          "\Projection
            {\NormalBundle{\Submanifold}{\Manifold}}
          "
        ]
      &
      \NormalBundle{\Submanifold}{\Manifold}
    \end{tikzcd}
  \]
agrees with the projection onto the second factor.
If 
$
  \left(
    \Submanifold,\Submanifold'
  \right)
  \subset 
  \left(
    \Manifold,\Submanifold
  \right)
$
denotes an embedded pair, we define a tubular 
neighborhood of 
$
  \left(
    \Submanifold,\Submanifold'
  \right)
$
in 
$
  \left(
    \Manifold,\Submanifold
  \right)
$
to be a tubular neighborhood
$
  \TubularNeighbourhoodMap{\Submanifold}{\Manifold}
  \colon
  \NormalBundle{\Submanifold}{\Manifold}
  \to 
  \Manifold
$
of $\Submanifold$ such that 
$
  \at
    {\TubularNeighbourhoodMap{\Submanifold}{\Manifold}}
    {\NormalBundle{\Submanifold'}{\Submanifold}}
$
is a tubular neighbourhood of $\Submanifold'$ in 
$\Submanifold$.
If $\Submanifold$ has a boundary we assume that every 
tubular neighbourhood of $\Submanifold$ is 
actually a tubular neighbourhood of 
$
  \left(
    \Submanifold,\BoundaryManifold{\Submanifold}
  \right)
$
in
$
  \left(
    \Manifold,
    \BoundaryManifold{\Manifold}
  \right)
$%
.

From the vector bundle
$\NormalBundle{\Submanifold}{\Manifold}$,
one can obtain a disk bundle by compactifying this 
fiberwise using a sphere at infinity to obtain 
$\overline{\NormalBundle{\Submanifold}{\Manifold}}$.
An embedding of 
$
  \overline{\NormalBundle{\Submanifold}{\Manifold}}
  \to
  M
$ 
is called a closed tubular neighborhood if its 
restriction to $\NormalBundle{\Submanifold}{\Manifold}$ 
is a tubular neighborhood. 
We denote by
\[
  \TubularNeighbourhoodClosedSpace
    {\Submanifold}
    {\Manifold}
  \subset
  \EmbeddingSpace
    {\overline{\NormalBundle{\Submanifold}{\Manifold}}}
    {\Manifold}
\]
the subspace of closed tubular neighborhoods.
Similar notation will occur if we consider tubular 
neighborhoods of pairs.

A proof of the following lemma is sketched in Section 
2.5 of \cite{CRW16}.
\begin{lemma}
\label{lem:TubularNeighbourhoodSpaceContractible}
  If $\left(\Submanifold,\Submanifold'\right)$ is an embedded pair of compact
  submanifolds in $\left(\Manifold,\Submanifold\right)$, then 
  $
    \TubularNeighbourhoodClosedSpace
      {\Submanifold}
      {\Manifold}
  $
  and 
  $
    \TubularNeighbourhoodClosedSpace
      {\left(\Submanifold,\Submanifold'\right)}
      {\left(\Manifold,\Submanifold\right)}
  $
  are contractible.
\end{lemma}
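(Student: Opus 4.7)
The plan is to build a strong deformation retraction of $\TubularNeighbourhoodClosedSpace{\Submanifold}{\Manifold}$ onto a singleton, and to perform the same argument respecting the sub-structure in the pair case. The argument proceeds in two stages: a rescaling homotopy that shrinks a given tubular neighborhood into an arbitrarily small tube around $\Submanifold$, followed by a linearization homotopy that straightens it onto a canonical tubular neighborhood coming from a fixed Riemannian metric on $\Manifold$.

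For the rescaling, I would fix a smooth family of fibre-preserving self-diffeomorphisms $\lambda_t\colon \overline{\NormalBundle{\Submanifold}{\Manifold}} \to \overline{\NormalBundle{\Submanifold}{\Manifold}}$ with $\lambda_1=\Identity$ that radially contract each closed disc fibre and remain smooth across the sphere at infinity. Precomposing with $\lambda_t$ yields a continuous path in $\TubularNeighbourhoodClosedSpace{\Submanifold}{\Manifold}$ whose image for small $t$ lies in an arbitrarily prescribed open neighborhood of $\Submanifold$. Choose once and for all a background Riemannian metric on $\Manifold$ whose exponential map restricted to a small disc subbundle is a tubular neighborhood $\TubularNeighbourhoodMap{\Submanifold}{\Manifold}_0$. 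For $t$ small, the composition $\TubularNeighbourhoodMap{\Submanifold}{\Manifold}_0^{-1}\circ \TubularNeighbourhoodMap{\Submanifold}{\Manifold}\circ \lambda_t$ is a well-defined self-embedding of $\overline{\NormalBundle{\Submanifold}{\Manifold}}$ whose restriction to the zero section is the identity and whose fibre derivative along the zero section is the identity of $\NormalBundle{\Submanifold}{\Manifold}$. Such self-embeddings form a fibrewise convex set; the straight-line homotopy in $\NormalBundle{\Submanifold}{\Manifold}$ from this endomorphism to the identity, conjugated back by $\TubularNeighbourhoodMap{\Submanifold}{\Manifold}_0$, provides the second stage of the deformation retraction onto $\{\TubularNeighbourhoodMap{\Submanifold}{\Manifold}_0\}$.

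For the pair version I would choose the background metric so that $\Submanifold$ sits totally geodesically inside $\Manifold$ near $\Submanifold'$, which can be arranged because the space of such metrics is nonempty and convex. Then the exponential map preserves the subbundle $\NormalBundle{\Submanifold'}{\Submanifold} \subset \NormalBundle{\Submanifold}{\Manifold}$, and since both $\lambda_t$ and the fibrewise straight-line homotopy preserve this subbundle as well, the deformation retraction constructed above restricts to one of $\TubularNeighbourhoodClosedSpace{(\Submanifold,\Submanifold')}{(\Manifold,\Submanifold)}$.

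The main obstacle is guaranteeing joint continuity in the $C^\infty$-topology of all the ingredients, in particular of $\lambda_t$ across the sphere at infinity of the compactified normal bundle and of the linear homotopy viewed as a family of closed tubular neighborhoods of $\Manifold$. Choosing $\lambda_t$ to act by a smooth radial diffeomorphism of the closed disc fibre that is the identity near the boundary sphere removes the first difficulty, while the second reduces to smooth dependence of the exponential map and of linear interpolation on their inputs. Special care is needed near the boundary $\BoundaryManifold{\Submanifold}$, where every tubular neighborhood is required to restrict to one of $\BoundaryManifold{\Submanifold}$ in $\BoundaryManifold{\Manifold}$; picking the auxiliary metric to be a product in a collar of $\BoundaryManifold{\Manifold}$ ensures that the whole homotopy is compatible with this constraint.
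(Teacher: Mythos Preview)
The paper does not give its own proof of this lemma; it simply records that ``A proof of the following lemma is sketched in Section~2.5 of \cite{CRW16}.'' Your two-stage strategy (radially shrink into a reference tube, then straight-line interpolate in the normal bundle) is exactly the classical argument for contractibility of tubular-neighborhood spaces, and is almost certainly what the cited sketch in \cite{CRW16} does as well; your handling of the boundary via a product metric and of the pair version via a metric making $\Submanifold$ totally geodesic is the expected refinement.

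One point deserves a little more care than your phrase ``such self-embeddings form a fibrewise convex set'' suggests: the affine interpolation $(1-s)\,\Identity + s\,\phi$ between two self-embeddings of $\overline{\NormalBundle{\Submanifold}{\Manifold}}$ is not automatically an embedding for intermediate $s$. What makes it work here is that, after your shrinking step, $\phi$ is $C^1$-close to the identity on the small disc bundle (since its fibre derivative along the zero section is the identity), so the interpolation has invertible differential and is injective. Making the amount of shrinking depend continuously on the input tubular neighborhood, so that the whole homotopy is jointly continuous, is the genuine technical content; you flag this correctly as the main obstacle, and it is handled by the paracompactness of the embedding space (or equivalently by an Alexander-type rescaling $\phi_s(v)=s^{-1}\phi(sv)$, which avoids the convexity issue altogether).
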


\begin{definition}
  We call an embedding, possibly equipped with a
  $\TangentialFibration$-structure, 
  $
    \Embedding
    \colon
    \SurfaceGB{\Genus}{\BoundaryComponents}
    \to
    \Manifold
  $
  together with a closed tubular neighbourhood of 
  $\apply{\Embedding}{\SurfaceGB{\Genus}{\BoundaryComponents}}$ a 
  \introduce{thickened embedding of $\SurfaceGB{\Genus}{\BoundaryComponents}$}.
  We will write 
  $
    \ThickenedEmbeddingSpaceBoundaryCondition
      {\SurfaceGB{\Genus}{\BoundaryComponents}}
      {\Manifold}
      {\BoundaryConditionPar}
  $ for the 
  \introduce{space of thickened embeddings} and 
  $
    \ThickenedEmbeddingSpaceBoundaryConditionTangential
      {\TangentialFibration}
      {\SurfaceGB{\Genus}{\BoundaryComponents}}
      {\Manifold}
      {\BoundaryConditionTangential{\BoundaryConditionPar}}
  $ 
  for the \introduce{space of thickened embeddings with 
  $\TangentialFibration$-structure}. 
  We could furthermore replace 
  $\SurfaceGB{\Genus}{\BoundaryComponents}$ by a pair 
  $
    \left(
      \SurfaceGB{\Genus}{\BoundaryComponents},\Submanifold'
    \right)
  $
  and $\Manifold$ by 
  $
    \left(
      \Manifold,\Submanifold
    \right)
  $ 
  to obtain \introduce{spaces of thickened embeddings of pairs}.
\end{definition}

The topology of the space of thickened embeddings is 
quite tricky and we refer the reader to Section 2.5 in 
\cite{CRW16} for details. The important thing to note 
about this space is the following proposition:
\begin{proposition}
  $\DiffeomorphismGroupBoundary{\Manifold}$ acts via post 
  composition on 
  $
    \ThickenedEmbeddingSpaceBoundaryCondition
      {\Submanifold}
      {\Manifold}
      {\BoundaryConditionPar}
  $
  and this space is 
  $\DiffeomorphismGroupBoundary{\Manifold}$-locally 
  retractile.
\end{proposition}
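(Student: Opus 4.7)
The plan is to combine the local retractility at the level of embeddings given by Proposition~\ref{prp:EmbeddingLocallyRetractile} with the contractibility of the space of closed tubular neighbourhoods given by Lemma~\ref{lem:TubularNeighbourhoodSpaceContractible}, together with a parametrised version of the isotopy extension theorem. Fix a base point $(\Embedding_{0},T_{0}) \in \ThickenedEmbeddingSpaceBoundaryCondition{\Submanifold}{\Manifold}{\BoundaryConditionPar}$, where $T_{0}$ denotes the chosen closed tubular neighbourhood of $\apply{\Embedding_{0}}{\Submanifold}$.

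First I apply Proposition~\ref{prp:EmbeddingLocallyRetractile} to obtain a $\DiffeomorphismGroupBoundary{\Manifold}$-local retraction $\LocalRetraction_{\Embedding} \colon V \to \DiffeomorphismGroupBoundary{\Manifold}$ around $\Embedding_{0}$ on the level of underlying embeddings, so that $\apply{\LocalRetraction_{\Embedding}}{\Embedding} \circ \Embedding_{0} = \Embedding$ for $\Embedding \in V$. For a thickened embedding $(\Embedding,T)$ with $\Embedding \in V$, precomposition with $\apply{\LocalRetraction_{\Embedding}}{\Embedding}^{-1}$ transports $T$ to a closed tubular neighbourhood $T' \coloneqq \apply{\LocalRetraction_{\Embedding}}{\Embedding}^{-1} \circ T$ of $\apply{\Embedding_{0}}{\Submanifold}$. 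This reduces the task to producing, continuously in $T'$ and compatibly with the prescribed boundary jet $\BoundaryConditionPar$, a diffeomorphism in $\DiffeomorphismGroupBoundary{\Manifold}$ carrying $T_{0}$ to $T'$.

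Next I shrink the neighbourhood further so that every arising $T'$ lies inside a small contractible neighbourhood $W \subset \TubularNeighbourhoodClosedSpace{\Submanifold}{\Manifold}$ of $T_{0}$. On $W$, the contraction produced in the proof of Lemma~\ref{lem:TubularNeighbourhoodSpaceContractible} can be arranged to be canonical, for instance via fibrewise linear interpolation in the coordinate chart around $\apply{\Embedding_{0}}{\Submanifold}$ furnished by $T_{0}$, and to be constant near $\BoundaryManifold{\Manifold}$ so as to preserve the boundary condition. This gives a family of isotopies of closed embeddings of $\overline{\NormalBundle{\Submanifold}{\Manifold}}$ into $\Manifold$ depending continuously on $T'$. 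Applying a parametric form of the isotopy extension theorem (Theorem~6.1.1 in \cite{WallDifferentialTopology}, cf.\ the references cited in Section~2.5 of \cite{CRW16}) yields a continuous family $\psi_{T'} \in \DiffeomorphismGroupBoundary{\Manifold}$ with $\psi_{T_{0}} = \Identity$ and $\psi_{T'} \circ T_{0} = T'$.

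Combining these two steps, the assignment
\[
  \apply{\LocalRetraction}{(\Embedding,T)} \coloneqq \apply{\LocalRetraction_{\Embedding}}{\Embedding} \circ \psi_{\apply{\LocalRetraction_{\Embedding}}{\Embedding}^{-1} \circ T}
\]
defines the desired $\DiffeomorphismGroupBoundary{\Manifold}$-local retraction around $(\Embedding_{0},T_{0})$, and a direct computation verifies the required identities $\apply{\LocalRetraction}{(\Embedding_{0},T_{0})} = \Identity$ and $\apply{\LocalRetraction}{(\Embedding,T)} \cdot (\Embedding_{0},T_{0}) = (\Embedding,T)$. I expect the main technical obstacle to be the careful verification of continuity throughout with respect to the rather subtle topology on $\ThickenedEmbeddingSpaceBoundaryCondition{\Submanifold}{\Manifold}{\BoundaryConditionPar}$ detailed in Section~2.5 of \cite{CRW16}; in particular, one must arrange the parametric isotopy extension to vary $C^{\infty}$-continuously in the tubular-neighbourhood parameter, and design the contraction of $W$ onto $T_{0}$ so that it remains within the class of closed tubular neighbourhoods realising the fixed boundary jet $\BoundaryConditionPar$.
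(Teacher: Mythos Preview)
The paper does not actually prove this proposition: immediately before stating it, the text says ``The topology of the space of thickened embeddings is quite tricky and we refer the reader to Section~2.5 in \cite{CRW16} for details,'' and after the proposition it passes directly to the corollary. So there is no in-text proof to compare against; the result is simply imported from \cite{CRW16}.

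Your proposed argument is the standard way to establish this kind of statement and is essentially how the argument in \cite{CRW16} proceeds: first retract at the level of embeddings using Proposition~\ref{prp:EmbeddingLocallyRetractile}, then handle the residual tubular-neighbourhood datum by combining the contractibility from Lemma~\ref{lem:TubularNeighbourhoodSpaceContractible} with a parametrised isotopy extension. Your identification of the delicate point---continuity with respect to the topology on $\ThickenedEmbeddingSpaceBoundaryCondition{\Submanifold}{\Manifold}{\BoundaryConditionPar}$ and the need for the parametric isotopy extension to vary smoothly in the tubular-neighbourhood parameter---is exactly right, and is precisely why the paper defers to \cite{CRW16} rather than spelling this out.
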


\begin{corollary}
  The forgetful map 
  $
    \ThickenedEmbeddingSpaceBoundaryCondition
      {\SurfaceGB{\Genus}{\BoundaryComponents}}
      {\Manifold}
      {\BoundaryConditionPar}
    \to 
    \EmbeddingSpaceBoundaryCondition
      {\SurfaceGB{\Genus}{\BoundaryComponents}}
      {\Manifold}
      {\BoundaryConditionPar}
  $
  is a locally trivial fibration with fiber over a basepoint 
  $
    \Embedding
    \in 
    \EmbeddingSpaceBoundaryCondition
      {\SurfaceGB{\Genus}{\BoundaryComponents}}
      {\Manifold}
      {\BoundaryConditionPar}
  $
  given by the space of tubular neighborhoods of 
  $
    \apply
      {\Embedding}
        {\SurfaceGB{\Genus}{\BoundaryComponents}}
  $%
  .
  
  Furthermore the forgetful map 
  $
    \ThickenedEmbeddingSpaceBoundaryConditionTangential
      {\TangentialFibration}
      {\SurfaceGB{\Genus}{\BoundaryComponents}}
      {\Manifold}
      {\BoundaryConditionPar}
    \to
    \EmbeddingSpaceBoundaryConditionTangential
      {\TangentialFibration}
      {\SurfaceGB{\Genus}{\BoundaryComponents}}
      {\Manifold}
      {\BoundaryConditionPar}
  $
  is also a locally trivial fibration with fiber the space of 
  tubular neighborhoods of some 
  $
    \apply
      {\Embedding}
      {\SurfaceGB{\Genus}{\BoundaryComponents}}
  $%
  .
\end{corollary}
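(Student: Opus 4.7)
The plan is to handle the two forgetful maps separately, reducing the tangential version to the non-tangential one via a pullback argument.

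For the first map, I would apply Lemma~\ref{lem:LocallyRetractile} directly. Both $\ThickenedEmbeddingSpaceBoundaryCondition{\SurfaceGB{\Genus}{\BoundaryComponents}}{\Manifold}{\BoundaryConditionPar}$ and $\EmbeddingSpaceBoundaryCondition{\SurfaceGB{\Genus}{\BoundaryComponents}}{\Manifold}{\BoundaryConditionPar}$ carry natural post-composition actions of $\DiffeomorphismGroupBoundary{\Manifold}$, and the forgetful map sending $(\Embedding,\nu)$ to $\Embedding$ is equivariant. By Proposition~\ref{prp:EmbeddingLocallyRetractile}, the target is $\DiffeomorphismGroupBoundary{\Manifold}$-locally retractile, so Lemma~\ref{lem:LocallyRetractile} immediately yields that the forgetful map is a locally trivial fibration. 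The fiber over $\Embedding$ is precisely $\TubularNeighbourhoodClosedSpace{\apply{\Embedding}{\SurfaceGB{\Genus}{\BoundaryComponents}}}{\Manifold}$ by the very definition of a thickened embedding.

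For the tangential version, I would observe that the commutative square
\[
  \begin{tikzcd}
    \ThickenedEmbeddingSpaceBoundaryConditionTangential{\TangentialFibration}{\SurfaceGB{\Genus}{\BoundaryComponents}}{\Manifold}{\BoundaryConditionTangential{\BoundaryConditionPar}} \ar[r]\ar[d] &
    \EmbeddingSpaceBoundaryConditionTangential{\TangentialFibration}{\SurfaceGB{\Genus}{\BoundaryComponents}}{\Manifold}{\BoundaryConditionTangential{\BoundaryConditionPar}} \ar[d] \\
    \ThickenedEmbeddingSpaceBoundaryCondition{\SurfaceGB{\Genus}{\BoundaryComponents}}{\Manifold}{\BoundaryConditionPar} \ar[r] &
    \EmbeddingSpaceBoundaryCondition{\SurfaceGB{\Genus}{\BoundaryComponents}}{\Manifold}{\BoundaryConditionPar}
  \end{tikzcd}
\]
is a pullback of topological spaces: specifying a tangential thickened embedding amounts to specifying a tangential embedding together with a closed tubular neighborhood of its image, and the latter datum depends only on the underlying embedding. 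Since pullbacks preserve locally trivial fibrations, the first part implies that the top horizontal map is also a locally trivial fibration, and its fiber over a given tangential embedding is again the corresponding space of closed tubular neighborhoods.

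No step in this plan is a serious obstacle; everything is a direct application of the machinery collected earlier in this section. The only point that demands a moment of care is verifying that the square above really is a pullback at the level of topological spaces and not merely of underlying sets, which relies on the explicit construction of the topology on the space of thickened embeddings reviewed in Section~2.5 of \cite{CRW16}.
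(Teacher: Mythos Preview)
Your proposal is correct and follows essentially the same approach as the paper: the first assertion via Lemma~\ref{lem:LocallyRetractile} and Proposition~\ref{prp:EmbeddingLocallyRetractile}, the second via a pullback square. The only cosmetic difference is that the paper draws the transposed square (with the forgetful maps as vertical arrows rather than horizontal), but the pullback argument is identical.
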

\begin{proof}
  The first assertion follows as 
  $
    \EmbeddingSpaceBoundaryCondition
      {\SurfaceGB{\Genus}{\BoundaryComponents}}
      {\Manifold}
      {\BoundaryConditionPar}
  $
  is $\DiffeomorphismGroupBoundary{\Manifold}$-locally 
  retractile.
  The second assertion follows since the following diagram is 
  a pullback diagram:
  \[
    \begin{tikzcd}
      \ThickenedEmbeddingSpaceBoundaryConditionTangential
        {\TangentialFibration}
        {\SurfaceGB{\Genus}{\BoundaryCondition}}
        {\Manifold}
        {\BoundaryConditionTangential{\BoundaryConditionPar}}
        \ar[r]
        \ar[d]
      &
      \ThickenedEmbeddingSpaceBoundaryCondition
        {\SurfaceGB{\Genus}{\BoundaryComponents}}
        {\Manifold}
        {\BoundaryConditionPar}
      \ar[d]
      \\
      \EmbeddingSpaceBoundaryConditionTangential
        {\TangentialFibration}
        {\SurfaceGB{\Genus}{\BoundaryComponents}}
        {\Manifold}
        {\BoundaryConditionTangential{\BoundaryConditionPar}}
      \ar[r]
      &
      \EmbeddingSpaceBoundaryCondition
        {\SurfaceGB{\Genus}{\BoundaryComponents}}
        {\Manifold}
        {\BoundaryConditionPar}
    \end{tikzcd}
  \]
\end{proof}
We will also need the following lemma:
\begin{lemma}
  \label{lem:RestrictionFibration}
  For every submanifold $\Submanifold$ of 
  $\SurfaceGB{\Genus}{\BoundaryComponents}$, the restriction maps
  \begin{align*}
    \EmbeddingSpaceBoundaryConditionTangential
      {\TangentialFibration}
      {\SurfaceGB{\Genus}{\BoundaryComponents}}
      {\Manifold}
      {\BoundaryConditionTangential{\BoundaryConditionPar}}
    &
    \to
    \EmbeddingSpaceBoundaryConditionTangential
      {\TangentialFibration}
      {\Submanifold}
      {\Manifold}
      {\BoundaryConditionTangential{\BoundaryConditionPar'}}
    \\
    \ThickenedEmbeddingSpaceBoundaryConditionTangential
      {\TangentialFibration}
      {\SurfaceGB{\Genus}{\BoundaryComponents}}
      {\Manifold}
      {\BoundaryConditionTangential{\BoundaryConditionPar}}
    &
    \to
    \ThickenedEmbeddingSpaceBoundaryConditionTangential
      {\TangentialFibration}
      {\Submanifold}
      {\Manifold}
      {\BoundaryConditionTangential{\BoundaryConditionPar'}}
  \end{align*}
  are fibrations
\end{lemma}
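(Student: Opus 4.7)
The plan is to prove each of the four restriction maps is a locally trivial fibration by applying Lemma~\ref{lem:LocallyRetractile}. For each map, I will identify a topological group $\Group$ that acts on source and target by post-composition so that the restriction is $\Group$-equivariant, and then show that the target is $\Group$-locally retractile.

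For the plain restriction
\[
  \EmbeddingSpaceBoundaryCondition
    {\SurfaceGB{\Genus}{\BoundaryComponents}}
    {\Manifold}
    {\BoundaryConditionPar}
  \to
  \EmbeddingSpaceBoundaryCondition
    {\Submanifold}
    {\Manifold}
    {\BoundaryConditionPar'},
\]
the natural group is $\Group=\DiffeomorphismGroupBoundary{\Manifold}$. Post-composing with an element of this group commutes with restricting to $\Submanifold$, so equivariance is automatic. The target is $\DiffeomorphismGroupBoundary{\Manifold}$-locally retractile by Proposition~\ref{prp:EmbeddingLocallyRetractile}, so Lemma~\ref{lem:LocallyRetractile} gives the fibration property. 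The thickened non-tangential case is identical, using the proposition stated in the preceding subsection that $\ThickenedEmbeddingSpaceBoundaryCondition{\Submanifold}{\Manifold}{\BoundaryConditionPar'}$ is $\DiffeomorphismGroupBoundary{\Manifold}$-locally retractile.

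For the tangentially structured restrictions, I would take $\Group=\HomeoTangential{\Manifold}{\TangentialSpace{\Manifold}}$, the group that already appeared in the proof of Lemma~\ref{lem:ForgettingTangentialFibration}. This group acts on tangentially structured (and thickened) embeddings by post-composing the embedding with the underlying diffeomorphism of $\Manifold$ and the $\TangentialFibration$-structure with the covering homeomorphism of $\TangentialSpace{\Manifold}$, and this action again commutes with restriction to $\Submanifold$. It therefore remains to show that the target space
$
\EmbeddingSpaceBoundaryConditionTangential
  {\TangentialFibration}
  {\Submanifold}
  {\Manifold}
  {\BoundaryConditionTangential{\BoundaryConditionPar'}}
$
(and its thickened analogue) is $\HomeoTangential{\Manifold}{\TangentialSpace{\Manifold}}$-locally retractile. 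I would do this by essentially repeating the argument used to produce the $\HomeoTangential{\Manifold}{\TangentialSpace{\Manifold}}$-local retractions in the proof of Lemma~\ref{lem:ForgettingTangentialFibration}: start from a $\DiffeomorphismGroupBoundary{\Manifold}$-local retraction $\LocalRetraction$ around a chosen $(\Embedding_{0},\TangentialStructure{\Embedding_{0}})$ given by Proposition~\ref{prp:EmbeddingLocallyRetractile} on the underlying plain embedding space, shrink the neighbourhood so that $\LocalRetraction$ lands in a contractible neighbourhood of the identity in $\DiffeomorphismGroupBoundary{\Manifold}$, and then use that over this contractible parameter space the pullback $\GrassmannianDifferential{\mathrm{ev}}^{*}\TangentialSpace{\Manifold}$ is isomorphic to a trivial pullback to lift each $\apply{\LocalRetraction}{\Embedding,\TangentialStructure{\Embedding}}$ to an element of $\HomeoTangential{\Manifold}{\TangentialSpace{\Manifold}}$.

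The main obstacle is precisely this lift: one needs the covering homeomorphism not merely to cover $\GrassmannianDifferential{\apply{\LocalRetraction}{\Embedding,\TangentialStructure{\Embedding}}}$ on $\Grassmannian{2}{\TangentBundle{\Manifold}}$, but to carry the fixed $\TangentialStructure{\Embedding_{0}}$ to the varying $\TangentialStructure{\Embedding}$, and to do so continuously in $(\Embedding,\TangentialStructure{\Embedding})$. Everything else—equivariance under $\HomeoTangential{\Manifold}{\TangentialSpace{\Manifold}}$, applying Lemma~\ref{lem:LocallyRetractile}, and transcribing the whole argument to the thickened case using the $\DiffeomorphismGroupBoundary{\Manifold}$-local retractility of the thickened embedding space—is routine bookkeeping once the tangential lift is in hand.
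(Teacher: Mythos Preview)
Your plan for the untangential restriction maps is exactly what the paper does: apply Lemma~\ref{lem:LocallyRetractile} with $\Group=\DiffeomorphismGroupBoundary{\Manifold}$, using Proposition~\ref{prp:EmbeddingLocallyRetractile} (respectively the local retractility of thickened embeddings) for the target. That part is fine.

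For the tangential case, however, the obstacle you flag is genuine and your proposal does not overcome it. In the proof of Lemma~\ref{lem:ForgettingTangentialFibration} the space being shown to be $\HomeoTangential{\Manifold}{\TangentialSpace{\Manifold}}$-locally retractile is the \emph{untangential} space; any lift of the $\DiffeomorphismGroupBoundary{\Manifold}$-retraction to $\HomeoTangential{\Manifold}{\TangentialSpace{\Manifold}}$ suffices there. Here you would need the lifted homeomorphism of $\TangentialSpace{\Manifold}$ to carry the fixed section $\TangentialStructure{\Embedding_{0}}$ to the varying section $\TangentialStructure{\Embedding}$, continuously in $(\Embedding,\TangentialStructure{\Embedding})$. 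Since the fibres of $\TangentialFibration$ are arbitrary spaces with no homogeneity assumed, there is no reason a fibre-preserving \emph{homeomorphism} taking one section to another exists, let alone one varying continuously and extending over all of $\TangentialSpace{\Manifold}$. The trivialisation argument you borrow only produces \emph{some} covering homeomorphism, not one adapted to the two sections.

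The paper avoids this entirely: it does not attempt to show the tangential target is locally retractile. Instead it checks the lifting property for $D^{N}\times\{0\}\hookrightarrow D^{N}\times[0,1]$ directly. One first lifts along the untangential restriction (already a fibration), obtaining a family of embeddings over $D^{N}\times[0,1]$; the given data then amount to a $\TangentialFibration$-structure on the subset $\SurfaceGB{\Genus}{\BoundaryComponents}\times D^{N}\times\{0\}\cup \Submanifold\times D^{N}\times[0,1]$. A separate lemma (Lemma~\ref{lem:MetricDeformation}) shows this subset is a strong deformation retract of $\SurfaceGB{\Genus}{\BoundaryComponents}\times D^{N}\times[0,1]$, so the pulled-back fibration is fibre-homotopy trivial over the inclusion and the section extends. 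This replaces the missing homeomorphism by a homotopy-theoretic extension argument, which is all a Hurewicz fibration actually provides.
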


The proof will use the following lemma:
\begin{lemma}
  \label{lem:MetricDeformation}
  Let $\TopologicalSpace$ denote a metric space and $\Submanifold\subset 
  \TopologicalSpace$ a closed subset, with a 
  closed neighborhood $\Neighbourhood{\Submanifold}$ such that $\Submanifold$ 
  is a strong deformation retract of $\Neighbourhood{\Submanifold}$ via a 
  strong deformation $\Phi(x,t)$. 
  Then 
  $
    \TopologicalSpace \times \{0\} 
    \cup 
    \Submanifold \times [0,1]
  $
  is a strong deformation retract of $\TopologicalSpace \times [0,1]$. 
\end{lemma}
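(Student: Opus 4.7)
The plan is to reduce this to the standard construction showing that a pair admitting a strong neighborhood deformation retract has the desired retraction property. The given data $(\Neighbourhood{\Submanifold}, \Phi)$ supplies the NDR data on $\Neighbourhood{\Submanifold}$, and the metric structure on $\TopologicalSpace$ will be used to extend it to all of $\TopologicalSpace$.

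First, since $\Submanifold$ is closed and lies in the interior of $\Neighbourhood{\Submanifold}$, choose an intermediate closed neighborhood $V$ of $\Submanifold$ with $V \subset \mathrm{int}(\Neighbourhood{\Submanifold})$; such a $V$ exists in any metric space. The explicit Urysohn construction via distance functions yields continuous maps $\varphi_1, \varphi_2 \colon \TopologicalSpace \to [0,1]$ with $\varphi_1 \equiv 1$ on $V$ and $\varphi_1 \equiv 0$ outside $\mathrm{int}(\Neighbourhood{\Submanifold})$, and with $\varphi_2^{-1}(1) = \Submanifold$ and $\varphi_2^{-1}(0) = \TopologicalSpace \setminus \mathrm{int}(V)$.

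The deformation $\Phi$ then extends continuously to a map $\widetilde\Phi \colon \TopologicalSpace \times [0,1] \to \TopologicalSpace$ by setting $\widetilde\Phi(x, t) = \Phi(x, \varphi_1(x)\,t)$ for $x \in \Neighbourhood{\Submanifold}$ and $\widetilde\Phi(x, t) = x$ otherwise; the vanishing of $\varphi_1$ on $\partial \Neighbourhood{\Submanifold}$ makes the two pieces agree there. Setting $u := 1 - \varphi_2$, the pair $(u, \widetilde\Phi)$ satisfies the usual NDR-pair axioms relative to $\Submanifold$, together with the stronger property that $\widetilde\Phi(x, 1) \in \Submanifold$ whenever $u(x) < 1$: this condition forces $x \in \mathrm{int}(V)$, where $\varphi_1(x) = 1$ and hence $\widetilde\Phi(x, 1) = \Phi(x, 1) \in \Submanifold$.

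Finally, apply the classical formula associated to an NDR pair --- a piecewise combination of $\widetilde\Phi$ and a rescaling of the $[0,1]$-factor based on the relative size of $s$ and $u(x)$ --- to obtain a map $H \colon \TopologicalSpace \times [0,1] \times [0,1] \to \TopologicalSpace \times [0,1]$ which retracts $\TopologicalSpace \times [0,1]$ onto $\TopologicalSpace \times \{0\} \cup \Submanifold \times [0,1]$ while fixing this subspace throughout. The main technical step is verifying joint continuity at points $(a, s)$ with $a \in \Submanifold$: the formula involves the quantity $s / u(x)$, which formally diverges there, but this singularity is harmless because $\widetilde\Phi$ is constant in $t$ along $\Submanifold$ by construction. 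The remaining properties --- that $H(\cdot, 0)$ is the identity, that $H(\cdot, 1)$ lands in the target, and that the target is fixed throughout --- are direct computations from the formula.
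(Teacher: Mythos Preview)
Your proof is correct and follows the same underlying idea as the paper: use the metric to manufacture a Urysohn-type function separating $\Submanifold$ from the complement of $\Neighbourhood{\Submanifold}$, then feed this into a standard piecewise formula that pushes the $[0,1]$-coordinate down to $0$ away from $\Submanifold$ and applies the deformation $\Phi$ near $\Submanifold$.

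The difference is one of packaging. The paper works directly: it defines a single function
\[
  d(x) = \frac{d(x,\Submanifold)}{d(x,\Submanifold) + d(x,\partial \Neighbourhood{\Submanifold})}
\]
on $\Neighbourhood{\Submanifold}$ and writes down an explicit three-case formula for the deformation $\Psi((x,s),t)$, split according to whether $x \notin \Neighbourhood{\Submanifold}$, or $x \in \Neighbourhood{\Submanifold}$ with $s \le d(x)$, or $s \ge d(x)$. You instead take a more structural route: you introduce an auxiliary intermediate neighborhood $V$ and two bump functions $\varphi_1,\varphi_2$, extend $\Phi$ to a global homotopy $\widetilde\Phi$, verify that $(u,\widetilde\Phi)$ with $u = 1-\varphi_2$ satisfies the NDR-pair axioms together with the extra condition $\widetilde\Phi(x,1)\in\Submanifold$ for $u(x)<1$, and then appeal to the classical NDR formula rather than writing it out. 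Your version is slightly heavier in setup (two bump functions and an intermediate neighborhood, where one distance-ratio suffices) but has the advantage of plugging directly into a textbook statement; the paper's version is leaner but requires the reader to check the explicit formula by hand. Both are complete.
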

The assumptions about $\TopologicalSpace$ are quite restricting and the lemma 
should be true in a more general setting, but for the present context it 
certainly suffices.
\begin{proof}
  Define 
  $
    d
    \colon 
    \Neighbourhood{\Submanifold}
    \to 
    \Reals
  $
  as 
  \[
    \apply{d}{x}
    =
    \frac
      {d(x,\Submanifold)}
      {d(x,\Submanifold)+d(x,\partial \Neighbourhood{\Submanifold})}
  \] 
  As assumed above let $\Phi(-,t)$ denote a homotopy between the 
  identity on $\Neighbourhood{\Submanifold}$ and a retraction of the 
  neighborhood. 
  Then the deformation retract in the product space and the corresponding 
  homotopy is given by the following:
  \[
    \apply{\Psi}{(x,s),t}=
    \begin{cases}
      (\apply{\Phi}{x,0},st) 
      &
      \text{ for }\apply{d}{x}-s\geq 0
      \\
      \left(
        \apply
          {\Phi}
          {
            x
            ,
            (1-t)
            \left(
              \frac
                {s-\apply{d}{x}}
                {s}
            \right)
          }
        ,
        s-(1-t)\apply{d}{x}
      \right) 
      &
      \text{ for } 
      \apply{d}{x}-s\leq 0
      \\
      (x,st) 
      &
      \text{ for } 
      x\notin \Neighbourhood{\Submanifold}
    \end{cases}
  \]
  \end{proof}

\begin{proof}[Proof of Lemma~\ref{lem:RestrictionFibration}]
  Note that both statements follow from Lemma~\ref{lem:LocallyRetractile} if 
  $\TangentialFibration=\Identity$. 
  So consider the following lifting problem:
  \[
    \begin{tikzcd}
      \Ball{N}\times\{0\}
        \ar[r,"\bar{\ContinuousMap}"]
        \ar[d,hook]
      &
      \EmbeddingSpaceBoundaryConditionTangential
        {\TangentialFibration}
        {\SurfaceGB{\Genus}{\BoundaryComponents}}
        {\Manifold}
        {\BoundaryConditionTangential{\BoundaryConditionPar}}
        \ar[r]
        \ar[d]
      &
      \EmbeddingSpaceBoundaryConditionTangential
        {+}
        {\SurfaceGB{\Genus}{\BoundaryComponents}}
        {\Manifold}
        {\BoundaryConditionTangential{\BoundaryConditionPar}}
        \ar[d,"\text{fib}"]
      \\
      \Ball{N}\times[0,1]
        \ar[r,"\ContinuousMap"]
        \ar[ru,dashed]
      &
      \EmbeddingSpaceBoundaryConditionTangential
        {\TangentialFibration}
        {\Submanifold}
        {\Manifold}
        {\BoundaryConditionTangential{\BoundaryConditionPar'}}
        \ar[r]
      &
      \EmbeddingSpaceBoundaryConditionTangential
        {+}
        {\Submanifold}
        {\Manifold}
        {\BoundaryConditionTangential{\BoundaryConditionPar'}}
    \end{tikzcd}
  \]
  Since the right vertical map is a fibration, we can lift $\ContinuousMap$ to 
  $
    \EmbeddingSpaceBoundaryConditionTangential
      {+}
      {\SurfaceGB{\Genus}{\BoundaryComponents}}
      {\Manifold}
      {\BoundaryConditionTangential{\BoundaryConditionPar}}
  $%
  . Such a map corresponds to an embedding
  $
    \Embedding
    \colon
    \SurfaceGB{\Genus}{\BoundaryComponents}\times \Ball{N} \times [0,1]
    \to
    \Manifold \times \Ball{N} \times [0,1]
  $
  which is the identity on the $\Ball{N}\times[0,1]$ component. 
  Furthermore this embedding is equipped with a tangential structure on
  $
    \SurfaceGB{\Genus}{\BoundaryComponents}\times \Ball{N}\times\{0\}
    \cup
    \Submanifold \times [0,1]
  $
  which stems from $\ContinuousMap$ and $\bar{\ContinuousMap}$. Lifting 
  $\ContinuousMap$ corresponds to an extension of this tangential structure. 
  By Lemma~\ref{lem:MetricDeformation}, 
  $
    \SurfaceGB{\Genus}{\BoundaryComponents}\times \Ball{N}\times\{0\}
    \cup
    \Submanifold \times [0,1]
  $
  is a deformation retract of 
  $
    \SurfaceGB{\Genus}{\BoundaryComponents}\times \Ball{N}\times [0,1]
  $
  , hence 
  $
    \Embedding^{\ast}\TangentialSpace{\Manifold}
  $
  is isomorphic to 
  $
    \at
      {\Embedding^{\ast}}
      {
        \SurfaceGB{\Genus}{\BoundaryComponents}\times \Ball{N}\times\{0\}
        \cup
        \Submanifold \times [0,1]
      }
    \TangentialSpace{\Manifold}
  $%
  . Composition with such an isomorphism provides the required extension of the 
  tangential structure. The proof for the spaces of thickened embeddings is 
  completely analogous.
\end{proof}
\section{Stabilization Maps}
  \label{scn:Stabilization}
We will be interested in two kinds of stabilization maps. Up to homotopy 
equivalences, they are the same, but they will play different roles in the 
proof and therefore will be distinguished. They are motivated by the definition 
of inner and outer cobordisms in \cite{RW16}.

\subsection{Boundary Bordisms and Stabilization Maps}
Let $\Manifold$ denote an at least $5$-dimensional manifold and 
$\DistBoundaryManifold{0}{\Manifold}$ a codimension $0$ submanifold of 
$\BoundaryManifold{\Manifold}$.
By definition, a boundary condition 
$
  \BoundaryConditionTangential{\BoundaryCondition}
  \in
  \BoundaryConditionSpaceTangential
    {\TangentialFibration}
    {\SurfaceGB{\Genus}{\BoundaryComponents}}
    {\Manifold}
$
that intersects the boundary of $\DistBoundaryManifold{0}{\Manifold}$ 
transversely yields a submanifold with 
$\TangentialFibration$-structure in 
$\DistBoundaryManifold{0}{\Manifold}\times \left[0,1\right]$ by taking a
product with $\left[0,1\right]$. This specifies a 
$\TangentialFibration$-jet along 
$\DistBoundaryManifold{0}{\Manifold}\times \{0\}$. 
Together with a $\TangentialFibration$-jet 
$
  \BoundaryConditionTangential
    {\bar{\BoundaryCondition}}
$
along 
$
  \DistBoundaryManifold{0}{\Manifold}\times \{1\}
$
and using collars one obtains boundary conditions in
$
  \BoundaryConditionSpaceTangential
    {\TangentialFibration}
    {
      \SurfaceGB{\Genus'}{\BoundaryComponents+\BoundaryComponents'}
    }
    {\DistBoundaryManifold{0}{\Manifold}\times \left[0,1\right]}
$
for any $\Genus'$ and $\BoundaryComponents'$, which we will denote by 
$
  \BoundaryConditionTangential
    {\BoundaryCondition}
  \cup 
  \BoundaryConditionTangential
    {\bar{\BoundaryCondition}}
$%
. Let us denote 
$
  \bigsqcup_{\Genus'}
  \SpaceSubsurfaceBoundaryTangential
    {\TangentialFibration}
    {\Genus'}
    {\BoundaryComponents+\BoundaryComponents'}
    {\DistBoundaryManifold{0}{\Manifold}
      \times
      \left[0,1\right]
    }
    {
      \BoundaryConditionTangential{\BoundaryCondition}
      \cup 
      \BoundaryConditionTangential{\bar{\BoundaryCondition}}
    }
$
by
$
  \StabilizingBordism
    {\TangentialFibration}
    {\DistBoundaryManifold{0}{\Manifold}}
    {\BoundaryConditionTangential{\BoundaryCondition}}
    {\BoundaryConditionTangential{\bar{\BoundaryCondition}}}
$
and call it the space of \introduce{boundary bordisms}.

Given an element
$
  \BoundaryBordism
  \in
  \StabilizingBordism
    {\TangentialFibration}
    {\DistBoundaryManifold{0}{\Manifold}}
    {\BoundaryConditionTangential{\BoundaryCondition}}
    {\BoundaryConditionTangential{\bar{\BoundaryCondition}}}
$
one obtains a continuous
\begin{align*}
  -\cup \BoundaryBordism
  \colon 
  \SpaceSubsurfaceBoundaryTangential
    {\TangentialFibration}
    {\Genus}
    {\BoundaryComponents}
    {\Manifold}
    {\BoundaryConditionTangential{\BoundaryCondition}}
  &
  \to 
  \SpaceSubsurfaceBoundaryTangential
    {\TangentialFibration}
    {\Genus'}
    {\BoundaryComponents'}
    {\Elongation{\Manifold}{1}}
    {\BoundaryConditionTangential{\bar{\BoundaryCondition}}}
  \\
  \Subsurface
  &
  \mapsto 
  \Subsurface
  \cup 
  \BoundaryBordism
  \cup
  \BoundaryConditionTangential{\BoundaryCondition'}
  \times
  [0,1]
\end{align*}
where $\BoundaryCondition'$ consists of those boundary conditions that are not 
met by the boundary bordism.
These maps are called \introduce{stabilization maps}. 

We will call 
$
  \BoundaryBordism
  \cap 
  \DistBoundaryManifold{0}{\Manifold}\times \{0\}
$ 
the \introduce{incoming boundary of $\BoundaryBordism$}
and 
$
  \BoundaryBordism
  \cap 
  \DistBoundaryManifold{0}{\Manifold}\times \{1\}
$ 
the \introduce{outgoing boundary of $\BoundaryBordism$}. 
We have the following lemma, which says that $\BoundaryBordism$ decomposes into 
elementary pieces, for which it will be easier to prove homological stability. 

\begin{proposition}
  \label{prp:StabilizationMorse}
  Let $\Manifold$ be a manifold with space of $\TangentialFibration$-structures 
  of subplanes of $\TangentBundle{\Manifold}$. 
  If $\BoundaryBordism$ is a boundary bordism as above then there exists 
  $\BoundaryBordism_1,\ldots,\BoundaryBordism_k$ such that:
  \begin{enumerate}[(i)]
    \item 
      $\BoundaryBordism_i$ is a boundary bordism sitting inside of 
      $\DistBoundaryManifold{0}{\Manifold}\times[i-1,i]$ and all of 
      its connected components but one are homeomorphic to a cylinder.
    \item 
      $
        \BoundaryBordism_1
        \cup 
        \ldots 
        \cup 
        \BoundaryBordism_k 
        \simeq 
        k
        \cdot 
        \BoundaryBordism
        \subset 
        \DistBoundaryManifold{0}{\Manifold}
        \times 
        [0,k]
      $ 
      (The product means that 
      $
        (x,t)
        \in 
        \DistBoundaryManifold{0}{\Manifold}
        \times 
        [0,1]
      $ 
      is mapped to 
      $
        (x,kt)
        \in 
        \DistBoundaryManifold{0}{\Manifold} 
        \times
        [0,k]
      $%
      ) and $\simeq$ means in this case that the underlying submanifolds are 
      isotopic relative to their boundary and their tangential structures are 
      homotopic
    \item 
      the projection onto the second coordinate of 
      $
        \DistBoundaryManifold{0}{\Manifold}
        \times 
        [i-1,i]
      $ 
      restricted to $\BoundaryBordism_i$ is a Morse function with at most one 
      critical point.
    \item 
      If any such critical point happens to be a minimum or a maximum it has 
      to be a global maximum or minimum of 
      $
        \BoundaryBordism_1
        \cup 
        \ldots
        \cup 
        \BoundaryBordism_k
      $%
      .
  \end{enumerate}
\end{proposition}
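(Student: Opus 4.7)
The plan is Morse-theoretic. The underlying submanifold of $\BoundaryBordism$ is a compact surface sitting inside $\DistBoundaryManifold{0}{\Manifold}\times[0,1]$, and the projection $\Projection{2}$ onto $[0,1]$ restricts to a smooth function on it. First I would apply a small ambient isotopy of $\DistBoundaryManifold{0}{\Manifold}\times[0,1]$ relative to a neighbourhood of the boundary so that $\at{\Projection{2}}{\BoundaryBordism}$ becomes a Morse function, and a further small perturbation so that its critical values are pairwise distinct. Since $\TangentialFibration$ is a Hurewicz fibration, any ambient isotopy carrying the underlying subsurface lifts to an isotopy of subsurfaces with tangential structure, as was already observed in Section~\ref{scn:SpacesOfSubsurfaces}, so at every stage the tangential structure is carried along consistently with the boundary jets $\BoundaryConditionTangential{\BoundaryCondition}$ and $\BoundaryConditionTangential{\bar{\BoundaryCondition}}$.

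Next I would reorder the critical values so that all index-$0$ critical points have smaller $\Projection{2}$-values than all index-$1$ critical points, which in turn have smaller values than all index-$2$ critical points. Reordering critical values of a Morse function on a surface lying inside a higher-dimensional manifold is standard: one pushes the relevant sheet of the surface up or down in the $[0,1]$-direction via an isotopy supported in a small neighbourhood of a vertical arc joining two consecutive critical levels, and extends this to an ambient isotopy by the isotopy extension theorem (available here since the arc is disjoint from the boundary jets). The dimension hypothesis $\DimensionIndex\geq 5$ ensures that such moves can be performed in general position without creating new crossings. Once the critical values $0<t_{1}<\dots<t_{k}<1$ are in the desired order, I choose a diffeomorphism of collars $[0,1]\cong[0,k]$ that sends $t_{i}$ into the open interval $(i-1,i)$, and use it together with the collar of $\DistBoundaryManifold{0}{\Manifold}$ to identify the rescaled bordism with a boundary bordism inside $\DistBoundaryManifold{0}{\Manifold}\times[0,k]$. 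Setting $\BoundaryBordism_{i}$ to be the intersection of this rescaled object with $\DistBoundaryManifold{0}{\Manifold}\times[i-1,i]$ gives the candidate decomposition.

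Verifying (i)--(iv) is then formal. Part (iii) is immediate, since each $\BoundaryBordism_{i}$ contains precisely the critical point of value $t_{i}$. Part (i) follows because, away from critical points, the level sets of a Morse function on a surface form a trivial cobordism, so every connected component of $\BoundaryBordism_{i}$ not containing that critical point is diffeomorphic to $\Circle\times[i-1,i]$ and meets both boundary levels. Part (ii) is by construction, the homotopy of tangential structures being provided by the Hurewicz lifting used above. Part (iv) follows from the ordering of critical values by index: any minimum of $\at{\Projection{2}}{\BoundaryBordism_{i}}$ in the interior must have been the image of an index-$0$ critical point, all of which were arranged to precede every saddle and maximum, and is therefore a global minimum of $\at{\Projection{2}}{\BoundaryBordism_{1}\cup\dots\cup\BoundaryBordism_{k}}$, with the analogous statement for maxima. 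The main obstacle I anticipate is justifying the reordering step cleanly while keeping the boundary jets $\BoundaryConditionTangential{\BoundaryCondition}$ and $\BoundaryConditionTangential{\bar{\BoundaryCondition}}$ unchanged; this is where the codimension hypothesis and the isotopy extension theorem, together with the lifting property of $\TangentialFibration$, do the essential work.
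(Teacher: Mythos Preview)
Your argument has a genuine gap in the verification of (iv). Reordering the critical values so that all index-$0$ points come first does \emph{not} make those points global minima of $\at{\Projection{2}}{\BoundaryBordism_{1}\cup\dots\cup\BoundaryBordism_{k}}$. The incoming boundary $\BoundaryBordism\cap\bigl(\DistBoundaryManifold{0}{\Manifold}\times\{0\}\bigr)$ sits at height~$0$, so the global minimum of the height function is~$0$ and is attained on the boundary, never at an interior critical point. Thus after your reordering any index-$0$ critical point (at height $t_{1}>0$) is still only a \emph{local} minimum, and (iv) fails. The remark following the proposition in the paper makes this explicit: since every connected component of $\BoundaryBordism$ is required to meet $\DistBoundaryManifold{0}{\Manifold}\times\{0\}$, condition (iv) forces there to be \emph{no} index-$0$ critical points whatsoever. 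The analogous statement holds for local maxima.

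What is missing is a handle-cancellation argument. Because each component of $\BoundaryBordism$ meets the incoming boundary, any $0$-handle must be cancelled by some $1$-handle; the paper realises this cancellation geometrically by surgery on $\BoundaryBordism$ along an embedded half-disk in $\DistBoundaryManifold{0}{\Manifold}\times[0,1]$ whose $\DistBoundaryManifold{1}{}$-boundary is the union of two gradient trajectories through the relevant index-$1$ point. The codimension hypothesis $\DimensionIndex\ge 5$ is used here to guarantee that such a half-disk can be found embedded and disjoint from the rest of $\BoundaryBordism$, and that the surgery is realised by an isotopy (so the tangential structure is carried along). This is substantially more work than the reordering step you describe, and it is the heart of the proof; self-indexing alone is not sufficient.
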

\begin{remark}
  Since we implicitly required every connected component of $\BoundaryBordism$ 
  to meet 
  $
    \DistBoundaryManifold{0}{\Manifold}
    \times 
    \{0\}
  $
  the last part of the lemma says among other things that there is no 
  minimum. 
\end{remark}

To prove this proposition we will need to use surgery along a half-disk, a 
process we 
will explain now. We define 
\begin{enumerate}[(i)]
  \item 
    $
    \HalfDisk
    \coloneqq
    \left\{
      \left(
        \Coordinate_{1},\Coordinate_{2}
      \right)
      \in
      \Reals^{2}
      \middle|
      \Coordinate_{2} \geq 0 
      \text{, } 
      \EuclideanNorm{
        \left(
          \Coordinate_{1},\Coordinate_{2}
        \right)
      }
      \leq 1
    \right\}
    $
  \item
    $
    \DistBoundaryManifold{0}{\HalfDisk}
    \coloneqq
    \left\{
      \left(
        \Coordinate_{1},\Coordinate_{2}
      \right)
      \in 
      \BoundaryManifold{\HalfDisk}
    \middle| 
      \Coordinate_{2}=0
    \right\}
    $
  \item 
    $
    \DistBoundaryManifold{1}{\HalfDisk}
    \coloneqq
    \left\{
      \left(
        \Coordinate_{1},\Coordinate_{2}
      \right)
      \in 
      \BoundaryManifold{\HalfDisk}
    \middle|
      \EuclideanNorm{
        \left(
          \Coordinate_{1},\Coordinate_{2}
        \right)
        =
        1
      }
    \right\}
  $
\end{enumerate}

Let $\Subsurface\subset \Manifold$ denote a subsurface of $\Manifold$. 
We fix an embedded circle 
$
  c
  \subset 
  \DistBoundaryManifold{1}{\HalfDisk}
  \times 
  [0,1]
$ 
which contains 
$
  \left(
    \DistBoundaryManifold{1}{\HalfDisk}
    \setminus 
    (
      \BallRadius{\epsilon}{1}
      \cup
      \BallRadius{\epsilon}{-1}
    )
  \right)
  \times 
  \{0,1\}
$
and connects these two connected arcs via two small circle segments. 
Fix once and for all an embedding 
$
  a
  \colon 
  \Ball{2} 
  \to 
  \HalfDisk
  \times 
  [0,1]
$
such that 
$
  \apply
    {a}
    {\BoundaryManifold{\Ball{2}}}
  =
  c
$ 
and 
$
  \apply
    {a}
    {\Ball{2}\setminus \frac{1}{2}\Ball{2}}
  \subset
  \DistBoundaryManifold{1}{\HalfDisk}
  \times
  [0,1]
$
and such that 
$
  \apply
    {a}
    {\Reals\cdot (1,0) \cap \frac{1}{2}\Ball{2}}
$ 
is very close to 
$
  \DistBoundaryManifold{0}{\HalfDisk}
  \times
  \{\frac{1}{2}\}
$
(see Figure~\ref{fig:surgery} to clarify the definition of $c$ and $a$). 
Note that 
$
  \apply{a}{\Ball{2}}
$
cuts 
$
  \DistBoundaryManifold{1}{\HalfDisk}
  \times
  [0,1]
$ 
into two connected components and let $X$ denote the "inner one" i.e. the one 
that does not contain 
$\DistBoundaryManifold{1}{\HalfDisk}\times \{0,1\}$.
\begin{figure}[htb]           
  \centering                  
  \def\svgwidth{200pt}    
  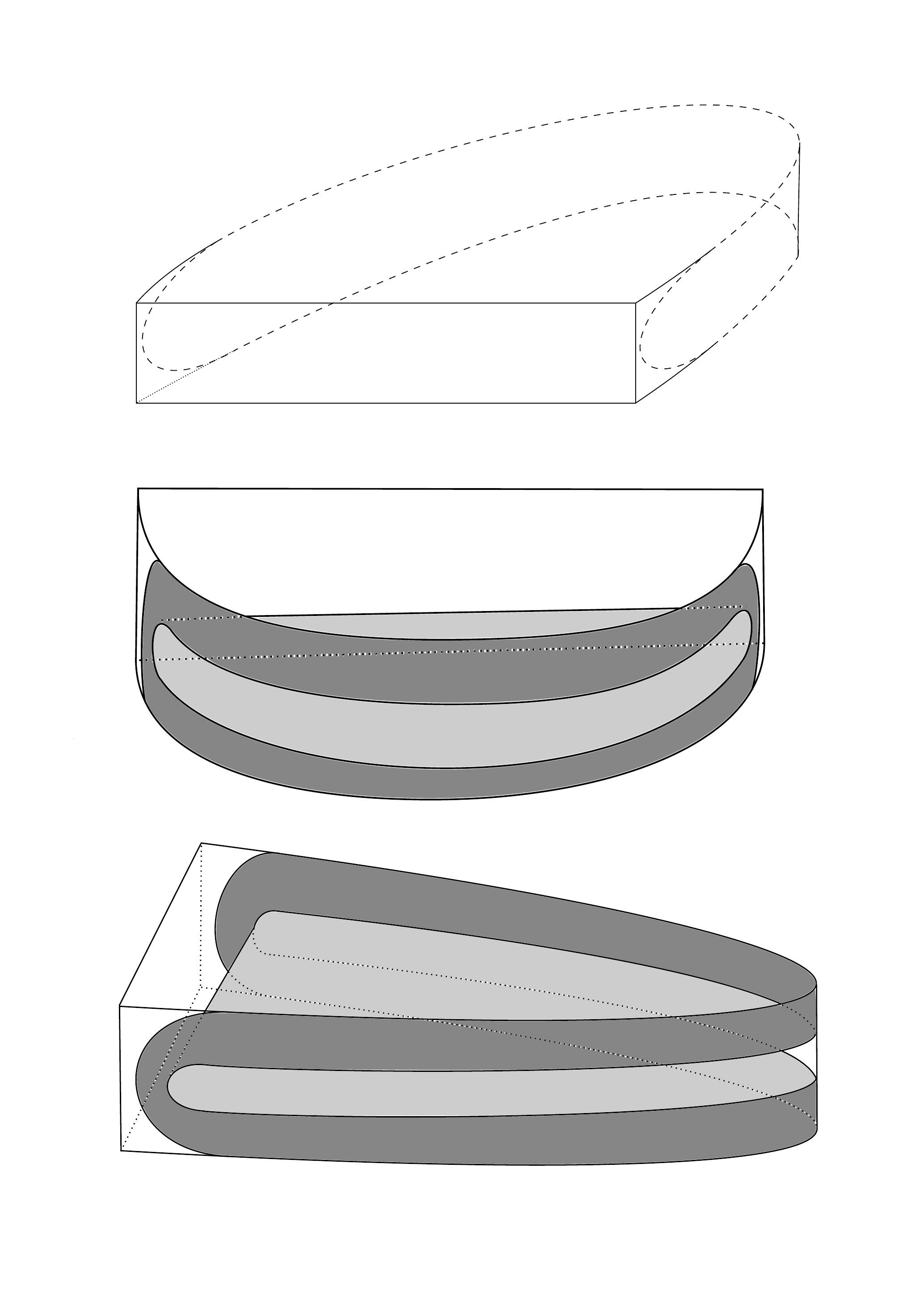
  \caption{The dashed circle in the first picture represents $c$ and the union 
  of the gray areas in the other two pictures represents the image of $a$ (of 
  course the boundary between the two areas has to be smoothed, but this 
  exceeds the drawing skills of the author)}
  \label{fig:surgery}          
\end{figure}
\begin{definition}
  Let $\Manifold$ denote a manifold as above and $\Subsurface$ a subsurface of 
  $\Manifold$.
  Let 
  $
    \SurgeryEmbedding
    \colon
    (
      \HalfDisk\times [0,1]
      ,
      \DistBoundaryManifold{1}{\HalfDisk}\times[0,1]
    )
    \to
    (\Manifold,\Subsurface)
  $
  denote an embedding, then we will denote 
  $
    (
      \Subsurface
      \setminus 
      \apply{\SurgeryEmbedding}{X}
    )
    \cup 
    \apply
      {\SurgeryEmbedding}
      {
        \apply
          {a}
          {\Ball{2}}
      }
  $ 
  by $\Surgery{\Subsurface}{\SurgeryEmbedding}$. 
  The above requirements for $a$ ensure that this is a smooth manifold. 
  
  Suppose that the dimension of $\Manifold$ is at least $5$ and that 
  $\Subsurface$ is a subsurface with tangential structure. 
  In this case $\Subsurface$ and $\Surgery{\Subsurface}{\SurgeryEmbedding}$ 
  are isotopic and we can use this to give 
  $\Surgery{\Subsurface}{\SurgeryEmbedding}$ 
  some tangential structure. 
  Even though this is not well-defined we will denote any such subsurface 
  with tangential structure by $\Surgery{\Subsurface}{\SurgeryEmbedding}$.
\end{definition}

Assume that the dimension of $\Manifold$ is at least $5$ and $\Subsurface$ 
is a subsurface, given a map of a half-disk 
$
  \SurgeryEmbedding
  \colon
  (\HalfDisk,\DistBoundaryManifold{1}{\HalfDisk})
  \to 
  (\Manifold,\Subsurface)
$%
, which is an embedding at every point except at $(\pm1,0)$, where 
$
  \apply
    {\Differential{\SurgeryEmbedding}}
    {\TangentSpace{(\pm1,0)}{\DistBoundaryManifold{1}{\HalfDisk}}}
$
agrees with 
$
  \apply
    {\Differential{\SurgeryEmbedding}}
    {\TangentSpace{(\pm1,0)}{\DistBoundaryManifold{0}{\HalfDisk}}}
$%
. We can use the sufficiently high codimension to ensure that 
$\SurgeryEmbedding$ 
extends to an embedding 
$
  \SurgeryEmbedding'
  \colon
  (
    \HalfDisk\times [0,1]
    ,
    \DistBoundaryManifold{1}{\HalfDisk}
  )
  \to
  (\Manifold,\Subsurface)
$
such that 
$
  \apply{\SurgeryEmbedding}{\DistBoundaryManifold{0}{\HalfDisk}}
$
agrees with 
$
  \apply
    {\SurgeryEmbedding'}
    {\apply
      {a}
      {\Reals\cdot(1,0) \cap \frac{1}{2}\Ball{2}}
    }
$%
. In this case we will still write 
$\Surgery{\Subsurface}{\SurgeryEmbedding}$ for the surgery along this extension 
of 
$\SurgeryEmbedding$.
Armed with this definition we can start proving the previous lemma. 
\begin{proof}[Proof of Proposition \ref{prp:StabilizationMorse}]
  We start by showing that there is an isotopy of $\BoundaryBordism$ such that 
  $
    \Projection{[0,1]}
    \colon
    \DistBoundaryManifold{0}{\Manifold}
    \times
    [0,1]
    \to
    [0,1]
  $
  restricted to $\BoundaryBordism$ is a Morse function with the 
  property that every critical value corresponds to a unique critical point, 
  which proves the first three parts of the lemma. 
  Then we proceed by explaining how to remove local minima. 
  Using the symmetry between minima and maxima given by the flip map 
  $(x,t)\mapsto (x,1-t)$ this also explains how to eliminate local maxima.
  
  We have an open inclusion
  \[
    \EmbeddingSpaceTangential
      {\TangentialFibration}
      {\BoundaryBordism}
      {\DistBoundaryManifold{0}{\Manifold}\times[0,1]}
    \to 
    \SmoothMappingSpace
      {\BoundaryBordism}
      {\DistBoundaryManifold{0}{\Manifold}\times[0,1]}
    \cong 
    \SmoothMappingSpace{\BoundaryBordism}{[0,1]}
    \times
    \SmoothMappingSpace{\BoundaryBordism}{\DistBoundaryManifold{0}{\Manifold}}
  \]
  The isomorphism stems from the identification of maps into products as 
  products of maps and the described map has the following form 
  $
    \Embedding
    \mapsto
    (
      \Projection{\DistBoundaryManifold{0}{\Manifold}}
      \circ
      \Embedding
      ,
      \Projection{[0,1]}
      \circ
      \Embedding
    )
  $. 
  That this map is indeed open is proven in \cite{cerf} Chapter~2, 
  Section~1.2. 
  Since Morse functions form an open and dense subset of all real 
  functions we get that in 
  $
    \SmoothMappingSpace{\BoundaryBordism}{[0,1]}
    \times
    \SmoothMappingSpace{\BoundaryBordism}{\DistBoundaryManifold{0}{\Manifold}}
  $
  the subset
  $
    \text{Morse}\!(\BoundaryBordism,[0,1])
    \times
    \SmoothMappingSpace{\BoundaryBordism}{\DistBoundaryManifold{0}{\Manifold}}
  $
  is open and dense as well. 
  Since the space of embeddings is also open there exists an embedding, such 
  that the projection onto the second component is a Morse function, such that 
  the embedding is arbitrarily close to the inclusion from $\BoundaryBordism$ 
  into $\DistBoundaryManifold{0}{\Manifold}$. 
  Since the space of embeddings is locally path-connected (see Theorem~44.1 of 
  \cite{CSGA}), we conclude that $\BoundaryBordism$ is isotopic to a subsurface
  such that the projection restricted to this subsurface is a Morse 
  function.
  It is easy to see that we can arrange the critical points to have
  different values by moving them up or down a little.
  
  For the fourth part of the lemma chose a riemannian metric $g$ on 
  $
    \DistBoundaryManifold{0}{\Manifold}
    \times
    [0,1]
  $ 
  such that $\partial_t$ (the directional derivative of the interval 
  coordinate) is orthogonal to 
  $
    \TangentSpace{(x,t)}{\DistBoundaryManifold{0}{\Manifold}\times\{t\}}
  $
  for every $(x,t)$ and assume that 
  $
    \at
      {\Projection{[0,1]}}
      {\BoundaryBordism}
  $
  is a Morse function with distinct critical values. 
  
  Before we can continue we have to fix some notation:
  Consider a critical point $x$ of 
  $
    \at
      {\Projection{[0,1]}}
      {\BoundaryBordism}
  $%
  , which is not a maximum.
  Consider the negative gradient flow of 
  $
    \at
      {\Projection{[0,1]}}
      {\BoundaryBordism}
  $
  in a neighborhood of $x$. 
  Then there have to be some flow lines $\gamma$ which converge to $x$ 
  meaning that 
  $
    \lim_{T\to-\infty}
    \apply{\gamma}{T}
    =
    x
  $
  but since $\BoundaryBordism$ is compact this 
  gradient flow line has to either meet a boundary component or it is an 
  embedding of $\Reals$ into $\BoundaryBordism$ in which case the compactness 
  of $\BoundaryBordism$ ensures 
  that 
  $
    \lim_{T\to\infty}
    \apply{\gamma}{T}
  $
  exists and has to be a critical value as well. 
  In this case we call $x$ the starting point of $\gamma$ and the other 
  limit point the endpoint of $\gamma$. 
  Furthermore note that if $\gamma$ ends at a point $x$, there also has to be 
  flow line of the gradient flow, which 
  starts at $x$ and goes in the opposite direction as $\gamma$, by which we 
  mean that in a Morse chart centered around $x$ $\gamma$ is given by 
  $e^{1/2T}v$, then the other curve is given by $-e^{1/2T}v$.
  
  Now let us proceed with the proof of the theorem. 
  Let $p$ denote a minimum for 
  $
    \at
      {\Projection{[0,1]}}
      {\BoundaryBordism}
  $
  or in other words a $0$-handle with respect to the handle-decomposition 
  imposed by 
  $
    \at
      {\Projection{[0,1]}}
      {\BoundaryBordism}
  $%
  . Since every connected component of 
  $\BoundaryBordism$ 
  meets 
  $
    \DistBoundaryManifold{0}{\Manifold}
    \times
    \{0\}
  $
  there has to be a $1$-handle, which cancels the $0$-handle 
  given by $p$. 
  In other words there has to exist a gradient flow line 
  $\gamma_1$ for 
  $
    \at
      {\Projection{[0,1]}}
      {\BoundaryBordism}
  $
  starting at $p$, which ends at an index $1$ 
  critical point $q$. 
  Following the negative gradient flow along the opposite 
  direction defined by $\gamma_1$ at $q$ gives us a gradient flow line 
  $\gamma_2$ that ends at some critical point $p'$ which is lower with respect 
  to its $t$-component than $q$. 
  By reparametrizing and including their endpoint and starting point we can 
  consider $\gamma_1$ and $\gamma_2$ as arcs starting and ending at critical 
  points. 
  We denote by $\gamma$ the concatenation of $\gamma_1$ and $\gamma_2$. 
  By changing $\BoundaryBordism$ a little bit but fixing 
  $
    \at
      {\Projection{[0,1]}}
      {\BoundaryBordism}
  $
  we can arrange that there is no open segment of 
  $\gamma$, where 
  $
    \Projection{\DistBoundaryManifold{0}{\Manifold}}
    \Circle
    \gamma
  $
  is constant. 
  Fix a strictly increasing or strictly decreasing function 
  $
    \SmoothMap
    \colon
    [0,1]
    \to 
    [0,1]
  $
  such that 
  $
    \apply{\SmoothMap}{0}
    =
    \apply{\Projection{[0,1]}}{p'}
  $
  and 
  $
    \apply{\SmoothMap}{1}
    =
    \apply{\Projection{[0,1]}}{p}
  $%
  . Then define 
  \begin{align*}
    H''
    \colon 
    [0,1]
    \times 
    [0,1]
    &
    \to 
    \DistBoundaryManifold{0}{\Manifold} 
    \times 
    [0,1]
    \\
    (t',s)
    &
    \mapsto 
    \left(
      \apply
        {\Projection{\DistBoundaryManifold{0}{\Manifold}}}
        {\apply{\gamma}{s}}
      ,
      (1-t')
      \apply
        {\Projection{[0,1]}}
        {\apply{\gamma}{s}}
      +
      t'\apply{\SmoothMap}{s}
    \right)
  \end{align*}
  Using the main theorem of \cite{Whitney}, which says that embeddings are 
  dense in the space of mappings, we can replace $H''$ by a $C^\infty$ close 
  map $H'$ such that $\bar{H}'$ factors through an embedding of a half disc $H'$
  (this is possible since $H''$ is constant on $[0,1]\times\{0\}$ and 
  $[0,1]\times\{1\}$)
  such that 
  $
    \apply
      {\Differential{\Projection{[0,1]}\circ H'}}
      {\TangentBundle{\HalfDisk}}
    =<\partial_t>
  $%
  , 
  $
    \apply
      {\Differential{\Projection{[0,1]}\circ H'}}
      {\TangentBundle{\DistBoundaryManifold{0}{\HalfDisk}}}
    =
    \lambda \partial_{t}
  $
  for $\lambda$ a negative function or $\lambda$ a positive function depending 
  on the height of $p$ and $p'$, and that 
  $
    \at
      {H'}
      {\DistBoundaryManifold{1}{\HalfDisk}}
    =
    \gamma
  $%
  . We can further replace $H'$ by $H$, using the denseness of transversal 
  maps, such that $H$ still fulfils the conditions above and that 
  $
    \apply
      {H}
      {\HalfDisk}
    \cap
    \BoundaryBordism
    =
    \gamma
  $%
  .
  
  Now the idea is to do surgery along $H$ to cancel pairs of critical points to 
  get rid of minima (In fact we have to change $H$ a little bit for technical 
  reasons, but this doesn't change the idea). 
  This strategy works since $\BoundaryBordism$ and 
  $\Surgery{\BoundaryBordism}{H}$ are isotopic by using a deformation of $H$ 
  to $\gamma$.
  
  We have to consider three and a half cases: 
  First consider the case, where $p'$ is also a minimum. 
  Without loss of generality we assume that $p'$ is lower than $p$. 
  In this case we replace $\gamma$ by the curve that ends a 
  little bit before $p'$ with respect to the flow time and alter 
  $
    \at
      {H}
      {\DistBoundaryManifold{0}{\HalfDisk}}
  $
  a little bit such that it is tangential to 
  $\gamma$ at 
  $\apply{H}{\pm1,0}$. 
  We alter $\gamma$ a bit further such that it starts a little bit before $p$ 
  in the sense that we go a short time in the 
  opposite direction of $\gamma$ at $p$ and proceed in the same fashion as 
  before to produce an $\bar{H}$ with the aforementioned properties. 
  Then $\Surgery{\BoundaryBordism}{\bar{H}}$ is isotopic to $\BoundaryBordism$ 
  , 
  $
    \at
      {\Projection{[0,1]}}
      {\Surgery{\BoundaryBordism}{\bar{H}}}
  $
  is still a Morse function with the same critical points as 
  $
    \at{\Projection{[0,1]}}{\BoundaryBordism}
  $ 
  except for $p$ and $q$ (this is possible by choosing a very small extension 
  of $\bar{H}$ in the definition of surgery). 
  In this case we have eliminated a minimum and start this process anew with 
  another minimum.
  
  The next case is the case where $p'$ is an index $1$ critical point which is 
  lower than $p$. 
  In this case we alter $\gamma$ exactly as in the previous 
  case and then 
  $
    \at{\Projection{[0,1]}}{\Surgery{\BoundaryBordism}{H}}
  $ 
  is still a Morse function with the same 
  critical points as 
  $
    \at{\Projection{[0,1]}}{\BoundaryBordism}
  $
  except for $p$ and $q$.
  In this case we have again eliminated a minimum without producing any new 
  critical points.
  
  The last full case, where $p'$ is an index $1$ critical point, which is 
  higher than $p$, is a little bit different compared to the previous two quite 
  similar cases and we will have to alter $H$ and $\gamma$ a little bit. 
  In this case we let $\gamma$ start not at $p$ but at 
  $\apply{\gamma}{\epsilon}$ for some small epsilon and choose 
  $
    \apply
      {H}
      {\DistBoundaryManifold{0}{\HalfDisk}}
  $
  to be tangential to $\gamma$ at its starting point. 
  Furthermore we alter $\gamma$ at its endpoint by stopping a short time before 
  arriving at $p'$ and then going down with respect to $\Projection{[0,1]}$ 
  avoiding $p'$ and points that lie on the gradient flow line which ends at 
  $p'$ and stopping at a point which is lower than $p'$. 
  As before 
  $
    \at
      {\Projection{[0,1]}}
      {\Surgery{\BoundaryBordism}{H}}
  $
  is a Morse function but the only critical point that could have vanished on 
  $\Surgery{\BoundaryBordism}{H}$ is $q$ as can be seen by considering 
  the derivative of $H$. 
  Since deleting only one critical point changes the Euler characteristic of 
  $\BoundaryBordism$, we know that this procedure has to create an index $1$ 
  critical point somewhere, but the only place where 
  this can occur is in a small neighborhood of 
  $
    \apply{H}{-1,0}
  $
  because at the other boundary point of $\gamma$ we can arrange 
  $
    \at
      {H}
      {\DistBoundaryManifold{0}{\HalfDisk}}
  $
  to be tangential to $\gamma$. 
  Another way to see this, is to note that 
  $
    \at
      {H}
      {\DistBoundaryManifold{0}{\HalfDisk}}
  $
  becomes part of a new gradient flow line of 
  $
    \at{\Projection{[0,1]}}{\Surgery{\BoundaryBordism}{H}}
  $%
  , which has to flow down to $p$ after passing a 
  neighborhood of 
  $
    \apply{H}{-1,0}
  $%
  , where it has to flow up. 
  Because every point close to 
  $
    \apply{H}{-1,0}
  $
  is a point that can flow down along 
  $
    \apply
      {\text{grad}}
      {\at{\Projection{[0,1]}}{\Surgery{\BoundaryBordism}{H}}}
  $
  to a critical point that is not $p$, we 
  conclude that the gradient flow line 
  $
    \at{H}{\DistBoundaryManifold{0}{\HalfDisk}}
  $
  left the open unstable manifold of $p$. 
  This procedure moves the peak of $\gamma$ below $p'$ so that $p'$ does not 
  play a role anymore in the canceling of $p$.
  
  All in all these considerations imply that we can proceed again as before but 
  we will not run into $p'$ again essentially reducing the number of critical 
  points that come into question for this construction.
  
  The last "half" case is the case, where $\gamma$ does not run into a critical 
  point after passing the index $1$-critical point, but rather runs into the 
  boundary 
  $
    \BoundaryBordism
    \cap
    \DistBoundaryManifold{0}{\Manifold}
    \times
    \{0\}
  $%
  . This case is handled completely analogous to the first two cases.
\end{proof}

This lemma allows us to split every stabilization map into a composition of 
maps which are given by taking the union with some $\BoundaryBordism_i$. 
The third and fourth condition limit the topology of $\BoundaryBordism_i$ to 
be either a pair of pants with incoming boundary consisting of one or two 
circles (a single index $1$ critical point) and corresponding outgoing boundary 
or a disk with incoming boundary a circle (a single index $2$ critical point). 
Accordingly we will write
\begin{align*}
  \StabilizationAlpha{\Genus}{\BoundaryCondition}
  \colon
  \SpaceSubsurfaceBoundaryTangential
    {\TangentialFibration}
    {\Genus}
    {\BoundaryComponents}
    {\Manifold}
    {\BoundaryConditionTangential{\BoundaryCondition}}
  &
  \to
  \SpaceSubsurfaceBoundaryTangential
    {\TangentialFibration}
    {\Genus+1}
    {\BoundaryComponents-1}
    {\Elongation{\Manifold}{1}}
    {\BoundaryConditionTangential{\bar{\BoundaryCondition}}}
  \\
  \StabilizationBeta{\Genus}{\BoundaryCondition}
  \colon
  \SpaceSubsurfaceBoundaryTangential
    {\TangentialFibration}
    {\Genus}
    {\BoundaryComponents}
    {\Manifold}
    {\BoundaryConditionTangential{\BoundaryCondition}}
  &
  \to
  \SpaceSubsurfaceBoundaryTangential
    {\TangentialFibration}
    {\Genus}
    {\BoundaryComponents+1}
    {\Elongation{\Manifold}{1}}
    {\BoundaryConditionTangential{\bar{\BoundaryCondition}}}
  \\
  \StabilizationGamma{\Genus}{\BoundaryCondition}
  \colon
  \SpaceSubsurfaceBoundaryTangential
    {\TangentialFibration}
    {\Genus}
    {\BoundaryComponents}
    {\Manifold}
    {\BoundaryConditionTangential{\BoundaryCondition}}
  &
  \to
  \SpaceSubsurfaceBoundaryTangential
    {\TangentialFibration}
    {\Genus}
    {\BoundaryComponents-1}
    {\Elongation{\Manifold}{1}}
    {\BoundaryConditionTangential{\bar{\BoundaryCondition}}}
\end{align*}
for the corresponding stabilization maps i.e. 
$\StabilizationAlpha{\Genus}{\BoundaryComponents}$ corresponds to 
a map which is given by stabilization with a pair of pants with incoming 
boundary two circles, $\StabilizationBeta{\Genus}{\BoundaryComponents}$ is a 
map which is given by stabilization with a pair of pants with incoming boundary 
a single circle and 
$\StabilizationGamma{\Genus}{\BoundaryComponents}$ is given by taking the union 
with a disk.
We will say that such a map is a \introduce{stabilization map of type 
$\alpha$,$\beta$ or $\gamma$} respectively.
Even though the homotopy class of these stabilization maps depend on the 
isotopy type of the boundary bordism, we will suppress $\BoundaryBordism$ from 
the notation. 
We will call $\BoundaryBordism$ the \introduce{corresponding boundary bordism} 
and whenever we need it, we will clarify that it corresponds to some fixed 
stabilization map.
In summary, one has:
\begin{corollary}
  Let $\Manifold$ be a manifold with a space of 
  $\TangentialFibration$-structures 
  of subplanes of $\TangentBundle{\Manifold}$. Then every stabilization map can 
  be written as a composition of maps of type $\alpha$, $\beta$ and $\gamma$.
\end{corollary}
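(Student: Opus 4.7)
The plan is to read off the corollary directly from Proposition~\ref{prp:StabilizationMorse}. Given a stabilization map $-\cup\BoundaryBordism$, the proposition produces boundary bordisms $\BoundaryBordism_1,\ldots,\BoundaryBordism_k$, each sitting in a slab $\DistBoundaryManifold{0}{\Manifold}\times[i-1,i]$, whose union is isotopic (with homotopic tangential structures) to a rescaled copy of $\BoundaryBordism$ in $\DistBoundaryManifold{0}{\Manifold}\times[0,k]$. Using that collars are unique up to isotopy, the canonical identification $\Elongation{\Manifold}{k}\cong\Manifold$ yields a homotopy
\[
  -\cup\BoundaryBordism
  \simeq
  (-\cup\BoundaryBordism_k)\circ\cdots\circ(-\cup\BoundaryBordism_1),
\]
which reduces the problem to identifying each factor as a map of type $\alpha$, $\beta$, or $\gamma$, or else as a map that can safely be omitted from the composition.

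Next I would classify each $\BoundaryBordism_i$ by the Morse data provided by (i) and (iii): it consists of cylindrical components together with at most one non-cylindrical component carrying at most one critical point of the projection to $[i-1,i]$. Condition (iv), combined with the standing assumption that every component of $\BoundaryBordism$ meets $\DistBoundaryManifold{0}{\Manifold}\times\{0\}$, forbids any interior local minimum, so the possible Morse indices of the critical point are $1$ or $2$. An index-$2$ critical point corresponds to capping an incoming boundary circle by a disk, giving a stabilization of type $\gamma$. An index-$1$ critical point corresponds to a pair of pants; depending on whether the descending manifold of the critical point connects two distinct incoming circles or a single one, the resulting map is of type $\alpha$ or of type $\beta$ respectively.

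Finally, whenever $\BoundaryBordism_i$ consists solely of cylindrical components, the associated map is homotopic to the identity under the collar identification $\Elongation{\Manifold}{1}\cong\Manifold$, so such factors can be dropped from the composition. The main technical obstacle I expect is precisely this last step: verifying that a purely cylindrical bordism (together with any choice of lifted tangential structure) induces a map that is genuinely homotopic to the identity on $\SpaceSubsurfaceBoundaryTangential{\TangentialFibration}{\Genus}{\BoundaryComponents}{\Manifold}{\BoundaryConditionTangential{\BoundaryCondition}}$. This should reduce to the isotopy uniqueness of collars combined with the Hurewicz fibration property of $\TangentialFibration$, which guarantees that the collar isotopy lifts to a homotopy of the tangential structures; together with the homotopy provided by clause (ii) of Proposition~\ref{prp:StabilizationMorse}, this establishes the required decomposition.
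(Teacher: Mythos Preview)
Your proposal is correct and follows the same approach as the paper, which treats the corollary as an immediate summary of the discussion preceding it (the paper gives no separate proof). You supply somewhat more detail than the paper does, in particular your explicit treatment of the purely cylindrical case; the paper simply notes that conditions (iii) and (iv) force each $\BoundaryBordism_i$ to be a pair of pants or a disk and moves on.
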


\subsection{Subset Bordisms}
  Suppose $\Manifold''$ and $\Manifold'$ are closed (as a subset) 
  codimension-$0$-submanifolds, possibly with corners, of $\Manifold$ such that 
  $\Manifold'\subset\Manifold''$. Here we do not require that the boundaries of 
  these submanifolds lie in the boundary of $\Manifold$. 
  We deliberately do not go into detail about manifolds with corners as in the 
  application later on the situation will be more transparent. 
  Suppose we have a $2$-dimensional submanifold with 
  $\TangentialFibration$-strucutre
  $
    \SubsetBordism
    \subset 
    \overline{\Manifold''\setminus\Manifold'}
  $
  , then we have the following map:
  \begin{align*}
    -\cup \SubsetBordism
    \colon
    \SpaceSubsurfaceBoundaryTangential
      {\TangentialFibration}
      {\Genus}
      {\BoundaryComponents}
      {\Manifold'}
      {\BoundaryConditionTangential{\BoundaryCondition}}
    &
    \to
    \SpaceSubsurfaceBoundaryTangential
      {\TangentialFibration}
      {\Genus'}
      {\BoundaryComponents'}
      {\Manifold''}
      {\BoundaryConditionTangential{\BoundaryCondition'}}
    \\
    \Subsurface
    &
    \mapsto
    \Subsurface
    \cup 
    \SubsetBordism
  \end{align*}
  Here $\BoundaryConditionTangential{\BoundaryCondition}$ is a boundary 
  condition that contains (but does not necessarily equal!)
  $
    \SubsetBordism\cap \BoundaryManifold{\Manifold'}
  $
  and $\Genus'$ and $\BoundaryComponents'$ depend on the topology of 
  $\SubsetBordism$.
  We call $\SubsetBordism$ a \introduce{subset bordism}. 
  \begin{remark}
  \label{rmk:SmoothingTheAngleStabilization}
    Note that if $\overline{\Manifold''\setminus\Manifold'}$ is homeomorphic to 
    a ball and either $\Manifold''$ or $\Manifold'$ has no corners, then one 
    can find homeomorphisms that "smooth the corners" 
    $
      \Phi'
      \colon 
      \Manifold'
      \to 
      \tilde{\Manifold}'
    $
    and
    $
      \Phi''
      \colon 
      \Manifold''
      \to 
      \Elongation{\tilde{\Manifold}'}{1}
    $%
    , where $\tilde{\Manifold}'$ does not have corners such that
    \[
      \begin{tikzcd}
        \SpaceSubsurfaceBoundaryTangential
          {\TangentialFibration}
          {\Genus}
          {\BoundaryComponents}
          {\Manifold'}
          {\BoundaryConditionTangential{\BoundaryCondition}}
          \ar[r,"-\cup \SubsetBordism"]
          \ar[d,"\Phi'_{\ast}"]
        &
        \SpaceSubsurfaceBoundaryTangential
          {\TangentialFibration}
          {\Genus'}
          {\BoundaryComponents'}
          {\Manifold''}
          {\BoundaryConditionTangential{\BoundaryCondition'}}
          \ar[d,"\Phi''_{\ast}"]
        \\
        \SpaceSubsurfaceBoundaryTangential
          {\tilde{\TangentialFibration}}
          {\Genus}
          {\BoundaryComponents}
          {\tilde{\Manifold}'}
          {\BoundaryConditionTangential{\tilde{\BoundaryCondition}}}
          \ar[r,"\varphi"]
        &
        \SpaceSubsurfaceBoundaryTangential
          {\tilde{\TangentialFibration}}
          {\Genus'}
          {\BoundaryComponents'}
          {\Elongation{\tilde{\Manifold}'}{1}}
          {\BoundaryConditionTangential{\tilde{\BoundaryCondition}'}}
      \end{tikzcd}
    \]
    where $\Phi'_{\ast}$ and $\Phi''_{\ast}$ are induced by the aforementioned 
    homeomorphisms and such that $\varphi$ is a stabilization map and the 
    diagram commutes up to homotopy. 
    In other words, in this case 
    $
      -\cup\SubsetBordism
    $
    behaves up to homotopy like a stabilization map. In particular their 
    homological properties (being an isomorphism or epimorphism etc.) agree. 
  \end{remark}
  Examples of subset bordisms will be the approximate augmentations,
  which will be introduced in Section~\ref{scn:Resolution}.
\section{Resolutions via Semi-Simplicial Spaces}
  \label{scn:SemiSimplicial}
The following section is based entirely on the fourth 
section of \cite{CRW16} and all its statements and proofs or references to 
proofs 
can be found there. Let $\SemiSimplicialCategory$ 
denote 
the category, whose objects are non-empty finite 
ordinals and 
whose morphisms are injective order preserving maps.
A \introduce{semi-simplicial space} is a contravariant 
functor 
$
  \SemiSimplicialSpace
  \colon
  \SemiSimplicialCategory
  \to 
  \textbf{Top}
$%
, and $\SemiSimplicialSpaceIndex{\SemiSimplicialIndex}$ 
will 
denote the image of 
$
  \Simplex{\SemiSimplicialIndex}
  \coloneqq
  \left\{
    0,\ldots,\SemiSimplicialIndex
  \right\}
$%
.
We denote by 
$\FaceMap{\FaceIndex}{\SemiSimplicialIndex}$ the 
face maps that stem from 
the inclusion 
$
  \left\{
    0,\ldots,\SemiSimplicialIndex-1
  \right\}
  \to
  \left\{
    0,\ldots \SemiSimplicialIndex
  \right\}
$
that misses 
$
  \FaceIndex
  \in
  \left\{
    0,\ldots \SemiSimplicialIndex
  \right\}
$%
.

A semi-simplicial space $\SemiSimplicialSpace$ together 
with 
a continuous map 
$
  \Augmentation{\SemiSimplicialSpaceIndex{0}}
  \colon
  \SemiSimplicialSpaceIndex{0}
  \to
  \TopologicalSpace
$
is called a 
\introduce{semi-simplicial space augmented over 
$\TopologicalSpace$} if
\begin{equation}
\label{eqt:Augmentation}
  \Augmentation{\SemiSimplicialSpaceIndex{0}}
  \circ
  \FaceMap{0}{1}
  = 
  \Augmentation{\SemiSimplicialSpaceIndex{0}}
  \circ
  \FaceMap{1}{1}
\end{equation}%
Using the face maps and 
(\ref{eqt:Augmentation}) one obtains one map for every level of 
the semi-simplicial space, the 
\introduce{$\SemiSimplicialIndex$-th augmentation map},
$\SemiSimplicialSpaceIndex{\SemiSimplicialIndex}\to 
\TopologicalSpace$, that we will denote by 
$
  \Augmentation
    {\SemiSimplicialSpaceIndex{\SemiSimplicialIndex}}
$%
. We will usually write
$
  \Augmentation{\SemiSimplicialSpace}
  \colon
  \SemiSimplicialSpace
  \to
  \TopologicalSpace
$
for the collection of all augmentation maps and call 
this the 
\introduce{augmentation}.
If $\SemiSimplicialSpace$ and $\SemiSimplicialSpace'$ 
denote two possibly augmented semi-simplicial spaces, 
we call 
a natural transformation between the functors a 
\introduce{semi-simplicial map}, if the 
semi-simplicial spaces are augmented we furthermore 
require 
the maps given by the natural transformation to commute 
with the augmentations. In that case we will denote the map by 
$\ContinuousMap_{\bullet}$, the maps on the levels of the simplicial set by 
$\ContinuousMap_{\SemiSimplicialIndex}$ and the map between the topological 
spaces by $\ContinuousMap$.

There is a geometric realization functor (compare 
\cite{ERW}) 
\[
  \GeometricRealization{\cdot} 
  \colon
  \textbf{Semi-simplicial spaces} 
  \to
  \textbf{Top}
\]
and we call a semi-simplicial space augmented over 
$\TopologicalSpace$ an 
\introduce{$\DimensionIndex$-resolution} if the induced 
map between the geometric realization of the 
semi-simplicial 
space and $\TopologicalSpace$ is 
$\DimensionIndex$-connected. 
If the induced map is a weak equivalence, we will call 
the 
augmented semi-simplicial space a \introduce{resolution 
of 
$\TopologicalSpace$}.

Furthermore we call an augmented semi-simplicial space 
$
  \Augmentation{\SemiSimplicialSpace}
  \colon
  \SemiSimplicialSpace
  \to
  \TopologicalSpace
$
an 
\introduce{augmented topological flag complex} if 
\begin{enumerate}[(i)]
  \item
  the product map 
  $
    \SemiSimplicialSpaceIndex{\SemiSimplicialIndex}
    \to 
    \SemiSimplicialSpaceIndex{0}
    \times_X
    \ldots
    \times_X 
    \SemiSimplicialSpaceIndex{0}
  $%
  , given by the product of the face maps, is an open 
  embedding
  \item 
  a tuple 
  $
    \left(
      \Point_{0},
      \ldots, 
      \Point_{\SemiSimplicialIndex}
    \right)
    \in
    \SemiSimplicialSpaceIndex{0}
    \times_X
    \ldots
    \times_X 
    \SemiSimplicialSpaceIndex{0}
  $
  is in 
  $\SemiSimplicialSpaceIndex{\SemiSimplicialIndex}$ if 
  and only if for each 
  $
    0
    \leq
    \FaceIndex
    < 
    \FaceIndex'
    \leq 
    \SemiSimplicialIndex
  $
  we have 
  $
    \left(
      \Point_{\FaceIndex},\Point_\FaceIndex'
    \right) 
    \in 
    \SemiSimplicialSpaceIndex{0}
    \times_X 
    \SemiSimplicialSpaceIndex{0}
  $
  lies in $\SemiSimplicialSpaceIndex{1}$.
\end{enumerate}
The following three lemmas will be crucial for the
proofs of the following sections and can be found in 
Section~4 of \cite{CRW16}.
\begin{lemma}
\label{lem:FlagComplex}
  Let 
  $
    \Augmentation{\SemiSimplicialSpace} 
    \colon 
    \SemiSimplicialSpace 
    \to 
    \TopologicalSpace
  $
  be an augmented topological flag complex. 
  Suppose that
  \begin{enumerate}[(i)]
  \item 
    $
      \Augmentation{\SemiSimplicialSpaceIndex{0}}
      \colon 
      \SemiSimplicialSpaceIndex{0} 
      \to 
      \TopologicalSpace
    $
    has local sections that is 
    $\Augmentation{\SemiSimplicialSpaceIndex{0}}$ is surjective and for 
    each $\Point_{0} \in \SemiSimplicialSpaceIndex{0}$ such that 
    $
      \apply
        {\Augmentation{\SemiSimplicialSpaceIndex{0}}}
        {\Point_{0}}
      =
      \Point
      \in
      \TopologicalSpace
    $ 
    there is a neighbourhood $\Neighbourhood{\Point}$ of $\Point$ and a 
    map 
    $
      s
      \colon
      \Neighbourhood{\Point}
      \to
      \SemiSimplicialSpaceIndex{0}
    $
    such that 
    $
      \apply
        {\Augmentation
          {\SemiSimplicialSpaceIndex{0}}
          \circ
          s
        }
        {\Point'}
      =
      \Point'
    $
    for all $\Point'\in \Neighbourhood{\Point}$ 
    and $\apply{s}{\Point}=\Point_{0}$.
  \item 
    given any finite collection 
    $
      \left\{
        \Point_{0}^{1},
        \ldots,
        \Point_{0}^{n}
      \right\}
      \subset 
      \SemiSimplicialSpaceIndex{0}
    $ 
    in a single fiber of $\Augmentation{\SemiSimplicialSpaceIndex{0}}$ 
    over some $\Point \in \TopologicalSpace$, there is an 
    $
      \Point_{0}^{\infty}
      \in
      \Fiber{\Point}{\Augmentation{\SemiSimplicialSpaceIndex{0}}}
    $
    such that each 
    $
      \left(
        \Point_{0}^{1},\Point_{0}^{\infty}
      \right)
    $
    is a $1$-simplex
  \end{enumerate}
  then 
  $
    \GeometricRealization{\Augmentation{\SemiSimplicialSpace}}
    \colon
    \GeometricRealization{\SemiSimplicialSpace}
    \to
    \TopologicalSpace
  $
  is a weak equivalence.
\end{lemma}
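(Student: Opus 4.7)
The plan is to follow the standard strategy for flag-complex style weak equivalence results (as in the work of Galatius--Randal-Williams). I would factor the argument into two independent claims: (a) that the augmentation $|\Augmentation{\SemiSimplicialSpace}|\colon |\SemiSimplicialSpace|\to \TopologicalSpace$ is a quasi-fibration, and (b) that each of its fibres is weakly contractible. Combining (a) and (b) with the long exact sequence of a quasi-fibration then immediately gives that $|\Augmentation{\SemiSimplicialSpace}|$ is a weak equivalence.

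For (b), I would fix $\Point \in \TopologicalSpace$ and observe that the fibre $|\Augmentation{\SemiSimplicialSpace}|^{-1}(\Point)$ is the geometric realisation of the semi-simplicial set $\SemiSimplicialSpace^{\Point}$ whose $p$-simplices are the points of $\SemiSimplicialSpaceIndex{p}$ lying over $\Point$. Because $\SemiSimplicialSpace$ is a topological flag complex, $\SemiSimplicialSpace^{\Point}$ is itself a flag complex on the vertex set $\Fiber{\Point}{\Augmentation{\SemiSimplicialSpaceIndex{0}}}$. I would then show $|\SemiSimplicialSpace^{\Point}|$ is weakly contractible by proving that every continuous map $\SmoothMap\colon K \to |\SemiSimplicialSpace^{\Point}|$ from a compact space (in particular, a sphere) is null-homotopic: the image of $\SmoothMap$ touches only finitely many vertices $\Point_{0}^{1},\ldots,\Point_{0}^{n}$; hypothesis~(ii) supplies $\Point_{0}^{\infty}$ forming a $1$-simplex with each $\Point_{0}^{i}$; the flag condition then upgrades this to the statement that $\Point_{0}^{\infty}$ spans a simplex with every simplex in the image of $\SmoothMap$, so $\SmoothMap$ cones off to $\Point_{0}^{\infty}$.

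For (a), I would apply a criterion for quasi-fibrations (the version of Dold--Thom used by Galatius--Randal-Williams): it suffices to exhibit an open cover $\{\Neighbourhood{\Point}\}$ of $\TopologicalSpace$ such that over each $\Neighbourhood{\Point}$ the restriction is a quasi-fibration, and to verify compatibility on intersections. Hypothesis~(i) produces, for each $\Point$, a local section $s\colon \Neighbourhood{\Point}\to \SemiSimplicialSpaceIndex{0}$ with $\apply{s}{\Point}=\Point_{0}$. Such a local section yields a fibrewise map on realisations: over each $\Point'\in \Neighbourhood{\Point}$ the section picks out a vertex in the fibre, and by the flag property together with hypothesis~(ii) applied fibrewise, coning to $\apply{s}{\Point'}$ contracts each fibre compatibly. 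This gives a fibrewise deformation retraction of $|\Augmentation{\SemiSimplicialSpace}|^{-1}(\Neighbourhood{\Point})$ onto $\apply{s}{\Neighbourhood{\Point}}\subset \SemiSimplicialSpaceIndex{0}$ and hence shows the restriction is a quasi-fibration.

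The main obstacle I anticipate is the technical bookkeeping in (a): the flag-complex condition is what lets one cone fibrewise inside $|\SemiSimplicialSpace|$ rather than only inside a single fibre, and one has to be careful that the coning procedure respects the open topological structure of the $\SemiSimplicialSpaceIndex{p}$ (not merely the underlying semi-simplicial set). The openness clause in the definition of a topological flag complex, together with the local sections from (i), is exactly what is needed to make the fibrewise contraction continuous in $\Point'$; once that is set up, the rest of the argument is formal. This is also why the proof is ultimately a reformulation of the flag-complex lemma of \cite{CRW16}, and I would cite the corresponding statement there for the delicate quasi-fibration step rather than reprove it from scratch.
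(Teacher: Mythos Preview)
The paper does not prove this lemma at all: it simply states it and refers to Section~4 of \cite{CRW16}. Your proposal to sketch the argument and then defer to \cite{CRW16} for the delicate step is therefore in exactly the same spirit, and your part~(b) is correct and standard.

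However, your description of step~(a) contains a genuine gap. You write that the local section $s$ picks out a vertex $s(\Point')$ in each fibre and that ``by the flag property together with hypothesis~(ii) applied fibrewise, coning to $s(\Point')$ contracts each fibre compatibly.'' This is not what hypothesis~(ii) provides. Hypothesis~(ii) says that for any finite set of vertices in a fibre there \emph{exists} a common cone point; it does not say that the particular vertex $s(\Point')$ chosen by the section is such a cone point. There is no reason for $(s(\Point'),v)$ to be a $1$-simplex for an arbitrary vertex $v$ in the fibre, so you cannot cone the entire fibre to $s(\Point')$, and hence you do not obtain the fibrewise deformation retraction you claim.

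The actual mechanism in \cite{CRW16} (and in the Galatius--Randal-Williams arguments it follows) is different: one shows directly that any relative homotopy class in $(\TopologicalSpace,|\SemiSimplicialSpace|)$ is trivial. Given $(f,g)\colon (D^{k+1},S^k)\to(\TopologicalSpace,|\SemiSimplicialSpace|)$, one uses the local sections from~(i) to produce, over a sufficiently fine triangulation of $D^{k+1}$, a lift of $f$ to $|\SemiSimplicialSpace|$ vertex by vertex; the flag condition then promotes adjacency of these lifted vertices to higher simplices, yielding a simplicial lift $\tilde f$. This lift need not agree with $g$ on $S^k$, and it is only at this final stage that hypothesis~(ii) is invoked: on each simplex of the boundary one has finitely many vertices coming from $g$ and from $\tilde f$, and~(ii) supplies a common cone point, allowing one to interpolate between $g$ and $\tilde f$. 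The local sections are used to build lifts, not to contract fibres. Since you already plan to cite \cite{CRW16} for this step, the gap is not fatal to your overall plan, but your heuristic for why the step works should be replaced by the lifting picture above.
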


\begin{lemma}
\label{lem:FibrationAugmentation}
  Let 
  $
    \Augmentation{\SemiSimplicialSpace}
    \colon
    \SemiSimplicialSpace 
    \to 
    \TopologicalSpace
  $ 
  denote an augmented semi-simplicial space. 
  If each 
  $
    \Augmentation{\SemiSimplicialSpaceIndex{\SemiSimplicialIndex}}
    \colon
    \SemiSimplicialSpaceIndex{\SemiSimplicialIndex}
    \to
    \TopologicalSpace
  $ 
  is a fibration and 
  $
    \Fiber
      {\Point}
      {\Augmentation{\SemiSimplicialSpaceIndex{\SemiSimplicialIndex}}}
  $
  denotes its fiber at $\Point\in\TopologicalSpace$, then the realization of 
  the semi-simplicial space 
  $
    \Fiber
      {\Point}
      {\Augmentation{\SemiSimplicialSpaceIndex{\SemiSimplicialIndex}}}
  $
  is weakly homotopy equivalent to the homotopy fiber of 
  $\GeometricRealization{\Augmentation{\SemiSimplicialSpace}}$ at $\Point$.
\end{lemma}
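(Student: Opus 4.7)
The plan is to construct the semi-simplicial space $F_{\bullet}$ of fibers, where $F_{i} := \Fiber{\Point}{\Augmentation{\SemiSimplicialSpaceIndex{i}}}$, with face maps induced by those of $\SemiSimplicialSpace$ (this is well-defined by the compatibility $\Augmentation{\SemiSimplicialSpaceIndex{i-1}} \circ \FaceMap{\FaceIndex}{i} = \Augmentation{\SemiSimplicialSpaceIndex{i}}$). There is a canonical map $\GeometricRealization{F_{\bullet}} \to \HomotopyFiber{\Point}{\GeometricRealization{\Augmentation{\SemiSimplicialSpace}}}$ factoring through the inclusion of the strict fiber, and the goal is to prove that it is a weak equivalence.

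First I would identify $\GeometricRealization{F_{\bullet}}$ with the strict fiber of $\GeometricRealization{\Augmentation{\SemiSimplicialSpace}}$ over $\Point$. A point of $\GeometricRealization{\SemiSimplicialSpace}$ is represented by a pair $(t, a) \in \Delta^{i} \times \SemiSimplicialSpaceIndex{i}$, whose image under $\GeometricRealization{\Augmentation{\SemiSimplicialSpace}}$ is $\apply{\Augmentation{\SemiSimplicialSpaceIndex{i}}}{a}$ and so independent of $t$. Consequently the preimage of $\Point$ consists precisely of those $(t, a)$ with $a \in F_{i}$, and since the face-map identifications of $\GeometricRealization{\SemiSimplicialSpace}$ restrict to those of $\GeometricRealization{F_{\bullet}}$, this preimage is homeomorphic to $\GeometricRealization{F_{\bullet}}$.

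The central step is then to show that $\GeometricRealization{\Augmentation{\SemiSimplicialSpace}}$ is a quasi-fibration, so that its strict fiber over $\Point$ computes its homotopy fiber. I would argue by induction on the skeletal filtration of $\GeometricRealization{\SemiSimplicialSpace}$: the inclusion of the $(n-1)$-skeleton into the $n$-skeleton is the cofibration attaching $\SemiSimplicialSpaceIndex{n} \times \Delta^{n}$ along $\SemiSimplicialSpaceIndex{n} \times \partial \Delta^{n}$, and the map $\SemiSimplicialSpaceIndex{n} \times \Delta^{n} \to \TopologicalSpace$ obtained by composing the projection with $\Augmentation{\SemiSimplicialSpaceIndex{n}}$ is itself a Hurewicz fibration with fiber $F_{n} \times \Delta^{n}$. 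The Dold--Thom gluing lemma for quasi-fibrations, applied at each stage of the filtration and verified on trivializing open covers of $\TopologicalSpace$ provided by the path-lifting functions of the fibrations $\Augmentation{\SemiSimplicialSpaceIndex{i}}$, extends the quasi-fibration property from the $(n-1)$-skeleton to the $n$-skeleton; passing to the colimit concludes the argument.

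The main obstacle is organising these local trivializations coherently across the skeletal filtration so that the Dold--Thom criterion is genuinely satisfied at every level, and not merely at each attaching step in isolation. As an alternative route that avoids this bookkeeping, one may appeal to the general comparison theorem of Ebert--Randal-Williams for semi-simplicial spaces, which directly identifies the homotopy fibers of a levelwise fibration of semi-simplicial spaces with the realization of the levelwise fibers, thereby bypassing the explicit quasi-fibration analysis entirely.
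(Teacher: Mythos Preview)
The paper does not give its own proof of this lemma: it states that the result, together with the other lemmas in that section, can be found in Section~4 of \cite{CRW16} (and the geometric realization machinery is referenced to \cite{ERW}). Your proposal is correct and is precisely the standard argument used in those references: one identifies $\GeometricRealization{F_\bullet}$ with the strict fiber of $\GeometricRealization{\Augmentation{\SemiSimplicialSpace}}$ and then shows the latter is a quasi-fibration by skeletal induction and the Dold--Thom criterion; the Ebert--Randal-Williams packaging you mention as an alternative is exactly the source the paper already points to for the realization functor. So your approach agrees with the paper's (outsourced) proof.
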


\begin{definition}
  \label{dfn:MappingPair}
  If 
  $
    \ContinuousMap
    \colon
    \TopologicalSpace_{1}
    \to
    \TopologicalSpace_{2}
  $
  denotes a continuous map, we will write $\MappingPair{\ContinuousMap}$ for 
  the pair 
  $
    \left(
      M_{\ContinuousMap},
      \TopologicalSpace_{1}
    \right)
  $%
  , where $M_{\ContinuousMap}$ denotes the mapping cylinder of $\ContinuousMap$.
\end{definition}
The following lemma hides an occurring spectral sequence argument in the proof. 
The proof of the lemma can be found in Criterion~4.4. in \cite{CRW16} and it is 
an abstraction of the first part of the proof of Theorem~9.3 in \cite{RW16}.
\begin{lemma}
\label{lem:StabilityCriterion}
  Let
  $
    \ContinuousMap_{\bullet}
    \colon
    \SemiSimplicialSpace
    \to 
    \SemiSimplicialSpace'
  $
  be a map of augmented semi-simplicial spaces such that 
  $\SemiSimplicialSpace \to \TopologicalSpace$ is an
  $(\DimensionIndex-1)$-resolution and 
  $\SemiSimplicialSpace' \to \TopologicalSpace'$ is an 
  $\DimensionIndex$-resolution.
  
  Suppose there is a sequence of path connected based spaces $(B_i,b_i)$ and 
  maps 
  $
    p_i
    \colon 
    \SemiSimplicialSpaceIndex{i}'
    \to 
    B_i
  $%
  , and form the map 
  \[
    g_i
    \colon
    \HomotopyFiber{b_i}{p_i\circ \ContinuousMap_{i}}
    \to
    \HomotopyFiber{b_i}{p_i}
  \]
  induced by the composition with $\ContinuousMap_{i}$. 
  \begin{center}
    \begin{tikzcd}
        \HomotopyFiber{b_i}{p_i\circ \ContinuousMap_{i}}
          \ar[r,"g_i"]
          \ar[d]
        & 
        \HomotopyFiber{b_i}{p_i}
          \ar[d] 
        \\
        \SemiSimplicialSpaceIndex{i}
          \ar[r,"\ContinuousMap_{i}"] 
          \ar[d] 
        & 
        \SemiSimplicialSpaceIndex{i}'
          \ar[r]
          \ar[d]
        &
        B_i
        \\
        \TopologicalSpace
          \ar[r,"\ContinuousMap"] 
        &
        \TopologicalSpace'
    \end{tikzcd}
  \end{center}
  Suppose that there is a $c\leq \DimensionIndex+1$ such that 
  \[
    \HomologyOfSpace{q}{\MappingPair{g_i}}
    =
    0
    \text{ when } 
    q+i\leq c, 
    \text{ except if } 
    (q,i)=(c,0)
  \]
  Then the map induced in homology by the composition of 
  the inclusion of the fiber and the augmentation map
  \[
    \begin{tikzcd}
      \HomologyOfSpace{q}{\MappingPair{g_0}}
        \ar[r] 
      &
      \HomologyOfSpace{q}{\MappingPair{f_0}}
        \ar[r,"\Augmentation{\bullet}"] 
      &
      \HomologyOfSpace{q}{\MappingPair{\ContinuousMap}}
    \end{tikzcd}
  \]
  is an epimorphism in degrees $q\leq c$.

  If in addition 
  $
    \HomologyOfSpace{c}{\MappingPair{g_0}}
    =
    0
  $%
  , then 
  $
    \HomologyOfSpace{q}{\MappingPair{\ContinuousMap}}
    =
    0
  $ 
  in degrees $q\leq c$.
\end{lemma}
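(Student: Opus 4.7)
The approach is a two-stage spectral sequence argument. First, for each fixed simplicial level $i$, the fiber-comparison hypothesis is converted into vanishing of $\HomologyOfSpace{q}{\MappingPair{\ContinuousMap_{i}}}$ in a range. Second, the skeletal spectral sequence of the augmented semi-simplicial pair is run, and the two resolution hypotheses are used to transfer the resulting statement on $\GeometricRealization{\ContinuousMap_{\bullet}}$ to the conclusion about $\MappingPair{\ContinuousMap}$.

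For the first stage, fix $i$ and view $\ContinuousMap_{i}\colon\SemiSimplicialSpaceIndex{i}\to\SemiSimplicialSpaceIndex{i}'$ as a map between the Hurewicz fibrations $p_{i}\circ\ContinuousMap_{i}$ and $p_{i}$ over the common path-connected base $B_{i}$. Since $g_{i}$ is by construction the induced map on fibers over $b_{i}$, the relative Serre spectral sequence takes the form
\[
  E^{2}_{p,q}
  =
  \HomologyOfSpace{p}{B_{i};\HomologyOfSpace{q}{\MappingPair{g_{i}}}}
  \Longrightarrow
  \HomologyOfSpace{p+q}{\MappingPair{\ContinuousMap_{i}}},
\]
with appropriate local coefficients. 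The hypothesis kills the $E^{2}$-page throughout the region $q\leq c-i$, except at the single position $(p,q,i)=(0,c,0)$. Inspecting the surviving contributions in total degrees $\leq c-i$ and the differentials entering $E^{2}_{0,c}$ on level zero, one reads off that $\HomologyOfSpace{q}{\MappingPair{\ContinuousMap_{i}}}=0$ whenever $q+i\leq c$ with $(q,i)\neq(c,0)$, while the edge map yields an isomorphism $\HomologyOfSpace{c}{\MappingPair{g_{0}}}\cong\HomologyOfSpace{c}{\MappingPair{\ContinuousMap_{0}}}$.

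For the second stage, the skeletal filtration of $\GeometricRealization{\SemiSimplicialSpace}$ and $\GeometricRealization{\SemiSimplicialSpace'}$ produces a first-quadrant spectral sequence of pairs
\[
  E^{1}_{p,q}
  =
  \HomologyOfSpace{q}{\MappingPair{\ContinuousMap_{p}}}
  \Longrightarrow
  \HomologyOfSpace{p+q}{\MappingPair{\GeometricRealization{\ContinuousMap_{\bullet}}}}.
\]
Substituting the level-wise vanishing from the first stage, the $E^{1}$-page is trivial in total degrees $\leq c$ apart from the single entry $E^{1}_{0,c}\cong\HomologyOfSpace{c}{\MappingPair{g_{0}}}$; all differentials leaving this entry have trivial target, so it survives to $E^{\infty}$ and determines $\HomologyOfSpace{q}{\MappingPair{\GeometricRealization{\ContinuousMap_{\bullet}}}}$ through degree $c$, as zero for $q<c$ and as a quotient of $\HomologyOfSpace{c}{\MappingPair{g_{0}}}$ when $q=c$. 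It remains to pass from $\GeometricRealization{\ContinuousMap_{\bullet}}$ to $\ContinuousMap$. The augmentations furnish a ladder of long exact sequences of pairs comparing $(\GeometricRealization{\SemiSimplicialSpace'},\GeometricRealization{\SemiSimplicialSpace})$ with $(\TopologicalSpace',\TopologicalSpace)$; the $(\DimensionIndex-1)$-connectivity of $\GeometricRealization{\SemiSimplicialSpace}\to\TopologicalSpace$ and the $\DimensionIndex$-connectivity of $\GeometricRealization{\SemiSimplicialSpace'}\to\TopologicalSpace'$, combined with the bound $c\leq \DimensionIndex+1$ and a five-lemma chase, force the augmentation to induce an isomorphism $\HomologyOfSpace{q}{\MappingPair{\GeometricRealization{\ContinuousMap_{\bullet}}}}\cong\HomologyOfSpace{q}{\MappingPair{\ContinuousMap}}$ for $q\leq c$. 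Composing the three isomorphisms and one surjection produces both assertions of the lemma.

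The principal difficulty is the careful bookkeeping at the exceptional position $(q,i)=(c,0)$: it is the unique bidegree producing a genuine surjection rather than an isomorphism in the conclusion, and one must track naturality of the edge map $\HomologyOfSpace{c}{\MappingPair{g_{0}}}\to\HomologyOfSpace{c}{\MappingPair{\ContinuousMap_{0}}}$ through both spectral sequences as well as through the five-lemma comparison. A secondary subtlety is the local coefficient system on $B_{i}$ in the Serre step; this is harmless because only the vanishing of the stalks is used, but it should be checked that no stronger input is needed to obtain the edge-map isomorphism at level $0$.
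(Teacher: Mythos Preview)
Your two-stage spectral sequence argument is the standard one and matches the proof the paper cites (Criterion~4.4 of \cite{CRW16}, itself an abstraction of the proof of Theorem~9.3 in \cite{RW16}); the paper does not reproduce the argument. Two of your claimed isomorphisms are in fact only surjections, however, and since you flag both spots yourself it is worth resolving them explicitly.

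First, the Serre edge homomorphism $\HomologyOfSpace{c}{\MappingPair{g_{0}}}\to\HomologyOfSpace{c}{\MappingPair{\ContinuousMap_{0}}}$ factors through $E^{2}_{0,c}=H_{0}\bigl(B_{0};\HomologyOfSpace{c}{\MappingPair{g_{0}}}\bigr)$, the $\pi_{1}(B_{0})$-coinvariants, so it is a surjection and in general not an isomorphism. No stronger input is available and none is needed: the lemma only claims an epimorphism, and your concern about the local coefficient system is therefore harmless for exactly the reason you suspected.

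Second, the augmentation comparison $\HomologyOfSpace{q}{\MappingPair{\GeometricRealization{\ContinuousMap_{\bullet}}}}\to\HomologyOfSpace{q}{\MappingPair{\ContinuousMap}}$ is an isomorphism from the connectivity hypotheses alone only for $q\leq\DimensionIndex-1$. At $q=c$ with $c=\DimensionIndex+1$ a bare five-lemma chase does not suffice; one must also feed in the vanishing $\HomologyOfSpace{c-1}{\MappingPair{\GeometricRealization{\ContinuousMap_{\bullet}}}}=0$ already obtained from the skeletal spectral sequence (equivalently, argue via the total homotopy cofiber of the square of augmentations). This gives surjectivity at $q=c$, which again is all that is asserted.
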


\section{Resolutions of the Space of Subsurfaces}
  \label{scn:Resolution}
  The goal of this section is to establish a resolution of 
  $
    \SpaceSubsurfaceBoundaryTangential
      {\TangentialFibration}
      {\Genus}
      {\BoundaryComponents}
      {\Manifold}
      {\BoundaryConditionTangential{\BoundaryCondition}}
  $
  and to understand how this resolution behaves with respect to stabilization 
  maps. This will enable us in Section~\ref{scn:Proof} to relate spaces of 
  subsurfaces and homological properties of maps between them to homological 
  properties of maps between spaces of subsurfaces with smaller genera.
\subsection{Constructing a Resolution for the Space of Subsurfaces}
  Recall the following notation introduced in Section~\ref{scn:Stabilization}:
  \begin{enumerate}[(i)]
    \item 
      $
        \HalfDisk
        \coloneqq
        \left\{
          \left(
            \Coordinate_{1},\Coordinate_{2}
          \right)
          \in
          \Reals^{2}
        \middle|
          \Coordinate_{2} \geq 0 
          \text{, } 
          \EuclideanNorm{
            \left(
              \Coordinate_{1},\Coordinate_{2}
            \right)
          }
          \leq 1
        \right\}
      $
    \item
      $
        \DistBoundaryManifold{0}{\HalfDisk}
        \coloneqq
        \left\{
          \left(
            \Coordinate_{1},\Coordinate_{2}
          \right)
          \in 
          \BoundaryManifold{\HalfDisk}
        \middle|
          \Coordinate_{2}=0
        \right\}
      $
    \item 
      $
        \DistBoundaryManifold{1}{\HalfDisk}
        \coloneqq
        \left\{
          \left(
            \Coordinate_{1},\Coordinate_{2}
          \right)
          \in 
          \BoundaryManifold{\HalfDisk}
        \middle|
          \EuclideanNorm{
            \left(
              \Coordinate_{1},\Coordinate_{2}
            \right)
            =
            1
          }
        \right\}
      $
  \end{enumerate}

  From here on forth let us fix
  an at least $5$-dimensional simply-connected manifold $\Manifold$ together 
  with a non-empty codimension $0$ submanifold of the boundary denoted 
  by $\DistBoundaryManifold{0}{\Manifold}$ and a space of tangential structures
  $
    \TangentialFibration
    \colon
    \TangentialSpace{\Manifold}
    \to 
    \Grassmannian{2}{\TangentBundle{\Manifold}}
  $%
  . Furthermore let $\BoundaryConditionTangential{\BoundaryCondition}$ denote 
  some fixed boundary condition for subsurfaces with tangential structure in 
  $\Manifold$. 
  Lastly, let $\DistBall$ denote a codimension $0$ ball in 
  $\DistBoundaryManifold{0}{\Manifold}$ that intersects $\BoundaryCondition$ in 
  two intervals $\DistIntervall{0}$ and $\DistIntervall{1}$, which we label and 
  orientate according to the orientation of $\BoundaryCondition$ once and for 
  all. 
  
  To define a resolution of the space of subsurfaces, we will need the 
  following definition:
  \begin{definition}
  Given
  $
    \Subsurface
    \in
    \SpaceAllSubsurfaceBoundaryTangential
      {\TangentialFibration}
      {\Manifold}
      {\BoundaryConditionTangential{\BoundaryCondition}}
  $%
  , we call an embedding 
  \[
    \DiskEmbedding
    \colon
    \left(
      \HalfDisk
      ,
      \DistBoundaryManifold{1}{\HalfDisk}
    \right)
    \to
    \left(
      \Manifold
      ,
      \Subsurface
    \right)
  \] 
  which maps $\DistBoundaryManifold{0}{\HalfDisk}$ to $\DistBall$ and 
  $\left(1,0\right)$ to $\DistIntervall{0}$ and 
  $\left(-1,0\right)$ to $\DistIntervall{1}$ an \introduce{arc in 
  $\Subsurface$ with embedded boundary isotopy}. 
  We will call a thickening of an arc in $\Subsurface$ with embedded 
  boundary isotopy
  $
    \left(
      \DiskEmbedding,
      \ThickenedHalfDisk{\DiskEmbedding}
    \right)
  $
  a \introduce{thickened arc in $\Subsurface$ with embedded boundary isotopy}
  if the image of $\ThickenedHalfDisk{\DiskEmbedding}$ restricted 
  to the normal bundle of $\DistBoundaryManifold{0}{\HalfDisk}$
  lies in $\DistBall$.
  In this case we will call the image of
  $
    \at
      {\DiskEmbedding}
      {\DistBoundaryManifold{1}{\HalfDisk}}
  $ 
  the \introduce{underlying arc of $\DiskEmbedding$} and the image of 
  of $\ThickenedHalfDisk{\DiskEmbedding}$ restricted 
  to the normal bundle of 
  $
    \apply
      {\DiskEmbedding}
      {\DistBoundaryManifold{1}{\HalfDisk}}
  $
  the \introduce{thickened underlying arc of $\DiskEmbedding$} and we will 
  write 
  $\ThickenedHalfDisk{\DiskEmbedding}_{\Subsurface}$ for it.
  For notational reasons we will usually just write $\DiskEmbedding$ for the 
  thickened arc in $\Subsurface$ with embedded boundary isotopy even tough the 
  correct notation would include $\ThickenedHalfDisk{\DiskEmbedding}$.
  
  An embedding (without the presence of a subsurface)
  $
    \DiskEmbedding
    \colon 
    \HalfDisk
    \to
    \Manifold
  $
  such that 
  $
    \apply{\DiskEmbedding}{\DistBoundaryManifold{1}{\HalfDisk}}
    \subset
    \Manifold
    \setminus 
    \BoundaryManifold{\Manifold}
  $, 
  $
    \apply{\DiskEmbedding}{\DistBoundaryManifold{0}{\HalfDisk}}
    \subset
    \DistBall
  $ 
  and 
  $
    \apply{\DiskEmbedding}{\left((-1)^k,0\right)}
    \in
    \DistIntervall{k}
  $
  , will be called an \introduce{arc with embedded boundary isotopy} i.e. we 
  drop $\Subsurface$ from the previous notation.
  Similarly for a \introduce{thickened arc with embedded boundary isotopy}.
\end{definition}

  \begin{definition}
  \label{dfn:ArcResolution}
  Let 
  $\ArcResolution
    {\TangentialFibration}
    {\Genus}
    {\BoundaryComponents}
    {\Manifold}
    {\BoundaryConditionTangential{\BoundaryCondition}}
    {\DistBall}
    {\bullet}
  $
  denote the following semi-simplicial space:
  The space of $\SemiSimplicialIndex$-simplices consists of tuples
  $
    \left(
      \Subsurface,
      \left(
        \DiskEmbedding^0,\ThickenedHalfDisk{\DiskEmbedding}^{0}
      \right)
      ,\ldots,
      \left(
        \DiskEmbedding^{\SemiSimplicialIndex}
        ,
        \ThickenedHalfDisk{\DiskEmbedding}^{\SemiSimplicialIndex}
      \right)
    \right)
  $ 
  such that:
  \begin{enumerate}[(i)]
    \item 
      $
        \Subsurface
        \in
        \SpaceSubsurfaceBoundaryTangential
          {\TangentialFibration}
          {\Genus}
          {\BoundaryComponents}
          {\Manifold}
          {\BoundaryConditionTangential{\BoundaryCondition}}
      $
      is a surface with tangential structure in $\Manifold$.
    \item 
      All the 
      $
        \left(
          \DiskEmbedding^{\FaceIndex}
          ,
          \ThickenedHalfDisk{\DiskEmbedding}^{\FaceIndex}
        \right)
      $ 
      are thickened arcs in $\Subsurface$ with embedded boundary isotopy.
    \item
      The images of all $\ThickenedHalfDisk{\DiskEmbedding}^{\FaceIndex}$ are 
      disjoint.
    \item
      $\Subsurface$ without all underlying thickened arcs is connected, i.e. 
      the arc system consisting of the underlying arcs is coconnected.
    \item 
      The starting and endpoints of the underlying arcs are ordered from 
      $0$ to $\SemiSimplicialIndex$ in $\DistIntervall{0}$ and ordered from 
      $\SemiSimplicialIndex$ to $0$ in $\DistIntervall{1}$ (Note that 
      this makes sense as $\DistIntervall{k}$ is oriented).
      In this case, we will say that the arc system consisting of the 
      underlying arcs is \introduce{ordered}.
  \end{enumerate}
  The $\FaceIndex$-th face map forgets the $\FaceIndex$-th embedding and we 
  topologize the set of $\SemiSimplicialIndex$-simplices as a subspace of 
  \[
    \SpaceSubsurfaceBoundaryTangential
      {\TangentialFibration}
      {\Genus}
      {\BoundaryComponents}
      {\Manifold}
      {\BoundaryConditionTangential{\BoundaryCondition}}
    \times 
    \ThickenedEmbeddingSpaceBoundaryCondition
      {(\HalfDisk)^{\SemiSimplicialIndex+1}}
      {\Manifold}
      {\DistBall}
  \]
  where in
  $
    \ThickenedEmbeddingSpaceBoundaryCondition
      {(\HalfDisk)^{\SemiSimplicialIndex+1}}
      {\Manifold}
      {\DistBall}
  $
  the boundary condition means just that $\DistBoundaryManifold{0}{\HalfDisk}$ 
  maps to $\DistBall$.
  This semi-simplicial space possesses an augmentation map 
  $\Augmentation{\bullet}$ to 
  $
    \SpaceSubsurfaceBoundaryTangential
      {\TangentialFibration}
      {\Genus}
      {\BoundaryComponents}
      {\Manifold}
      {\BoundaryConditionTangential{\BoundaryCondition}}
  $ 
  which forgets the thickened arcs with embedded boundary isotopy. 
\end{definition}
\begin{notation}
  Sometimes we want to distinguish between the cases, where the intersection of 
  $\BoundaryCondition$ and $\DistBall$ meets a single connected component of 
  $\delta$ or two different components. 
  To emphasize this we will sometimes write 
  $
    \ArcResolutionSingle
      {\TangentialFibration}
      {\Genus}
      {\BoundaryCondition}
      {\Manifold}
      {\BoundaryConditionTangential{\BoundaryCondition}}
      {\DistBall}
      {\bullet}
  $
  for the single component case and 
  $
    \ArcResolutionTwo
      {\TangentialFibration}
      {\Genus}
      {\BoundaryCondition}
      {\Manifold}
      {\BoundaryConditionTangential{\BoundaryCondition}}
      {\DistBall}
      {\bullet}
  $ 
  for the different components case.
\end{notation}

  The general proof strategy of the following proposition was communicated to 
  me by Frederico Cantero as a proposed fix to some errors in the proof of 
  Proposition 5.3 in 
  \cite{CRW16}.
  \begin{proposition}
\label{prp:ArcResolution}
  Suppose that $\Manifold$ is an at least $5$-dimensional and simply-connected 
  manifold, then
  $\ArcResolution
    {\TangentialFibration}
    {\Genus}
    {\BoundaryComponents}
    {\Manifold}
    {\BoundaryConditionTangential{\BoundaryCondition}}
    {\DistBall}
    {\bullet}
  $
  is a $\left(\Genus-1\right)$-resolution of 
  $
    \SpaceSubsurfaceBoundaryTangential
      {\TangentialFibration}
      {\Genus}
      {\BoundaryComponents}
      {\Manifold}
      {\BoundaryConditionTangential{\BoundaryCondition}}
  $
  i.e. 
  \[
    \Augmentation{\bullet}
    \colon
    \GeometricRealization
      {\ArcResolution
        {\TangentialFibration}
        {\Genus}
        {\BoundaryComponents}
        {\Manifold}
        {\BoundaryConditionTangential{\BoundaryCondition}}
        {\DistBall}
        {\bullet}
      }
    \to
    \SpaceSubsurfaceBoundaryTangential
      {\TangentialFibration}
      {\Genus}
      {\BoundaryComponents}
      {\Manifold}
      {\BoundaryConditionTangential{\BoundaryCondition}}
  \]
  is $\left(\Genus-1\right)$-connected or in other words the homotopy fiber 
  of this map is $\left(\Genus-2\right)$-connected.
\end{proposition}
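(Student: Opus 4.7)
My plan is to apply Lemma~\ref{lem:FibrationAugmentation}: first show that each augmentation map on levels is a fibration, then identify the homotopy fibre as the realization of a semi-simplicial space of arc systems in a fixed subsurface, and finally deduce the connectivity of this fibre from the classical connectivity of arc complexes on surfaces. The target connectivity $\Genus-1$ for the augmentation translates into $(\Genus-2)$-connectivity of the fibre.

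First I would check that the $p$-th augmentation
$
  \Augmentation
    {\SemiSimplicialSpaceIndex{p}}
  \colon
  \ArcResolution
    {\TangentialFibration}
    {\Genus}
    {\BoundaryComponents}
    {\Manifold}
    {\BoundaryConditionTangential{\BoundaryCondition}}
    {\DistBall}
    {p}
  \to
  \SpaceSubsurfaceBoundaryTangential
    {\TangentialFibration}
    {\Genus}
    {\BoundaryComponents}
    {\Manifold}
    {\BoundaryConditionTangential{\BoundaryCondition}}
$
is a fibration. The group $\DiffeomorphismGroupBoundary{\Manifold}$ acts equivariantly on both sides by postcomposition, the base is $\DiffeomorphismGroupBoundary{\Manifold}$-locally retractile by Proposition~\ref{prp:SpaceSubsurfacesLocallyRetractile}, and one can transport ordered tuples of thickened arcs with embedded boundary isotopy along such a retraction, so Lemma~\ref{lem:LocallyRetractile} yields a locally trivial fibration. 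By Lemma~\ref{lem:FibrationAugmentation}, the homotopy fibre of $\GeometricRealization{\Augmentation{\bullet}}$ over $\Subsurface$ is weakly equivalent to the realization of the semi-simplicial space $F_{\bullet}(\Subsurface)$ of ordered coconnected tuples of thickened arcs in $\Subsurface$ with embedded boundary isotopy. Crucially, the tangential structure does not reappear in the fibre, since arcs carry no $\TangentialFibration$-data and $\Subsurface$ is fixed together with its structure.

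Next I would contract away the thickenings. The forgetful map from $F_{\bullet}(\Subsurface)$ to the semi-simplicial space $F_{\bullet}'(\Subsurface)$ of ordered coconnected un-thickened arc systems is a levelwise locally trivial fibration whose fibres are products of spaces of the form $\TubularNeighbourhoodClosedSpace{a(\HalfDisk)}{\Manifold}$; by Lemma~\ref{lem:TubularNeighbourhoodSpaceContractible} these are contractible, so $\GeometricRealization{F_{\bullet}(\Subsurface)}\simeq \GeometricRealization{F_{\bullet}'(\Subsurface)}$. It remains to show that $F_{\bullet}'(\Subsurface)$ is $(\Genus-2)$-connected. The strategy is to pass to isotopy classes: the simplicial complex of isotopy classes of ordered coconnected arc systems on $\SurfaceGB{\Genus}{\BoundaryComponents}$ with endpoints on the prescribed intervals is $(\Genus-2)$-connected by the classical Harer/Wahl arc-complex theorem (in both the single-component and two-component endpoint cases), and each path component of $F_{0}'(\Subsurface)$ corresponding to a fixed isotopy class is contractible as the orbit of $\DiffeomorphismGroupBoundary{\Subsurface}$ through an arc has contractible stabilizers that extend to isotopies of $\Manifold$; a standard simplicial approximation argument applied to the augmented topological flag-like structure of $F_{\bullet}'(\Subsurface)$ then transfers the connectivity bound from the discrete to the topologized setting.

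The main obstacle will be this last transfer step and the careful comparison between $F_{\bullet}'(\Subsurface)$ and its discrete analogue: the coconnectedness condition in (iv) of Definition~\ref{dfn:ArcResolution} is not a pairwise condition, so $F_{\bullet}'(\Subsurface)$ is not literally a flag complex, and one must verify that the tuples satisfying coconnectedness assemble into an open subspace whose geometric realization still captures the connectivity of the simplicial complex of isotopy classes. This is precisely the step where the proof of Proposition~5.3 in \cite{CRW16} requires the correction communicated by Cantero, and reproducing it carefully is the technical heart of the argument. The dimensional assumption $\dim\Manifold\geq 5$ and the simply-connectedness enter through general position (so arcs in $\Manifold$ meeting $\Subsurface$ can be isotoped into $\Subsurface$) and through $\pi_{1}(\Manifold,\Subsurface)=0$ (so relative arcs can be homotoped into $\Subsurface$ rel boundary), which is needed to compare arcs in $\Subsurface$ to arcs in $\Manifold$ with boundary on $\Subsurface$.
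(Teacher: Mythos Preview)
Your overall strategy matches the paper's: reduce to $\TangentialFibration=+$ via a pullback square, show the levelwise augmentations are fibrations using $\DiffeomorphismGroupBoundary{\Manifold}$-local retractility, apply Lemma~\ref{lem:FibrationAugmentation} to identify the homotopy fibre over $\Subsurface$ with the realization of the semi-simplicial fibre, and then compare that fibre to a discrete arc complex whose connectivity is known.

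The gap is exactly where you locate it, but your proposed resolution does not work as stated. The ``standard simplicial approximation argument'' you invoke does not exist for this situation: the fibre is not a topological flag complex (coconnectedness is not a pairwise condition, as you note), and merely arguing that path components of $0$-simplices are contractible would not suffice even if it were true. The paper instead applies Theorem~A.7 of \cite{Kupers}, which transfers connectivity from a discrete simplicial complex to a topological ordered flag space under four hypotheses: (i) the discrete target $X(\Subsurface;\Point_0,\Point_1)_\bullet$ is weakly Cohen--Macaulay of dimension $\Genus-1$; (ii) the topological fibre is Hausdorff and an ordered flag space; (iii) the comparison map is simplexwise injective; (iv) a lifting condition for $0$-simplices. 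Here $X$ is a semi-simplicial \emph{set} of thickened arcs in $\Subsurface$ meeting two fixed boundary points radially; the comparison map forgets the half-disk entirely and joins arc endpoints to those fixed points through a collar.

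Most of the real content is in (i). Connectivity of $|X_\bullet|$ comes from Theorem~2.9 in \cite{nariman}, but weakly Cohen--Macaulay also requires that the link of every $p$-simplex be $(\Genus-p-4)$-connected. The paper proves this by identifying a subcomplex $Y_\bullet$ of the link (arcs ordered above all arcs of the given simplex) with the arc complex of the cut surface, and then applying the bad-simplex technique of \cite{HatcherVogtmann} to propagate connectivity from $Y_\bullet$ to the full link. Hypothesis (iv) is where $\pi_1(\Manifold)=0$ and $\dim\Manifold\ge 5$ enter, essentially as you indicate: given a discrete arc in $\Subsurface$, one extends to a half-disk in $\Manifold$ via \cite{H61} and then achieves disjointness from finitely many given thickened half-disks by transversality. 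Your preliminary step of contracting the tubular neighbourhoods via Lemma~\ref{lem:TubularNeighbourhoodSpaceContractible} is harmless but unnecessary; the paper keeps them and absorbs the comparison into the map to $X$.
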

\begin{proof}
  Since 
  $
    \SpaceSubsurfaceBoundaryTangential
      {+}
      {\Genus}
      {\BoundaryComponents}
      {\Manifold}
      {\BoundaryConditionTangential{\BoundaryCondition}}
  $
  is 
  $\DiffeomorphismGroupBoundary{\Manifold}$-locally 
  retractile and the augmentation maps
  \[
    \ArcResolution
      {+}
      {\Genus}
      {\BoundaryComponents}
      {\Manifold}
      {\BoundaryConditionTangential{\BoundaryCondition}}
      {\DistBall}
      {\bullet}
    \to 
    \SpaceSubsurfaceBoundaryTangential
      {+}
      {\Genus}
      {\BoundaryComponents}
      {\Manifold}
      {\BoundaryConditionTangential{\BoundaryCondition}}
  \]
  are equivariant with respect to the natural action of 
  $\DiffeomorphismGroupBoundary{\Manifold}$ 
  via post composition, Lemma~\ref{lem:LocallyRetractile} implies that the 
  augmentation maps are locally trivial fibrations. 
  Using Lemma \ref{lem:FibrationAugmentation} we get that the 
  homotopy fiber 
  $
    \Fiber
      {\Subsurface}
      {\GeometricRealization{\Augmentation{\bullet}}}
  $
  is weakly homotopy equivalent to 
  $
    \GeometricRealization
      {\Fiber{\Subsurface}{\Augmentation{\bullet}}}
  $%
  . The $\SemiSimplicialIndex$-simplices of 
  $\Fiber{\Subsurface}{\Augmentation{\bullet}}$ are disjoint, ordered 
  thickened arcs in $\Subsurface$ with embedded boundary isotopy. 
  
  Fix $\Point_{k}\in \DistIntervall{k}$ and charts 
  $\Neighbourhood{\Point_{k}}$ centered at $\Point_{k}$. 
  We say that an embedding 
  $
    \Embedding
    \colon 
    \left[0,1\right]
    \to
    \Subsurface
  $ 
  meets $\Point_{k}$ in a radial fashion if its image in 
  $\Neighbourhood{\Point_{k}}$ is a straight ray meeting $0$.
  
  Let 
  $
    X
    \!
    \left(
      \Subsurface
      ;
      \Point_{0},\Point_{1}
    \right)
    _
    {\bullet}
  $
  denote the following semi-simplicial set:
  Its set of zero simplices consists of embeddings of an arc into 
  $\Subsurface$ that meets $\Point_{0}\in\DistIntervall{0}$ and 
  $\Point_{1}\in\DistIntervall{1}$ in a radial fashion together with a 
  tubular neighborhood of said arc and we furthermore require these arcs to 
  be non-isotopic to a part of the boundary.
  The set of $\SemiSimplicialIndex$-simplices is given by collections of 
  $0$-simplices such that the complement of the arcs is connected, and such 
  that the tubular neighborhoods are disjoint except for their intersection 
  with $\Neighbourhood{\Point_{k}}$,
  lastly we require that the ordering of the arcs at $\Point_{k}$ with 
  respect to the angle of the corresponding ray to be order-preserving at 
  $\Point_{0}$ and order-reversing at $\Point_{1}$. 
  
  There is a map 
  $
    \ContinuousMap
    \colon
    \Fiber{\Subsurface}{\Augmentation{\bullet}}
    \to
    X
    \!
    \left(\Subsurface,\Point_{0},\Point_{1}\right)
    _
    {\bullet}
  $
  given by sending an element in the fiber to the underlying thickened arcs 
  and then adding a collar of $\Subsurface$ to $\Subsurface$ and then joining 
  the arcs to $\Point_{k}$ in a controlled way to produce an element in 
  $
    X
    \!
    \left(\Subsurface,\Point_{0},\Point_{1}\right)
    _
    {\bullet}
  $%
  .
  
  Let $\Fiber{\Subsurface}{\Augmentation{\bullet}}^{\delta}$ denote the 
  semi-simplicial set (not space)
  $\Fiber{\Subsurface}{\Augmentation{\bullet}}$ i.e. the set of 
  $\SemiSimplicialIndex$-simplices 
  is given by the underlying set of 
  $\Fiber{\Subsurface}{\Augmentation{\SemiSimplicialIndex}}$.
  Now we want to apply Theorem A.7 of \cite{Kupers}, which says that, if
  \begin{enumerate}[(i)]
    \item 
      $
        X
        \!
        \left(\Subsurface,\Point_{0},\Point_{1}\right)
        _
        {\bullet}
      $
      is weakly Cohen-Macauley of dimension $(\Genus-1)$ i.e. it is 
      $(\Genus-2)$-connected and the link of every $p$-simplex is 
      $(\Genus-2-p-2)$-connected.
    \item 
      $\Fiber{\Subsurface}{\Augmentation{\bullet}}$ is Hausdorff and an 
      ordered flag space (See \cite{Kupers} for the definition of ordered 
      flag space)
    \item 
      $
        \GeometricRealization{\ContinuousMap}
        \colon
        \GeometricRealization
          {\Fiber{\Subsurface}{\Augmentation{\bullet}}^\delta}
        \to
        X
        \!
        \left(\Subsurface,\Point_{0},\Point_{1}\right)
        _
        {\bullet}
      $
      is simplexwise injective i.e. for every 
      $p$-simplex 
      $
        \left\{
          \DiskEmbedding_{0},
          \ldots,
          \DiskEmbedding_{p}
        \right\}
      $
      with $p\geq 1$ we have 
      $
        \apply{\ContinuousMap}{\DiskEmbedding_{i}}
        \neq
        \apply{\ContinuousMap}{\DiskEmbedding_{j}}
      $
      for all $i\neq j$.
    \item 
      For all finite collections 
      $
        \left\{
          \DiskEmbedding_{1},
          \ldots,
          \DiskEmbedding_{\FaceIndex}
        \right\}
        \subset 
        \Fiber{\Subsurface}{\Augmentation{0}}
      $
      and 
      $
        \Embedding_{0}
        \in
        X
        \!
        \left(\Subsurface,\Point_{0},\Point_{1}\right)
        _
        {\bullet}
      $ 
      such that 
      $
        \left(
          \Embedding_{0},
          \apply{\ContinuousMap}{\DiskEmbedding_{i}}
        \right)
      $
      is a $1$-simplex in 
      $
        X
        \!
        \left(\Subsurface,\Point_{0},\Point_{1}\right)
        _
        {\bullet}
      $ 
      then there exists an element 
      $
        \DiskEmbedding_{0}
        \in
        \Fiber{\Subsurface}{\Augmentation{0}}
      $ 
      such that $\apply{\ContinuousMap}{\DiskEmbedding_{0}}=\Embedding_{0}$ 
      and such that 
      $
        \left(
          \DiskEmbedding_{0},
          \DiskEmbedding_{i}
        \right)
      $
      is a $1$-simplex in 
      $\Fiber{\Subsurface}{\Augmentation{\bullet}}$.
  \end{enumerate}
  then 
  $
    \GeometricRealization{
      \Fiber{\Subsurface}{\Augmentation{\bullet}}
    }
  $
  is $(\Genus-2)$-connected.
  
  It is an easy observation that the second and third condition hold.
  For the first condition, note that Theorem 2.9 in \cite{nariman} 
  proves that 
  $
    \GeometricRealization{
      X
      \!
      \left(\Subsurface,\Point_{0},\Point_{1}\right)
      _
      {\bullet}
    }
  $ 
  is $(\Genus-2)$-connected, so we only have to prove that the link of a 
  $p$-simplex is $(\Genus-2-p-2)$-connected. Given a $p$-simplex 
  $\IndexSimplex{p}$, the link of this simplex consists of all arc systems 
  whose union with the arc system of $\IndexSimplex{p}$ is still a 
  coconnected arc system.
  Consider the  closed complement of the thickened arcs in $\IndexSimplex{p}$ 
  denoted by $\CutOut{\Subsurface}{\IndexSimplex{p}}$
  (where this means, that we take the complement of the tubular 
  neighborhoods, then take the closure in $\Subsurface$ and then resolve the 
  singularities at $\Point_{k}$). Evidently every simplex in the link of 
  $\IndexSimplex{p}$ consists of arcs that lie in 
  $\CutOut{\Subsurface}{\IndexSimplex{p}}$. 
  Furthermore the points 
  $\Point_{k}$ correspond to $p+2$ points in 
  $\CutOut{\Subsurface}{\IndexSimplex{p}}$ and each of these points 
  corresponds to an interval between two consecutive arcs in 
  $\IndexSimplex{p}$ in the ordering of arcs connecting the points 
  $\Point_{k}$.
  We will denote the subspace of the link of $\IndexSimplex{p}$ consisting of 
  those arcs that are bigger than all arcs in $\IndexSimplex{p}$ in the 
  ordering of the arcs connecting $\Point_{k}$ by $Y_{\bullet}$. 
  Unbending corners, $Y_{\bullet}$ is isomorphic to 
  $
    X
    \!
    \left(\CutOut{\Subsurface}{\IndexSimplex{p}},\Point'_{0},\Point'_{1}\right)
    _
    {\bullet}
  $
  which is at least $(g-p-3)$-connected by Theorem~2.9 in \cite{nariman} and 
  a simple Euler characteristic computation. 
  
  We will show, using the 
  techniques of Section~2.1 (i.e. bad and good simplices) in 
  \cite{HatcherVogtmann}, that the link has the 
  same connectivity as its subspace $Y_{\bullet}$. 
  We call a simplex in the link of $\IndexSimplex{p}$ bad if all its arcs
  are smaller than some arc in $\IndexSimplex{p}$. 
  These certainly satisfy the conditions for a set of bad simplices in 
  \cite{HatcherVogtmann}. 
  Furthermore given a bad simplex $\IndexSimplex{k}'$, then 
  $G_{\IndexSimplex{k}'}$ is given by 
  $
     X
    \!
    \left(
      \CutOut{\Subsurface}{\IndexSimplex{p}\cup\IndexSimplex{k}'},
      \Point'_{0},
      \Point'_{1}
    \right)
    _
    {\bullet}
  $%
  , which is $(\Genus-p-3-k-1)$-connected. 
  Thus by (b) of Corollary~2.2 in \cite{HatcherVogtmann} the link is at least 
  $(\Genus-2-p-1)$-connected.
  
  For the fourth condition, note that since
  $
    \left(
      \Embedding_{0},
      \apply{\ContinuousMap}{\DiskEmbedding_{i}}
    \right)
  $ is a $1$-simplex, in particular $\Embedding_0$ and the underlying arc of 
  $\apply{\ContinuousMap}{\DiskEmbedding_{i}}$ are coconnected and ordered 
  correctly, we can find a thickened embedded arc $\bar{\DiskEmbedding}_{0}$ 
  in $\Subsurface$ such that 
  $
  \apply
    {\ContinuousMap}
    {\bar{\DiskEmbedding}_{0}}
    =\Embedding_{0}
  $
  (this expression makes sense, since $\ContinuousMap$ only 
  takes the underlying thickened arcs of the elements in
  $\Fiber{\Subsurface}{\Augmentation{\bullet}}$) and such that the pairs
  consisting of $\bar{\DiskEmbedding}_{0}$ 
  and the thickened underlying arcs of $\DiskEmbedding_{i}$ form an 
  ordered and coconnected arc system. 
  Using that $\Manifold$ is simply-connected and 
  the main result of \cite{H61} we can find an extension of 
  $\bar{\DiskEmbedding}_{0}$ to an arc in $\Manifold$ with embedded boundary 
  isotopy.
  Since the dimension of $\Manifold$ is at 
  least $5$ we conclude that a small perturbation of this embedding yields an 
  embedding disjoint from all the other $\DiskEmbedding_{i}$ by 
  transversality. 
  Adding a sufficently small tubular neighborhood finishes the proof of the 
  fourth assumption and therefore implies the desired claim about the 
  connectivity of $\Fiber{\Subsurface}{\Augmentation{\bullet}}$.
  
  To finish the proof, just observe that the following diagram is a pullback 
  diagram, where the lower map denotes the map forgetting the tangential 
  structure.
  \begin{center}
    \begin{tikzcd}
      \GeometricRealization
        {\ArcResolution
          {\TangentialFibration}
          {\Genus}
          {\BoundaryComponents}
          {\Manifold}
          {\BoundaryConditionTangential{\BoundaryCondition}}
          {\DistBall}
          {\bullet}
        }
        \ar[r]
        \ar[d]
      &
      \GeometricRealization
        {\ArcResolution
          {+}
          {\Genus}
          {\BoundaryComponents}
          {\Manifold}
          {\BoundaryConditionTangential{\BoundaryCondition}}
          {\DistBall}
          {\bullet}
        }
        \ar[d]
      \\
      \SpaceSubsurfaceBoundaryTangential
        {\TangentialFibration}
        {\Genus}
        {\BoundaryComponents}
        {\Manifold}
        {\BoundaryConditionTangential{\BoundaryCondition}}
        \ar[r]
      &
      \SpaceSubsurfaceBoundary
        {\Genus}
        {\BoundaryComponents}
        {\Manifold}
        {\BoundaryCondition}
    \end{tikzcd}
  \end{center}
\end{proof}

We will need to establish some more notation.
\begin{definition}
  A \introduce{thickened strip with embedded boundary isotopy} consists of the 
  following data:
  \begin{enumerate}[(i)]
    \item
      A thickened arc with embedded boundary isotopy 
      $
        \DiskEmbedding
        \colon
        \HalfDisk
        \to 
        \Manifold
      $
    \item
      A subbundle $\LineBundle$ of 
      $
        \NormalBundle
          {\apply
            {\DiskEmbedding}
            {\DistBoundaryManifold{1}{\HalfDisk}}
          }
          {\Manifold}
      $
  \end{enumerate}
  such that the restriction 
  $
    \at
      {\ThickenedHalfDisk{\DiskEmbedding}}
      {\at{\LineBundle}{(1,0)}}
  $
  lies in $\DistIntervall{0}$ and 
  $
    \at
    {\ThickenedHalfDisk{\DiskEmbedding}}
    {\at{\LineBundle}{(-1,0)}}
  $
  lies in $\DistIntervall{1}$. We will denote the restriction of 
  $\ThickenedHalfDisk{\DiskEmbedding}$ to $\LineBundle$ by 
  $
    \ThickenedHalfDisk{\DiskEmbedding}
    _
    {\LineBundle}
  $%
  .
  If we add a tangential structure for the image of 
  $
    \ThickenedHalfDisk{\DiskEmbedding}
    _
    {\LineBundle}
  $%
  , which agrees with the tangential structure specified by 
  $\BoundaryConditionTangential{\BoundaryCondition}$, 
  wherever this makes sense, we will call this a \introduce{thickened strip 
  with tangential structure and embedded boundary isotopy}.
  We will call the image of 
  $
    \ThickenedHalfDisk{\DiskEmbedding}
    _
    {\LineBundle}
  $
  the \introduce{strip of $(\DiskEmbedding,\LineBundle)$}.
\end{definition}
\begin{notation}
  Similar as before we will usually suppress 
  $\ThickenedHalfDisk{\DiskEmbedding}$ and 
  $\ThickenedHalfDisk{\DiskEmbedding}_{\LineBundle}$ 
  from this notation and only write $\DiskEmbedding$ for the tuple 
  $(\DiskEmbedding,\ThickenedHalfDisk{\DiskEmbedding},\LineBundle)$.
\end{notation}

\begin{definition}
  Let 
  $
    \SpaceOfStrips
      {\TangentialFibration}
      {\Manifold}
      {\BoundaryConditionTangential{\BoundaryCondition}}
      {\DistBall}
      {\SemiSimplicialIndex}
  $
  denote the set of tuples 
  $
    \left(
      \DiskEmbedding^{0},
      \ldots,
      \DiskEmbedding^{\SemiSimplicialIndex}
    \right)
  $
  such that all the $\DiskEmbedding^k$ are thickened strips with tangential 
  structures and embedded boundary isotopies such that all the images of 
  $\ThickenedHalfDisk{\DiskEmbedding}^k$
  are disjoint and the starting and endpoints of the underlying arcs lie in 
  $\DistIntervall{0}$ and $\DistIntervall{1}$ and they are
  ordered from $0$ to $\SemiSimplicialIndex$ in $\DistIntervall{0}$ and ordered 
  from $\SemiSimplicialIndex$ to $0$ in $\DistIntervall{1}$.

  We topologize this as a subset of 
  $
    \ThickenedEmbeddingSpaceBoundaryCondition
      {\HalfDisk\times \Simplex{\SemiSimplicialIndex}}
      {\Manifold}
      {\DistBall}
    \times
    \EmbeddingSpaceTangential
      {\TangentialFibration}
      {I\times I \times \Simplex{\SemiSimplicialIndex}}
      {\Manifold}
  $%
  , where the thickened arcs with embedded boundary isotopies correspond to 
  elements in the first factor and the strips and their tangential structure 
  to the second factor.
  
  There is a continuous map
  $
    \ArcResolution
      {\TangentialFibration}
      {\Genus}
      {\BoundaryComponents}
      {\Manifold}
      {\BoundaryConditionTangential{\BoundaryCondition}}
      {\DistBall}
      {\SemiSimplicialIndex}
    \to
    \SpaceOfStrips
      {\TangentialFibration}
      {\Manifold}
      {\BoundaryConditionTangential{\BoundaryCondition}}
      {\DistBall}
      {\SemiSimplicialIndex}
  $
  which forgets the surface but keeps the tubular 
  neighbourhood in the surface and the tangential structure on it. 
\end{definition}

\begin{lemma}
\label{lem:HomotopyFiberArcResolution}
  The restriction map 
  $
    \ArcResolution
      {\TangentialFibration}
      {\Genus}
      {\BoundaryComponents}
      {\Manifold}
      {\BoundaryConditionTangential{\BoundaryCondition}}
      {\DistBall}
      {\SemiSimplicialIndex}
    \to
    \SpaceOfStrips
      {\TangentialFibration}
      {\Manifold}
      {\BoundaryConditionTangential{\BoundaryCondition}}
      {\DistBall}
      {\SemiSimplicialIndex}
  $
  is a Serre fibration and the fiber over a point 
  $
    \DiskEmbedding
    =
    \left(
      \DiskEmbedding_{0},\ldots,\DiskEmbedding_{\SemiSimplicialIndex}
    \right)
  $
  can be identified with 
  $
    \SpaceSubsurfaceBoundaryTangential
      {\TangentialFibration}
      {\Genus-\SemiSimplicialIndex-1}
      {\BoundaryComponents+\SemiSimplicialIndex+1}
      {\CutOut{\Manifold}{\DiskEmbedding}}
      {\BoundaryConditionTangential{\CutOut{\BoundaryCondition}{\DiskEmbedding}}}
  $ 
  if we are considering 
  $
    \ArcResolutionSingle
      {\TangentialFibration}
      {\Genus}
      {\BoundaryComponents}
      {\Manifold}
      {\BoundaryConditionTangential{\BoundaryCondition}}
      {\DistBall}
      {\SemiSimplicialIndex}
  $%
  . If we are considering 
  $
    \ArcResolutionTwo
      {\TangentialFibration}
      {\Genus}
      {\BoundaryComponents}
      {\Manifold}
      {\BoundaryConditionTangential{\BoundaryCondition}}
      {\DistBall}
      {\SemiSimplicialIndex}
  $ 
  instead, then the fiber is given by 
  $
    \SpaceSubsurfaceBoundaryTangential
      {\TangentialFibration}
      {\Genus-\SemiSimplicialIndex}
      {\BoundaryComponents+\SemiSimplicialIndex-1}
      {\CutOut{\Manifold}{\DiskEmbedding}}
      {\BoundaryConditionTangential{\CutOut{\BoundaryCondition}{\DiskEmbedding}}}
  $%
  .
\end{lemma}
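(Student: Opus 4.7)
The plan is to handle the two assertions separately: establishing the fibration property via local retractility, following the template set in Proposition~\ref{prp:SpaceSubsurfacesLocallyRetractile} and Lemma~\ref{lem:ForgettingTangentialFibration}, and identifying the fiber via a cutting and gluing bijection together with a straightforward Euler characteristic computation.

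First I would verify that
$\SpaceOfStrips{\TangentialFibration}{\Manifold}{\BoundaryConditionTangential{\BoundaryCondition}}{\DistBall}{\SemiSimplicialIndex}$
is $\HomeoTangential{\Manifold}{\TangentialSpace{\Manifold}}$-locally retractile. Given a tuple of thickened strips, the underlying thickened arcs admit disjoint tubular neighborhoods in $\Manifold$; pushing a nearby configuration back onto the original along these fibers, and then extending via Theorem~6.1.1 of \cite{WallDifferentialTopology} to an ambient isotopy fixing a neighborhood of the boundary outside $\DistBall$, produces a $\DiffeomorphismGroupBoundary{\Manifold}$-local retraction at the level of the thickened arcs and their $\LineBundle$-subbundles. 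As in the proof of Lemma~\ref{lem:ForgettingTangentialFibration}, this local retraction can be promoted to one taking values in $\HomeoTangential{\Manifold}{\TangentialSpace{\Manifold}}$ using contractibility of a small closed neighborhood of the identity in $\DiffeomorphismGroupBoundary{\Manifold}$ together with compactness of $\Manifold$, so that the Hurewicz fibration $\TangentialFibration$ can be trivialized along the resulting homotopy of Grassmannian differentials. Since the map forgetting the surface is $\HomeoTangential{\Manifold}{\TangentialSpace{\Manifold}}$-equivariant by postcomposition, Lemma~\ref{lem:LocallyRetractile} yields local triviality, and paracompactness of the base (by the argument used in Lemma~\ref{lem:ForgettingTangentialFibration}) upgrades this to a Serre fibration.

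For the fiber over a tuple of strips $\DiskEmbedding=(\DiskEmbedding_{0},\ldots,\DiskEmbedding_{\SemiSimplicialIndex})$, an element consists of a surface with tangential structure $\Subsurface$ in $\Manifold$ containing each strip of $\DiskEmbedding_{k}$ as a $2$-dimensional submanifold (with matching tubular neighborhood in $\Manifold$) and whose tangential structure restricts to the prescribed one on each strip. Cutting $\Subsurface$ along the strips yields a surface with tangential structure in $\CutOut{\Manifold}{\DiskEmbedding}$ whose boundary condition is exactly $\BoundaryConditionTangential{\CutOut{\BoundaryCondition}{\DiskEmbedding}}$; the inverse operation of gluing back along the strips is continuous in both directions, giving the claimed homeomorphism. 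The numerology of $\Genus'$ and $\BoundaryComponents'$ is then obtained from the fact that each cut along an arc with endpoints on the boundary raises $\chi$ by $1$. In the single-component case each arc has both endpoints on the same boundary circle and is non-separating by the coconnectedness assumption, so each of the $\SemiSimplicialIndex+1$ cuts sends $(g,b)$ to $(g-1,b+1)$, producing $(\Genus-\SemiSimplicialIndex-1,\BoundaryComponents+\SemiSimplicialIndex+1)$. In the two-component case the first cut merges the two boundary circles meeting $\DistBall$, sending $(g,b)$ to $(g,b-1)$, while each of the remaining $\SemiSimplicialIndex$ cuts then has both endpoints on the merged circle and behaves as in the single-component case, yielding $(\Genus-\SemiSimplicialIndex,\BoundaryComponents+\SemiSimplicialIndex-1)$.

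The hardest step will be the fibration argument: producing local retractions that simultaneously respect the embedded half-discs, the subbundles $\LineBundle$ inside the normal bundles that cut out the strips, and the tangential structure on each strip, while fixing all boundary data prescribed by $\BoundaryConditionTangential{\BoundaryCondition}$ along $\BoundaryManifold{\Manifold}\setminus\DistBall$. Once this local retractility statement is set up carefully, the fiber identification is a formal consequence of the cut-glue bijection, and the formulas for $\Genus'$ and $\BoundaryComponents'$ reduce to the Euler characteristic bookkeeping sketched above.
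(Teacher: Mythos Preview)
Your fiber identification is correct and matches the paper's argument. The gap is in the fibration step. The template of Lemma~\ref{lem:ForgettingTangentialFibration} shows that a space \emph{without} tangential structure is $\HomeoTangential{\Manifold}{\TangentialSpace{\Manifold}}$-locally retractile, and then concludes that the forgetful map \emph{from} the $\theta$-version \emph{to} the $+$-version is a fibration. It does not show that the $\theta$-version itself is locally retractile. Concretely: your construction produces, for nearby $a,a'\in\SpaceOfStrips{\TangentialFibration}{\Manifold}{\BoundaryConditionTangential{\BoundaryCondition}}{\DistBall}{\SemiSimplicialIndex}$, a diffeomorphism $f$ carrying the underlying strip data of $a$ to that of $a'$, and then a lift $\phi\in\HomeoTangential{\Manifold}{\TangentialSpace{\Manifold}}$ of $\GrassmannianDifferential{f}$ obtained from a bundle trivialization. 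Acting by $\phi$ on the tangential structure $\TangentialStructure{a}$ yields \emph{some} tangential structure on the new strip, but there is no reason it equals $\TangentialStructure{a'}$: the lift $\phi$ was built from a trivialization that never saw $\TangentialStructure{a'}$. So you have shown at most that $\SpaceOfStrips{+}{\Manifold}{\BoundaryCondition}{\DistBall}{\SemiSimplicialIndex}$ is locally retractile, hence that $\ArcResolution{\TangentialFibration}{\Genus}{\BoundaryComponents}{\Manifold}{\BoundaryConditionTangential{\BoundaryCondition}}{\DistBall}{\SemiSimplicialIndex}\to\SpaceOfStrips{+}{\Manifold}{\BoundaryCondition}{\DistBall}{\SemiSimplicialIndex}$ is a fibration; but Lemma~\ref{lem:FibrationComposition} goes the wrong direction to deduce from this that the map to $\SpaceOfStrips{\TangentialFibration}{\Manifold}{\BoundaryConditionTangential{\BoundaryCondition}}{\DistBall}{\SemiSimplicialIndex}$ is one.

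The paper circumvents exactly this obstacle. It passes through a parametrized version $\ArcResolutionEmbedding{\TangentialFibration}{\Genus}{\BoundaryComponents}{\Manifold}{\BoundaryConditionTangential{\BoundaryCondition}}{\DistBall}{\SemiSimplicialIndex}$ sitting over $\ArcResolution{\TangentialFibration}{\Genus}{\BoundaryComponents}{\Manifold}{\BoundaryConditionTangential{\BoundaryCondition}}{\DistBall}{\SemiSimplicialIndex}$, and builds a commutative square whose top row is the restriction map $\EmbeddingSpaceBoundaryConditionTangential{\TangentialFibration}{\SurfaceGB{\Genus}{\BoundaryComponents}}{\Manifold}{\BoundaryConditionTangential{\BoundaryCondition}}\to\EmbeddingSpaceBoundaryConditionTangential{\TangentialFibration}{\sigma\times[-1,1]}{\Manifold}{\BoundaryCondition'}$. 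That restriction map is a fibration by Lemma~\ref{lem:RestrictionFibration}, whose proof handles the tangential structure honestly via the deformation retract of Lemma~\ref{lem:MetricDeformation} and a lifting against $\TangentialFibration$. The square is a pullback, so the map $\ArcResolutionEmbedding{\TangentialFibration}{\Genus}{\BoundaryComponents}{\Manifold}{\BoundaryConditionTangential{\BoundaryCondition}}{\DistBall}{\SemiSimplicialIndex}\to\SpaceOfStrips{\TangentialFibration}{\Manifold}{\BoundaryConditionTangential{\BoundaryCondition}}{\DistBall}{\SemiSimplicialIndex}$ is a fibration; since the projection down to $\ArcResolution{\TangentialFibration}{\Genus}{\BoundaryComponents}{\Manifold}{\BoundaryConditionTangential{\BoundaryCondition}}{\DistBall}{\SemiSimplicialIndex}$ is also a surjective fibration, Lemma~\ref{lem:FibrationComposition} (now applied in the correct direction) gives the desired conclusion. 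The moral is that the tangential structure on the strips is controlled not by an ambient group action but by the restriction-fibration lemma, and your argument needs an ingredient of that type.
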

Here we define 
$
  \CutOut{\Manifold}{\DiskEmbedding}
  =
  \overline{
    \Manifold
    \setminus 
    \cup_{\FaceIndex} 
    \ThickenedHalfDisk{\DiskEmbedding}_{\FaceIndex}
  }
$
and the boundary condition
$
  \BoundaryConditionTangential{\CutOut{\BoundaryCondition}{\DiskEmbedding}}
$ 
is given by the boundary of the image of the strips together with the 
intersection of the boundary condition $\BoundaryCondition$ and the submanifold
$\CutOut{\Manifold}{\DiskEmbedding}$.
\begin{remark}
\label{rmk:SmoothingTheAngleRemovingThings}
  Note that $\CutOut{\Manifold}{\DiskEmbedding}$ is a manifold with corners and 
  $\BoundaryConditionTangential{\CutOut{\BoundaryCondition}{\DiskEmbedding}}$ 
  is a boundary condition for a manifold with corners, but we can circumvent 
  this by fixing a homeomorphism from $\CutOut{\Manifold}{\DiskEmbedding}$ to 
  $\Manifold$ that is a diffeomorphism at all points except the corner points. 
  Heuristically such a homeomorphisms pushes the dent which came from 
  removing $\DiskEmbedding$ to the outside.
  
  Note that the boundary condition 
  $\BoundaryConditionTangential{\CutOut{\BoundaryCondition}{\DiskEmbedding}}$ 
  maps the corners of 
  $\CutOut{\SurfaceGB{\Genus}{\BoundaryCondition}}{\DiskEmbedding}$, i.e. the 
  surface with the corresponding arcs removed, to the corners of 
  $\CutOut{\Manifold}{\DiskEmbedding}$. 
  Therefore postcomposing a subsurface with the aforementioned homeomorphism 
  yields an embedded subsurface without corners in a manifold without corners. 
  Using this we can treat the occurring spaces of subsurfaces with corners 
  of manifolds with corners up to homeomorphism like ordinary spaces of 
  subsurfaces. 
\end{remark}
\begin{proof}
  We will construct the following diagram and prove that the maps labelled 
  $\text{fib}$ are fibrations to prove the claim:
  \begin{equation}
  \label{eqt:FibrationSpaceOfStrips}
    \begin{tikzcd}
      \EmbeddingSpaceBoundaryConditionTangential
        {\TangentialFibration}
        {\SurfaceGB{\Genus}{\BoundaryComponents}}
        {\Manifold}
        {\BoundaryConditionTangential{\BoundaryCondition}}
        \ar[r, "\text{fib}"]
      &
      \EmbeddingSpaceBoundaryConditionTangential
        {\TangentialFibration}
        {\sigma\times[-1,1]}
        {\Manifold}
        {\BoundaryCondition'}
        \\
        \ArcResolutionEmbedding
          {\TangentialFibration}
          {\Genus}
          {\BoundaryComponents}
          {\Manifold}
          {\BoundaryConditionTangential{\BoundaryCondition}}
          {\DistBall}
          {\SemiSimplicialIndex}
          \ar[u,"\text{fib}"]
          \ar[r,"\text{fib}"]
          \ar[d]
        &
        \SpaceOfStrips
          {\TangentialFibration}
          {\Manifold}
          {\BoundaryConditionTangential{\BoundaryCondition}}
          {\DistBall}
          {\SemiSimplicialIndex}
          \ar[u,"\text{fib}"]
        \\
        \ArcResolution
          {\TangentialFibration}
          {\Genus}
          {\BoundaryComponents}
          {\Manifold}
          {\BoundaryConditionTangential{\BoundaryCondition}}
          {\DistBall}
          {\SemiSimplicialIndex}
          \ar[ur]
    \end{tikzcd}
  \end{equation}
  Fix a set $\sigma$ of $\SemiSimplicialIndex+1$ disjoint arcs in 
  $\SurfaceGB{\Genus}{\BoundaryComponents}$ starting at 
  $
    \apply
      {\BoundaryConditionPar^{-1}}
      {\DistIntervall{0}}
  $
  and ending at 
  $
    \apply
      {\BoundaryConditionPar^{-1}}
      {\DistIntervall{1}}
  $%
  , and fulfilling the ordering condition of 
  Definition~\ref{dfn:ArcResolution}, and such that their complement is 
  connected. Here $\BoundaryConditionPar$ is a parametrization of 
  $\BoundaryCondition$.
  Then, the top map is the restriction map, which was proven to be a 
  fibration in Lemma~\ref{lem:RestrictionFibration}. 
  The right vertical map forgets the boundary isotopy. 
  Note that this is a fibration as the following diagram is a pullback and the 
  right hand map is a fibration by Lemma~\ref{lem:LocallyRetractile}:
  \[
    \begin{tikzcd}
      \SpaceOfStrips
        {\TangentialFibration}
        {\Manifold}
        {\BoundaryConditionTangential{\BoundaryCondition}}
        {\DistBall}
        {\SemiSimplicialIndex}
        \ar[r]
        \ar[d]
      &
       \SpaceOfStrips
        {\TangentialFibration}
        {\Manifold}
        {\BoundaryConditionTangential{\BoundaryCondition}}
        {\DistBall}
        {\SemiSimplicialIndex}
        \ar[d]
      \\
      \EmbeddingSpaceBoundaryConditionTangential
        {\TangentialFibration}
        {
          [0,1]
          \times
          [0,1]
          \times 
          \SemiSimplicialSpaceIndex{\SemiSimplicialIndex}
        }
        {\Manifold}
        {\BoundaryCondition'}
        \ar[r]
      &
      \EmbeddingSpaceBoundaryConditionTangential
        {+}
        {
          [0,1]
          \times
          [0,1]
          \times 
          \SemiSimplicialSpaceIndex{\SemiSimplicialIndex}
        }
        {\Manifold}
        {\BoundaryCondition'}
    \end{tikzcd}
  \]
  The definition of the space
  $
    \ArcResolutionEmbedding
      {\TangentialFibration}
      {\Genus}
      {\BoundaryComponents}
      {\Manifold}
      {\BoundaryConditionTangential{\BoundaryCondition}}
      {\DistBall}
      {\SemiSimplicialIndex}
  $
  is as follows:
  We define 
  $
    \ArcResolutionNoDisk
      {\TangentialFibration}
      {\Genus}
      {\BoundaryComponents}
      {\Manifold}
      {\BoundaryConditionTangential{\BoundaryCondition}}
      {\DistBall}
      {\SemiSimplicialIndex}
  $
  just like the arc resolution, but without the boundary isotopies (i.e. 
  subsurfaces with arc systems). 
  We define 
  $
    \ArcResolutionEmbedding
      {\TangentialFibration}
      {\Genus}
      {\BoundaryComponents}
      {\Manifold}
      {\BoundaryConditionTangential{\BoundaryCondition}}
      {\DistBall}
      {\SemiSimplicialIndex}
  $
  via the following diagram in which the space in the top left corner of both 
  squares are defined as pullbacks:
  \[
    \begin{tikzcd}
      \ArcResolutionEmbedding
        {\TangentialFibration}
        {\Genus}
        {\BoundaryComponents}
        {\Manifold}
        {\BoundaryConditionTangential{\BoundaryCondition}}
        {\DistBall}
        {\SemiSimplicialIndex}
        \ar[r]
        \ar[d]
      &
      \ArcResolutionEmbedding
        {+}
        {\Genus}
        {\BoundaryComponents}
        {\Manifold}
        {\BoundaryConditionTangential{\BoundaryCondition}}
        {\DistBall}
        {\SemiSimplicialIndex}
        \ar[d]
        \ar[r]
      &
      \EmbeddingSpaceBoundaryCondition
        {\SurfaceGB{\Genus}{\BoundaryComponents}}
        {\Manifold}
        {\BoundaryConditionTangential{\BoundaryConditionPar}}
        \ar[d]
      \\
      \ArcResolution
        {\TangentialFibration}
        {\Genus}
        {\BoundaryComponents}
        {\Manifold}
        {\BoundaryConditionTangential{\BoundaryCondition}}
        {\DistBall}
        {\SemiSimplicialIndex}
        \ar[r]
      &
      \ArcResolution
        {+}
        {\Genus}
        {\BoundaryComponents}
        {\Manifold}
        {\BoundaryConditionTangential{\BoundaryCondition}}
        {\DistBall}
        {\SemiSimplicialIndex}
        \ar[r]
      &
      \ArcResolutionNoDisk
        {+}
        {\Genus}
        {\BoundaryComponents}
        {\Manifold}
        {\BoundaryConditionTangential{\BoundaryCondition}}
        {\DistBall}
        {\SemiSimplicialIndex}
    \end{tikzcd}
  \]
  Note that 
  $\DiffeomorphismGroupArcs{\sigma}{\SurfaceGB{\Genus}{\BoundaryComponents}}$
  the group of diffeomorphisms that fix the boundary and $\sigma$ acts on the 
  space of embeddings in the diagram and the left most map is the quotient map 
  associated to this action. It is a fibration by 
  Lemma~\ref{lem:LocallyRetractile} and the fact that 
  $
    \ArcResolutionNoDisk
      {+}
      {\Genus}
      {\BoundaryComponents}
      {\Manifold}
      {\BoundaryConditionTangential{\BoundaryCondition}}
      {\DistBall}
      {\SemiSimplicialIndex}
  $
  is $\DiffeomorphismGroupBoundary{\Manifold}$-locally retractile. Since the 
  left vertical map is a fibration, all vertical maps in the diagram are 
  fibrations. Furthermore the top square in (\ref{eqt:FibrationSpaceOfStrips}) 
  is also easily seen to be a pullback diagram. 
  Hence all maps in that square are fibrations. 
  Since all maps in (\ref{eqt:FibrationSpaceOfStrips}), except for 
  the diagonal one, are fibrations
  Lemma~\ref{lem:FibrationComposition} implies that the diagonal map is a 
  fibration as well.
  
  Lastly we have to determine the fiber over $\DiskEmbedding$. 
  Note that removing $\SemiSimplicialIndex+1$ strips from a surface increases 
  its Euler characteristic by $\SemiSimplicialIndex+1$, since it corresponds to 
  taking out $\SemiSimplicialIndex+1$ one-cells. 
  To calculate the genus and the number of boundary components of the fiber 
  it is enough to specify its number of boundary components. 
  We will take out the $\SemiSimplicialIndex+1$ strips consecutively and there 
  are two cases we have to distinguish:
  Either all boundary points lie in the same connected component of the 
  boundary of the surface (Case~1) or they all lie in different connected 
  components (Case~2). 
  But note that removing the first arc in Case~2, reduces the calculation 
  to Case~1 for the rest of the arcs. 
  But in the first case we see that taking out an arc increases the number of 
  boundary components by one and the requirement for the ordering of the arcs 
  ensures that the consecutive arcs all connect the same connected component of 
  the boundary. 
  
  All in all we conclude that in Case~2 the number of boundary components 
  changes to $\BoundaryComponents+\SemiSimplicialIndex-1$ and in the first case 
  it changes to $\BoundaryComponents+\SemiSimplicialIndex+1$. 
  Using the Euler-characteristic formula for surfaces, we can 
  compute the corresponding genus to get the above specifications of the fiber. 
\end{proof}

\begin{definition}
  For some 
  $
    \left(
      \DiskEmbedding^{0},
      \ldots,
      \DiskEmbedding^{\SemiSimplicialIndex}
    \right)
    \in
    \SpaceOfStrips
      {\TangentialFibration}
      {\Manifold}
      {\BoundaryConditionTangential{\BoundaryCondition}}
      {\DistBall}
      {\SemiSimplicialIndex}
    $
  we call the composition of the inclusion of the fiber in Lemma 
  \ref{lem:HomotopyFiberArcResolution} with the 
  augmentation of 
  $
    \Augmentation{\SemiSimplicialIndex}
    \colon
    \ArcResolution
      {\TangentialFibration}
      {\Genus}
      {\BoundaryComponents}
      {\Manifold}
      {\BoundaryConditionTangential{\BoundaryCondition}}
      {\DistBall}
      {\SemiSimplicialIndex}
    \to
    \SpaceSubsurfaceBoundaryTangential
      {\TangentialFibration}
      {\Genus}
      {\BoundaryComponents}
      {\Manifold}
      {\BoundaryConditionTangential{\BoundaryCondition}}
  $
  the \introduce{$\SemiSimplicialIndex$-approximate augmentation} of the 
  resolution
  $
    \ArcResolution
      {\TangentialFibration}
      {\Genus}
      {\BoundaryComponents}
      {\Manifold}
      {\BoundaryConditionTangential{\BoundaryCondition}}
      {\DistBall}
      {\SemiSimplicialIndex}
  $ 
  over the $\SemiSimplicialIndex$-simplex
  $
    \left(
      \DiskEmbedding^{0},
      \ldots,
      \DiskEmbedding^{\SemiSimplicialIndex}
    \right)
  $%
  .
\end{definition}

\subsection{Stabilizing the arc resolution}
  We want to extend the maps 
  $
    \StabilizationAlpha{\Genus}{\BoundaryComponents}
  $
  and 
  $
    \StabilizationBeta{\Genus}{\BoundaryComponents}
  $
  to the aforementioned resolution as shown below: 
  \[
    \begin{tikzcd}
      \ArcResolution
        {\TangentialFibration}
        {\Genus}
        {\BoundaryComponents}
        {\Manifold}
        {\BoundaryConditionTangential{\BoundaryCondition}}
        {\DistBall}
        {\SemiSimplicialIndex}
        \arrow[dashrightarrow]{rr}
        \arrow{d}{\Augmentation{\SemiSimplicialIndex}}
      &
      & 
      \ArcResolution
        {\TangentialFibration}
        {\Genus+1}
        {\BoundaryComponents-1}
        {\Elongation{\Manifold}{1}}
        {\BoundaryConditionTangential{\bar{\BoundaryCondition}}}
        {\bar{\DistBall}}
        {\SemiSimplicialIndex}
        \arrow{d}{\Augmentation{\SemiSimplicialIndex}}
      \\
      \SpaceSubsurfaceBoundaryTangential
        {\TangentialFibration}
        {\Genus}
        {\BoundaryComponents}
        {\Manifold}
        {\BoundaryConditionTangential{\BoundaryCondition}}
        \arrow{rr}{\StabilizationAlpha{\Genus}{\BoundaryComponents}} 
      &
      &
      \SpaceSubsurfaceBoundaryTangential
        {\TangentialFibration}
        {\Genus+1}
        {\BoundaryComponents-1}
        {\Elongation{\Manifold}{1}}
        {\BoundaryConditionTangential{\bar{\BoundaryCondition}}}
    \end{tikzcd}
  \]
  and similarly for $\StabilizationBeta{\Genus}{\BoundaryComponents}$.
  Let $\StabilizationBordism$ denote a subsurface in 
  $
    \DistBoundaryManifold{0}{\Manifold}
    \times
    I
  $
  used in the definition of a stabilization map.
  We define $\bar{\DistBall}=\DistBall\times\{1\}$ and
  $\bar{\DistIntervall{i}}=\DistIntervall{i}\times\{1\}$ for $i\in 
  \{0,1\}$ and we assume without loss of generality that 
  $
    \StabilizationBordism
    \cap
    \left(
      \DistBall 
      \times 
      I
    \right)
    =
    \left(
      \DistBall
      \cap 
      \BoundaryCondition
    \right)
    \times 
    I
  $%
  , in particular
  $
    \bar{\DistBall}
    \cap
    \bar{\BoundaryCondition}
    =
    \left(
      \DistBall
      \cap
      \BoundaryCondition
    \right)
    \times
    \{1\}
  $%
  , where 
  $\BoundaryConditionTangential{\bar{\BoundaryCondition}}$ denotes the boundary 
  condition of the image of the stabilization map.
  (Here we isotope $\StabilizationBordism$ not relative the boundary to get 
  that $\DistBall \times I$ is contained in $\StabilizationBordism$).
  
  Define $\tilde{\DiskEmbedding}$ for 
  $
    \DiskEmbedding
    \in
    \SpaceOfStrips
      {\TangentialFibration}
      {\Manifold}
      {\BoundaryConditionTangential{\BoundaryCondition}}
      {\DistBall}
      {0}
  $
  as
  $
    \apply
      {\DiskEmbedding}
      {\DistBoundaryManifold{0}{\HalfDisk}}
    \times
    I
  $%
  . This allows us to extend the stabilization maps to
  $
  \ArcResolution
    {\TangentialFibration}
    {\Genus}
    {\BoundaryComponents}
    {\Manifold}
    {\BoundaryConditionTangential{\BoundaryCondition}}
    {\DistBall}
    {\SemiSimplicialIndex}
  $
  as follows 
  \[
    \left(
      \Subsurface,
      \left(
        \DiskEmbedding^{0},
        \ldots,
        \DiskEmbedding^{\SemiSimplicialIndex}
      \right)
    \right)
    \mapsto
    \left(
      \Subsurface
      \cup
      \StabilizationBordism
      \left(
        \DiskEmbedding^{0}
        \cup
        \tilde{\DiskEmbedding}^{0},
        \ldots,
        \DiskEmbedding^{\SemiSimplicialIndex}
        \cup
        \tilde{\DiskEmbedding}^{\SemiSimplicialIndex}
      \right)
    \right)
  \]%
  and we write $\bar{\DiskEmbedding}$ for 
  $\DiskEmbedding\cup\tilde{\DiskEmbedding}$. 
  This yields the dashed lifts. Since they commute with the face 
  maps and with the augmentation maps we get a map of augmented semi-simplicial 
  spaces 
  \[
    \StabilizationAlpha{\Genus}{\BoundaryComponents}^{\bullet} 
    \colon
    \ArcResolutionTwo
      {\TangentialFibration}
      {\Genus}
      {\BoundaryComponents}
      {\Manifold}
      {\BoundaryConditionTangential{\BoundaryCondition}}
      {\DistBall}
      {\bullet}
    \to
    \ArcResolutionSingle
      {\TangentialFibration}
      {\Genus+1}
      {\BoundaryComponents-1}
      {\Elongation{\Manifold}{1}}
      {\BoundaryConditionTangential{\bar{\BoundaryCondition}}}
      {\bar{\DistBall}}
      {\bullet}
    \]
    which is augmented over $\StabilizationAlpha{\Genus}{\BoundaryComponents}$. 
    If $\bar{\DistBall}$ intersects two different 
    components of the new boundary condition, then we obtain analogously
  \[
    \StabilizationBeta{\Genus}{\BoundaryComponents}^{\bullet} 
    \colon
    \ArcResolutionSingle
      {\TangentialFibration}
      {\Genus}
      {\BoundaryComponents}
      {\Manifold}
      {\BoundaryConditionTangential{\BoundaryCondition}}
      {\DistBall}
      {\bullet}
    \to
    \ArcResolutionTwo
      {\TangentialFibration}
      {\Genus}
      {\BoundaryComponents+1}
      {\Elongation{\Manifold}{1}}
      {\BoundaryConditionTangential{\bar{\BoundaryCondition}}}
      {\bar{\DistBall}}
      {\bullet}
  \]
  augmented over $\StabilizationBeta{\Genus}{\BoundaryComponents}$ .
  
  All of these considerations imply the following corollary:
  \begin{corollary}
  The semi-simplicial pair 
  $
    \MappingPair{\StabilizationAlpha{\Genus}{\BoundaryComponents}^{\bullet}}
  $
  together with the natural augmentation map to 
  $\MappingPair{\StabilizationAlpha{\Genus}{\BoundaryComponents}}$ 
  is a $\Genus$-resolution i.e. the map between pairs is $\Genus$-connected. 
  
  The semi-simplicial pair 
  $
    \MappingPair{\StabilizationBeta{\Genus}{\BoundaryComponents}^{\bullet}}
  $
  together with the natural augmentation map to 
  $\MappingPair{\StabilizationBeta{\Genus}{\BoundaryComponents}}$ 
  is a $(\Genus-1)$-resolution i.e. the map between pairs is 
  $(\Genus-1)$-connected. 
\end{corollary}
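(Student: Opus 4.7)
The plan is to combine Proposition~\ref{prp:ArcResolution} for both the source and target of the stabilization map with the long exact sequence of a total (iterated) cofiber of a commutative square. First I would observe that the commuting square consisting of the augmented semi-simplicial stabilization map on top and the ordinary stabilization map on the bottom, linked by the vertical augmentations, induces after realization a natural map of pairs $\GeometricRealization{\MappingPair{\StabilizationAlpha{\Genus}{\BoundaryComponents}^{\bullet}}} \to \MappingPair{\StabilizationAlpha{\Genus}{\BoundaryComponents}}$. Saying this map of pairs is $\Genus$-connected is equivalent to saying the induced map on mapping cones (i.e.\ on the quotients collapsing the subspaces) is $\Genus$-connected, so it suffices to bound the connectivity of the total cofiber $T$ of the square.

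By Proposition~\ref{prp:ArcResolution} the source augmentation is $(\Genus-1)$-connected and, since the target has genus $\Genus+1$, the target augmentation is $\Genus$-connected. The total cofiber $T$ admits two canonically equivalent descriptions: it is the homotopy cofiber of the induced map between the cones of the two vertical augmentations and, equivalently, the homotopy cofiber of the induced map between the cones of the two horizontal stabilization maps. From the vertical-then-horizontal description, the cofibration sequence $\mathrm{Cone}(\Augmentation{\mathrm{source}}) \to \mathrm{Cone}(\Augmentation{\mathrm{target}}) \to T$ yields a long exact sequence in reduced homology in which $\tilde H_k$ of the source cone vanishes for $k \leq \Genus-1$ and $\tilde H_k$ of the target cone vanishes for $k \leq \Genus$; a direct inspection then forces $\tilde H_k(T) = 0$ for $k \leq \Genus$. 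Via the horizontal-then-vertical description this means precisely that the map of cones of the horizontal stabilization maps is $\Genus$-connected, which is the claim for $\StabilizationAlpha{\Genus}{\BoundaryComponents}$.

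The case of $\StabilizationBeta{\Genus}{\BoundaryComponents}^{\bullet}$ proceeds analogously, except that $\beta$ preserves the genus, so by Proposition~\ref{prp:ArcResolution} both augmentations are $(\Genus-1)$-connected; the corresponding long exact sequence then only forces $\tilde H_k(T) = 0$ for $k \leq \Genus-1$, yielding a $(\Genus-1)$-connected map of pairs, as asserted. The main subtle point is the canonical identification between the two iterations of the total cofiber; this is a standard property of homotopy pushouts of spaces, valid here because the relevant maps arise as semi-simplicial inclusions and hence are cofibrations, but it must be invoked explicitly in order to turn the vanishing of $\tilde H_k(T)$ into a statement about the map of cones of the stabilization maps. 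The passage between homological and homotopical connectivity is then routine via the Hurewicz theorem in the simply-connected range, which is guaranteed by the standing simply-connectedness assumption on $\Manifold$.
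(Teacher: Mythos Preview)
Your overall strategy---combine the connectivities of the two augmentations from Proposition~\ref{prp:ArcResolution}---is the right one, and the homological computation $\tilde H_k(T)=0$ for $k\le\Genus$ is correct. The gaps are in the two conversions between homotopy and homology.

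The opening equivalence ``map of pairs is $\Genus$-connected if and only if the induced map on mapping cones is $\Genus$-connected'' fails in general: relative homotopy groups $\pi_k(X,A)$ agree with $\pi_k(X/A)$ only in a Blakers--Massey range, and you have no a priori connectivity of the horizontal inclusions in the two pairs to invoke. Likewise, the final Hurewicz step would require the mapping cones of the stabilization maps to be simply connected; that is a statement about spaces of subsurfaces and is \emph{not} implied by $\Manifold$ being simply connected.

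Both issues vanish if you never leave homotopy groups. Proposition~\ref{prp:ArcResolution} says the source augmentation is $(\Genus-1)$-connected and, since the target of $\StabilizationAlpha{\Genus}{\BoundaryComponents}$ has genus $\Genus+1$, the target augmentation is $\Genus$-connected. Map the long exact sequence of homotopy groups of the realized semi-simplicial pair to that of $\MappingPair{\StabilizationAlpha{\Genus}{\BoundaryComponents}}$ and apply the five-lemma directly: you obtain isomorphisms on the relative $\pi_k$ for $k<\Genus$ and a surjection for $k=\Genus$, which is precisely the assertion. For $\StabilizationBeta{\Genus}{\BoundaryComponents}$ both augmentations are $(\Genus-1)$-connected and the same five-lemma argument yields a $(\Genus-1)$-connected map of pairs. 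This purely homotopical argument is what is meant by ``all of these considerations imply the following corollary''.
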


  There is a commutative square 
  \[
    \begin{tikzcd}
      \ArcResolutionTwo
        {\TangentialFibration}
        {\Genus}
        {\BoundaryComponents}
        {\Manifold}
        {\BoundaryConditionTangential{\BoundaryCondition}}
        {\DistBall}
        {\SemiSimplicialIndex}
        \ar[
          rr,
          "\StabilizationAlpha{\Genus}{\BoundaryComponents}^\SemiSimplicialIndex"
          ]
        \ar[d]
      &
      &
      \ArcResolutionSingle
        {\TangentialFibration}
        {\Genus+1}
        {\BoundaryComponents-1}
        {\Elongation{\Manifold}{1}}
        {\BoundaryConditionTangential{\bar{\BoundaryCondition}}}
        {\bar{\DistBall}}
        {\SemiSimplicialIndex}
      \ar[d]
      \\
      \SpaceOfStrips
        {\TangentialFibration}
        {\Manifold}
        {\BoundaryConditionTangential{\BoundaryCondition}}
        {\DistBall}
        {\SemiSimplicialIndex}
        \ar[rr,"\DiskEmbedding \mapsto \bar{\DiskEmbedding}"]
      &
      &
      \SpaceOfStrips
        {\TangentialFibration}
        {\Elongation{\Manifold}{1}}
        {\BoundaryConditionTangential{\bar{\BoundaryCondition}}}
        {\bar{\DistBall}}
        {\SemiSimplicialIndex}
    \end{tikzcd}
  \]
  using collars one sees that the lower map is a homotopy equivalence.
  By commutativity of the above square we get a map between the fibers over 
  the points $\DiskEmbedding$ and $\bar{\DiskEmbedding}$
  \[
    \SpaceSubsurfaceBoundaryTangential
      {\TangentialFibration}
      {\Genus-\SemiSimplicialIndex}
      {\BoundaryComponents+\SemiSimplicialIndex+1}
      {\CutOut{\Manifold}{\DiskEmbedding}}
      {\BoundaryConditionTangential{\CutOut{\BoundaryCondition}{\DiskEmbedding}}}
    \to
    \SpaceSubsurfaceBoundaryTangential
      {\TangentialFibration}
      {\Genus-\SemiSimplicialIndex}
      {\BoundaryComponents+i}
      {\CutOut{\Elongation{\Manifold}{1}}{\bar{\DiskEmbedding}}}
      {\CutOut
        {\BoundaryConditionTangential{\bar{\BoundaryCondition}}}
        {\bar{\DiskEmbedding}}
      }
  \]
  If $\StabilizationBordism$ denotes the bordism defining the map 
  $\StabilizationAlpha{\Genus}{\BoundaryComponents}$ in question, 
  then this map is given by taking the union with 
  $
    \CutOut
      {\StabilizationBordism}
      {\DiskEmbedding}
    \coloneqq
    \overline{
      \StabilizationBordism
      \setminus
      \cup_{\SemiSimplicialIndex}
      \DiskEmbedding_{\SemiSimplicialIndex}
    }
  $%
  . These are again manifolds with corners, but this is quite easily fixed 
  using similar techniques as in 
  Remark~\ref{rmk:SmoothingTheAngleStabilization} and 
  Remark~\ref{rmk:SmoothingTheAngleRemovingThings}.
  
  This map between the fibers is quite easily seen to be of the form
  $
    \StabilizationBeta
      {\Genus-\SemiSimplicialIndex}
      {\BoundaryComponents+\SemiSimplicialIndex-1}
  $%
  .
  As the map 
  $
    \SpaceOfStrips
      {\TangentialFibration}
      {\Manifold}
      {\BoundaryConditionTangential{\BoundaryCondition}}
      {\DistBall}
      {\SemiSimplicialIndex}
    \to
    \SpaceOfStrips
      {\TangentialFibration}
      {\Elongation{\Manifold}{1}}
      {\BoundaryConditionTangential{\bar{\BoundaryCondition}}}
      {\bar{\DistBall}}
      {\SemiSimplicialIndex}
  $
  is a homotopy equivalence, we conclude that the space of subsurfaces
  $
    \SpaceSubsurfaceBoundaryTangential
      {\TangentialFibration}
      {\Genus-\SemiSimplicialIndex}
      {\BoundaryComponents+\SemiSimplicialIndex-1}
      {\CutOut{\Manifold}{\DiskEmbedding}}
      {\CutOut{\BoundaryConditionTangential{\BoundaryCondition}}{\DiskEmbedding}}
  $
  is 
  homotopy equivalent to the homotopy fiber of the composition of the 
  map 
  $
    \ArcResolution
      {\TangentialFibration}
      {\Genus}
      {\BoundaryComponents}
      {\Manifold}
      {\BoundaryConditionTangential{\BoundaryCondition}}
      {\DistBall}
      {\SemiSimplicialIndex}
    \to
    \SpaceOfStrips
      {\TangentialFibration}
      {\Manifold}
      {\BoundaryConditionTangential{\BoundaryCondition}}
      {\DistBall}
      {\SemiSimplicialIndex}
  $
  with the aforementioned homotopy equivalence. Moreover we have shown that 
  the map between the fibers is a stabilization map of the form 
  $
    \StabilizationBeta
      {\Genus-\SemiSimplicialIndex}
      {\BoundaryComponents+\SemiSimplicialIndex-1}
  $%
  . This is expressed in the following diagram:
  \[
    \begin{tikzcd}
      \SpaceSubsurfaceBoundaryTangential
        {\TangentialFibration}
        {\Genus-\SemiSimplicialIndex}
        {\BoundaryComponents+\SemiSimplicialIndex-1}
        {\CutOut{\Manifold}{\DiskEmbedding}}
        {\CutOut
          {\BoundaryConditionTangential{\BoundaryCondition}}
          {\DiskEmbedding}
        }
        \ar[
          rr,
          "\StabilizationBeta
            {\Genus-\SemiSimplicialIndex}
            {\BoundaryComponents+\SemiSimplicialIndex-1}
          "]
        \ar[dd]
      &
      &
      \SpaceSubsurfaceBoundaryTangential
        {\TangentialFibration}
        {\Genus-\SemiSimplicialIndex}
        {\BoundaryComponents+i}
        {\CutOut{\Elongation{\Manifold}{1}}{\bar{\DiskEmbedding}}}
        {\CutOut
          {\BoundaryConditionTangential{\bar{\BoundaryCondition}}}
          {\bar{\DiskEmbedding}}
        }
        \ar[dd]
      \\
      \\
      \ArcResolutionTwo
        {\TangentialFibration}
        {\Genus}
        {\BoundaryComponents}
        {\Manifold}
        {\BoundaryConditionTangential{\BoundaryCondition}}
        {\DistBall}
        {\SemiSimplicialIndex}
        \ar[
          rr,
          "\StabilizationAlpha{\Genus}{\BoundaryComponents}^\SemiSimplicialIndex"
        ]
        \ar[rdd]
      &
      &
      \ArcResolutionSingle
        {\TangentialFibration}
        {\Genus+1}
        {\BoundaryComponents-1}
        {\Elongation{\Manifold}{1}}
        {\BoundaryConditionTangential{\bar{\BoundaryCondition}}}
        {\bar{\DistBall}}
        {\SemiSimplicialIndex}
        \ar[ldd]
      \\
      \\
      &
      \SpaceOfStrips
        {\TangentialFibration}
        {\Elongation{\Manifold}{1}}
        {\BoundaryConditionTangential{\bar{\BoundaryCondition}}}
        {\bar{\DistBall}}
        {\SemiSimplicialIndex}
    \end{tikzcd}
  \]

  Repeating the same procedure for maps of the form 
  $\StabilizationBeta{\Genus}{\BoundaryComponents}$, we obtain the following 
  corollary:
  \begin{corollary}
\label{crl:Fiber}
  The induced map between the homotopy fibers of
  \[
    \left(
      \StabilizationAlpha{\Genus}{\BoundaryComponents}^{\SemiSimplicialIndex}
    \right)
    \to
    \SpaceOfStrips
      {\TangentialFibration}
      {\Elongation{\Manifold}{1}}
      {\BoundaryConditionTangential{\bar{\BoundaryCondition}}}
      {\bar{\DistBall}}
      {\SemiSimplicialIndex}
  \]
  is given by 
  $
    \StabilizationBeta
      {\Genus-\SemiSimplicialIndex}
      {\BoundaryComponents+\SemiSimplicialIndex-1}
  $ 
  and analogously the induced map between the homotopy fibers of
  \[
    \left(
      \StabilizationBeta{\Genus}{\BoundaryComponents}^{\SemiSimplicialIndex}
    \right)
    \to
    \SpaceOfStrips
      {\TangentialFibration}
      {\Elongation{\Manifold}{1}}
      {\BoundaryConditionTangential{\bar{\BoundaryCondition}}}
      {\bar{\DistBall}}
      {\SemiSimplicialIndex}
  \] is given by
  $
    \StabilizationAlpha
      {\Genus-\SemiSimplicialIndex-1}
      {\BoundaryComponents+\SemiSimplicialIndex+1}
  $%
  .
\end{corollary}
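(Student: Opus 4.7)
\bigskip

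\noindent\textbf{Proof proposal.} The first assertion has essentially been established in the discussion immediately preceding the statement, so the plan is simply to assemble those observations into a clean argument and then repeat the same reasoning with roles reversed for the $\beta$ case. Concretely, I would first invoke the commutative square relating $\StabilizationAlpha{\Genus}{\BoundaryComponents}^{\SemiSimplicialIndex}$ to the map of strip spaces $\DiskEmbedding\mapsto\bar{\DiskEmbedding}$, observe (as the text does) that the lower map is a homotopy equivalence induced by a collar, and apply Lemma~\ref{lem:HomotopyFiberArcResolution} to identify both (strict) fibers, over $\DiskEmbedding$ and $\bar{\DiskEmbedding}$, with spaces of subsurfaces in the cut-open manifolds. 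Because the projection to the strip space is a Serre fibration, these strict fibers model the homotopy fibers.

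Next I would identify the induced map on fibers. Tracking through the definition of $\StabilizationAlpha{\Genus}{\BoundaryComponents}^{\SemiSimplicialIndex}$, which sends $(\Subsurface,\DiskEmbedding)$ to $(\Subsurface\cup\StabilizationBordism,\bar{\DiskEmbedding})$, the induced map between fibers is gluing with $\CutOut{\StabilizationBordism}{\DiskEmbedding}$. Since for a stabilization of type $\alpha$ the bordism $\StabilizationBordism$ is a pair of pants whose incoming boundary consists of two circles meeting $\DistBall$ in two distinct arcs, cutting off $\SemiSimplicialIndex+1$ ordered strips joining these two arcs merges the two incoming circles into a single one, so $\CutOut{\StabilizationBordism}{\DiskEmbedding}$ is a pair of pants with one incoming and two outgoing circles, i.e.\ defines a $\beta$-type stabilization. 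The numerical decoration $\StabilizationBeta{\Genus-\SemiSimplicialIndex}{\BoundaryComponents+\SemiSimplicialIndex-1}$ follows from the fiber identification in Lemma~\ref{lem:HomotopyFiberArcResolution}, where the domain of $\StabilizationAlpha^\SemiSimplicialIndex$ is a two-component arc resolution and the codomain is a single-component arc resolution, giving genus $\Genus-\SemiSimplicialIndex$ and $\BoundaryComponents+\SemiSimplicialIndex-1$ boundary components on the source.

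For the $\beta$ case I would run the completely analogous argument: the bordism defining $\StabilizationBeta{\Genus}{\BoundaryComponents}$ is a pair of pants whose incoming boundary is a single circle meeting $\DistBall$, and cutting along $\SemiSimplicialIndex+1$ ordered strips splits that incoming circle, producing a pair of pants with two incoming circles, i.e.\ an $\alpha$-type stabilization. The genus and boundary count follow again from Lemma~\ref{lem:HomotopyFiberArcResolution}, now using the opposite case of the single versus two-component dichotomy, which yields $\StabilizationAlpha{\Genus-\SemiSimplicialIndex-1}{\BoundaryComponents+\SemiSimplicialIndex+1}$.

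I expect no serious obstacle beyond bookkeeping. The only point requiring care is that $\CutOut{\StabilizationBordism}{\DiskEmbedding}$ and $\CutOut{\Manifold}{\DiskEmbedding}$ are manifolds with corners, so that strictly speaking the resulting map is a ``stabilization map of a manifold with corners''; I would handle this exactly as flagged in Remark~\ref{rmk:SmoothingTheAngleStabilization} and Remark~\ref{rmk:SmoothingTheAngleRemovingThings}, by choosing corner-smoothing homeomorphisms of the pairs $(\Manifold,\DistBall)$ and $(\StabilizationBordism,\DistBall\cap\BoundaryCondition)$ relative to the parts of the boundary not affected by the cut. Since homotopy fibers are invariant under such homeomorphisms, this identifies the induced map with an honest stabilization map of the claimed type.
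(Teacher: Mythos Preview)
Your proposal is correct and follows essentially the same route as the paper: the corollary is stated immediately after the discussion that establishes it, and that discussion proceeds exactly as you outline---commutative square over the strip spaces, the bottom map a homotopy equivalence via collars, fibers identified by Lemma~\ref{lem:HomotopyFiberArcResolution}, induced map given by gluing $\CutOut{\StabilizationBordism}{\DiskEmbedding}$, corners handled by Remarks~\ref{rmk:SmoothingTheAngleStabilization} and~\ref{rmk:SmoothingTheAngleRemovingThings}, and the $\beta$ case obtained by the symmetric argument. Your explanation of why cutting the pair of pants along the strips swaps the $\alpha$/$\beta$ type is slightly more explicit than the paper's ``quite easily seen,'' but the content is the same.
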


  \begin{definition}
\label{def:RelativeApproximateAugmentation}
  We call the map 
  $
    \MappingPair
      {\StabilizationBeta
        {\Genus-\SemiSimplicialIndex}
        {\BoundaryComponents+\SemiSimplicialIndex-1}
      }
    \to
    \MappingPair{\StabilizationAlpha{\Genus}{\BoundaryComponents}}
  $
  given by the composition of the inclusion of the fiber in the previous 
  corollary into 
  $
    \MappingPair{
      \StabilizationAlpha{\Genus}{\BoundaryComponents}^{\SemiSimplicialIndex}
    }
  $
  with the projection onto 
  $
    \MappingPair{\StabilizationAlpha{\Genus}{\BoundaryComponents}}
  $ 
  the \introduce{relative $\SemiSimplicialIndex$-approximate augmentation} of 
  the resolution 
  $
    \left(
      \StabilizationAlpha{\Genus}{\BoundaryComponents}^{\bullet}
    \right)
  $%
  . 
  Analogously we call
  $
  \MappingPair
  {\StabilizationAlpha
    {\Genus-\SemiSimplicialIndex-1}
    {\BoundaryComponents+\SemiSimplicialIndex+1}
  }
  \to
  \MappingPair{\StabilizationBeta{\Genus}{\BoundaryComponents}}$
  defined as in the previous case the \introduce{relative 
  $\SemiSimplicialIndex$-approximate augmentation} of the resolution
  $
    \left(
      \StabilizationBeta{\Genus}{\BoundaryComponents}^{\bullet}
    \right)
  $%
  .
  If we want to emphasize the point 
  $
    \DiskEmbedding
    \in
    \SpaceOfStrips
      {\TangentialFibration}
      {\Manifold}
      {\BoundaryConditionTangential{\BoundaryCondition}}
      {\DistBall}
      {\SemiSimplicialIndex}
  $
  over which the fiber lives, then we call it the \introduce{relative 
  $\SemiSimplicialIndex$-approximate augmentation over $\DiskEmbedding$}.
\end{definition}

\section{$\TrivialityIndex$-Triviality and $\pi_0$-stabilization}
  \label{scn:kTriviality}
The following notions are inspired by the corresponding definitions in 
Section~6 of \cite{RW16} and will serve as the main requirements of a 
tangential structure in order to fulfil homological stability. The following 
definition is an adaptation of Definition~6.1 in \cite{RW16}.
\begin{definition}
\label{dfn:Absorbs}
  We say a subset bordism 
  $
    \SubsetBordism
    \subset
    \overline{
      \Manifold
      \setminus 
      \Manifold'
    }
  $
  absorbs a boundary bordism 
  $
    \BoundaryBordism
    \subset
    \DistBoundaryManifold{0}{\Manifold}
  $
  if
  $
    \SubsetBordism
    \cap
    \DistBoundaryManifold{0}{\Manifold}
    \times
    [0,1]
    \subset
    \BoundaryBordism
  $
  and there exists a 
  $
    \AuxBordism
    \subset 
    \Elongation{
      \left(
        M\setminus M'
      \right)
    }
    {1}
    \cup
    \DistBoundaryManifold{0}{\Manifold}
    \times
    [1,2]
  $
  , where 
  $
    \DistBoundaryManifold{0}{\Manifold'}
    =
    \DistBoundaryManifold{0}{\Manifold}
    \cap
    \Manifold'
  $
  (note that $\Elongation{\Manifold'}{1}$ can be embedded into 
  $\Elongation{\Manifold}{2}$ and $\AuxBordism$ is a subset of the difference 
  of these two)
  such that 
  \[
    \begin{tikzcd}
      \SpaceAllSubsurfaceBoundaryTangential
        {\TangentialFibration}
        {\Manifold'}
        {\CutOut
          {\BoundaryConditionTangential{\BoundaryCondition}}
          {\SubsetBordism}
        }
        \ar[r,"\CutOut{\BoundaryBordism}{\SubsetBordism}"]
        \ar[d,"\SubsetBordism"]
      &
      \SpaceAllSubsurfaceBoundaryTangential
        {\TangentialFibration}
        {\Elongation{\Manifold'}{1}}
        {\CutOut
           {\BoundaryConditionTangential{\BoundaryCondition'}}
           {\SubsetBordism}
        }
        \ar[d,"\Elongation{\SubsetBordism}{1}"]
        \ar[ldd,"\AuxBordism"' near start,dashed]
      \\
      \SpaceAllSubsurfaceBoundaryTangential
        {\TangentialFibration}
        {\Manifold}
        {\BoundaryConditionTangential{\BoundaryCondition}}
        \ar[r,"\BoundaryBordism" near start, crossing over]
        \ar[d,"\simeq"]
      &
      \SpaceAllSubsurfaceBoundaryTangential
        {\TangentialFibration}
        {\Elongation{\Manifold}{1}}
        {\BoundaryConditionTangential{\BoundaryCondition'}}
        \ar[d,"\simeq"]
      \\
      \SpaceAllSubsurfaceBoundaryTangential
        {\TangentialFibration}
        {\Elongation{\Manifold}{2}}
        {\BoundaryConditionTangential{\BoundaryCondition}}
        \ar[r,"\BoundaryBordism+1"]
      &
      \SpaceAllSubsurfaceBoundaryTangential
        {\TangentialFibration}
        {\Elongation{\Manifold}{3}}
        {\BoundaryConditionTangential{\BoundaryCondition'}}
    \end{tikzcd}
  \]
  the following diagram commutes up to homotopy, where the lower maps labeled 
  by $\simeq$ are 
  given gluing on a cylinder.
\end{definition}
\begin{remark}
  While this seems hard to check it is actually just a question about the path 
  components of the occuring bordism spaces that define the spaces. This will 
  be further emphasized in the proof of 
  Proposition~\ref{prp:ConnectedKTriviality}.
\end{remark}
The following definition is the analogue of the first part of Definition~6.2 
in \cite{RW16} of test pairs of height $\TrivialityIndex$:
\begin{definition}
  Given a boundary condition $\BoundaryCondition$, a sequence of pairs of 
  disjoint embedded intervals
  $ 
    \DistIntervall{0}^{i}
    \subset
    \BoundaryCondition
  $ 
  and 
  $
    \DistIntervall{1}^{i} 
    \subset
    \BoundaryCondition
  $
  for
  $1\leq i\leq \TrivialityIndex$, one can glue 
  $0$-handles along 
  $\DistIntervall{0}^{i}$ and 
  $\DistIntervall{1}^{i}$ to $\BoundaryCondition$. Doing this for the first 
  $i-1$ pairs of intervals yields an (abstract) $1$-manifold with 
  $\TrivialityIndex-i$ embedded pairs of intervals which we denote by 
  $\BoundaryCondition^{i}$. 
  
  We call such a sequence together with two more intervals 
  $\bar{\DistIntervall{0}}$ and $\bar{\DistIntervall{1}}$ \introduce{genus 
  maximizing of length $\TrivialityIndex$ for maps of type $\alpha$ and 
  boundary condition $\BoundaryCondition$}
  if:
  \begin{enumerate}[(i)]
    \item 
      All occurring embedded intervals are disjoint and 
      $\DistIntervall{0}^{i}\subset \bar{\DistIntervall{0}}$ and 
      $\DistIntervall{1}^{i}\subset \bar{\DistIntervall{1}}$
    \item 
      $\DistIntervall{0}^{1}$ and $\DistIntervall{1}^{1}$ lie in different 
      connected components of $\BoundaryCondition$
    \item
      $\DistIntervall{0}^{i}$ and $\DistIntervall{1}^{i}$ lie 
      in the same connected component of $\BoundaryCondition^{i}$ if $i$ is 
      even and in different components if $i$ is odd. 
  \end{enumerate}
  Such a sequence is \introduce{genus maximizing of length $\TrivialityIndex$ 
  for maps of type $\beta$ and boundary condition $\BoundaryCondition$} if:
  \begin{enumerate}[(i)]
  \item 
    All occurring embedded intervals are disjoint and 
    $\DistIntervall{0}^{i}\subset \bar{\DistIntervall{0}}$ and 
    $\DistIntervall{1}^{i}\subset \bar{\DistIntervall{1}}$
  \item 
    $\DistIntervall{0}^{1}$ and $\DistIntervall{1}^{1}$ lie in the same 
    connected components of $\BoundaryCondition$
  \item
    $\DistIntervall{0}^{i}$ and $\DistIntervall{1}^{i}$ lie 
    in the same connected component of $\BoundaryCondition^{i}$ if $i$ is 
    odd and in different components if $i$ is even. 
\end{enumerate}

  Furthermore given a genus maximizing sequence of length $\TrivialityIndex$, 
  we call a sequence of codimension $0$ balls $\DistBall^{i}$ all contained in 
  a ball $\overline{\DistBall}$ such that 
  $
    \DistBall^{i}
    \cap
    \BoundaryCondition 
    =
    \DistIntervall{0}^{i}
    \cup
    \DistIntervall{1}^{i}
  $
  and 
  $
    \overline{\DistBall}
    \cap
    \BoundaryCondition 
    =
    \overline{\DistIntervall{0}}
    \cup
    \overline{\DistIntervall{1}}
  $
  together with a sequence of
  $
    \DiskEmbedding^{i}
    \in
    \SpaceOfStrips
      {\TangentialFibration}
      {\CutOut{\Manifold}{\DiskEmbedding^{1},\ldots,\DiskEmbedding^{i-1}}}
      {\BoundaryConditionTangential{(\BoundaryCondition^{i-1})}}
      {\DistBall^{i}}
      {0}
  $
  for $1\leq i \leq \TrivialityIndex$
  \introduce{compatible with $(\DistIntervall{0}^{i},\DistIntervall{1}^{i})$}.
  We will usually suppress $\DistBall$ from this notation.
  In this case $\BoundaryCondition^{i}$ is naturally a boundary condition and 
  not just an abstract manifold.
\end{definition}
\begin{remark}
  Note that it is always possible to chose $\overline{\DistBall}$ and 
  $\DistBall^{i}$ 
  which intersect $\BoundaryCondition$ as required. 
  If $\BoundaryManifold{\Manifold}$ is simply-connected, then this ball is
  unique up to isotopy.
\end{remark}
We will be interested in tangential structures for which certain sequences of
approximate augmentations of the resolutions introduced in 
Section~\ref{scn:Resolution} absorb stabilization maps of type 
$\alpha$ and $\beta$.
\begin{definition}
  Given a genus maximizing sequence of length $\TrivialityIndex$ with 
  compatible $\DiskEmbedding^{i}$, we call the composition of all the $0$-th 
  approximate augmentations corresponding to the strips
  $
    \DiskEmbedding^{i}
    \in
    \SpaceOfStrips
      {\TangentialFibration}
      {\CutOut{\Manifold}{\bigcup_{j\in \{1,\ldots, i\}} \DiskEmbedding^{j}}}
      {\BoundaryConditionTangential{(\BoundaryCondition^{i})}}
      {\DistBall^{i}}
      {0}
  $
  the associated \introduce{augmentation composition of length 
  $\TrivialityIndex$} and denote it by
  \[
    \kAugmentation{\TrivialityIndex}{\Genus}{\BoundaryComponents}
    \colon
    \SpaceSubsurfaceBoundaryTangential
      {\TangentialFibration}
      {\Genus'}
      {\BoundaryComponents'}
      {
        \CutOut
          {\Manifold}
          {
            \bigcup_{j\in\{1,\ldots,\TrivialityIndex\}} 
            \DiskEmbedding^{j}
          }
      }
      {\BoundaryConditionTangential{(\BoundaryCondition^{\TrivialityIndex})}}
    \to
    \SpaceSubsurfaceBoundaryTangential
      {\TangentialFibration}
      {\Genus}
      {\BoundaryComponents}
      {\Manifold}
      {\BoundaryConditionTangential{\BoundaryCondition}}
  \]
  where 
  \begin{align*}
    \Genus'
    &=
    \begin{cases}
      \Genus-l \text{ if the sequence is genus maximizing for 
      maps of type $\alpha$ and $\TrivialityIndex$ is 2l+1}
    \\
      \Genus-l \text{ if the sequence is genus maximizing for 
        maps of type $\alpha$ and $\TrivialityIndex$ is 2l}
    \\
      \Genus-(l+1) \text{ if the sequence is genus maximizing for 
      maps of type $\beta$ and $\TrivialityIndex$ is 2l+1}
    \\
      \Genus-l \text{ if the sequence is genus maximizing for 
      maps of type $\beta$ and $\TrivialityIndex$ is 2l}
    \end{cases}
    \\
    \BoundaryComponents'
    &=
    \begin{cases}
    \BoundaryComponents-1 \text{ if the sequence is genus maximizing for 
      maps of type $\alpha$ and $\TrivialityIndex$ is 2l+1}
    \\
    \BoundaryComponents \text{ if the sequence is genus maximizing for 
      maps of type $\alpha$ and $\TrivialityIndex$ is 2l}
    \\
    \BoundaryComponents+l \text{ if the sequence is genus maximizing for 
      maps of type $\beta$ and $\TrivialityIndex$ is 2l+1}
    \\
    \BoundaryComponents \text{ if the sequence is genus maximizing for 
      maps of type $\beta$ and $\TrivialityIndex$ is 2l}
    \end{cases}
  \end{align*}
  Note that the augmentation compositions are subset bordisms.
  
  If we have a stabilization map defined via $\StabilizationBordism$, where the 
  defining boundary bordism $\StabilizationBordism$ fulfils 
  $
    \StabilizationBordism
    \cap 
    \left(
      \left(
        \bar{\DistIntervall{0}}\cup \bar{\DistIntervall{1}}
      \right)
      \times 
      [0,1]
    \right)
    =
    \left(
      \bar{\DistIntervall{0}}\cup \bar{\DistIntervall{1}}
    \right)
    \times 
    [0,1]
  $%
  , in this case we call $-\cup \StabilizationBordism$ \introduce{compatible 
  with the genus maximizing sequence}. Then $-\cup \StabilizationBordism$ lifts 
  to a map 
  \[
    -
    \cup 
    \CutOut
      {\StabilizationBordism}
      {
        \bigcup_{j\in\{1,\ldots,\TrivialityIndex\}} 
        \DiskEmbedding^{j}
      }
    \colon
    \SpaceSubsurfaceBoundaryTangential
      {\TangentialFibration}
      {\Genus'}
      {\BoundaryComponents'}
      {
        \CutOut
        {\Manifold}
        {
          \bigcup_{j\in\{1,\ldots,\TrivialityIndex\}} 
          \DiskEmbedding^{j}
        }
      }
      {\BoundaryConditionTangential{(\BoundaryCondition^{\TrivialityIndex})}}
    \to 
    \SpaceSubsurfaceBoundaryTangential
      {\TangentialFibration}
      {\Genus''}
      {\BoundaryComponents''}
      {\Elongation
        {\CutOut
          {\Manifold}
          {
            \bigcup_{j\in\{1,\ldots,\TrivialityIndex\}} 
            \bar{\DiskEmbedding}^{j}
          }
        }
        {1}
      }
      {\BoundaryConditionTangential{(\BoundaryCondition^{\TrivialityIndex})}}
  \]
  where $\Genus'$ and $\BoundaryComponents'$ are as before, and if 
  $\StabilizationBordism$ is of type $\alpha$ or $\beta$ and the sequence is 
  genus maximizing for maps of same type, then 
  $
    -
    \cup 
    \CutOut
    {\StabilizationBordism}
    {
      \bigcup_{j\in\{1,\ldots,\TrivialityIndex\}} 
      \DiskEmbedding^{j}
    }
  $
  is a map of the same type as before if $\TrivialityIndex$ is even, and of the 
  other type if $\TrivialityIndex$ is odd. We call this map a 
  \introduce{ 
    relative augmentation compositions of length $\TrivialityIndex$
  }
  and denote it by 
  $\AlphakAugmentation{\TrivialityIndex}{\Genus}{\BoundaryComponents}$ if 
  $-\cup \StabilizationBordism$ is of type alpha, and 
  $\BetakAugmentation{\TrivialityIndex}{\Genus}{\BoundaryComponents}$ in the 
  other case.
\end{definition}
\begin{remark}
  Note that by the dimension restriction on $\Manifold$, and 
  $\BoundaryManifold{\Manifold}$ being simply-connected, we can find an isotopy 
  that changes a $\StabilizationBordism$ into one that is compatible with the 
  genus maximizing sequence.
\end{remark}
Again this definition is inspired by the second part of Definition~6.2 in 
\cite{RW16}.
\begin{definition}
  \label{dfn:kTriviality}
  We call a space of $\TangentialFibration$-structures
  \introduce{$\TrivialityIndex$-trivial} if for every genus maximizing sequence 
  of length 
  $\TrivialityIndex$ and a compatible sequence $\DiskEmbedding^{i}$, and every 
  compatible stabilization map of type $\alpha$ or $\beta$, the
  boundary bordism $\StabilizationBordism$ is absorbed by the subset bordism 
  defining the relative augmentation compositions of length $\TrivialityIndex$ 
  associated to the genus maximizing sequence.
\end{definition}

In Section~\ref{scn:TangentialkTriviality} we will prove that it suffices to 
have stabilisation of connected components in order to obtain 
$\TrivialityIndex$-triviality for some $\TrivialityIndex$. To state this we 
need the following definition, which is the analogue of Definition~6.3 in 
\cite{RW16}:
\begin{definition}
  We say a tangential structure 
  $
    \TangentialFibration
    \colon 
    \TangentialSpace{\Manifold}
    \to
    \Grassmannian{2}{\TangentBundle{\Manifold}}
  $
  \introduce{$\pi_0$-stabilizes at genus $\Genus$} if all stabilization maps 
  \begin{align*}
    \StabilizationAlpha
      {\Genus'}
      {\BoundaryComponents}
    \colon
    \SpaceSubsurfaceBoundaryTangential
      {\TangentialFibration}
      {\Genus'}
      {\BoundaryComponents}
      {\Manifold}
      {\BoundaryConditionTangential{\BoundaryCondition}}
    &
    \to
    \SpaceSubsurfaceBoundaryTangential
      {\TangentialFibration}
      {\Genus'+1}
      {\BoundaryComponents-1}
      {\Elongation{\Manifold}{1}}
      {\BoundaryConditionTangential{\bar{\BoundaryCondition}}}
    \\
    \StabilizationBeta
      {\Genus'}
      {\BoundaryComponents}
    \colon
    \SpaceSubsurfaceBoundaryTangential
      {\TangentialFibration}
      {\Genus'}
      {\BoundaryComponents}
      {\Manifold}
      {\BoundaryConditionTangential{\BoundaryCondition}}
    &
    \to
    \SpaceSubsurfaceBoundaryTangential
      {\TangentialFibration}
      {\Genus'}
      {\BoundaryComponents+1}
      {\Elongation{\Manifold}{1}}
      {\BoundaryConditionTangential{\bar{\BoundaryCondition}}}
  \end{align*}
  induce bijections on $\pi_0$ for all $\Genus'\geq \Genus$ and they induce a 
  surjection on $\pi_0$ for all $\Genus'\geq \Genus-1$. 
  
  We say a tangential structure
  $
    \TangentialFibration
    \colon
    \TangentialSpace{\Manifold}
    \to
    \Grassmannian{2}{\TangentBundle{\Manifold}}
  $
  \introduce{$\pi_{0}$-stabilizes at the boundary at genus $\Genus$} if all 
  stabilization maps:
  \begin{align*}
    \StabilizationAlpha
      {\Genus'}
      {\BoundaryComponents}
    \colon
    \SpaceSubsurfaceBoundaryTangential
      {\TangentialFibration}
      {\Genus'}
      {\BoundaryComponents}
      {\DistBoundaryManifold{0}{\Manifold}\times [0,1]}
      {\BoundaryConditionTangential{\BoundaryCondition}}
    &
    \to
    \SpaceSubsurfaceBoundaryTangential
      {\TangentialFibration}
      {\Genus'+1}
      {\BoundaryComponents-1}
      {\DistBoundaryManifold{1}{\Manifold}\times [0,2]}
      {\BoundaryConditionTangential{\bar{\BoundaryCondition}}}
    \\
    \StabilizationBeta
      {\Genus'}
      {\BoundaryComponents}
    \colon
    \SpaceSubsurfaceBoundaryTangential
      {\TangentialFibration}
      {\Genus'}
      {\BoundaryComponents}
      {\DistBoundaryManifold{0}{\Manifold}\times[0,1]}
      {\BoundaryConditionTangential{\BoundaryCondition}}
    &
    \to
    \SpaceSubsurfaceBoundaryTangential
      {\TangentialFibration}
      {\Genus'}
      {\BoundaryComponents+1}
      {\DistBoundaryManifold{0}{\Manifold}\times [0,2]}
      {\BoundaryConditionTangential{\bar{\BoundaryCondition}}}
  \end{align*}
  induce bijections on $\pi_0$ for all $\Genus'\geq \Genus$ and they induce a 
  surjection on $\pi_0$ for all $\Genus'\geq \Genus-1$. 
\end{definition}
The rather abstract property of $\TrivialityIndex$-triviality is already 
implied by $\pi_0$-stabilization. This is encapsulated in the following 
proposition, which will be proven in Section~\ref{scn:TangentialkTriviality} 
and is an adaptation of Proposition~6.5 in \cite{RW16}.
\begin{proposition}
  \label{prp:ConnectedKTriviality}
  Suppose $\Manifold$ is an at least $5$-dimensional manifold which is simply 
  connected and $\DistBoundaryManifold{0}{\Manifold}$ is a simply-connected 
  codimension $0$ submanifold of $\BoundaryManifold{\Manifold}$.
  Suppose further that 
  $
    \TangentialFibration
    \colon
    \TangentialSpace{\Manifold}
    \to 
    \Grassmannian{2}{\TangentBundle{\Manifold}}
  $
  $\pi_0$-stabilizes at the boundary at $\Genus$, then it is 
  $2\Genus+1$-trivial.
\end{proposition}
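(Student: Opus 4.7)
The plan is to unpack Definition~\ref{dfn:Absorbs} and reduce the required homotopy commutativity to a statement about $\pi_0$ of appropriate spaces of subsurfaces. Fix a genus maximizing sequence of length $2\Genus+1$ with compatible strips $\DiskEmbedding^{1},\ldots,\DiskEmbedding^{2\Genus+1}$ and a compatible stabilization map $-\cup\StabilizationBordism$ of type $\alpha$ or $\beta$. The subset bordism $\SubsetBordism$ is then $\bigcup_i \ThickenedHalfDisk{\DiskEmbedding}^{i}_{\LineBundle}$, sitting in the complement $\overline{\Manifold\setminus\Manifold'}$, where $\Manifold'$ is the complement of (a tubular neighborhood of) $\SubsetBordism$ in $\Manifold$. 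Set $\BoundaryBordism=\StabilizationBordism$. By assumption, $\SubsetBordism\cap \DistBoundaryManifold{0}{\Manifold}\times[0,1]\subset\StabilizationBordism$, which is the first condition of absorption.

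Next I would construct the auxiliary bordism $\AuxBordism$. Since $\Manifold$ has dimension at least $5$ and $\DistBoundaryManifold{0}{\Manifold}$ is simply-connected, I can isotope a copy of $\StabilizationBordism$, within the region $\Elongation{(\Manifold\setminus\Manifold')}{1}\cup\DistBoundaryManifold{0}{\Manifold}\times[1,2]$, so that its incoming boundary matches the outgoing boundary of $\Elongation{\SubsetBordism}{1}$ (which represents the composite of first gluing $\SubsetBordism$ and then attaching a cylinder). The existence of such an isotopy uses high codimension and simple-connectivity to avoid intersections and to realize the required tangential structure. This yields the diagonal map $\AuxBordism$ together with an obvious isotopy that makes the upper triangle of the diagram commute, essentially by construction: one side is gluing $\Elongation{\SubsetBordism}{1}\circ\CutOut{\StabilizationBordism}{\SubsetBordism}$, the other is gluing $\AuxBordism$, and the two submanifolds so obtained differ by an ambient isotopy fixing the boundary condition.

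The nontrivial content is homotopy commutativity of the lower triangle, which compares $\AuxBordism$ (from the top-right corner to the bottom-left) followed by $\StabilizationBordism+1$ with the map obtained by gluing $\SubsetBordism$ and then $\StabilizationBordism$. Since all maps in sight are gluing maps, this homotopy commutativity amounts to showing that, for every element $\Subsurface'$ in the top-left space $\SpaceAllSubsurfaceBoundaryTangential{\TangentialFibration}{\Manifold'}{\CutOut{\BoundaryConditionTangential{\BoundaryCondition}}{\SubsetBordism}}$, the two resulting elements of $\SpaceAllSubsurfaceBoundaryTangential{\TangentialFibration}{\Elongation{\Manifold}{3}}{\BoundaryConditionTangential{\BoundaryCondition'}}$ lie in the same path component. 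Here I use Lemma~\ref{lem:ForgettingTangentialFibration} to reduce the homotopy statement to a connected-components statement, because the two maps have the same underlying embedded submanifold (up to isotopy) and the tangential structures are compatible along the cut.

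The heart of the proof is applying $\pi_0$-stabilization at the boundary to identify these two components. By construction, the augmentation composition of length $2\Genus+1$ associated to a genus maximizing sequence produces (after counting how each strip changes the genus via the alternating condition on whether $\DistIntervall{0}^i,\DistIntervall{1}^i$ lie in the same component of $\BoundaryCondition^i$) a subsurface of genus at least $\Genus+1$ glued on along the boundary. Hence each of the intermediate stabilization steps by which $\StabilizationBordism$ could be pushed across $\SubsetBordism$ lies in the range where $\StabilizationAlpha{\cdot}{\cdot}$ and $\StabilizationBeta{\cdot}{\cdot}$ at the boundary induce bijections on $\pi_{0}$. Chaining these bijections, together with Remark~\ref{rmk:SmoothingTheAngleStabilization} to treat the corners produced when cutting by the strips as genuine stabilization maps, shows that the two resulting elements must agree in $\pi_{0}$. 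The main obstacle I expect is the bookkeeping that length $2\Genus+1$ is enough: one has to check, case by case on whether $\StabilizationBordism$ is of type $\alpha$ or $\beta$ and on the parity of the strip index, that the intermediate genera arising in the rewriting all exceed $\Genus$, so that each invocation of the $\pi_0$-stabilization hypothesis is legal.
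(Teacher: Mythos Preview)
Your proposal has a genuine gap in the construction of $\AuxBordism$. You propose to obtain it by isotoping a copy of $\StabilizationBordism$ into the region, but $\AuxBordism$ cannot be diffeomorphic to $\StabilizationBordism$: its incoming boundary is $\CutOut{\BoundaryConditionTangential{\BoundaryCondition'}}{\SubsetBordism}$ (the boundary condition after removing all $2\Genus+1$ strips) and its outgoing boundary is $\BoundaryConditionTangential{\BoundaryCondition}$, so $\AuxBordism$ must in effect undo the strips. In the paper's computation for type $\alpha$ this forces $\AuxBordism$ to have genus $\Genus-1$ and four boundary circles, not the topology of a pair of pants. Your description of the upper triangle is also off: its two sides are $(-\cup\AuxBordism)\circ(-\cup\CutOut{\StabilizationBordism}{\SubsetBordism})$ and the composite of $(-\cup\SubsetBordism)$ with the cylinder equivalence, both starting at the top-left corner; the expression ``$\Elongation{\SubsetBordism}{1}\circ\CutOut{\StabilizationBordism}{\SubsetBordism}$ versus $\AuxBordism$'' does not describe either path.

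The key idea you are missing is that the two triangles are statements about \emph{elements} of spaces of subsurfaces sitting in the bordism region itself, not about maps out of $\Manifold'$. The composite bordisms appearing in Definition~\ref{dfn:Absorbs} --- namely $\bar{\DiskEmbedding}_{\LineBundle}\cup\BoundaryConditionTangential{\BoundaryCondition'}\times[1,3]$ for the right column, $\DiskEmbedding_{\LineBundle}\cup\BoundaryConditionTangential{\BoundaryCondition}\times[0,2]$ for the left, and the sought $\AuxBordism$ --- are themselves $\TangentialFibration$-subsurfaces of $\bar{\DiskEmbedding}\cup\DistBoundaryManifold{0}{\Manifold}\times[1,3]$, of $\DiskEmbedding\cup\DistBoundaryManifold{0}{\Manifold}\times[0,2]$, and of $\bar{\DiskEmbedding}\cup\DistBoundaryManifold{0}{\Manifold}\times[1,2]$, of genera $\Genus$, $\Genus$, $\Genus-1$ respectively (this is exactly what the genus-maximizing condition buys). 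Gluing $(\StabilizationBordism+2)$ and $\CutOut{\StabilizationBordism}{\bar{\DiskEmbedding}}$ gives a commutative square of stabilization maps between these small spaces, to which the $\pi_0$-stabilization-at-the-boundary hypothesis applies \emph{directly}: one vertical map is $\pi_0$-surjective (source genus $\Genus-1$), the other $\pi_0$-injective (source genus $\Genus$). One then \emph{chooses} $\AuxBordism$ via this surjectivity so that $\AuxBordism\cup(\StabilizationBordism+2)$ lies in the component of $\bar{\DiskEmbedding}_{\LineBundle}\cup\BoundaryConditionTangential{\BoundaryCondition'}\times[1,3]$; that is the lower triangle. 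The upper triangle then follows from commutativity of the square and the injectivity. Your ``chaining bijections by pushing $\StabilizationBordism$ across $\SubsetBordism$'' does not correspond to this mechanism --- the hypothesis is invoked exactly once for surjectivity and once for injectivity, and no intermediate genera arise.
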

\section{Proving Homological Stability}
  \label{scn:Proof}
In order to prove homological stability we will need the stability bounds. 
The following definition stems from Definition~6.9 in \cite{RW16}. 

\begin{definition}
  \label{dfn:StabilityBounds}
  Given two natural numbers $\TrivialityIndex\geq 1$ and 
  $\PiZeroStabilization\geq 0$, we define the following recursive functions 
  $\AlphaBound,\BetaBound,\AlphaAuxBound,\BetaAuxBound\colon \Integers\to 
  \Integers$ as follows:
  \begin{enumerate}[(i)]
    \item 
        \[
          \apply{\AlphaBound}{\Genus}
          =
          \apply{\BetaBound}{\Genus}
          =
          \apply{\AlphaAuxBound}{\Genus}
          =
          \apply{\BetaAuxBound}{\Genus}
          =
          -1
        \]
        for $\Genus\leq \PiZeroStabilization-2$ and 
        \[
          \apply{\AlphaBound}{\PiZeroStabilization-1}
          =
          \apply{\BetaBound}{\PiZeroStabilization}
          =
          \apply{\AlphaAuxBound}{\PiZeroStabilization}
          =
          \apply{\BetaAuxBound}{\PiZeroStabilization}
          =
          0
        \]
      \item
        We define
        \begin{equation*}
          \apply{\AlphaAuxBound}{\Genus}
          =
          \min
          \begin{cases}
            \apply{\AlphaBound}{\Genus-1}+1
            \\
            \apply{\BetaBound}{\Genus-1}+1
            \\
            \apply{\AlphaAuxBound}{\Genus-1}+1
            \\
            \apply{\BetaAuxBound}{\Genus-1}+1
            \\
            0\text{ if }\Genus\leq 0
          \end{cases}
          \phantom{asdasdasdasdasdas}
          \apply{\BetaAuxBound}{\Genus}
          =
          \begin{cases}
            \apply{\AlphaBound}{\Genus-2}+1
            \\
            \apply{\BetaBound}{\Genus-1}+1
            \\
            \apply{\AlphaAuxBound}{\Genus-2}+1
            \\
            \apply{\BetaAuxBound}{\Genus-1}+1
            \\
            0\text{ if }\Genus\leq 0
          \end{cases}
        \end{equation*}
      \item
        If $\TrivialityIndex=1$, we define:
        \begin{equation*}
          \apply{\AlphaBound}{\Genus}
          =
          \min
          \begin{cases}
            \apply{\AlphaBound}{\Genus-1}+1
            \\
            \apply{\AlphaAuxBound}{\Genus}
          \end{cases}
        \phantom{asdasdasdasdasdas}
        \apply{\BetaBound}{\Genus}
        =
        \begin{cases}
          \apply{\BetaBound}{\Genus-1}+1
          \\
          \apply{\AlphaAuxBound}{\Genus}
        \end{cases}
        \end{equation*}

      \item
        If $\TrivialityIndex=2l$ with $l>0$, we define:
        \begin{equation*}
          \apply{\AlphaBound}{\Genus}
          =
          \min
          \begin{cases}
            \apply{\AlphaBound}{\Genus-1}+1
            \\
            \apply{\BetaBound}{\Genus-l}+1
            \\
            \max\{\apply{\AlphaAuxBound}{\Genus+1-l},0\}
            \\
            \max \{\apply{\BetaAuxBound}{\Genus+1-l},0\}
          \end{cases}
          \phantom{asdasdasdasdasdas}
          \apply{\BetaBound}{\Genus}
          =
          \begin{cases}
            \apply{\AlphaBound}{\Genus-l-1}+1
            \\
            \apply{\BetaBound}{\Genus-1}+1
            \\
            \max\{\apply{\AlphaAuxBound}{\Genus-l},0\}
            \\
            \max\{\apply{\BetaAuxBound}{\Genus+1-l},0\}
          \end{cases}
        \end{equation*}
      \item
        If $\TrivialityIndex=2l+1$ with $l>0$, we define:
        \begin{equation*}
          \apply{\AlphaBound}{\Genus}
          =
          \min
          \begin{cases}
            \apply{\AlphaBound}{\Genus-l-1}+1
            \\
            \max\{\apply{\AlphaAuxBound}{\Genus-l},0\}
            \\
            \max \{\apply{\BetaAuxBound}{\Genus+1-l},0\}
          \end{cases}
          \phantom{asdasdasdasdasdas}
          \apply{\BetaBound}{\Genus}
          =
          \begin{cases}
            \apply{\BetaBound}{\Genus-l-1}+1
            \\
            \max\{\apply{\AlphaAuxBound}{\Genus-l},0\}
            \\
            \max\{\apply{\BetaAuxBound}{\Genus-l},0\}
          \end{cases}
        \end{equation*}
  \end{enumerate}
\end{definition}
\begin{remark}
\label{rmk:StabilityBounds}
  If $\TrivialityIndex=1$ and $\PiZeroStabilization=0$, then 
  $
    \apply{\AlphaBound}{\Genus}
    =
    \apply{\AlphaAuxBound}{\Genus}
    =
    \left\lfloor
      \frac{1}{3}(2\Genus + 1)
    \right\rfloor
  $
  and 
  $
    \apply{\BetaBound}{\Genus}
    =
    \apply{\BetaAuxBound}{\Genus}
    =
    \left\lfloor
      \frac{2}{3}\Genus
    \right\rfloor
  $
  gives the aforementioned functions.
\end{remark}
The goal of the rest of this section is to prove the most of the first two 
parts of the 
following theorem. 
The proof will be an adaptation of the proof of the main theorem of 
\cite{CRW16} to include $\TrivialityIndex$-triviality:
\begin{theorem}
  \label{thm:HomologicalStability}
  Suppose $\Manifold$ is an at least $5$-dimensional, simply-connected manifold 
  with non-empty boundary and $\DistBoundaryManifold{0}{\Manifold}$ is a 
  codimension $0$ submanifold of $\BoundaryManifold{\Manifold}$. Suppose 
  further that 
  $
    \TangentialFibration
    \colon
    \TangentialSpace{\Manifold}
    \to 
    \Grassmannian{2}{\TangentBundle{\Manifold}}
  $
  is a 
  $\TrivialityIndex$-trivial space of $\TangentialFibration$-structures on 
  subplanes of $\TangentBundle{\Manifold}$ which $\pi_0$-stabilizes at 
  $\PiZeroStabilization$, then
  \begin{itemize}
  \item
    the homology of 
    $\MappingPair{\StabilizationAlpha{\Genus}{\BoundaryComponents}}$ vanishes 
    in degrees $\ast\leq \apply{\AlphaBound}{\Genus}$
  \item 
    the homology of 
    $\MappingPair{\StabilizationBeta{\Genus}{\BoundaryComponents}}$ vanishes 
    in degrees $\ast\leq \apply{\BetaBound}{\Genus}$. If one of the newly 
    created boundary components is contractible in 
    $\DistBoundaryManifold{0}{\Manifold}$ and the tangential structure on that 
    boundary component vanishes in 
    $\HomotopyGroup{1}{\TangentialSpace{\Manifold}}$, then 
    $\StabilizationBeta{\Genus}{\BoundaryComponents}$ is a monomorphism in all 
    degrees
  \item
    the homology of 
    $\MappingPair{\StabilizationGamma{\Genus}{\BoundaryComponents}}$ vanishes
    in degrees $\ast\leq \apply{\BetaBound}{\Genus}$. If 
    $\BoundaryComponents>1$, then 
    $\StabilizationGamma{\Genus}{\BoundaryComponents}$ induces an epimorphism 
    in all degrees
  \end{itemize}
  where $\AlphaBound$ and $\BetaBound$ are defined as in 
  Definition~\ref{dfn:StabilityBounds} and $\MappingPair{-}$ denotes the 
  mapping pair as in Definition~\ref{dfn:MappingPair}.
\end{theorem}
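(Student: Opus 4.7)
The plan is to prove the three statements by simultaneous strong induction on $\Genus$, matching the recursive structure of Definition~\ref{dfn:StabilityBounds}. The base cases for $\Genus\leq\PiZeroStabilization$ are supplied by the $\pi_0$-stabilization hypothesis: a map inducing a bijection on $\pi_{0}$ has mapping pair with vanishing $H_{0}$, so the initial values $0$ and $-1$ in the recursion are immediate. The inductive step for the $\alpha$- and $\beta$-type maps will be extracted from the arc resolution, while the $\gamma$-case will be deferred to Section~\ref{scn:CappingOff}.

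For the inductive step on a map of type $\alpha$ (the $\beta$-case is parallel), I would apply Lemma~\ref{lem:StabilityCriterion} to the lifted semi-simplicial map $\StabilizationAlpha{\Genus}{\BoundaryComponents}^{\bullet}$ constructed in Section~\ref{scn:Resolution}, augmented over $\StabilizationAlpha{\Genus}{\BoundaryComponents}$. Proposition~\ref{prp:ArcResolution} guarantees that the source is a $(\Genus-1)$-resolution and the target a $\Genus$-resolution. For the auxiliary base spaces $B_{i}$ of the criterion I would take the space of strips $\SpaceOfStrips{\TangentialFibration}{\Elongation{\Manifold}{1}}{\BoundaryConditionTangential{\bar{\BoundaryCondition}}}{\bar{\DistBall}}{i}$; Corollary~\ref{crl:Fiber} then identifies the induced map $g_{i}$ on homotopy fibres with a stabilization map of the \emph{opposite} type, with genus lowered by $i+1$ or $i$ and boundary components suitably shifted. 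The inductive hypothesis applied to this opposite-type map supplies the required vanishing of $H_{q}(\MappingPair{g_{i}})$, which Lemma~\ref{lem:StabilityCriterion} converts into the vanishing of $H_{q}(\MappingPair{\StabilizationAlpha{\Genus}{\BoundaryComponents}})$ through the degrees recorded in the first branches of $\apply{\AlphaBound}{\Genus}$ and $\apply{\BetaBound}{\Genus}$ in Definition~\ref{dfn:StabilityBounds}.

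The main obstacle is the low-genus range where this fibre recursion does not yet yield enough vanishing; this is exactly the function of $\TrivialityIndex$-triviality and of the auxiliary bounds $\apply{\AlphaAuxBound}{\Genus}$, $\apply{\BetaAuxBound}{\Genus}$. Given a genus-maximizing sequence of length $\TrivialityIndex$ with compatible strips, Definition~\ref{dfn:kTriviality} says that the boundary bordism defining the stabilization map is absorbed by the subset bordism of the augmentation composition $\kAugmentation{\TrivialityIndex}{\Genus}{\BoundaryComponents}$. Up to homotopy this factors $\MappingPair{\StabilizationAlpha{\Genus}{\BoundaryComponents}}$ through $\MappingPair{\AlphakAugmentation{\TrivialityIndex}{\Genus}{\BoundaryComponents}}$ (and similarly $\beta$ through $\BetakAugmentation{\TrivialityIndex}{\Genus}{\BoundaryComponents}$), whose source has strictly smaller genus. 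Iteratively applying Lemma~\ref{lem:StabilityCriterion} to the $0$-th relative approximate augmentations in this chain, together with the fibration sequences of Lemma~\ref{lem:FibrationAugmentation}, delivers the auxiliary estimates $\apply{\AlphaAuxBound}{\Genus}$ and $\apply{\BetaAuxBound}{\Genus}$ and feeds them into the remaining branches of $\apply{\AlphaBound}{\Genus}$ and $\apply{\BetaBound}{\Genus}$. Tracking how many $0$-th augmentations are needed to reach an instance of the inductive hypothesis is precisely what produces the shift by $l$ versus $l+1$ between the cases $\TrivialityIndex=2l$ and $\TrivialityIndex=2l+1$, and it is this bookkeeping — rather than any single ingredient — that I expect to be the most delicate part of the argument.

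The $\StabilizationGamma{\Genus}{\BoundaryComponents}$ case does not admit a direct arc-resolution argument and is naturally postponed to Section~\ref{scn:CappingOff}. The epimorphism claim for $\BoundaryComponents>1$ should follow from producing a homotopy right inverse via a $\beta$-type map that reopens the capped boundary component, which is possible precisely because there is at least one remaining boundary component to connect to. The monomorphism statement for $\StabilizationBeta{\Genus}{\BoundaryComponents}$ under the nullhomotopy assumption should follow from an analogous retraction argument: fill in the newly created boundary component with a disk, use that the tangential class on it vanishes in $\HomotopyGroup{1}{\TangentialSpace{\Manifold}}$ to extend the tangential structure across this disk by Lemma~\ref{lem:ForgettingTangentialFibration}, and observe that the composition with $\StabilizationBeta{\Genus}{\BoundaryComponents}$ is homotopic to the identity.
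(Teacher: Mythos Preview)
Your overall architecture is right: induction on the genus with the four coupled statements $\AlphaBound_{\InductionIndex}$, $\BetaBound_{\InductionIndex}$, $\AlphaAuxBound_{\InductionIndex}$, $\BetaAuxBound_{\InductionIndex}$, the arc resolution feeding Lemma~\ref{lem:StabilityCriterion}, and the deferral of $\gamma$ to Section~\ref{scn:CappingOff}. Your account of how the auxiliary bounds $\AlphaAuxBound$, $\BetaAuxBound$ arise is also correct: applying Lemma~\ref{lem:StabilityCriterion} to $\StabilizationAlpha{\Genus}{\BoundaryComponents}^{\bullet}$ with base $\SpaceOfStrips{\TangentialFibration}{\Elongation{\Manifold}{1}}{\BoundaryConditionTangential{\bar{\BoundaryCondition}}}{\bar{\DistBall}}{i}$ and fibre maps identified by Corollary~\ref{crl:Fiber} yields exactly the surjectivity statement $\AlphaAuxBound_{\InductionIndex}$, namely that the relative $0$-approximate augmentation $\HomologyOfSpace{\ast}{\MappingPair{\StabilizationBeta{\Genus}{\BoundaryComponents-1}}}\to\HomologyOfSpace{\ast}{\MappingPair{\StabilizationAlpha{\Genus}{\BoundaryComponents}}}$ is onto in the predicted range.

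The gap is in the passage from $\AlphaAuxBound_{\InductionIndex}$, $\BetaAuxBound_{\InductionIndex}$ to $\AlphaBound_{\InductionIndex}$, $\BetaBound_{\InductionIndex}$. You write that absorption ``factors $\MappingPair{\StabilizationAlpha{\Genus}{\BoundaryComponents}}$ through $\MappingPair{\AlphakAugmentation{\TrivialityIndex}{\Genus}{\BoundaryComponents}}$'' and that iterating Lemma~\ref{lem:StabilityCriterion} then finishes. But absorption in the sense of Definition~\ref{dfn:Absorbs} does not produce such a factorisation; it produces a diagonal filler $\AuxBordism$ making two triangles commute. What the paper actually does is assemble a five-row ladder of Puppe sequences for the successive augmentation compositions, and then invoke Lemma~\ref{lem:Nullhomotopic} twice: once to factor the induced map on mapping cones as $h\circ p$ through a suspension, and once more (using the second diagonal supplied by $\TrivialityIndex$-triviality) to show that $h\circ\Suspension{\kAugmentation{1}{\InductionIndex-l}{\BoundaryComponents}(\DiskEmbedding^{1})}$ is nullhomotopic. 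The conclusion is that the relative augmentation composition $\MappingCone{\StabilizationAlpha{\InductionIndex-l}{\BoundaryComponents}}\to\MappingCone{\StabilizationAlpha{\InductionIndex}{\BoundaryComponents}}$ is the \emph{zero} map in the degrees where $\Suspension{\kAugmentation{1}{\InductionIndex-l}{\BoundaryComponents}}$ is surjective. Combined with the surjectivity of that same composition coming from $\AlphaAuxBound_{\InductionIndex}$ and $\BetaAuxBound_{\InductionIndex}$, this forces the target to vanish. Without this ``surjective and zero, hence target is zero'' step your argument never converts the auxiliary surjectivity into the actual vanishing of $\HomologyOfSpace{\ast}{\MappingPair{\StabilizationAlpha{\Genus}{\BoundaryComponents}}}$, and the induction does not close. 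The bookkeeping you flag as delicate (the $l$ versus $l{+}1$ shifts) is really governed by how many rows of this Puppe diagram one needs before Lemma~\ref{lem:Nullhomotopic} applies, not by iterated use of Lemma~\ref{lem:StabilityCriterion}.
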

The proof will proceed via induction. More precisely we will use induction for 
the following two statements:
\begin{itemize}
  \item
    $\AlphaBound_{\InductionIndex}$:
    For all $\Genus\leq \InductionIndex$ and
    for all $\StabilizationAlpha{\Genus}{\BoundaryComponents}$,
    the reduced homology of 
    $\MappingPair{\StabilizationAlpha{\Genus}{\BoundaryComponents}}$ vanishes 
    in degrees $\ast\leq \apply{\AlphaBound}{\Genus}$
  \item
    $\BetaBound_{\InductionIndex}$:
    For all $\Genus\leq \InductionIndex$ and
    for all $\StabilizationBeta{\Genus}{\BoundaryComponents}$, 
    the reduced homology of 
    $\MappingPair{\StabilizationBeta{\Genus}{\BoundaryComponents}}$ vanishes 
    in degrees $\ast\leq \apply{\BetaBound}{\Genus}$
\end{itemize}
Furthermore we will also need the following two auxiliary statements:
\begin{itemize}
\item
  $\AlphaAuxBound_{\InductionIndex}$: For all $\Genus\leq
  \InductionIndex$ and all strips with embedded boundary isotopy, the relative 
  $0$-approximate augmentation induces a surjection 
  $
    \HomologyOfSpace
      {\ast}
      {
        \MappingPair
        {\StabilizationBeta
          {\Genus}
          {\BoundaryComponents-1}
        }
      }
    \to
    \HomologyOfSpace
      {\ast}
      {
        \MappingPair{\StabilizationAlpha{\Genus}{\BoundaryComponents}}
      }
  $
  in 
  homology in degrees $\ast \leq \apply{\AlphaAuxBound}{\Genus}$.
\item
  $\BetaAuxBound_{\InductionIndex}$: For all $\Genus\leq \InductionIndex$ and 
  and all strip with embedded boundary isotopy, the relative $0$-approximate 
  augmentation induces a surjection
  $
    \HomologyOfSpace
      {\ast}
      {
        \MappingPair{\StabilizationAlpha{\Genus-1}{\BoundaryComponents+1}}
      }
    \to
    \HomologyOfSpace
      {\ast}
      {
        \MappingPair{\StabilizationBeta{\Genus}{\BoundaryComponents}}
      }
  $
  in degrees $\ast \leq \apply{\BetaAuxBound}{\Genus}$.
\end{itemize}
Note that the start of the induction is provided by the assumption that 
the tangential structure $\pi_0$-stabilizes at genus $\PiZeroStabilization$.
 
We will also denote the \introduce{mapping cone} of a map $\ContinuousMap$ by 
$\MappingCone{\ContinuousMap}$, if $\ContinuousMap$ is an inclusion 
$\Submanifold\to \TopologicalSpace$ we will write 
$\MappingCone{\TopologicalSpace,\Submanifold}$ instead.
The following technical lemma will be needed in the proof. It is Lemma~7.5 
in \cite{CRW16}.
  \begin{lemma}
  \label{lem:Nullhomotopic}
    Suppose we have a map of pairs \[
    \begin{tikzcd}
      A\arrow{r}{i}\arrow{d}{g}&X\arrow{d}{f}\\
      A'\arrow{r}{j}&X'
    \end{tikzcd}
    \] such that there exists a map $t\colon X\to A'$ making the bottom
    triangle commute up to a homotopy $H$.
    Then the induced maps between mapping cones 
    $
      (f,g)
      \colon 
      \bar{C}(X,A)
      \to 
      \bar{C}(X',A')
    $
    factors as 
    $
      \bar{C}(X,A)
      \xrightarrow{p}
      \bar{C}A
      \cup_i 
      \bar{C}X
      \xrightarrow{h}
      \bar{C}(X',A')
    $%
    , where $p$ comes from the Puppe sequence of the pair $(X,A)$. 
    
    Additionaly if there is a homotopy $G$, which makes the bottom triangle 
    commute, then the composite 
    $
      \bar{C}A
      \cup_i 
      \bar{C}X
      \xrightarrow{h}
      \bar{C}(X',A')
      \xrightarrow{p'}
      \bar{C}A'
      \cup_j 
      \bar{C}X'
    $
    is nullhomotopic.
  \end{lemma}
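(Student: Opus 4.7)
The strategy is to realize $\bar{C}A \cup_i \bar{C}X$ as the double mapping cone — the mapping cone of the inclusion $X \hookrightarrow \bar{C}(X,A)$, which is the next stage of the Puppe cofiber sequence of $A \xrightarrow{i} X$. Under this interpretation $p$ is the canonical inclusion $\bar{C}(X,A) \hookrightarrow \bar{C}(X,A) \cup_X \bar{C}X = \bar{C}A \cup_i \bar{C}X$, and giving a factorization of $\bar{C}(f,g)$ through $p$ is equivalent to providing a null-homotopy of the composite $X \xrightarrow{f} X' \hookrightarrow \bar{C}(X',A')$. The data $(t,H)$ furnishes precisely such a null-homotopy: first use $H$ to homotope $f$ to $jt$ inside $X'$, then contract $jt$ to the cone apex along $\bar{C}A' \subset \bar{C}(X',A')$.

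Concretely for the factorization claim, I would set $h|_{\bar{C}A} = \bar{C}g \colon \bar{C}A \to \bar{C}A' \subset \bar{C}(X',A')$. On $\bar{C}X = X \times [0,1]/(X\times\{1\})$, I split the cone parameter in half: for $s \in [0,\tfrac{1}{2}]$ set $h(x,s) = H(x,2s) \in X'$, and for $s \in [\tfrac{1}{2}, 1]$ set $h(x,s) = [t(x), 2s-1] \in \bar{C}A'$. Continuity at $s = \tfrac{1}{2}$ uses the identification $[t(x),0] = jt(x)$ in $\bar{C}(X',A')$; continuity along the glued base $A \subset \bar{C}A$ and $i(A) \subset \bar{C}X$ uses the strict commutativity $jg = fi$; and $h \circ p = \bar{C}(f,g)$ is directly verified on $X \subset \bar{C}(X,A)$ and on $\bar{C}A \subset \bar{C}(X,A)$.

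For the null-homotopy of $p' \circ h$, the plan is to deform it into a map whose image lies entirely in the contractible subspace $\bar{C}A' \subset \bar{C}A' \cup_j \bar{C}X'$. On the $\bar{C}A$ side, use $G$ to deform $\bar{C}g$ through $(a,s,\tau) \mapsto [G(a,\tau), s] \in \bar{C}A'$ to $\bar{C}(ti) = \bar{C}t \circ \bar{C}i$, where $\bar{C}i \colon \bar{C}A \to \bar{C}X$ is the cone on $i$ and $\bar{C}t \colon \bar{C}X \to \bar{C}A'$ is the cone on $t$. Simultaneously on the $\bar{C}X$ side, deform $h|_{\bar{C}X}$ into $\bar{C}t$ by using $H$ to push the $X'$-piece into $\bar{C}A'$. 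At $\tau = 1$ the composite maps both $\bar{C}A$ and $\bar{C}X$ into $\bar{C}A'$, agreeing on the glued base, and since $\bar{C}A'$ is contractible the resulting map is null-homotopic in $\bar{C}A' \cup_j \bar{C}X'$.

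The main obstacle will be compatibility of the two deformations along the shared gluing $A \subset \bar{C}A$, $i(A) \subset \bar{C}X$: the $\bar{C}A$-side traces the path $\tau \mapsto jG(a,\tau) \in X'$, while the $\bar{C}X$-side traces $\tau \mapsto H(i(a),\tau) \in X'$, and these two paths connecting $jg(a) = fi(a)$ to $jti(a)$ need not agree. Their difference forms a map $A \times S^1 \to X'$ which becomes null-homotopic after inclusion into the contractible $\bar{C}X' \subset \bar{C}A' \cup_j \bar{C}X'$, and this null-homotopy supplies the room needed to produce a single globally consistent homotopy $K_\tau$ from $p' \circ h$ to a constant, completing the argument.
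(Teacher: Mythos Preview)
The paper does not prove this lemma; it is quoted as Lemma~7.5 of \cite{CRW16} and invoked without argument, so there is no in-paper proof to compare against.

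Your construction of $h$ and the verification that $h \circ p = \bar{C}(f,g)$ are clean and correct. You have (sensibly) read the second occurrence of ``bottom triangle'' in the statement as ``top triangle'', so that $G$ witnesses $g \simeq t \circ i$; this is the only coherent reading and matches how the lemma is applied in Section~\ref{scn:Proof}.

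For the null-homotopy of $p' \circ h$, the strategy and the diagnosis of the obstacle are both right, and your resolution is valid. To make it precise: the filling $A \times D^2 \to \bar{C}X'$ of the loop formed by $\tau \mapsto jG(a,\tau)$ and $\tau \mapsto H(i(a),\tau)$ should be inserted into a collar $\{s \in [0,\epsilon]\}$ of the base of $\bar{C}A$, replacing $[G(a,\tau), s]_{\bar{C}A'}$ there by the interpolation so that at $s = 0$ the modified $G$-deformation agrees with the $H$-deformation on the nose. This costs only a preliminary reparametrisation of $\bar{C}g$ (so that it is constant on $s \in [0,\epsilon]$) to keep $\tau = 0$ honest, and is otherwise straightforward. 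After this adjustment the two deformations glue to a global homotopy carrying $p' \circ h$ into $\bar{C}A'$, which is then contracted to the apex.
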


We have the following proposition of convoluted implications finishing the 
proof of Theorem~\ref{thm:HomologicalStability}:
\begin{proposition}
  With the assumptions of Theorem~\ref{thm:HomologicalStability} we have the 
  following implications:
  \begin{enumerate}[(i)]
  \item
  \label{itm:AlphaAuxBound}
    $\BetaBound_{\InductionIndex}$ implies
    $\AlphaAuxBound_{\InductionIndex}$.
  \item
  \label{itm:BetaAuxBound}
    $\AlphaBound_{\InductionIndex-1}$ implies $\BetaAuxBound_{\InductionIndex}$
  \item
  \label{itm:AlphaBound}
    If $\TrivialityIndex=1$, then $\AlphaAuxBound_{\InductionIndex}$ and 
    $\AlphaBound_{\InductionIndex-1}$ imply $\AlphaBound_{\InductionIndex}$
    
    If $\TrivialityIndex>1$ then $\AlphaAuxBound_{\InductionIndex}$, 
    $\BetaAuxBound_{\InductionIndex}$ and 
    $
      \begin{cases}
        \BetaBound_{\InductionIndex-l} \text{ if $\TrivialityIndex=2l$ }\\
        \AlphaBound_{\InductionIndex-l-1} \text{ if $\TrivialityIndex=2l+1$ }
      \end{cases}
    $
    imply $\AlphaBound_{\InductionIndex}$
  \item
  \label{itm:BetaBound}
    If $\TrivialityIndex=1$, then $\BetaAuxBound_{\InductionIndex}$ and 
    $\BetaBound_{\InductionIndex-1}$ imply $\BetaBound_{\InductionIndex}$
    
    If $\TrivialityIndex>1$ then $\BetaAuxBound_{\InductionIndex}$, 
    $\AlphaAuxBound_{\InductionIndex-1}$ and 
    $
    \begin{cases}
      \AlphaBound_{\InductionIndex-l-1} \text{ if $\TrivialityIndex=2l$ }\\
      \BetaBound_{\InductionIndex-l-1} \text{ if $\TrivialityIndex=2l+1$ }
    \end{cases}
    $
    imply $\BetaBound_{\InductionIndex}$
  \end{enumerate}
\end{proposition}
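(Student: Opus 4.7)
The plan is to verify the four implications by exploiting the resolutions $\StabilizationAlpha{\Genus}{\BoundaryComponents}^\bullet$ and $\StabilizationBeta{\Genus}{\BoundaryComponents}^\bullet$ from Section~\ref{scn:Resolution}, combined with Lemma~\ref{lem:StabilityCriterion}, Corollary~\ref{crl:Fiber}'s identification of homotopy fibers, and, for (iii) and (iv), the absorption mechanism of $\TrivialityIndex$-triviality together with Lemma~\ref{lem:Nullhomotopic}.

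For (\ref{itm:AlphaAuxBound}), I would apply Lemma~\ref{lem:StabilityCriterion} to the map of augmented semi-simplicial pairs $\StabilizationAlpha{\Genus}{\BoundaryComponents}^\bullet$, which augments to a $\Genus$-resolution of $\MappingPair{\StabilizationAlpha{\Genus}{\BoundaryComponents}}$. Choose the auxiliary base spaces $B_i = \SpaceOfStrips{\TangentialFibration}{\Elongation{\Manifold}{1}}{\BoundaryConditionTangential{\bar{\BoundaryCondition}}}{\bar{\DistBall}}{i}$ with $p_i$ the forgetful map, so that by Corollary~\ref{crl:Fiber} the fiber maps $g_i$ identify with $\StabilizationBeta{\Genus-i}{\BoundaryComponents+i-1}$. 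The inductive hypothesis $\BetaBound_\InductionIndex$ provides the vanishing $\HomologyOfSpace{q}{\MappingPair{g_i}}=0$ for $q+i\leq c:=\apply{\AlphaAuxBound}{\Genus}$, the tightest constraint ($i=1$) being exactly $\apply{\AlphaAuxBound}{\Genus}\leq \apply{\BetaBound}{\Genus-1}+1$ from Definition~\ref{dfn:StabilityBounds}; monotonicity of the sequence $\apply{\BetaBound}{\Genus-i}+i$ (which follows from $\apply{\BetaBound}{\Genus}\leq\apply{\BetaBound}{\Genus-1}+1$) handles $i\geq 2$, and a separate check, using $\apply{\AlphaAuxBound}{\Genus}-1\leq \apply{\BetaBound}{\Genus}$, handles $i=0$. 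The surjectivity conclusion of Lemma~\ref{lem:StabilityCriterion} is precisely the statement $\AlphaAuxBound_\InductionIndex$. Implication (\ref{itm:BetaAuxBound}) is entirely parallel, using the $\beta$-resolution, the identification $g_i=\StabilizationAlpha{\Genus-i-1}{\BoundaryComponents+i+1}$, and the hypothesis $\AlphaBound_{\InductionIndex-1}$ with the bound $\apply{\BetaAuxBound}{\Genus}\leq \apply{\AlphaBound}{\Genus-2}+1$.

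Implications (\ref{itm:AlphaBound}) and (\ref{itm:BetaBound}) require the \emph{vanishing} conclusion of Lemma~\ref{lem:StabilityCriterion}, not just the surjection, so one must additionally arrange $\HomologyOfSpace{c}{\MappingPair{g_0}}=0$. The auxiliary hypothesis $\AlphaAuxBound_\InductionIndex$ (resp.\ $\BetaAuxBound_\InductionIndex$) again supplies the first-conclusion surjectivity that reduces the question to showing that the $0$-th relative approximate augmentation is zero on homology in the relevant degrees. This is where $\TrivialityIndex$-triviality enters: by Definition~\ref{dfn:kTriviality}, the stabilization in question is absorbed by the augmentation composition of length $\TrivialityIndex$, and the commutative-up-to-homotopy square of Definition~\ref{dfn:Absorbs} furnishes precisely the data required by Lemma~\ref{lem:Nullhomotopic}. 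Applied to the map of pairs given by the relative $0$-approximate augmentation, Lemma~\ref{lem:Nullhomotopic} factors the induced map on mapping cones through $\bar{C}A\cup_i \bar{C}X$ and produces a null-homotopic composite, upgrading the surjection from $\AlphaAuxBound_\InductionIndex$ (resp.\ $\BetaAuxBound_\InductionIndex$) to the identically zero map on homology. When $\TrivialityIndex=1$ this is applied once, combined with $\AlphaBound_{\InductionIndex-1}$ (resp.\ $\BetaBound_{\InductionIndex-1}$) to control the face-forgetting in the resolution, yielding $\AlphaBound_\InductionIndex$. For $\TrivialityIndex>1$ the absorption chains through $\TrivialityIndex$ approximate augmentations whose types alternate between $\alpha$ and $\beta$, and the drop of $l=\lfloor \TrivialityIndex/2\rfloor$ in genus (or $l+1$ in the odd case) matches exactly the indices $\InductionIndex-l$, $\InductionIndex-l-1$ appearing among the assumed bounds.

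The principal obstacle is the combinatorial bookkeeping for (\ref{itm:AlphaBound}) and (\ref{itm:BetaBound}) with $\TrivialityIndex>1$: each level of the augmentation composition changes the type of the stabilization map and cuts down $(\Genus,\BoundaryComponents)$ in a type-dependent fashion, and one must verify that the minima in Definition~\ref{dfn:StabilityBounds} correspond exactly to the worst constraints arising from iterating Lemma~\ref{lem:Nullhomotopic} and Lemma~\ref{lem:StabilityCriterion} in tandem. Once the case distinctions in Definition~\ref{dfn:StabilityBounds} are matched against the case distinctions governing the parity of $\TrivialityIndex$ and the type of the relative augmentation compositions, each of the four statements collapses into a mechanical application of the two main technical lemmas to the appropriate semi-simplicial resolution.
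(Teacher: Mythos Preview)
Your treatment of (\ref{itm:AlphaAuxBound}) and (\ref{itm:BetaAuxBound}) is correct and matches the paper's argument exactly: feed the resolutions $\StabilizationAlpha{\Genus}{\BoundaryComponents}^\bullet$, $\StabilizationBeta{\Genus}{\BoundaryComponents}^\bullet$ into Lemma~\ref{lem:StabilityCriterion} with base $\SpaceOfStrips{\TangentialFibration}{\Elongation{\Manifold}{1}}{\BoundaryConditionTangential{\bar{\BoundaryCondition}}}{\bar{\DistBall}}{i}$, identify the fibre maps via Corollary~\ref{crl:Fiber}, and read off the surjectivity statement.

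For (\ref{itm:AlphaBound}) and (\ref{itm:BetaBound}) there is a genuine gap. You assert that the absorption square of Definition~\ref{dfn:Absorbs} ``furnishes precisely the data required by Lemma~\ref{lem:Nullhomotopic}'' and that this ``upgrades the surjection \ldots\ to the identically zero map''. But Lemma~\ref{lem:Nullhomotopic} does \emph{not} conclude that the map $\MappingCone{X,A}\to\MappingCone{X',A'}$ is null; it only says the further composite $\MappingCone{A}\cup_i\MappingCone{X}\xrightarrow{h}\MappingCone{X',A'}\xrightarrow{p'}\MappingCone{A'}\cup_j\MappingCone{X'}$ is null. So a single application of absorption plus Lemma~\ref{lem:Nullhomotopic} to the length-$\TrivialityIndex$ augmentation composition does not yet show that this composition induces zero on homology of mapping cones.

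The paper closes this gap with an extra layer. One chooses a genus-maximizing sequence of length $\TrivialityIndex+1$ (not $\TrivialityIndex$) with the property that cycling the first strip $\DiskEmbedding^{1}$ to the end still yields a genus-maximizing sequence. This produces a five-row Puppe-sequence diagram in which $\TrivialityIndex$-triviality is applied to the subsequence $(\DiskEmbedding^{2},\ldots,\DiskEmbedding^{\TrivialityIndex+1})$, giving compatible diagonals in \emph{two} adjacent squares (one before and one after gluing $\DiskEmbedding^{1}$ back in). Lemma~\ref{lem:Nullhomotopic}, applied at both places, then yields $h\circ\Suspension{\kAugmentation{1}{\InductionIndex-l}{\BoundaryComponents}(\DiskEmbedding^{1})}\simeq h'\circ p'\circ h''\simeq *$. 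Now one uses that $\kAugmentation{1}{\InductionIndex-l}{\BoundaryComponents}(\DiskEmbedding^{1})$ is a map of type $\beta$ (hence a homology surjection in the range governed by $\BetaBound_{\InductionIndex-l}$, which is exactly the extra hypothesis in the $\TrivialityIndex=2l$ case) to conclude that $h$ itself is zero on homology; finally $h\circ p$ is the length-$\TrivialityIndex$ relative augmentation composition on cones, and its being simultaneously zero and, by $\AlphaAuxBound_\InductionIndex$ and $\BetaAuxBound_\InductionIndex$, an epimorphism forces the target $\HomologyOfSpace{\ast}{\MappingPair{\StabilizationAlpha{\InductionIndex}{\BoundaryComponents}}}$ to vanish. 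Your outline never introduces the extra strip $\DiskEmbedding^{1}$, the cycling, or the step ``$h\circ(\text{surjection})\simeq *\Rightarrow h=0$ on homology'', and without these the argument does not go through.
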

\begin{proof}[Proof of (\ref{itm:AlphaAuxBound}) and (\ref{itm:BetaAuxBound})]
  We will prove (\ref{itm:AlphaAuxBound}), the other part is completely 
  analogous.
  By corollary \ref{crl:Fiber} we get the following diagram:
  \[
    \begin{tikzcd}
      \SpaceSubsurfaceBoundaryTangential
        {\TangentialFibration}
        {\InductionIndex-\FaceIndex}
        {\BoundaryComponents+\FaceIndex-1}
        {\CutOut{\Manifold}{\DiskEmbedding}}
        {\BoundaryConditionTangential{\CutOut{\BoundaryCondition}{\DiskEmbedding}}}
        \ar[
          rr,
          "\StabilizationBeta
            {\InductionIndex-\FaceIndex}
            {\BoundaryComponents+\FaceIndex-1}
          "
        ]
        \ar[d]
      &
      &
      \SpaceSubsurfaceBoundaryTangential
        {\TangentialFibration}
        {\InductionIndex-\FaceIndex}
        {\BoundaryComponents+\FaceIndex}
        {\Elongation{\CutOut{\Manifold}{\bar{\DiskEmbedding}}}{1}}
        {
          \BoundaryConditionTangential
            {\CutOut{\BoundaryCondition'}{\bar{\DiskEmbedding}}}
        }
        \ar[d]
      \\
      \ArcResolutionTwo
        {\TangentialFibration}
        {\InductionIndex}
        {\BoundaryComponents}
        {\Manifold}
        {\BoundaryConditionTangential{\BoundaryCondition}}
        {\DistBall}
        {\FaceIndex}
        \ar[d] 
        \ar[rr,"(\StabilizationAlpha{\InductionIndex}{\BoundaryComponents})_{\bullet}"]
      &
      &
      \ArcResolutionSingle
        {\TangentialFibration}
        {\InductionIndex+1}
        {\BoundaryComponents-1}
        {\Elongation{\Manifold}{1}}
        {\BoundaryConditionTangential{\BoundaryCondition'}}
        {\bar{\DistBall}}
        {\FaceIndex}
      \ar[r]
      \ar[d]
      &
      \SpaceOfStrips
        {\TangentialFibration}
        {\Manifold}
        {\BoundaryConditionTangential{\BoundaryCondition}}
        {\DistBall}
        {\FaceIndex}
      \\
      \SpaceSubsurfaceBoundaryTangential
        {\TangentialFibration}
        {\InductionIndex}
        {\BoundaryComponents}
        {\Manifold}
        {\BoundaryConditionTangential{\BoundaryCondition}}
        \ar[rr,"\StabilizationAlpha{\InductionIndex}{\BoundaryComponents}"] 
      &
      &
      \SpaceSubsurfaceBoundaryTangential
        {\TangentialFibration}
        {\InductionIndex+1}
        {\BoundaryComponents-1}
        {\Elongation{\Manifold}{1}}
        {\BoundaryConditionTangential{\BoundaryCondition'}}
    \end{tikzcd}
  \]
  Furthermore the augmentation 
  $
    \ArcResolutionTwo
      {\TangentialFibration}
      {\InductionIndex}
      {\BoundaryComponents}
      {\Manifold}
      {\BoundaryConditionTangential{\BoundaryCondition}}
      {\DistBall}
      {\bullet} 
    \to 
    \SpaceSubsurfaceBoundaryTangential
      {\TangentialFibration}
      {\InductionIndex}
      {\BoundaryComponents}
      {\Manifold}
      {\BoundaryConditionTangential{\BoundaryCondition}}
  $
  is $(\InductionIndex-2)$-connected by Proposition 
  \ref{prp:ArcResolution}.
  Since $\CutOut{\Manifold}{\DiskEmbedding}$ is homeomorphic to
  $\Manifold$, $\BetaBound_{\InductionIndex}$ implies 
  that 
  $
    \HomologyOfSpace
      {q}
      {\StabilizationBeta{\InductionIndex-\FaceIndex}{\BoundaryComponents+\FaceIndex-1}}
    =
    0
  $
  for $q \leq \apply{\BetaBound}{\InductionIndex-\FaceIndex}$.
  This gives us all the ingredients to apply Lemma 
  \ref{lem:StabilityCriterion} with
  $
    c=\apply{\AlphaAuxBound}{\InductionIndex}
  $%
  , using that 
  $
    \apply{\AlphaAuxBound}{\InductionIndex} 
    \geq 
    \apply{\BetaBound}{\Genus-i}+i
  $%
  .
  The lemma implies that the induced map
  $
    \HomologyOfSpace
      {q}
      {\MappingPair{\StabilizationBeta{\InductionIndex}{\BoundaryComponents-1}}}
    \to 
    \HomologyOfSpace
      {q}
      {\MappingPair{\StabilizationAlpha{\InductionIndex}{\BoundaryComponents}}}
  $
  is an epimorphism for $q\leq \apply{\AlphaAuxBound}{\InductionIndex}$, which 
  is exactly the statement of $\AlphaAuxBound_{\InductionIndex}$. 
\end{proof}
\begin{proof}[Proof of (\ref{itm:AlphaBound}) and (\ref{itm:BetaBound})]
  We will prove (\ref{itm:AlphaBound}) for $\TrivialityIndex=2l$ here. The 
  proof for odd $\TrivialityIndex$ is completely analogous and we will indicate 
  the necessary minor changes for the proof of (\ref{itm:BetaBound}).
  
  Pick a genus maximizing sequence 
  $(\DistIntervall{j}^{i})_{1,\ldots,\TrivialityIndex+1}$ of length 
  $\TrivialityIndex+1$ for maps of type $\alpha$ with a compatible sequence
  $(\DiskEmbedding^{i})_{1,\ldots,\TrivialityIndex+1}$ of arcs with embedded 
  boundary isotopy such that 
  $(\DiskEmbedding^{i})_{2,\ldots,\TrivialityIndex,1}$ is also a genus 
  maximizing sequence of length $\TrivialityIndex+1$. 
  To get such a sequence, choose for example $\TrivialityIndex+1$ disjoint 
  intervals in the two 
  components of $\BoundaryCondition$ and then label them positively oriented 
  from $1$ to $\TrivialityIndex+1$, this gives the desired genus maximizing 
  sequence (For (\ref{itm:BetaBound}) choose $2\TrivialityIndex+2$ disjoint 
  intervals in the single component of $\BoundaryCondition$ and label them as 
  follows: $\DistIntervall{0}^{\TrivialityIndex+1},\ldots 
  \DistIntervall{0}^{1},\DistIntervall{1}^{\TrivialityIndex+1},\ldots 
  \DistIntervall{1}^{1}$; in this case $\DistIntervall{0}^{1}$ and 
  $\DistIntervall{1}^{1}$ switch if one moves $\DiskEmbedding^{1}$ to the end 
  of the sequence).
  
  Now form the following diagram, where we denote 
  $
    \DiskEmbedding^{1}
    \cup 
    \ldots 
    \cup
    \DiskEmbedding^{\TrivialityIndex+1}
  $ 
  by $d$, 
  $
    \bar{\DiskEmbedding}^{1}
    \cup
    \ldots
    \cup
    \bar{\DiskEmbedding}^{\TrivialityIndex+1}
  $
  by $\bar{d}$,
  $
    \DiskEmbedding^{2}
    \cup
    \ldots
    \cup 
    \DiskEmbedding^{\TrivialityIndex+1}
  $
  by $c$, and $\bar{c}$ analogously:
  \[
  \begin{lrbox}{\wideeqbox}
    $\begin{tikzcd}[cramped, column sep=small]
      \SpaceSubsurfaceBoundaryTangential
        {\TangentialFibration}
        {\InductionIndex-l}
        {\BoundaryComponents-1}
        {\CutOut
          {\Manifold}
          {d}
        }
        {\BoundaryConditionTangential
          {\CutOut
            {\BoundaryCondition}
            {d}
          }
        }
        \ar[r]
        \ar[d,"\kAugmentation{1}{\InductionIndex-l}{\BoundaryComponents}(\DiskEmbedding^{1})"]
      &
      \SpaceSubsurfaceBoundaryTangential
        {\TangentialFibration}
        {\InductionIndex-l}
        {\BoundaryComponents}
        {\CutOut
          {\Elongation{\Manifold}{1}}
          {\bar{d}}
        }
        {\BoundaryConditionTangential
          {\CutOut
            {\BoundaryCondition'}
            {\bar{d}}
          }
        }
        \ar[r]
        \ar[d,
          "
            \kAugmentation
              {1}
              {\InductionIndex-l}
              {\BoundaryComponents}
            (\bar{\DiskEmbedding}^{1})
          "
        ]
      &
      \MappingCone{\StabilizationBeta{\InductionIndex-l}{\BoundaryComponents-1}}
        \ar[d]
        \ar[r]
      &
      \Suspension{
        \SpaceSubsurfaceBoundaryTangential
          {\TangentialFibration}
          {\InductionIndex-l}
          {\BoundaryComponents-1}
          {\CutOut
            {\Manifold}
            {d}
          }
          {\BoundaryConditionTangential
            {\CutOut
              {\BoundaryCondition}
              {d}
            }
          }
      }
        \ar[d,"
          \Suspension
            {
              \kAugmentation{1}{\InductionIndex-l}{\BoundaryComponents}(\DiskEmbedding^{1})
            }
          "
        ]
      \\
      \SpaceSubsurfaceBoundaryTangential
        {\TangentialFibration}
        {\InductionIndex-l}
        {\BoundaryComponents}
        {\CutOut
          {\Manifold}
          {c}
        }
        {\BoundaryConditionTangential
          {\CutOut
            {\BoundaryCondition}
            {c}
          }
        }
        \ar[r]
        \ar[d,
          "
            \kAugmentation
              {\TrivialityIndex}
              {\InductionIndex}
              {\BoundaryComponents}
            (c)
          "
        ]
      &
      \SpaceSubsurfaceBoundaryTangential
        {\TangentialFibration}
        {\InductionIndex-l+1}
        {\BoundaryComponents-1}
        {\CutOut
          {\Elongation{\Manifold}{1}}
          {\bar{c}}
        }
        {\BoundaryConditionTangential
          {\CutOut
            {\BoundaryCondition'}
            {\bar{c}}
          }
        }
        \ar[r]
        \ar[d,
          "
            \kAugmentation
              {\TrivialityIndex}
              {\InductionIndex+1}
              {\BoundaryComponents-1}
            (\bar{c})
          "
        ]
      &
      \MappingCone{\StabilizationAlpha{\InductionIndex-l}{\BoundaryComponents}}
        \ar[d]
        \ar[r,"p"]
      &
      \Suspension{
        \SpaceSubsurfaceBoundaryTangential
          {\TangentialFibration}
          {\InductionIndex-l}
          {\BoundaryComponents}
          {\CutOut
            {\Manifold}
            {c}
          }
          {\BoundaryConditionTangential
            {\CutOut
              {\BoundaryCondition}
              {c}
            }
          }
      }
      \ar[dl,"h"]
      \\
      \SpaceSubsurfaceBoundaryTangential
        {\TangentialFibration}
        {\InductionIndex}
        {\BoundaryComponents}
        {\Manifold}
        {\BoundaryConditionTangential{\BoundaryCondition}}
        \ar[r]
      &
      \SpaceSubsurfaceBoundaryTangential
        {\TangentialFibration}
        {\InductionIndex+1}
        {\BoundaryComponents-1}
        {\Elongation{\Manifold}{1}}
        {\BoundaryConditionTangential{\BoundaryCondition'}}
        \ar[r]
      &
      \MappingCone{\StabilizationAlpha{\InductionIndex}{\BoundaryComponents}}
      \\
      \SpaceSubsurfaceBoundaryTangential
        {\TangentialFibration}
        {\InductionIndex}
        {\BoundaryComponents-1}
        {\CutOut
          {\Manifold}
          {\DiskEmbedding^{1}}
        }
        {\BoundaryConditionTangential
          {\CutOut
            {\BoundaryCondition}
            {\tilde{\DiskEmbedding}^{1}}
          }
        }
        \ar[r]
        \ar[u,
          "
            \kAugmentation
              {1}
              {\InductionIndex}
              {\BoundaryComponents}
            (\DiskEmbedding^{1})
          "
        ]
      &
      \SpaceSubsurfaceBoundaryTangential
        {\TangentialFibration}
        {\InductionIndex}
        {\BoundaryComponents}
        {\CutOut
          {\Elongation{\Manifold}{1}}
          {\bar{\DiskEmbedding}^{1}}
        }
        {\BoundaryConditionTangential
          {\CutOut
            {\BoundaryCondition'}
            {\tilde{\DiskEmbedding}^{1}}
          }
        }
        \ar[r]
        \ar[u,
          "
            \kAugmentation
              {1}
              {\InductionIndex+1}
              {\BoundaryComponents-1}
            (\bar{\DiskEmbedding}^{1})
          "
        ]
      &
      \MappingCone{\StabilizationBeta{\InductionIndex}{\BoundaryComponents-1}}
        \ar[u]
        \ar[r,"p'"]
      &
      \Suspension{
        \SpaceSubsurfaceBoundaryTangential
          {\TangentialFibration}
          {\InductionIndex}
          {\BoundaryComponents-1}
          {\CutOut
            {\Manifold}
            {\DiskEmbedding^{1}}
          }
          {\BoundaryConditionTangential
            {\CutOut
              {\BoundaryCondition}
              {\tilde{\DiskEmbedding}^{1}}
            }
          }
      }
        \ar[ul,"h'"]
      \\
      \SpaceSubsurfaceBoundaryTangential
        {\TangentialFibration}
        {\InductionIndex-l}
        {\BoundaryComponents-1}
        {\CutOut
          {\Manifold}
          {d}
        }
        {\BoundaryConditionTangential
          {\CutOut
            {\BoundaryCondition}
            {d}
          }
        }
        \ar[r]
        \ar[u,
          "
            \kAugmentation
              {\TrivialityIndex}
              {\InductionIndex}
              {\BoundaryComponents-1}
            (d)
          "
        ]
      &
      \SpaceSubsurfaceBoundaryTangential
        {\TangentialFibration}
        {\InductionIndex-l}
        {\BoundaryComponents}
        {\CutOut
          {\Elongation{\Manifold}{1}}
          {\bar{d}}
        }
        {\BoundaryConditionTangential
          {\CutOut
            {\BoundaryCondition'}
            {\bar{d}}
          }
        }
        \ar[r]
        \ar[u,
          "
            \kAugmentation
              {\TrivialityIndex}
              {\InductionIndex}
              {\BoundaryComponents}
            (\bar{d})
          "
        ]
      &
      \MappingCone{\StabilizationBeta{\InductionIndex-l}{\BoundaryComponents-1}}
      \ar[u]
      \ar[r]
      &
      \Suspension{
        \SpaceSubsurfaceBoundaryTangential
        {\TangentialFibration}
        {\InductionIndex-l}
        {\BoundaryComponents-1}
        {\CutOut
          {\Manifold}
          {d}
        }
        {\BoundaryConditionTangential
          {\CutOut
            {\BoundaryCondition}
            {d}
          }
        }
      }
      \ar[ul,"h''"]
    \end{tikzcd}$
    \end{lrbox}
    \makebox[0pt]{\usebox{\wideeqbox}}
  \]
  Here the rows are given by the corresponding Puppe sequences, note that the 
  first and last line agree. The vertical maps are augmentation compositions 
  and we have indicated in brackets which arcs with boundary isotopy are used 
  there. 
  
  Since $\TangentialFibration$ is $\TrivialityIndex$-trivial we can find a
  $
    \AuxBordism
    \subset
    c
    \cup 
    \DistBoundaryManifold
      {0}
      {\CutOut{\Manifold}{\DiskEmbedding^{1}}}
    \times 
    [1,2]
  $ 
  providing a diagonal map for the lower left square. 
  Note that 
  $
    \AuxBordism 
    \cup 
    (
      \bar{\DiskEmbedding}^{1}
      \setminus
      \DiskEmbedding^{1}
    )
    \subset
    c
    \cup
    \DistBoundaryManifold{0}{\Manifold}
    \times
    [1,2]
  $
  gives a bordism that provides a diagonal map for the second square. This 
  implies that the diagonal map of the lower square postcomposed with 
  $
    \kAugmentation
      {1}
      {\InductionIndex}
      {\BoundaryComponents}
    (\DiskEmbedding^{1})
  $
  agrees up to homotopy with the composition of 
  $
    \kAugmentation
      {1}
      {\InductionIndex-l}
      {\BoundaryComponents}
    (\bar{\DiskEmbedding}^{1})
  $
  and the diagonal map of the second square.
  Using this observation and Lemma~\ref{lem:Nullhomotopic}, one obtains the 
  following:
  \[
    h
    \circ 
    \Suspension
    {
      \kAugmentation{1}{\InductionIndex-l}{\BoundaryComponents}(\DiskEmbedding^{1})
    }
    \simeq
    (
      \kAugmentation
        {1}
        {\InductionIndex}
        {\BoundaryComponents}
      (\DiskEmbedding^{1})
      ,
      \kAugmentation
        {1}
        {\InductionIndex+1}
        {\BoundaryComponents-1}
      (\bar{\DiskEmbedding}^{1})
    )
    \circ 
    h''
    \simeq
    h'
    \circ
    p'
    \circ
    h''
    \simeq
    \ast
  \]
  This implies that $h$ induces the zero morphism in homology in those degrees, 
  where 
  $
    \Suspension
    {
      \kAugmentation{1}{\InductionIndex-l}{\BoundaryComponents}(\DiskEmbedding^{1})
    }
  $
  induces a surjective map in homology. 
  Note that by replacing the occurring manifolds with corners by manifolds with 
  boundary,
  $
    \kAugmentation{1}{\InductionIndex-l}{\BoundaryComponents}(\DiskEmbedding^{1})
  $
  becomes equivalent to a map of type 
  $\StabilizationBeta{\InductionIndex-l}{\BoundaryComponents}$, hence is 
  surjective in 
  degrees $\apply{\BetaBound}{\InductionIndex-l}$.
  Hence $h\circ p$, which is homotopic to 
  $ 
    \MappingCone{\StabilizationAlpha{\InductionIndex-l}{\BoundaryComponents}}
    \to
    \MappingCone{\StabilizationAlpha{\InductionIndex}{\BoundaryComponents}}
  $%
  , the relative augmentation composition on mapping cones instead of mapping 
  pairs,
  is the null homomorphism in those degrees as well.
  Since this map is defined as a composition of relative 
  $0$-approximate augmentations (on cones not pairs), which are epimorphisms in 
  a range of degrees (depending on the genus occurring in the approximate 
  augmentations) 
  by $\AlphaAuxBound_{\InductionIndex}$ and $\BetaAuxBound_{\InductionIndex}$, 
  one has
  $
    \HomologyOfSpace
      {\ast}
      {\MappingPair{\StabilizationAlpha{\InductionIndex}{\BoundaryComponents}}}
  $
  vanishes in all degrees 
  $
    \ast 
    \leq 
    \min
    \{
      \apply{\AlphaAuxBound}{\InductionIndex-l+1}
      ,
      \apply{\BetaAuxBound}{\InductionIndex-l+1}
      ,
      \apply{\BetaBound}{\InductionIndex-l}
    \}
  $%
  . This finishes the proof as $\apply{\AlphaBound}{\InductionIndex}$ is by 
  definition smaller or equal than the left hand side.
\end{proof}
\section{Homological Stability for Capping off Boundary Components}
  \label{scn:CappingOff}
  All that is left to do to finish the proof of 
  Theorem~\ref{thm:HomologicalStability} is to prove the 
  homological stability statement for maps of type $\gamma$ and prove 
  that 
  $\StabilizationBeta{\Genus}{\BoundaryComponents}$ induces a 
  monomorphism in 
  integral homology provided that one 
  of the newly created boundary components is contractible and the tangential 
  structure on it extends to a disk.
  To do this we will first observe that homological stability for maps 
  of 
  type $\StabilizationGamma{\Genus}{\BoundaryComponents}$ where 
  $\BoundaryComponents>1$ is an easy corollary of homological 
  stability of 
  maps of type $\StabilizationBeta{\Genus}{\BoundaryComponents}$ and 
  then we 
  will establish everything needed to
  use Lemma \ref{lem:StabilityCriterion} again to relate the 
  homological 
  stability for maps of type $\StabilizationGamma{\Genus}{1}$ to maps 
  of type 
  $\StabilizationGamma{\Genus}{\BoundaryComponents}$ 
  for $\BoundaryComponents>1$.
  
  So consider a subsurface 
  $
    \BoundaryBordism
    \subset
    \DistBoundaryManifold{0}{\Manifold}
    \times 
    [0,1]
  $%
  , which defines a 
  map of type $\StabilizationBeta{\Genus}{\BoundaryComponents}$. 
  Suppose that one of the boundary components of the outgoing boundary 
  of the 
  pair of pants component of $\BoundaryBordism$ is contractible 
  in $\DistBoundaryManifold{0}{\Manifold}\times [0,1]$. 
  Fix a contraction and, using the main result of \cite{H61}, realize 
  it up to homotopy as an embedding 
  $
    (D^2,\BoundaryManifold{D^2}) 
    \to 
    (
      \DistBoundaryManifold{0}{\Manifold}
      \times 
      [0,1]
      ,
      \DistBoundaryManifold{0}{\Manifold}
      \times 
      \{1\}
    )
  $%
  .
    
  By flipping this embedded contraction (i.e. flipping the interval 
  direction) and equipping it with a tangential 
  structure extending the one on the boundary given by 
  $\BoundaryBordism$, 
  yields a bordism defining a map of type 
  $\StabilizationGamma{\Genus}{\BoundaryComponents}$ and by 
  construction 
  $
    \StabilizationGamma{\Genus}{\BoundaryComponents+1}
    \circ 
    \StabilizationBeta{\Genus}{\BoundaryComponents}
  $
  is a union with a cylinder and hence the composition is a homotopy 
  equivalence and induces an isomorphism in homology. This implies 
  in particular that $\StabilizationBeta{\Genus}{\BoundaryComponents}$ 
  induces a monomorphism in integral homology in all degrees.
  
  Similarly given a disk defining a map of type 
  $\StabilizationGamma{\Genus}{\BoundaryComponents}$ such that 
  there exists another component of $\BoundaryCondition$ in the same 
  connected component of $\DistBoundaryManifold{0}{\Manifold}$, we can 
  find a 
  pair of pants defining a map 
  $\StabilizationBeta{\Genus}{\BoundaryComponents-1}$ 
  such that 
  $
    \StabilizationGamma{\Genus}{\BoundaryComponents}
    \circ
    \StabilizationBeta{\Genus}{\BoundaryComponents-1}
  $ 
  is a homotopy equivalence.
  We have already shown that 
  $\StabilizationBeta{\Genus}{\BoundaryComponents}$ induces an 
  isomorphism in homology in degrees $ \leq 
  \apply{\BetaBound}{\Genus}-1$ and 
  an epimorphism in the next degree. 
  Moreover in this case 
  $\StabilizationBeta{\Genus}{\BoundaryComponents-1}$ 
  induces a monomorphism in all degrees from which we conclude that it 
  actually induces an isomorphism in all degrees $\leq 
  \apply{\BetaBound}{\Genus}$. 
  This implies that 
  $\StabilizationGamma{\Genus}{\BoundaryComponents}$, 
  as a left inverse for this map, 
  induces an isomorphism in these degrees as well. All in all this 
  gives:
  \begin{lemma}
\label{lem:CappingOffBoundary}
  Suppose $\Manifold$ fulfils the assumptions of Theorem~\ref{thm:HomologicalStability}:
  If one of the new boundary components of the bordism that defines a map 
  of type $\StabilizationBeta{\Genus}{\BoundaryComponents}$ is contractible in 
  $\DistBoundaryManifold{0}{\Manifold}$, then 
  $\StabilizationBeta{\Genus}{\BoundaryComponents}$ induces a monomorphism in 
  homology in all degrees .
  
  Similarly if there exists another boundary component of $\BoundaryCondition$ 
  in the same connected component of $\DistBoundaryManifold{0}{\Manifold}$, 
  where the disk component of the bordism defining 
  $\StabilizationGamma{\Genus}{\BoundaryComponents}$ lies, then 
  $\StabilizationGamma{\Genus}{\BoundaryComponents}$ induces an 
  epimorphism in homology in all degrees and furthermore an isomorphism in 
  degrees $\leq \apply{\BetaBound}{\Genus}+1$.
\end{lemma}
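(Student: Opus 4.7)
The plan is to prove both statements by constructing explicit bordisms that exhibit stabilization maps of one type as genuine left inverses, in homology, of stabilization maps of the other type, and then combining these composition identities with the stability bounds already established in Theorem~\ref{thm:HomologicalStability}.

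For the first claim, I would fix a null-homotopy in $\DistBoundaryManifold{0}{\Manifold}$ of the assumed contractible new boundary circle and realize it, using the main theorem of \cite{H61} together with the dimension hypothesis $\dim \Manifold \geq 5$, as a smooth embedding $(D^2,\partial D^2) \to (\DistBoundaryManifold{0}{\Manifold}\times[0,1],\DistBoundaryManifold{0}{\Manifold}\times\{1\})$. Flipping the interval coordinate places this disk into a collar above the pair of pants defining $\StabilizationBeta{\Genus}{\BoundaryComponents}$, and the hypothesis that the tangential structure on the relevant boundary circle is nullhomotopic in $\HomotopyGroup{1}{\TangentialSpace{\Manifold}}$ is precisely what is needed to extend the boundary data to a tangential structure across the disk. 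This yields a stabilization map of type $\gamma$ whose composition with $\StabilizationBeta{\Genus}{\BoundaryComponents}$ is gluing on a disk capping off the contractible pants boundary; the resulting subsurface is isotopic to one produced by stabilization with a cylinder, and the composite map is therefore a homotopy equivalence. Hence $\StabilizationBeta{\Genus}{\BoundaryComponents}$ is injective on integral homology in every degree.

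For the second claim, I would reverse this construction. Using the assumed second boundary circle of $\BoundaryCondition$ in the same component of $\DistBoundaryManifold{0}{\Manifold}$ as the capping disk, I can build a pair of pants inside $\DistBoundaryManifold{0}{\Manifold}\times[0,1]$ joining that circle to a small circle bounding the capping disk. This defines a map of type $\StabilizationBeta{\Genus}{\BoundaryComponents-1}$ whose post-composition with $\StabilizationGamma{\Genus}{\BoundaryComponents}$ is again cylinder-stabilization and so a homotopy equivalence; in particular $\StabilizationGamma{\Genus}{\BoundaryComponents}$ is surjective on homology in every degree. Moreover the pair of pants just constructed has a new boundary component bounding the capping disk, hence contractible in $\DistBoundaryManifold{0}{\Manifold}$, so the first part of the lemma upgrades $\StabilizationBeta{\Genus}{\BoundaryComponents-1}$ to a monomorphism in every degree. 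Combining this with the bounds of Theorem~\ref{thm:HomologicalStability} — isomorphism through degree $\apply{\BetaBound}{\Genus}-1$ and epimorphism in degree $\apply{\BetaBound}{\Genus}$ — one concludes that $\StabilizationBeta{\Genus}{\BoundaryComponents-1}$ is an isomorphism in all degrees $\ast \leq \apply{\BetaBound}{\Genus}$, and its left inverse $\StabilizationGamma{\Genus}{\BoundaryComponents}$ inherits the same isomorphism range.

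The main obstacle I anticipate is not homological but geometric: arranging that the embedded disk produced from the null-homotopy (and later the pair of pants) is disjoint from the bordism it interacts with, that its tangential structure can be chosen compatibly with the prescribed data on the boundary circle, and that the resulting compositions genuinely realize cylinder-stabilization up to an isotopy of subsurfaces with tangential structure. The dimension assumption handles the transversality and general-position issues via the codimension, while the $\pi_1$-triviality of the tangential structure on the contractible boundary component provides the obstruction-theoretic input for the extension of sections across the disk; with these in place the composition identities and the inference from Theorem~\ref{thm:HomologicalStability} yield the lemma.
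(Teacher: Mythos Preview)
Your proposal is correct and follows essentially the same route as the paper: realize the null-homotopy as an embedded disk via \cite{H61}, flip it to obtain a $\gamma$-bordism so that $\gamma\circ\beta$ is cylinder-stabilization (hence a homotopy equivalence), and then run the dual construction for the second part, upgrading the epimorphism range of $\beta_{g,b-1}$ to an isomorphism using the monomorphism from the first part. You are in fact slightly more explicit than the paper about why the tangential structure extends over the disk (the $\pi_1$-triviality hypothesis from Theorem~\ref{thm:HomologicalStability}), which the lemma statement itself omits.
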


  Therefore we only have to concern ourselves with the case where 
  there is no 
  more connected component of $\BoundaryCondition$ in the same 
  connected 
  component as the disk component of the bordism defining 
  $\StabilizationGamma{\Genus}{\BoundaryComponents}$. 
  We will tackle this case using a certain resolution, which 
  lets us relate 
  $
    \MappingPair{\StabilizationGamma{\Genus}{\BoundaryComponents}}
  $
  to 
  $
    \MappingPair{\StabilizationGamma{\Genus}{\BoundaryComponents+\FaceIndex}}
  $%
  . This section is very similar to Section~\ref{scn:Resolution} 
  albeit it is much easier and does not require any inductive 
  arguments.
  
  Let $\DistBall$ denote a codimension $0$ ball that is in the same connected 
  component of 
  $\DistBoundaryManifold{0}{\Manifold}$ as the disk component of the 
  bordism 
  defining some fixed map of type 
  $\StabilizationGamma{\Genus}{\BoundaryComponents}$ and that is 
  disjoint 
  from $\BoundaryCondition$.
  \begin{definition}
  Fix an embedded subsurface 
  $
    \Subsurface
    \in
    \SpaceSubsurfaceBoundaryTangential
      {\TangentialFibration}
      {\Genus}
      {\BoundaryComponents}
      {\Manifold}
      {\BoundaryConditionTangential{\BoundaryCondition}}
  $%
  .
  We call an embedding 
  \[
    \BoundaryPath
    \colon
    ([0,1],\frac{1}{2})
    \to
    (\Manifold,\Subsurface)
  \]
  such that 
  $
    \apply{\BoundaryPath}{0}\in\DistBall
  $
  and 
  $
    \apply{\BoundaryPath}{1}
    \in
    \Manifold
    \setminus
    \BoundaryManifold{\Manifold}
  $
  a \introduce{boundary path of $\Subsurface$}. 
  If we add a tubular neighborhood of $\BoundaryPath$ in the pair $(\Manifold,\Subsurface)$ 
  denoted by 
  $
    \ThickenedBoundaryPath
    =
    (\ThickenedBoundaryPath_{\Manifold},\ThickenedBoundaryPath_{\Subsurface})
  $
  such that 
  $
    \ThickenedBoundaryPath
    \cap 
    \BoundaryManifold{\Manifold}
    \subset 
    \DistBall
  $%
  , then we will call the pair 
  $
    (
      \BoundaryPath
      ,
      \ThickenedBoundaryPath
    )
  $
  a \introduce{thickened boundary path of $\Subsurface$}. 

  If we only consider embeddings as above 
  $
    \BoundaryPath
    \colon
    ([0,1],0)
    \to
    (\Manifold,\BoundaryManifold{\Manifold})
  $
  without a particular subsurface, we will drop the $\Subsurface$ from the notation 
  i.e. we will call these just \introduce{boundary paths}.
  
  We call a boundary path together with a $2$-plane $\TwoPlane$ in the normal 
  bundle at  $\apply{\BoundaryPath}{\frac{1}{2}}$ a \introduce{thickened 
  boundary path}. A 
  \introduce{thickened boundary path with tangential structure} is a thickenend 
  boundary path with a tangential structure on 
  $\apply{\ThickenedBoundaryPath}{\TwoPlane}$. 
\end{definition}  

  With this notation at hand we can proceed to define another 
  resolution of 
  $
    \SpaceSubsurfaceBoundaryTangential
      {\TangentialFibration}
      {\Genus}
      {\BoundaryComponents}
      {\Manifold}
      {\BoundaryConditionTangential{\BoundaryCondition}}
  $%
  .
  \begin{definition}
  Let 
  $
    \BoundaryPathResolution
      {\TangentialFibration}
      {\Genus}
      {\BoundaryComponents}
      {\Manifold}
      {\BoundaryConditionTangential{\BoundaryCondition}}
      {\DistBall}
      {\bullet}
  $
  denote the semi-simplicial space, whose $\SemiSimplicialIndex$-simplices are tuples 
  $
    (
      \Subsurface
      ,
      (\BoundaryPath^{0},\ThickenedBoundaryPath^{0})
      ,
      \ldots
      ,
      (\BoundaryPath^{\SemiSimplicialIndex},\ThickenedBoundaryPath^{\SemiSimplicialIndex})
    )
  $%
  , such that:
  \begin{enumerate}[(i)]
  \item 
    $
      \Subsurface
      \in 
      \SpaceSubsurfaceBoundaryTangential
        {\TangentialFibration}
        {\Genus}
        {\BoundaryComponents}
        {\Manifold}
        {\BoundaryConditionTangential{\BoundaryCondition}}
    $
  \item 
    $(\BoundaryPath^{\FaceIndex},\ThickenedBoundaryPath^{\FaceIndex})$ is a thickened 
    boundary path of $\Subsurface$.
  \item 
    The neighbourhoods 
    $
      \ThickenedBoundaryPath^{0}
      ,
      \ldots 
      ,
      \ThickenedBoundaryPath^{\SemiSimplicialIndex}
    $
    are pairwise disjoint.
  \end{enumerate}
  The $\FaceIndex$-th face map forgets the $\FaceIndex$-th boundary path and we topologize 
  this as a subspace of 
  $
    \SpaceSubsurfaceBoundaryTangential
      {\TangentialFibration}
      {\Genus}
      {\BoundaryComponents}
      {\Manifold}
      {\BoundaryConditionTangential{\BoundaryCondition}}
    \times
    \ThickenedEmbeddingSpaceBoundaryCondition
      {[0,1]\times \SemiSimplicialSpaceIndex{\SemiSimplicialIndex}}
      {\Manifold}
      {q}
  $%
  . There is also an augmentation map 
  $\Augmentation{\bullet}$ to 
  $
    \SpaceSubsurfaceBoundaryTangential
      {\TangentialFibration}
      {\Genus}
      {\BoundaryComponents}
      {\Manifold}
      {\BoundaryConditionTangential{\BoundaryCondition}}
  $%
  , which forgets the boundary paths.
\end{definition}

  \begin{lemma}
  If $\Manifold$ is connected and has dimension at least $3$, then the 
  semi-simplicial space 
  $
    \BoundaryPathResolution
      {\TangentialFibration}
      {\Genus}
      {\BoundaryComponents}
      {\Manifold}
      {\BoundaryConditionTangential{\BoundaryCondition}}
      {\DistBall}
      {\bullet}
  $
  is a resolution of 
  $
    \SpaceSubsurfaceBoundaryTangential
      {\TangentialFibration}
      {\Genus}
      {\BoundaryComponents}
      {\Manifold}
      {\BoundaryConditionTangential{\BoundaryCondition}}
  $%
  .
\end{lemma}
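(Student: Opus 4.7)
The plan is to apply Lemma~\ref{lem:FlagComplex} directly. First I will verify that $\BoundaryPathResolution{\TangentialFibration}{\Genus}{\BoundaryComponents}{\Manifold}{\BoundaryConditionTangential{\BoundaryCondition}}{\DistBall}{\bullet}$ is an augmented topological flag complex over $\SpaceSubsurfaceBoundaryTangential{\TangentialFibration}{\Genus}{\BoundaryComponents}{\Manifold}{\BoundaryConditionTangential{\BoundaryCondition}}$. The flag condition is built into the definition: a tuple $(\Subsurface,(\BoundaryPath^{0},\ThickenedBoundaryPath^{0}),\ldots,(\BoundaryPath^{\SemiSimplicialIndex},\ThickenedBoundaryPath^{\SemiSimplicialIndex}))$ is a $\SemiSimplicialIndex$-simplex exactly when the only nontrivial condition beyond being a $0$-simplex is that the $\ThickenedBoundaryPath^{\FaceIndex}$ are pairwise disjoint, which is a pairwise condition on the $0$-simplices over $\Subsurface$. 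The openness of the resulting embedding into the iterated fiber product is standard, since disjointness of a finite family of compact tubes is an open condition in the $C^{\infty}$-topology on the space of thickened embeddings.

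Next I will establish local sections of $\Augmentation{\SemiSimplicialSpaceIndex{0}}$. Given a subsurface $\Subsurface$ and a $0$-simplex $(\Subsurface,\BoundaryPath,\ThickenedBoundaryPath)$ over it, Proposition~\ref{prp:SpaceSubsurfacesLocallyRetractile} provides a neighborhood $\Neighbourhood{\Subsurface}$ together with a $\DiffeomorphismGroupBoundary{\Manifold}$-local retraction $\LocalRetraction \colon \Neighbourhood{\Subsurface} \to \DiffeomorphismGroupBoundary{\Manifold}$. Post-composing $(\BoundaryPath,\ThickenedBoundaryPath)$ with $\apply{\LocalRetraction}{\Subsurface'}$ yields a thickened boundary path of $\Subsurface'$ for each $\Subsurface' \in \Neighbourhood{\Subsurface}$; this depends continuously on $\Subsurface'$ and sends $\Subsurface$ back to the chosen $0$-simplex, giving the local section.

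The main point is the "infinity condition" of Lemma~\ref{lem:FlagComplex}: given a finite collection of $0$-simplices $\{(\Subsurface,\BoundaryPath^{1},\ThickenedBoundaryPath^{1}),\ldots,(\Subsurface,\BoundaryPath^{n},\ThickenedBoundaryPath^{n})\}$ lying over a single $\Subsurface \in X$, I must exhibit another thickened boundary path $(\BoundaryPath^{\infty},\ThickenedBoundaryPath^{\infty})$ of $\Subsurface$ whose tube is disjoint from every $\ThickenedBoundaryPath^{\FaceIndex}$. Here is where $\dim \Manifold \geq 3$ and the connectedness of $\Manifold$ enter: the union $K = \bigcup_{\FaceIndex=1}^{n} \apply{\BoundaryPath^{\FaceIndex}}{[0,1]} \cup \Subsurface$ has dimension at most $2$, so $\Manifold \setminus K$ is path-connected and we can find a continuous path from some point of $\DistBall$ minus the $\ThickenedBoundaryPath^{\FaceIndex} \cap \DistBall$ to some point outside $\BoundaryManifold{\Manifold}$ that crosses $\Subsurface$ transversely in a single point. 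A small $C^{\infty}$-perturbation makes this into a smooth embedded arc meeting $\Subsurface$ transversely in exactly one point and avoiding the $\BoundaryPath^{\FaceIndex}$. By compactness of this new arc and of the finitely many tubes $\ThickenedBoundaryPath^{\FaceIndex}$, a sufficiently thin tubular neighborhood supplies the desired $\ThickenedBoundaryPath^{\infty}$ disjoint from all $\ThickenedBoundaryPath^{\FaceIndex}$, producing the required $1$-simplices $(\BoundaryPath^{\FaceIndex},\BoundaryPath^{\infty})$.

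The only delicate point is the transversality and avoidance argument in the last paragraph, but no new ideas beyond the general position lemmas and a careful choice of tube thickness are needed. Applying Lemma~\ref{lem:FlagComplex} then concludes that $\GeometricRealization{\Augmentation{\bullet}}$ is a weak equivalence, so $\BoundaryPathResolution{\TangentialFibration}{\Genus}{\BoundaryComponents}{\Manifold}{\BoundaryConditionTangential{\BoundaryCondition}}{\DistBall}{\bullet}$ is a resolution of $\SpaceSubsurfaceBoundaryTangential{\TangentialFibration}{\Genus}{\BoundaryComponents}{\Manifold}{\BoundaryConditionTangential{\BoundaryCondition}}$.
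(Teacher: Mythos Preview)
Your approach is the same as the paper's: verify the hypotheses of Lemma~\ref{lem:FlagComplex}. The paper organises things slightly differently by first observing that
\[
\begin{tikzcd}
\BoundaryPathResolution{\TangentialFibration}{\Genus}{\BoundaryComponents}{\Manifold}{\BoundaryConditionTangential{\BoundaryCondition}}{\DistBall}{\bullet} \ar[r]\ar[d] & \SpaceSubsurfaceBoundaryTangential{\TangentialFibration}{\Genus}{\BoundaryComponents}{\Manifold}{\BoundaryConditionTangential{\BoundaryCondition}} \ar[d]\\
\BoundaryPathResolution{+}{\Genus}{\BoundaryComponents}{\Manifold}{\BoundaryCondition}{\DistBall}{\bullet} \ar[r] & \SpaceSubsurfaceBoundary{\Genus}{\BoundaryComponents}{\Manifold}{\BoundaryCondition}
\end{tikzcd}
\]
is a pullback with the bottom augmentation a fibre bundle, and then applies Lemma~\ref{lem:FlagComplex} only to the bottom row; local sections there are immediate from the bundle structure. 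Your direct construction of local sections via the $\DiffeomorphismGroupBoundary{\Manifold}$-retraction is correct and amounts to the same thing, since the boundary paths carry no tangential data.

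One point to fix: in the infinity condition you assert that $\Manifold\setminus K$ is path-connected where $K$ includes $\Subsurface$. When $\dim\Manifold=3$ the surface $\Subsurface$ has codimension~$1$ and may separate $\Manifold$, so this claim need not hold. The paper instead removes only the tubes $\ThickenedBoundaryPath^{\FaceIndex}$, whose cores are arcs of codimension~$\geq 2$; the complement then stays connected, and since the new boundary path must meet $\Subsurface$ anyway there is no reason to excise it. With that adjustment your argument is complete.
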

\begin{proof}
  There is the following commutative square:
  \[
    \begin{tikzcd}
      \BoundaryPathResolution
        {\TangentialFibration}
        {\Genus}
        {\BoundaryComponents}
        {\Manifold}
        {\BoundaryConditionTangential{\BoundaryCondition}}
        {\DistBall}
        {\bullet}
        \ar[r]
        \ar[d] 
      &
      \SpaceSubsurfaceBoundaryTangential
        {\TangentialFibration}
        {\Genus}
        {\BoundaryComponents}
        {\Manifold}
        {\BoundaryConditionTangential{\BoundaryCondition}}
        \ar[d]
      \\
      \BoundaryPathResolution
        {+}
        {\Genus}
        {\BoundaryComponents}
        {\Manifold}
        {\BoundaryCondition}
        {\DistBall}
        {\bullet}
        \ar[r]
      &
      \SpaceSubsurfaceBoundaryTangential
        {+}
        {\Genus}
        {\BoundaryComponents}
        {\Manifold}
        {\BoundaryCondition}
    \end{tikzcd}
  \]
  By Lemma \ref{lem:LocallyRetractile} 
  $
    \SpaceSubsurfaceBoundaryTangential
      {+}
      {\Genus}
      {\BoundaryComponents}
      {\Manifold}
      {\BoundaryCondition}
  $
  is 
  $\DiffeomorphismGroupBoundary{\Manifold}$-locally retractile.
  The group $\DiffeomorphismGroupBoundary{\Manifold}$ acts on 
  $
    \BoundaryPathResolution
      {+}
      {\Genus}
      {\BoundaryComponents}
      {\Manifold}
      {\BoundaryCondition}
      {\DistBall}
      {\bullet}
  $
  and the lower map is equivariant, hence the bottom map is a Hurewicz-fibration. 
  Furthermore it is obvious that the square is a pullback square, from 
  which we conclude that the top map is also a Hurewicz-fibration. 
  
  It is easy to see that     
  $
    \BoundaryPathResolution
      {+}
      {\Genus}
      {\BoundaryComponents}
      {\Manifold}
      {\BoundaryCondition}
      {\DistBall}
      {\bullet}
  $
  is a topological flag complex. We want to apply Lemma~\ref{lem:FlagComplex} to show that it is a 
  resolution. Since the augmentation is a fiber bundle (and the fiber is non-empty as $\Manifold$ is 
  connected) this flag complex has local sections. 
  All that is left to show to apply the lemma is that for 
  $
    \{\BoundaryPath^{1},\ldots \BoundaryPath^{n}\}
    \subset 
    \Fiber{\Subsurface}{\Augmentation{0}}
  $%
  , there exists an $\BoundaryPath^{\infty}$ such that 
  $(\BoundaryPath^{\SemiSimplicialIndex},\BoundaryPath^{\infty})$ is a $1$-simplex for every 
  $\SemiSimplicialIndex$.
  Since $\Manifold$ is at least $3$-dimensional we conclude that 
  $
    \Manifold 
    \setminus 
    (
      \ThickenedBoundaryPath^{0}
      \cup
      \ldots
      \cup 
      \ThickenedBoundaryPath^{n}
    )
  $
  is still path-connected, hence we can find a boundary path in this complement.
\end{proof}

  \begin{definition}
  Let $\SpaceOfBoundaryPaths{\TangentialFibration}{\Manifold}{\DistBall}{\SemiSimplicialIndex}$ denote 
  the set of tuples 
  $(\ThickenedBoundaryPath^{0},\ldots,\ThickenedBoundaryPath^{\SemiSimplicialIndex})$, where 
  $\ThickenedBoundaryPath^{\FaceIndex}$ are thickened boundary paths with 
  tangential structure such that all $\ThickenedBoundaryPath^{\FaceIndex}$ are disjoint.
  We topologize this as a subspace of 
  $
    \ThickenedEmbeddingSpaceBoundaryCondition
      {[0,1]\times\SemiSimplicialSpaceIndex{\SemiSimplicialIndex}}
      {\Manifold}
      {q}
    \times 
    \EmbeddingSpace{D^2\times \SemiSimplicialSpaceIndex{\SemiSimplicialIndex}}{\Manifold}
  $.
  
  There is a map from 
  $
    \BoundaryPathResolution
      {\TangentialFibration}
      {\Genus}
      {\BoundaryComponents}
      {\Manifold}
      {\BoundaryConditionTangential{\BoundaryCondition}}
      {\DistBall}
      {\bullet}
  $
  to 
  $\SpaceOfBoundaryPaths{\TangentialFibration}{\Manifold}{\DistBall}{\SemiSimplicialIndex}$, which 
  forgets the surface, but keeps the tubular neighbourhood in the surface and the corresponding 
  tangential structure on it.
\end{definition}

  \begin{lemma}
\label{lem:FiberSpecificationBoundaryPath}
  For a point 
  $
    \BoundaryPath
    =
    (
      \BoundaryPath_{0}
      ,
      \ldots
      ,
      \BoundaryPath_{\SemiSimplicialIndex}
    )
    \in 
    \SpaceOfBoundaryPaths
      {\TangentialFibration}
      {\Manifold}
      {\DistBall}
      {\SemiSimplicialIndex}
  $
  there is a homotopy fiber sequence:
  \[
    \begin{tikzcd}
      \SpaceSubsurfaceBoundaryTangential
        {\TangentialFibration}
        {\Genus}
        {\BoundaryComponents+\SemiSimplicialIndex+1}
        {\CutOut{\Manifold}{\BoundaryPath}}
        {\CutOut
          {\BoundaryConditionTangential{\BoundaryCondition}}
          {\BoundaryPath}
        }
        \ar[r]
      &
      \BoundaryPathResolution
        {\TangentialFibration}
        {\Genus}
        {\BoundaryComponents}
        {\Manifold}
        {\BoundaryConditionTangential{\BoundaryCondition}}
        {\DistBall}
        {\SemiSimplicialIndex}
        \ar[r]
      &
      \SpaceOfBoundaryPaths
        {\TangentialFibration}
        {\Manifold}
        {\DistBall}
        {\SemiSimplicialIndex}
    \end{tikzcd}
  \]
   where $\CutOut{\Manifold}{\BoundaryPath}$ denotes 
   $
    \overline{
      \Manifold
      \setminus 
      \left(
        \bigcup_{\FaceIndex} 
        \ThickenedBoundaryPath_{\FaceIndex}
      \right)
    }
  $
  and 
  $
    \CutOut
      {\BoundaryConditionTangential{\BoundaryCondition}}
      {\ThickenedBoundaryPath}
  $
  denotes 
  $
    \BoundaryConditionTangential{\BoundaryCondition} 
    \cup 
    \left(
      \bigcup_{\FaceIndex} 
      \BoundaryManifold{\ThickenedBoundaryPath_{\FaceIndex}^{\text{Disk}}}
    \right)
  $%
  where $\ThickenedBoundaryPath_{\FaceIndex}^{\text{Disk}}$ denotes the 
  embedding with $\TangentialFibration$-structure of $\Ball{2}$ into 
  $\Manifold$ associated to the two-plane in the definition of thickened 
  boundary path with tangential structure.
\end{lemma}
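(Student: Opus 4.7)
The plan is to proceed in close analogy to the proof of Lemma~\ref{lem:HomotopyFiberArcResolution}, adapted to the situation of boundary paths rather than arcs in the surface. First, I would introduce an intermediate embedding space $O^{\TangentialFibration}_{\Genus,\BoundaryComponents}(\Manifold;\BoundaryConditionTangential{\BoundaryCondition};\DistBall)_{\SemiSimplicialIndex}$ parametrizing an embedding $\SurfaceGB{\Genus}{\BoundaryComponents}\to\Manifold$ (with prescribed jet $\BoundaryConditionTangential{\BoundaryConditionPar}$) together with $\SemiSimplicialIndex+1$ disjoint thickened boundary paths with tangential structure intersecting it transversely at a fixed set of reference points $\sigma\subset\SurfaceGB{\Genus}{\BoundaryComponents}$. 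This space sits in a diagram analogous to (\ref{eqt:FibrationSpaceOfStrips}) whose three relevant maps are: the restriction to the thickened boundary paths, the quotient by $\DiffeomorphismGroupArcs{\sigma}{\SurfaceGB{\Genus}{\BoundaryComponents}}$ landing in $\BoundaryPathResolution{\TangentialFibration}{\Genus}{\BoundaryComponents}{\Manifold}{\BoundaryConditionTangential{\BoundaryCondition}}{\DistBall}{\SemiSimplicialIndex}$, and the forgetful map to $\SpaceOfBoundaryPaths{\TangentialFibration}{\Manifold}{\DistBall}{\SemiSimplicialIndex}$.

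Second, I would verify that each of these maps is a fibration by combining Lemma~\ref{lem:LocallyRetractile}, Lemma~\ref{lem:RestrictionFibration} and the observation that $\SpaceOfBoundaryPaths{\TangentialFibration}{\Manifold}{\DistBall}{\SemiSimplicialIndex}$ is a pullback of a forgetful map along the tangential-structure fibration, just as in the proof of Lemma~\ref{lem:HomotopyFiberArcResolution}. Applying Lemma~\ref{lem:FibrationComposition} then gives that the augmentation of the semi-simplicial space to $\SpaceOfBoundaryPaths{\TangentialFibration}{\Manifold}{\DistBall}{\SemiSimplicialIndex}$ is itself a fibration, yielding the desired homotopy fiber sequence once the fiber is identified.

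Third, for the identification of the fiber over a fixed $\BoundaryPath=(\BoundaryPath_{0},\ldots,\BoundaryPath_{\SemiSimplicialIndex})$: a subsurface $\Subsurface$ in the fiber must meet each $\BoundaryPath_{\FaceIndex}$ transversely at $\apply{\BoundaryPath_{\FaceIndex}}{\tfrac12}$, and its germ there is pinned down by the $2$-plane $\TwoPlane_{\FaceIndex}$, extending to a small disk inside $\ThickenedBoundaryPath_{\FaceIndex,\Manifold}$. Cutting these disks out of $\Subsurface$ produces a subsurface of $\CutOut{\Manifold}{\BoundaryPath}$ with tangential structure meeting $\BoundaryManifold{\CutOut{\Manifold}{\BoundaryPath}}$ in precisely $\CutOut{\BoundaryConditionTangential{\BoundaryCondition}}{\BoundaryPath}$; conversely, any such subsurface glues back to a subsurface in the fiber via the prescribed disks. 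The Euler characteristic count shows that removing $\SemiSimplicialIndex+1$ disjoint open disks leaves the genus unchanged and increases the number of boundary components by $\SemiSimplicialIndex+1$, giving the asserted $\SurfaceGB{\Genus}{\BoundaryComponents+\SemiSimplicialIndex+1}$.

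The main obstacle will be the bookkeeping of tangential structures under the disk removal, in particular checking that the tangential structure on the subsurface restricted to the disks around the $\BoundaryPath_{\FaceIndex}$ is exactly the one encoded by the thickened boundary paths, so that gluing is well-defined and continuous in families. This is entirely parallel to the compatibility argument at the end of the proof of Lemma~\ref{lem:HomotopyFiberArcResolution} but requires being careful that the $2$-plane data $\TwoPlane_{\FaceIndex}$ together with its tangential lift really is equivalent data to specifying the germ of $\Subsurface$ with $\TangentialFibration$-structure at the intersection point; this follows because $\TangentialFibration$ is a Hurewicz fibration and the space of germs of transverse surfaces with a given tangent $2$-plane is contractible.
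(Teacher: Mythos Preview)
Your proposal is correct and follows exactly the approach the paper intends: the paper omits the proof entirely, stating only that it is ``completely analogous to the proof of Lemma~\ref{lem:HomotopyFiberArcResolution}'', and your outline is precisely that analogy carried out with points and disks in place of arcs and strips. The fiber identification via disk removal (genus unchanged, $\SemiSimplicialIndex+1$ new boundary circles) and the fibration arguments via Lemmas~\ref{lem:LocallyRetractile}, \ref{lem:RestrictionFibration}, and \ref{lem:FibrationComposition} are the right ingredients.
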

The proof of this lemma is completely analogous to the proof of 
Lemma~\ref{lem:HomotopyFiberArcResolution} and is therefore omitted.

The issues here that arise from the occurring manifolds with corners 
are dealt with in the exact same fashion as in the discussion after 
Lemma~\ref{lem:HomotopyFiberArcResolution}.

  Similar to Section~\ref{scn:Resolution}, we want to understand how 
  this resolution behaves under stabilization maps. 
  Let us take a look at the following diagram, in which we want to constructed 
  the dashed arrow:
  \[
    \begin{tikzcd}
      \BoundaryPathResolution
        {\TangentialFibration}
        {\Genus}
        {\BoundaryComponents}
        {\Manifold}
        {\BoundaryConditionTangential{\BoundaryCondition}}
        {\DistBall}
        {\bullet}
        \ar[r,dashed]
        \ar[d]
      &
      \BoundaryPathResolution
        {\TangentialFibration}
        {\Genus}
        {\BoundaryComponents-1}
        {\Elongation{\Manifold}{1}}
        {\BoundaryConditionTangential{\bar{\BoundaryCondition}}}
        {\bar{\DistBall}}
        {\bullet}
        \ar[d]
      \\
      \SpaceSubsurfaceBoundaryTangential
        {\TangentialFibration}
        {\Genus}
        {\BoundaryComponents}
        {\Manifold}
        {\BoundaryConditionTangential{\BoundaryCondition}}
        \ar[r,"\StabilizationGamma{\Genus}{\BoundaryComponents}"]
      &
      \SpaceSubsurfaceBoundaryTangential
        {\TangentialFibration}
        {\Genus}
        {\BoundaryComponents-1}
        {\Elongation{\Manifold}{1}}
        {\BoundaryConditionTangential{\bar{\BoundaryCondition}}}
    \end{tikzcd}
  \]
  Let us denote the surface defining 
  $\StabilizationGamma{\Genus}{\BoundaryComponents}$ by 
  $\StabilizationBordism$. 
  Replace $\StabilizationBordism$ by a different but isotopic bordism if 
  necessary to ensure that 
  $
    (\DistBall\times [0,1])
    \cap 
    \StabilizationBordism
    =
    \emptyset
  $%
  . We will write $\bar{\DistBall}$ for $\DistBall\times [0,1]$.
  For a 
  $
    \BoundaryPath
    \in 
    \SpaceOfBoundaryPaths
      {\TangentialFibration}
      {\Manifold}
      {\DistBall}
      {\SemiSimplicialIndex}
  $
  we define 
  $
    \tilde{\BoundaryPath}^{\FaceIndex}
    =
    \apply{\BoundaryPath^{\FaceIndex}}{0}
    \times 
    [0,1]
  $ 
  and 
  $
    \bar{\BoundaryPath}^{\FaceIndex}
    =
    \BoundaryPath^{\FaceIndex}
    \cup 
    \tilde{\BoundaryPath}^{\FaceIndex}
  $%
  . The dashed arrow is now given by mapping $\Subsurface$ to 
  $
    \Subsurface
    \cup 
    \StabilizationBordism
  $ 
  and 
  $\BoundaryPath^{\FaceIndex}$ to 
  $\bar{\BoundaryPath}^{\FaceIndex}$. 
  These maps commute with the face and augmentation maps and so they define a 
  map of augmented semi-simplicial spaces, 
  which we denote by 
  $\StabilizationGamma{\Genus}{\BoundaryComponents}^{\bullet}$. 
  
  There is a map
  $
    \SpaceOfBoundaryPaths
      {\TangentialFibration}
      {\Manifold}
      {\DistBall}
      {\SemiSimplicialIndex}
    \to
    \SpaceOfBoundaryPaths
      {\TangentialFibration}
      {\Elongation{\Manifold}{1}}
      {\bar{\DistBall}}
      {\SemiSimplicialIndex}
  $
  defined by $\BoundaryPath \mapsto \bar{\BoundaryPath}$. 
  This map is a homotopy equivalence. 
  We have the following diagram: 
  \[
    \begin{tikzcd}
      \SpaceSubsurfaceBoundaryTangential
        {\TangentialFibration}
        {\Genus}
        {\BoundaryComponents+\SemiSimplicialIndex+1}
        {\CutOut{\Manifold}{\BoundaryPath}}
        {\BoundaryConditionTangential{\CutOut{\BoundaryCondition}{\BoundaryPath}}}
        \ar[r]
        \ar[d,"\StabilizationGamma{\Genus}{\BoundaryComponents+\SemiSimplicialIndex+1}"]
      &
      \BoundaryPathResolution
        {\TangentialFibration}
        {\Genus}
        {\BoundaryComponents}
        {\Manifold}
        {\BoundaryConditionTangential{\BoundaryCondition}}
        {\DistBall}
        {\bullet}
        \ar[d,"\StabilizationGamma{\Genus}{\BoundaryComponents}^{\SemiSimplicialIndex}"]
        \ar[r]
      &
      \SpaceOfBoundaryPaths
        {\TangentialFibration}
        {\Manifold}
        {\DistBall}
        {\SemiSimplicialIndex}
        \ar[d,"\simeq"]
      \\
      \SpaceSubsurfaceBoundaryTangential
        {\TangentialFibration}
        {\Genus}
        {\BoundaryComponents+\SemiSimplicialIndex}
        {\CutOut{\Elongation{\Manifold}{1}}{\bar{\BoundaryPath}}}
        {\BoundaryConditionTangential
          {\CutOut{\BoundaryCondition'}{\bar{\BoundaryPath}}}
        }
        \ar[r]
      &
      \BoundaryPathResolution
        {\TangentialFibration}
        {\Genus}
        {\BoundaryComponents-1}
        {\Elongation{\Manifold}{1}}
        {\BoundaryConditionTangential{\BoundaryCondition'}}
        {\DistBall}
        {\bullet}
        \ar[r]
      &
      \SpaceOfBoundaryPaths
        {\TangentialFibration}
        {\Elongation{\Manifold}{1}}
        {\DistBall}
        {\SemiSimplicialIndex}
    \end{tikzcd}
  \]
  The commutativity of the right square gives us a map between the 
  fibers and it is easy to see that the left vertical arrow is given by 
  $\StabilizationGamma{\Genus}{\BoundaryComponents+\SemiSimplicialIndex+1}$. 
  Putting this together yields:
  \begin{corollary}
\label{crl:BoundaryPathsResolutionFibers}
  The induced map between the homotopy fibers of
  \[
    (\StabilizationGamma{\Genus}{\BoundaryComponents}^{\SemiSimplicialIndex})
    \to
     \SpaceOfBoundaryPaths
       {\TangentialFibration}
       {\Manifold}
       {\DistBall}
       {\SemiSimplicialIndex}
  \]
  is given by 
  $\StabilizationGamma{\Genus}{\BoundaryComponents+\SemiSimplicialIndex+1}$.
\end{corollary}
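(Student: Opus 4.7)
The plan is to combine the fiber identification from Lemma~\ref{lem:FiberSpecificationBoundaryPath} with the explicit formula for $\StabilizationGamma{\Genus}{\BoundaryComponents}^{\SemiSimplicialIndex}$, exploiting the crucial disjointness $\StabilizationBordism \cap (\DistBall\times[0,1]) = \emptyset$ built into the construction preceding the corollary. Concretely, I would first apply Lemma~\ref{lem:FiberSpecificationBoundaryPath} to both rows of the commutative diagram appearing just before the statement. The top homotopy fiber over $\BoundaryPath$ is identified with $\SpaceSubsurfaceBoundaryTangential{\TangentialFibration}{\Genus}{\BoundaryComponents+\SemiSimplicialIndex+1}{\CutOut{\Manifold}{\BoundaryPath}}{\CutOut{\BoundaryConditionTangential{\BoundaryCondition}}{\BoundaryPath}}$ and the bottom one over $\bar{\BoundaryPath}$ with the analogous space in $\CutOut{\Elongation{\Manifold}{1}}{\bar{\BoundaryPath}}$, noting that the number of boundary components of the new boundary condition is the original $\BoundaryComponents$ together with the $\SemiSimplicialIndex+1$ disks introduced by removing the tubular neighbourhoods of the boundary paths.

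Next I would trace through the definition of $\StabilizationGamma{\Genus}{\BoundaryComponents}^{\SemiSimplicialIndex}$: on an element $(\Subsurface,\BoundaryPath^{0},\ldots,\BoundaryPath^{\SemiSimplicialIndex})$ it is given by $(\Subsurface \cup \StabilizationBordism,\bar{\BoundaryPath}^{0},\ldots,\bar{\BoundaryPath}^{\SemiSimplicialIndex})$. Because $\StabilizationBordism$ was chosen disjoint from $\DistBall \times [0,1]$ and each $\tilde{\BoundaryPath}^{\FaceIndex}$ lies in $\DistBall\times [0,1]$, the bordism $\StabilizationBordism$ is disjoint from every $\bar{\BoundaryPath}^{\FaceIndex}$. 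Consequently $\StabilizationBordism$ sits inside $\CutOut{\Elongation{\Manifold}{1}}{\bar{\BoundaryPath}}$, and the induced map on homotopy fibers is simply $\Subsurface \mapsto \Subsurface \cup \StabilizationBordism$.

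The final step is to recognize this map of fibers as a stabilization of type $\gamma$. The bordism $\StabilizationBordism$ is a disk capping off one boundary component of the original boundary condition $\BoundaryConditionTangential{\BoundaryCondition}$, and it still lives in the same connected component of the boundary of the cut-out manifold as before since the disks produced by the boundary paths do not meet it. Thus $- \cup \StabilizationBordism$ is exactly a map of type $\gamma$ for the cut-out manifold, with source having $\BoundaryComponents+\SemiSimplicialIndex+1$ boundary components, which gives the claim. The corner issues arising from cutting along the $\ThickenedBoundaryPath^{\FaceIndex}$ are resolved by the homeomorphism smoothing device of Remark~\ref{rmk:SmoothingTheAngleRemovingThings}, and the homotopy equivalence of the bases $\SpaceOfBoundaryPaths{\TangentialFibration}{\Manifold}{\DistBall}{\SemiSimplicialIndex} \simeq \SpaceOfBoundaryPaths{\TangentialFibration}{\Elongation{\Manifold}{1}}{\bar{\DistBall}}{\SemiSimplicialIndex}$ ensures that identifying homotopy fibers of the two rows is legitimate.

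There is no real obstacle here: all the work has already been done in Lemma~\ref{lem:FiberSpecificationBoundaryPath} and in arranging the disjointness $\StabilizationBordism \cap (\DistBall\times[0,1])=\emptyset$. The main point requiring some care is the bookkeeping of boundary components and the handling of manifolds with corners; these are strictly analogous to the treatment of Corollary~\ref{crl:Fiber} and do not introduce new difficulties.
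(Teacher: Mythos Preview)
Your proposal is correct and follows essentially the same approach as the paper: the paper presents the commutative diagram of fibrations over the two boundary-path spaces, notes the homotopy equivalence of the bases, and then simply asserts that the induced map on fibers is $\StabilizationGamma{\Genus}{\BoundaryComponents+\SemiSimplicialIndex+1}$, leaving the verification implicit. Your write-up just spells out the details behind that assertion (the disjointness of $\StabilizationBordism$ from the extended boundary paths, the bookkeeping of boundary components, and the corner-smoothing) which the paper treats as evident and parallel to Corollary~\ref{crl:Fiber}.
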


  Finally with all these tools at hand we are able to finish the proof of 
  Theorem~\ref{thm:HomologicalStability} by concluding:
  \begin{proposition}
  Suppose $\Manifold$ fulfils the requirements of 
  Theorem~\ref{thm:HomologicalStability},
  then 
  \[
    \HomologyOfSpace
      {\ast}
      {\MappingPair{\StabilizationGamma{\Genus}{\BoundaryComponents}}}
    =
    0
  \]
  for
  $\ast \leq \apply{\BetaBound}{\Genus}$.
\end{proposition}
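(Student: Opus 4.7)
The plan is to use the boundary path resolution of Corollary~\ref{crl:BoundaryPathsResolutionFibers} together with the spectral sequence style input Lemma~\ref{lem:StabilityCriterion}, reducing the statement for $\StabilizationGamma{\Genus}{\BoundaryComponents}$ (in the remaining case where $\DistBall$'s connected component of $\DistBoundaryManifold{0}{\Manifold}$ contains no other component of $\BoundaryCondition$) to the already-handled case where a second boundary component is present.

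First, I would apply $\StabilizationGamma{\Genus}{\BoundaryComponents}^{\bullet}$ as a map of augmented semi-simplicial spaces, where both source and target are genuine resolutions of the corresponding subsurface spaces; in particular both are $n$-resolutions for every $n$. To use Lemma~\ref{lem:StabilityCriterion} I take $B_i$ to be the (path components containing the base point of) $\SpaceOfBoundaryPaths{\TangentialFibration}{\Elongation{\Manifold}{1}}{\bar{\DistBall}}{i}$ and $p_i$ the natural projection forgetting the subsurface. Because the map between the bases is the homotopy equivalence discussed before Corollary~\ref{crl:BoundaryPathsResolutionFibers}, that corollary identifies the induced map $g_i$ between homotopy fibers with
\[
  \StabilizationGamma{\Genus}{\BoundaryComponents+i+1}
  \colon
  \SpaceSubsurfaceBoundaryTangential
    {\TangentialFibration}
    {\Genus}
    {\BoundaryComponents+i+1}
    {\CutOut{\Manifold}{\BoundaryPath}}
    {\BoundaryConditionTangential{\CutOut{\BoundaryCondition}{\BoundaryPath}}}
  \to
  \SpaceSubsurfaceBoundaryTangential
    {\TangentialFibration}
    {\Genus}
    {\BoundaryComponents+i}
    {\CutOut{\Elongation{\Manifold}{1}}{\bar{\BoundaryPath}}}
    {\BoundaryConditionTangential{\CutOut{\BoundaryCondition'}{\bar{\BoundaryPath}}}}
\]
and cutting out boundary paths yields a manifold (after corner-smoothing as in Remark~\ref{rmk:SmoothingTheAngleRemovingThings}) that again satisfies the hypotheses of Theorem~\ref{thm:HomologicalStability}.

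The key observation is that for every $i\geq 0$ the disk of the bordism defining $g_i$ lies in the connected component of $\DistBall$, and the endpoint in $\DistBall$ of any of the boundary paths creates a fresh boundary component of $\CutOut{\BoundaryCondition}{\BoundaryPath}$ which lies in that same connected component. Thus the second half of Lemma~\ref{lem:CappingOffBoundary} applies to every $g_i$: it is an epimorphism in all degrees and an isomorphism in degrees $\leq \apply{\BetaBound}{\Genus}+1$. The long exact sequence of the mapping pair then gives
\[
  \HomologyOfSpace{q}{\MappingPair{g_i}}
  =
  0
  \qquad
  \text{for all } q\leq \apply{\BetaBound}{\Genus}+2.
\]
Setting $c=\apply{\BetaBound}{\Genus}$, the hypothesis of Lemma~\ref{lem:StabilityCriterion} is satisfied: whenever $q+i\leq c$ we have $q\leq \apply{\BetaBound}{\Genus}+2$, and moreover $\HomologyOfSpace{c}{\MappingPair{g_0}}=0$ for the same reason. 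The conclusion is exactly
\[
  \HomologyOfSpace{q}{\MappingPair{\StabilizationGamma{\Genus}{\BoundaryComponents}}}
  =
  0
  \qquad
  \text{for } q\leq \apply{\BetaBound}{\Genus}.
\]

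The only genuinely delicate step is the identification of the fibers via Corollary~\ref{crl:BoundaryPathsResolutionFibers} together with the corner-smoothing needed to regard $\CutOut{\Manifold}{\BoundaryPath}$ as a manifold to which Lemma~\ref{lem:CappingOffBoundary} can legally be applied, and the verification that after this smoothing the newly created boundary component really does sit in the same connected component of $\DistBoundaryManifold{0}{-}$ as the disk of the bordism; everything else is a direct application of the machinery already assembled.
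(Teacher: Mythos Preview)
Your proof is correct and follows essentially the same approach as the paper: both apply Lemma~\ref{lem:StabilityCriterion} to the boundary path resolution, identify the fiber maps with $\StabilizationGamma{\Genus}{\BoundaryComponents+i+1}$ via Corollary~\ref{crl:BoundaryPathsResolutionFibers}, and then invoke Lemma~\ref{lem:CappingOffBoundary}. Your version is slightly more explicit about \emph{why} Lemma~\ref{lem:CappingOffBoundary} applies to the fiber maps (the boundary paths create a fresh boundary circle in the same component as the disk) and you take $c=\apply{\BetaBound}{\Genus}$ where the paper takes $c=\apply{\BetaBound}{\Genus}+1$, but these are cosmetic differences.
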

\begin{proof}
  We want to apply Lemma~\ref{lem:StabilityCriterion}. 
  The role of $B_{\SemiSimplicialIndex}$ will be played by 
  $
    \SpaceOfBoundaryPaths
      {\TangentialFibration}
      {\Manifold}
      {\DistBall}
      {\SemiSimplicialIndex}
  $
  and
  $
    \BoundaryPathResolution
      {\TangentialFibration}
      {\Genus}
      {\BoundaryComponents}
      {\Manifold}
      {\BoundaryConditionTangential{\BoundaryCondition}}
      {\DistBall}
      {\bullet}
  $
  will be the two resolutions,
  $\StabilizationGamma{\Genus}{\BoundaryComponents}^{\bullet}$ will be the map 
  between them.
  Corollary~\ref{crl:BoundaryPathsResolutionFibers} specified the occurring 
  homotopy fibers. 
  
  We choose $\DimensionIndex=\infty$ and  $c=\apply{\BetaBound}{\Genus}+1$.
  By Lemma~\ref{lem:CappingOffBoundary} and 
  \ref{crl:BoundaryPathsResolutionFibers} 
  we conclude that the homology of the reduced mapping cone of the map between 
  the fibers equals zero in the desired range i.e. 
  \[
    \HomologyOfSpace
      {\ast}
      {\MappingPair
        {\StabilizationGamma{\Genus}{\BoundaryComponents+\SemiSimplicialIndex+1}}
      }
    =
    0
  \]
  for $\ast \leq \apply{\BetaBound}{\Genus}$, which finishes the proof of the 
  proposition.
\end{proof}
\section{Tangential Structures that fulfil Homological Stability}
  \label{scn:TangentialkTriviality}
The goal of this section is to establish criteria for when a 
space of tangential structures satisfies $\TrivialityIndex$-triviality, which 
was the key requirement for a tangential structure to fulfil homological 
stability, and prove Proposition~\ref{prp:ConnectedKTriviality}, which will be 
fundamental in the proofs of these criteria.
This will yield more easily applicable versions
of Theorem~\ref{thm:HomologicalStability}.

\subsection{Stability of Connected Components implies 
  $\TrivialityIndex$-triviality}
  \begin{proof}[Proof of Proposition~\ref{prp:ConnectedKTriviality}]
  We will prove $\TrivialityIndex$-triviality for maps of type $\alpha$, the 
  case of maps of type $\beta$ is completely analogous.
  Let $\left(\DistIntervall{0}^{i},\DistIntervall{1}^{i}\right)$ together with 
  $\bar{\DistIntervall{0}}$ and $\bar{\DistIntervall{1}}$ denote a genus 
  maximizing sequence of length $2 \Genus + 1$ for a map of type $\alpha$ and 
  compatible $\DiskEmbedding^{i}$ as in the definition of 
  $\TrivialityIndex$-triviality. 
  Let $\StabilizationBordism$ denote a stabilization bordism such that 
  $
    \StabilizationBordism
    \cap 
    \left(
      \left(
        \bar{\DistIntervall{0}}\cup \bar{\DistIntervall{1}}
      \right)
      \times 
      [0,1]
    \right)
    =
    \left(
      \bar{\DistIntervall{0}}\cup \bar{\DistIntervall{1}}
    \right)
    \times 
    [0,1]
  $%
  .
  Furthermore let us denote $\bigcup_{i} \DiskEmbedding_{\LineBundle}^{i}$ by
  $\DiskEmbedding_{\LineBundle}$ and $\bigcup_{i} 
  \bar{\DiskEmbedding}_{\LineBundle}^{i}$ by 
  $\bar{\DiskEmbedding}_{\LineBundle}$ and $\bigcup_{i} \DiskEmbedding^{i}$ by 
  $\DiskEmbedding$ and analogously for $\bar{\DiskEmbedding}$. 
  We want to prove that $\DiskEmbedding_{\LineBundle}$ absorbs 
  $\StabilizationBordism$ or in other words there exists a 
  $
    \AuxBordism 
    \subset 
    \bar{\DiskEmbedding} \cup \DistBoundaryManifold{0}{\Manifold}\times [1,2]
  $
  such that 
  \[
    \begin{tikzcd}
      \SpaceAllSubsurfaceBoundaryTangential
        {\TangentialFibration}
        {\CutOut{\Manifold}{\DiskEmbedding}}
        {\CutOut
          {\BoundaryConditionTangential{\BoundaryCondition}}
          {\SubsetBordism}
        }
        \ar[r,"-\cup\CutOut{\StabilizationBordism}{\DiskEmbedding}"]
        \ar[d,"-\cup\DiskEmbedding"]
      &
      \SpaceAllSubsurfaceBoundaryTangential
        {\TangentialFibration}
        {\Elongation{\CutOut{\Manifold}{\bar{\DiskEmbedding}}}{1}}
        {\CutOut
          {\BoundaryConditionTangential{\BoundaryCondition'}}
          {\bar{\DiskEmbedding}}
        }
        \ar[d,"-\cup\Elongation{\bar{\DiskEmbedding}}{1}"]
        \ar[ldd,"-\cup\AuxBordism"' near start,dashed]
      \\
      \SpaceAllSubsurfaceBoundaryTangential
        {\TangentialFibration}
        {\Manifold}
        {\BoundaryConditionTangential{\BoundaryCondition}}
        \ar[r,"-\cup\StabilizationBordism" near start, crossing over]
        \ar[d,"\simeq"]
      &
      \SpaceAllSubsurfaceBoundaryTangential
        {\TangentialFibration}
        {\Elongation{\Manifold}{1}}
        {\BoundaryConditionTangential{\BoundaryCondition'}}
        \ar[d,"\simeq"]
      \\
      \SpaceAllSubsurfaceBoundaryTangential
        {\TangentialFibration}
        {\Elongation{\Manifold}{2}}
        {\BoundaryConditionTangential{\BoundaryCondition}}
        \ar[r,"-\cup(\StabilizationBordism+2)"]
      &
      \SpaceAllSubsurfaceBoundaryTangential
        {\TangentialFibration}
        {\Elongation{\Manifold}{3}}
        {\BoundaryConditionTangential{\BoundaryCondition'}}
    \end{tikzcd}
  \]
  commutes up to homotopy. 
  
  Let us denote 
  $
    \overline{
      \BoundaryManifold{\DiskEmbedding_{\LineBundle}}
      \setminus 
      \BoundaryCondition
    }
  $
  by $\DistBoundaryManifold{0}{\DiskEmbedding_{\LineBundle}}$ and 
  $
      \overline{
        \BoundaryManifold{\overline{\DiskEmbedding}_{\LineBundle}}
        \setminus 
        \BoundaryCondition'
      }
  $
  by $\DistBoundaryManifold{0}{\overline{\DiskEmbedding}_{\LineBundle}}$. We 
  define 
  \begin{align}
    \BoundaryConditionTangential{\BoundaryCondition}_{\DiskEmbedding}
    &
    \coloneqq
    \DistBoundaryManifold{0}{\DiskEmbedding_{\LineBundle}}
    \cup
    \left(
      \BoundaryConditionTangential{\BoundaryCondition}
      \setminus
      \DiskEmbedding
    \right)
    \\
    \BoundaryConditionTangential{\BoundaryCondition'}_{\overline{\DiskEmbedding}}
    &
    \coloneqq
    \DistBoundaryManifold{0}{\overline{\DiskEmbedding}_{\LineBundle}}
    \cup
    \left(
      \BoundaryConditionTangential{\BoundaryCondition'}
      \setminus
      \overline{\DiskEmbedding}
    \right)
  \end{align}
  We have the following commutative diagram:
  \[
    \begin{tikzcd}
      \SpaceSubsurfaceBoundaryTangential
        {\TangentialFibration}
        {\Genus-1}
        {4}
        {
          \bar{\DiskEmbedding}
          \cup 
          \DistBoundaryManifold{0}{\Manifold}
          \times 
          [1,2]
        }
        {
          \BoundaryConditionTangential
            {\BoundaryCondition'}
          _{\overline{\DiskEmbedding}}
        }
        \ar[r,"-\cup \CutOut{\StabilizationBordism}{\bar{\DiskEmbedding}}"]
        \ar[d,"-\cup \StabilizationBordism+2"]
      &
      \SpaceSubsurfaceBoundaryTangential
      {\TangentialFibration}
      {\Genus}
      {3}
      {
        \DiskEmbedding 
        \cup 
        \DistBoundaryManifold{0}{\Manifold}
        \times 
        [0,2]
      }
      {\BoundaryConditionTangential{\BoundaryCondition}_{\DiskEmbedding}}
      \ar[d,"-\cup \StabilizationBordism+2"]
      \\
      \SpaceSubsurfaceBoundaryTangential
        {\TangentialFibration}
        {\Genus}
        {3}
        {
          \bar{\DiskEmbedding}
          \cup 
          \DistBoundaryManifold{0}{\Manifold}
          \times 
          [1,3]
        }
        {
          \BoundaryConditionTangential
            {\BoundaryCondition'}
          _{\overline{\DiskEmbedding}}
        }
        \ar[r,"-\cup \CutOut{\StabilizationBordism}{\bar{\DiskEmbedding}}"]
      &
      \SpaceSubsurfaceBoundaryTangential
        {\TangentialFibration}
        {\Genus+1}
        {2}
        {
          \DiskEmbedding 
          \cup 
          \DistBoundaryManifold{0}{\Manifold}
          \times 
          [0,3]
        }
        {\BoundaryConditionTangential{\BoundaryCondition}_{\DiskEmbedding}}
    \end{tikzcd}
  \]
  Note that the occuring spaces in this diagram are manifolds with corners, 
  this can be fixed by the usual smoothing the angle techniques explained in 
  Remark~\ref{rmk:SmoothingTheAngleRemovingThings} and 
  Remark~\ref{rmk:SmoothingTheAngleRemovingThings}, furthermore 
  note that in this diagram 
  $
    -
    \cup 
    \CutOut{\StabilizationBordism}{\DiskEmbedding}
  $ 
  is a complicated subset bordism and not a stabilization bordism. 
  
  Note that the non-cylcindrical parts of the occuring bordisms in the first 
  diagram occur as elements of the spaces in this second diagram. 
  For example the bordism for the composition of the two arrows on the right of 
  the first diagram is given by 
  $
    \bar{\DiskEmbedding}_{\LineBundle} 
    \cup 
    \BoundaryConditionTangential{\BoundaryCondition'}
    \times
    [1,3]
  $ which is an element of the lower left space. Analogously the bordism of the 
  composition on the left hand side 
  $
    \DiskEmbedding_{\LineBundle}
    \cup
    \BoundaryConditionTangential{\BoundaryCondition}
    \times 
    [0,2]
  $ 
  is an element of the top right space. 
  That the genus and the number of boundary conditions is as stated follows 
  from the requirement that the sequence of intervals is genus maximizing and 
  that $\StabilizationBordism$ contains 
  $
    \left(
      \bar{\DistIntervall{0}}
      \cup 
      \bar{\DistIntervall{1}}
    \right)
    \times 
    [0,1]
  $%
  .
  
  $\AuxBordism$ will be an element of the top left space and the 
  compositions with $-\cup \AuxBordism$ in the first diagram correspond to 
  composition in the second diagram.
  Furthermore note that 
  the left map is $\pi_0$-surjective and the right hand is $\pi_0$-injective. 
  Hence chose an element $\AuxBordism$ in 
  $
  \SpaceSubsurfaceBoundaryTangential
    {\TangentialFibration}
    {\Genus-1}
    {4}
    {
      \bar{\DiskEmbedding}
      \cup 
      \DistBoundaryManifold{0}{\Manifold}
      \times 
      [1,2]
    }
    {a}
  $
  such that $\AuxBordism\cup \StabilizationBordism + 2$ lies in the same 
  connected component as 
  $
    \bar{\DiskEmbedding_{\LineBundle}} 
    \cup
    \BoundaryConditionTangential{\BoundaryCondition'}
    \times
    [1,3]
  $%
  .
  Then the lower triangle in the first diagram commutes up to homotopy by 
  construction. 
  
  Since the second diagram is commutative the image of  
  $
    \AuxBordism 
    \cup 
    \CutOut{\StabilizationBordism}{\bar{\DiskEmbedding}}
  $
  under the right map is isotopic to 
  $
    \bar{\DiskEmbedding_{\LineBundle}} 
    \cup
    \BoundaryConditionTangential{\BoundaryCondition'}
    \times
    [1,3]
    \cup
    \CutOut{\StabilizationBordism}{\DiskEmbedding}
  $
  which in turn is isotopic to
  $
    \DiskEmbedding_{\LineBundle} 
    \cup
    \BoundaryConditionTangential{\BoundaryCondition}
    \times
    [0,2]
    \cup
    \StabilizationBordism + 2
  $%
  , since this is the image of 
  $
    \DiskEmbedding_{\LineBundle}
    \cup
    \BoundaryConditionTangential{\BoundaryCondition}
    \times
    [0,2]
  $
  under the right hand map, which is $\pi_0$-injective, we conclude that the 
  upper triangle in the first diagram commutes as well.
\end{proof}
  As a corollary to Proposition~\ref{prp:ConnectedKTriviality} and 
  Theorem~\ref{thm:HomologicalStability} one obtains:
  \begin{theorem}
\label{thm:HomologicalStabilityConnectedComponents}
  Suppose $\Manifold$ is an at least $5$-dimensional, simply-connected 
  manifold with non-empty boundary and 
  $\DistBoundaryManifold{0}{\Manifold}$ is a codimension $0$ simply-connected 
  submanifold of $\BoundaryManifold{\Manifold}$ with a space of 
  $\TangentialFibration$-structures of subplanes of 
  $\TangentBundle{\Manifold}$. 
  Suppose further that $\TangentialFibration$ 
  $\pi_0$-stabilizes at $\PiZeroStabilization_{1}$ and $\pi_0$-stabilizes 
  at the boundary at $\PiZeroStabilization_{2}$, then
  \begin{enumerate}[(i)]
  \item
    The homology of 
    $\MappingPair{\StabilizationAlpha{\Genus}{\BoundaryComponents}}$ 
    vanishes in degrees 
    $
      \ast
      \leq     
      \apply{\AlphaBound}{\Genus}
    $
  \item
    The homology of 
    $\MappingPair{\StabilizationBeta{\Genus}{\BoundaryComponents}}$ 
    vanishes in degrees 
    $
      \ast
      \leq 
      \apply{\BetaBound}{\Genus}
    $
  \item
    The homology of 
    $\MappingPair{\StabilizationGamma{\Genus}{\BoundaryComponents}}$ 
    vanishes in degrees 
    $
      \ast
      \leq 
      \apply{\BetaBound}{\Genus}+1
    $
  \end{enumerate}
  Where $\TrivialityIndex$ in the definition of $\AlphaBound$ and $\BetaBound$ 
  (Definition~\ref{dfn:StabilityBounds}) equals $2\PiZeroStabilization_{2}+1$.
\end{theorem}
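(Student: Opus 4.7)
The plan is to deduce this theorem as a direct corollary of Proposition~\ref{prp:ConnectedKTriviality} and Theorem~\ref{thm:HomologicalStability}, so the proof is essentially a matter of correctly matching notation and parameters. First I would apply Proposition~\ref{prp:ConnectedKTriviality}: the hypotheses that $\Manifold$ is at least $5$-dimensional and simply-connected, that $\DistBoundaryManifold{0}{\Manifold}$ is simply-connected, together with the assumption that $\TangentialFibration$ $\pi_0$-stabilizes at the boundary at $\PiZeroStabilization_{2}$, are precisely the hypotheses of that proposition. Its conclusion yields that $\TangentialFibration$ is $(2\PiZeroStabilization_{2}+1)$-trivial. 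This establishes the first of the two tangential hypotheses of Theorem~\ref{thm:HomologicalStability} with $\TrivialityIndex = 2\PiZeroStabilization_{2}+1$.

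Next I would apply Theorem~\ref{thm:HomologicalStability} with this choice of $\TrivialityIndex$ and with $\PiZeroStabilization = \PiZeroStabilization_{1}$. The remaining hypothesis, ordinary $\pi_{0}$-stabilization at $\PiZeroStabilization_{1}$, is part of the assumption. The first two conclusions, vanishing of the homology of $\MappingPair{\StabilizationAlpha{\Genus}{\BoundaryComponents}}$ in degrees $\ast \leq \apply{\AlphaBound}{\Genus}$ and of $\MappingPair{\StabilizationBeta{\Genus}{\BoundaryComponents}}$ in degrees $\ast \leq \apply{\BetaBound}{\Genus}$, are then immediate readouts of Theorem~\ref{thm:HomologicalStability} with the stability bounds $\AlphaBound$, $\BetaBound$ of Definition~\ref{dfn:StabilityBounds} instantiated at $\TrivialityIndex = 2\PiZeroStabilization_{2}+1$ and $\PiZeroStabilization = \PiZeroStabilization_{1}$.

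The only point that requires a little extra care is the $\StabilizationGamma{\Genus}{\BoundaryComponents}$ statement, since Theorem~\ref{thm:HomologicalStability} gives vanishing of $\MappingPair{\StabilizationGamma{\Genus}{\BoundaryComponents}}$ in degrees $\ast \leq \apply{\BetaBound}{\Genus}$, whereas the corollary asserts the slightly stronger range $\ast \leq \apply{\BetaBound}{\Genus}+1$. Here I would appeal to Lemma~\ref{lem:CappingOffBoundary}, which gives the improved isomorphism range $\ast \leq \apply{\BetaBound}{\Genus}+1$ for $\StabilizationGamma{\Genus}{\BoundaryComponents}$ whenever the disk component sits in the same connected component of $\DistBoundaryManifold{0}{\Manifold}$ as another boundary component of $\BoundaryCondition$, combined with the boundary-path resolution argument at the end of Section~\ref{scn:CappingOff} for the remaining case. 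Since both ingredient results are already proved, no genuine obstacle arises; the entire proof reduces to chaining the two theorems and reading off the bounds.
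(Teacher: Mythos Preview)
Your proposal is correct and matches the paper's approach exactly: the paper states this theorem simply as ``a corollary to Proposition~\ref{prp:ConnectedKTriviality} and Theorem~\ref{thm:HomologicalStability}'' without further argument, so the chaining you describe is all that is intended. Your observation about the $\gamma$-bound discrepancy (the corollary claims $\apply{\BetaBound}{\Genus}+1$ while Theorem~\ref{thm:HomologicalStability} as stated gives only $\apply{\BetaBound}{\Genus}$) is a genuine point; the sharper bound does follow from Lemma~\ref{lem:CappingOffBoundary} and the boundary-path resolution exactly as you indicate, since the fibers in that resolution always have at least one extra boundary component.
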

\subsection{Criterions for Stabilization of Connected Components}
  The goal of this subsection is to prove the following theorem:
  \begin{theorem}
\label{thm:HomologicalStabilityFiberSimplyConnected}
  Suppose that $\Manifold$ is a simply connected manifold of 
  dimension at least 
  $5$ such that $\DistBoundaryManifold{0}{\Manifold}$ is simply-connected as 
  well and 
  $
    \TangentialFibration
    \colon
    \TangentialSpace{\Manifold}
    \to
    \Grassmannian{2}{\TangentBundle{\Manifold}}
  $
  is a space of $\TangentialFibration$-structures of 
  subplanes of $\TangentBundle{\Manifold}$ where
  $\Fiber{\TwoPlane}{\TangentialFibration}$ is simply-connected, then
  \begin{enumerate}[(i)]
      \item
        The homology of 
        $\MappingPair{\StabilizationAlpha{\Genus}{\BoundaryComponents}}$ 
        vanishes in degrees 
        $
          \ast
          \leq     
          \left\lfloor
            \frac{1}{3}(2\Genus + 1)
          \right\rfloor
        $
      \item
        The homology of 
        $\MappingPair{\StabilizationBeta{\Genus}{\BoundaryComponents}}$ 
        vanishes in degrees 
        $
          \ast
          \leq 
          \left\lfloor
            \frac{2}{3}\Genus
          \right\rfloor
        $
      \item
        The homology of 
        $\MappingPair{\StabilizationGamma{\Genus}{\BoundaryComponents}}$ 
        vanishes in degrees 
        $
          \ast
          \leq 
          \left\lfloor
            \frac{2}{3}\Genus+1
          \right\rfloor
        $
  \end{enumerate}
\end{theorem}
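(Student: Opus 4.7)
My plan is to reduce the theorem to Theorem~\ref{thm:HomologicalStabilityConnectedComponents} by verifying that $\TangentialFibration$ $\pi_0$-stabilizes and $\pi_0$-stabilizes at the boundary at genus $\PiZeroStabilization = 0$. Once this is in place, Proposition~\ref{prp:ConnectedKTriviality} supplies $\TrivialityIndex$-triviality with $\TrivialityIndex = 2\cdot 0 + 1 = 1$, and with $\PiZeroStabilization = 0$ and $\TrivialityIndex = 1$ the functions in Definition~\ref{dfn:StabilityBounds} evaluate, by Remark~\ref{rmk:StabilityBounds}, to $\apply{\AlphaBound}{\Genus} = \left\lfloor (2\Genus + 1)/3 \right\rfloor$ and $\apply{\BetaBound}{\Genus} = \left\lfloor 2\Genus/3 \right\rfloor$, which together with the $+1$ shift for $\StabilizationGamma{\Genus}{\BoundaryComponents}$ reproduce the bounds in the statement.

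The substantive step is the $\pi_0$-stabilization. For this I would exploit the Hurewicz fibration
\[
  \Projection{\TangentialFibration}
  \colon
  \SpaceSubsurfaceBoundaryTangential
    {\TangentialFibration}
    {\Genus}
    {\BoundaryComponents}
    {\Manifold}
    {\BoundaryConditionTangential{\BoundaryCondition}}
  \to
  \SpaceSubsurfaceBoundary
    {\Genus}
    {\BoundaryComponents}
    {\Manifold}
    {\BoundaryCondition}
\]
of Lemma~\ref{lem:ForgettingTangentialFibration}, whose fiber over $\Subsurface$ is the space $\SpaceOfSections{\TangentialSpace{\Subsurface}}$ of sections with prescribed boundary, where the pointwise fibers of $\TangentialSpace{\Subsurface}\to\Subsurface$ are homotopy equivalent to the homotopy fiber $F$ of $\TangentialFibration$. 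Since $\Subsurface$ has nonempty boundary and $F$ is simply-connected, the obstructions to extending a boundary section, living in $H^{n}(\Subsurface, \BoundaryManifold{\Subsurface}; \HomotopyGroup{n-1}{F})$ for $n=1,2$, vanish, so the section spaces are nonempty. The stabilization on the total space covers the stabilization on the base, which is a $\pi_0$-bijection by \cite{CRW16}, and on fibers it is given by gluing a fixed extension $\tau_P$ of the boundary data across the stabilization bordism $P$. Chasing the commutative ladder of long exact sequences of the two fibrations then reduces $\pi_0$-bijectivity on the total space to the fact that this gluing induces a bijection on $\pi_0$ of the section spaces, which follows from a torsor-translation argument: $\pi_0 \SpaceOfSections{\TangentialSpace{\Subsurface}}$ is a torsor over an abelian group assembled from $\HomotopyGroup{\geq 2}{F}$ and the relative cohomology of $(\Subsurface, \BoundaryManifold{\Subsurface})$, and gluing $\tau_P$ is translation by a fixed element of the corresponding torsor on $P$. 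The same argument with $\Manifold$ replaced by $\DistBoundaryManifold{0}{\Manifold}\times[-1,0]$ (still simply-connected of dimension at least $5$) gives $\pi_0$-stabilization at the boundary at genus $0$.

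The main obstacle I anticipate is that $\SpaceOfSections{\TangentialSpace{\Subsurface}}$ can have nontrivial $\pi_0$ whenever $\HomotopyGroup{2}{F}\neq 0$: simple-connectedness of $F$ guarantees only nonemptiness, not path-connectedness, of the fibers of $\Projection{\TangentialFibration}$. Hence the proof cannot simply invoke connectedness of fibers together with \cite{CRW16}; instead the torsor structure on $\pi_0$ of the section spaces and its interaction with the $\pi_1$-monodromy of the base must be tracked carefully through the long-exact-sequence ladder. Making this bookkeeping precise, and in particular verifying that the gluing of $\tau_P$ descends to a bijection on the quotients of $\pi_0 \SpaceOfSections{\TangentialSpace{\Subsurface}}$ by the monodromy action, is the technical heart of the argument.
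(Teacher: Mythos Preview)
Your reduction is exactly the paper's: establish $\pi_0$-stabilization (and $\pi_0$-stabilization at the boundary) at genus $0$, then feed $\PiZeroStabilization_1=\PiZeroStabilization_2=0$ into Theorem~\ref{thm:HomologicalStabilityConnectedComponents} and read off the bounds via Remark~\ref{rmk:StabilityBounds}.

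Where you differ is in the argument for $\pi_0$-stabilization. You run the long exact sequence of the forgetful fibration $\Projection{\TangentialFibration}$, invoke \cite{CRW16} on the base, and analyse $\pi_0$ of the section-space fibre as a $\HomotopyGroup{2}{F}$-torsor. The paper instead proves directly that $\HomotopyGroup{0}{\SpaceSubsurfaceBoundaryTangential{\TangentialFibration}{\Genus}{\BoundaryComponents}{\Manifold}{\BoundaryConditionTangential{\BoundaryCondition}}}$ is an affine torsor over the single group $\HomotopyGroup{0}{\EmbeddingSpaceTangential{\TangentialFibration}{\TwoSphere}{\Manifold}}$, identified with a subgroup of $\HomotopyGroup{2}{\TangentialSpace{\Manifold}}$. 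The key device is the fundamental-class map $\Subsurface\mapsto[\TangentialStructure{\Embedding}]_*[\SurfaceGB{\Genus}{\BoundaryComponents}]\in H_2(\TangentialSpace{\Manifold},\BoundaryConditionTangential{\BoundaryCondition})$: using Hurewicz (possible since $\TangentialSpace{\Manifold}$ is simply connected) one shows this is injective on $\pi_0$ of the embedding space and, being manifestly $\DiffeomorphismGroupBoundary{\SurfaceGB{\Genus}{\BoundaryComponents}}$-invariant, it factors through $\pi_0$ of the subsurface space. Stabilization is then visibly equivariant for the connected-sum action of $\HomotopyGroup{0}{\EmbeddingSpaceTangential{\TangentialFibration}{\TwoSphere}{\Manifold}}$, hence a bijection of torsors (Corollary~\ref{crl:PiZeroStabilizes}). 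This packages your ``base'' and ``fibre'' torsors into one and, because the argument already takes place on the quotient by diffeomorphisms, the monodromy problem you flag never arises.

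That monodromy problem is a genuine obstacle in your route, not just bookkeeping. For injectivity on $\pi_0$ of the total space you need the images of the two boundary maps $\pi_1(\SpaceSubsurfaceBoundary{\Genus}{\BoundaryComponents}{\Manifold}{\BoundaryCondition})\to\HomotopyGroup{2}{F}$ before and after stabilization to coincide under your torsor bijection; naturality only gives one inclusion, and the reverse would follow from $\pi_1$-surjectivity of the stabilization on $\SpaceSubsurfaceBoundary{\Genus}{\BoundaryComponents}{\Manifold}{\BoundaryCondition}$, which \cite{CRW16} does not provide (it gives homology, not $\pi_1$). One can likely close this by showing the monodromy homomorphism itself vanishes, but that is an additional argument. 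The paper's fundamental-class trick sidesteps all of this.
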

  The proof will use Theorem~\ref{thm:HomologicalStabilityConnectedComponents} 
  and Remark~\ref{rmk:StabilityBounds}.
  
  As was explained in the introduction, examples of spaces of 
  $\TangentialFibration$-structures with simply-connected 
  fiber are given by $k$ framings of the normal bundle of the subsurfaces, 
  where $k\leq\DimensionIndex-2$.
  
  In order to apply Theorem~\ref{thm:HomologicalStabilityConnectedComponents}, 
  we need a better understanding of the space of connected components of 
  $
    \SpaceSubsurfaceBoundaryTangential
      {\TangentialFibration}
      {\Genus}
      {\BoundaryComponents}
      {\Manifold}
      {\BoundaryConditionTangential{\BoundaryCondition}}
  $%
  if the fiber of the space of tangential structures is simply-connected.
  \begin{lemma}
  Suppose that $\Manifold$ is simply-connected and of dimension at least $5$, 
  and that the fiber
  $
    \Fiber{\TwoPlane}{\TangentialFibration}
  $
  is simply connected, then 
  $
    \HomotopyGroup
      {0}
      {\EmbeddingSpaceTangential{\TangentialFibration}{\TwoSphere}{\Manifold}}
  $
  carries a canonical group structure and there is an action of this group on
  $
    \HomotopyGroup
      {0}
      {\SpaceSubsurfaceBoundaryTangential
        {\TangentialFibration}
        {\Genus}
        {\BoundaryComponents}
        {\Manifold}
        {\BoundaryConditionTangential{\BoundaryCondition}
      }
    }
  $%
  , given by connected sum, such that it is an affine
  $
    \HomotopyGroup
      {0}
      {\EmbeddingSpaceTangential{\TangentialFibration}{\TwoSphere}{\Manifold}}
  $%
  -set.
\end{lemma}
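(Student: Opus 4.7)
The plan is to define both the group structure on
$\HomotopyGroup{0}{\EmbeddingSpaceTangential{\TangentialFibration}{\TwoSphere}{\Manifold}}$
and its action on
$\HomotopyGroup{0}{\SpaceSubsurfaceBoundaryTangential{\TangentialFibration}{\Genus}{\BoundaryComponents}{\Manifold}{\BoundaryConditionTangential{\BoundaryCondition}}}$
simultaneously, using embedded connected sum along a tube.
Given two representatives $S_{1},S_{2}\subset\Manifold$ of classes in
$\HomotopyGroup{0}{\EmbeddingSpaceTangential{\TangentialFibration}{\TwoSphere}{\Manifold}}$,
I would first use transversality together with $\dim\Manifold\geq 5$ to isotope them
to be disjoint, then pick an embedded arc $\gamma$ in $\Manifold$ joining them, thicken
$\gamma$ to a tube, remove a small disk from each of $S_{1}$ and $S_{2}$ at the ends of
$\gamma$, and glue in the tube to obtain $S_{1}\#S_{2}$.
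To equip the result with a tangential structure, I would extend
$\TangentialStructure{S_{1}}$ and $\TangentialStructure{S_{2}}$ over the tube using
that $\TangentialFibration$ is a Hurewicz fibration; the choice of extension is unique
up to homotopy because the fiber of $\TangentialFibration$ is simply-connected and
the tube (minus the attaching disks) is homotopy equivalent to $\gamma$ rel endpoints.

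The first main step is to verify that the isotopy class of $S_{1}\#S_{2}$ (as a
subsurface with tangential structure) does not depend on any of the choices.
Independence of the isotopy making $S_{1},S_{2}$ disjoint and of the thickening of
$\gamma$ follows from Proposition~\ref{prp:EmbeddingLocallyRetractile} and
Lemma~\ref{lem:TubularNeighbourhoodSpaceContractible}; independence of the arc
$\gamma$ uses that $\Manifold$ is simply-connected (so the space of arcs connecting
two fixed points is path-connected, again by general position in dimension at least
$5$); and independence of the extended tangential structure is the aforementioned
consequence of $\pi_{1}$ of the fiber being trivial.
Associativity and commutativity up to isotopy are immediate once well-definedness is
established, by standard rearrangement of the connect-summing arcs.
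For the unit element, I would use that a small embedded $2$-sphere bounding an
embedded $3$-ball in $\Manifold$, equipped with any tangential structure (unique up to
homotopy by the simple-connectedness hypotheses), acts trivially: the connected sum
with such a sphere is isotopic to the original embedding by pushing the sphere into
the small disk removed during the surgery.
Inverses will be produced by reversing the orientation of a representative and
applying simply-connectedness of the fiber of $\TangentialFibration$ to see that the
sum of a sphere with its reverse is nullisotopic in
$\EmbeddingSpaceTangential{\TangentialFibration}{\TwoSphere}{\Manifold}$.

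Next I would define the action on
$\HomotopyGroup{0}{\SpaceSubsurfaceBoundaryTangential{\TangentialFibration}{\Genus}{\BoundaryComponents}{\Manifold}{\BoundaryConditionTangential{\BoundaryCondition}}}$
by the same recipe: given $\Subsurface$ and $S$, isotope $S$ off $\Subsurface$ and
off the boundary condition, choose a connecting arc disjoint from $\Subsurface$, and
form $\Subsurface\#S$ via the tube surgery.
Since the result is diffeomorphic to $\Subsurface$ as an abstract surface (connected
sum with $\TwoSphere$ is trivial topologically), this lands in the same space of
subsurfaces.
The same arguments as before show the operation descends to $\pi_{0}$ and respects
the group structure on
$\HomotopyGroup{0}{\EmbeddingSpaceTangential{\TangentialFibration}{\TwoSphere}{\Manifold}}$.

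Finally, for the affine structure, I would show the action is free on each orbit by
the following reversal argument: if
$\Subsurface\#S\sim\Subsurface$ in $\pi_{0}$, one can isotope the equivalence so that
the tube surgery region is disjoint from $\Subsurface$, and the reverse surgery then
exhibits $S$ as being isotopic, via a tangential homotopy, to a standardly embedded
sphere representing the identity class.
The main obstacle I anticipate is managing the tangential structure on the connecting
tube so that every step respects the lift to $\TangentialSpace{\Manifold}$: all of
the isotopies above have to be lifted through the Hurewicz fibration
$\TangentialFibration$ in a coherent way, and the possible ambiguity is parametrised
by $\HomotopyGroup{1}{\Fiber{\TwoPlane}{\TangentialFibration}}$, so the
simply-connectedness of the fiber of $\TangentialFibration$ really is essential at
essentially every step of the verification.
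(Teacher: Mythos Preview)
Your geometric approach via embedded connected sum is a natural alternative to the paper's route, which is essentially algebraic: the paper identifies $\HomotopyGroup{0}{\EmbeddingSpaceTangential{\TangentialFibration}{\TwoSphere}{\Manifold}}$ with the subgroup $\apply{\HomotopyGroupMap{\TangentialFibration}^{-1}}{\HomotopyGroup{0}{\EmbeddingSpace{\TwoSphere}{\Manifold}}}\subset\HomotopyGroup{2}{\TangentialSpace{\Manifold}}$ (using Haefliger to pass between embeddings and maps), and then analyses $\HomotopyGroup{0}{\SpaceSubsurfaceBoundaryTangential{\TangentialFibration}{\Genus}{\BoundaryComponents}{\Manifold}{\BoundaryConditionTangential{\BoundaryCondition}}}$ by restricting embeddings to a neighbourhood $\SurfaceGBOneSkeleton{\Genus}{\BoundaryComponents}$ of the $1$-skeleton and identifying the fibre over a fixed such embedding with a relative $\pi_2$. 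All the torsor statements then drop out of long exact sequences of pairs and the Hurewicz theorem.

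There is, however, a genuine gap in your proposal: you never argue \emph{transitivity}. Saying the action is ``free on each orbit'' is not the same as saying the set is affine; you must show that any two $\Subsurface,\Subsurface'\in\SpaceSubsurfaceBoundaryTangential{\TangentialFibration}{\Genus}{\BoundaryComponents}{\Manifold}{\BoundaryConditionTangential{\BoundaryCondition}}$ differ by connected sum with some $\TangentialFibration$-sphere. This is the heart of the lemma, and it does not follow from the well-definedness arguments you give. The paper's mechanism for this is precisely the $1$-skeleton restriction: by Haefliger and the simple-connectedness of the fibre of $\TangentialFibration$, any two embeddings agree up to isotopy on $\SurfaceGBOneSkeleton{\Genus}{\BoundaryComponents}$, so their difference is concentrated on the single $2$-cell and hence \emph{is} a $\TangentialFibration$-sphere. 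Without some device of this kind your argument cannot close. Your freeness sketch is also too loose: the claim that one can isotope ``so that the tube surgery region is disjoint from $\Subsurface$'' is not justified, since the isotopy from $\Subsurface\#S$ to $\Subsurface$ may interact with that region in an uncontrolled way. The paper avoids this by mapping everything injectively to $\HomologyOfSpace{2}{\TangentialSpace{\Manifold},\BoundaryConditionTangential{\BoundaryCondition}}$ via the fundamental class, where freeness is immediate from an exact sequence; a purely geometric freeness argument would need a comparable invariant.
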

\begin{proof}
  The proof is divided into three steps. We will first construct the group 
  structure on 
  $
    \HomotopyGroup
      {0}
      {\EmbeddingSpaceTangential{\TangentialFibration}{\TwoSphere}{\Manifold}}
  $
  by relating this space to the second homotopy groups of 
  $\TangentialSpace{\Manifold}$ and 
  $\Grassmannian{2}{\TangentBundle{\Manifold}}$, then we will relate this group 
  to 
  $
    \HomotopyGroup
      {0}
      {\EmbeddingSpaceBoundaryConditionTangential
        {\TangentialFibration}
        {\SurfaceGB{\Genus}{\BoundaryComponents}}
        {\Manifold}
        {\BoundaryConditionTangential{\BoundaryCondition}}
      }
  $
  using the simply-connectedness of $\Manifold$. Lastly, we will see that the 
  identifications we proved along the way behave very well with respect to the 
  action of the diffeomorphism group of the surface to get an understanding of
  $
    \HomotopyGroup
      {0}
      {\SpaceSubsurfaceBoundaryTangential
        {\TangentialFibration}
        {\Genus}
        {\BoundaryComponents}
        {\Manifold}
        {\BoundaryConditionTangential{\BoundaryCondition}
        }
      }
  $%
  . 
  
  For the first step, consider the following tower of fibrations:
  \[
    \begin{tikzcd}
      &
      \TangentialSpace{\Manifold}
        \ar[d,"\TangentialFibration"]
      \\
      &
      \Grassmannian{2}{\TangentBundle{\Manifold}}
        \ar[d]
      \\
      \SurfaceGB{\Genus}{\BoundaryComponents}
        \ar[r,"\Embedding"]
        \ar[ru,"\GrassmannianDifferential{\Embedding}"]
      &
      \Manifold
    \end{tikzcd}
  \]
  Let us calculate
  $
    \HomotopyGroup
      {2}
      {\Grassmannian{2}{\TangentBundle{\Manifold}}}
  $%
  . Consider the following diagram, which stems from the fact that the fibration
  $\Grassmannian{2}{\TangentBundle{\Manifold}}\to \Manifold$ 
  is the pullback of $\Grassmannian{2}{\TautologicalBundle}$, where 
  $\TautologicalBundle$ denotes the tautological bundle of some
  $\Grassmannian{\DimensionIndex}{\Reals^{\BigDimensionIndex}}$
  \[
    \begin{tikzcd}[cramped, column sep=small]
      \ldots
        \ar[r] 
      &
      \HomotopyGroup{3}{\Manifold}
        \ar[d]
        \ar[r] 
      &
      \HomotopyGroup{2}{\Grassmannian{2}{\Reals^{\DimensionIndex}}}
        \ar[d,"\Identity"]
        \ar[r]
      &
      \HomotopyGroup{2}{\Grassmannian{2}{\TangentBundle{\Manifold}}}
        \ar[d] 
        \ar[r]
      &
      \HomotopyGroup{2}{\Manifold}
        \ar[d]
        \ar[r] 
      &
      0
      \\
      \ldots
        \ar[r] 
      &
      0=\HomotopyGroup
        {3}
        {\Grassmannian{\DimensionIndex}{\Reals^{\BigDimensionIndex}}}
        \ar[r]
      & 
      \HomotopyGroup{2}{\Grassmannian{2}{\Reals^{\DimensionIndex}}}
        \ar[r]
      &
      \HomotopyGroup{2}{\Grassmannian{2}{\TautologicalBundle}}
        \ar[r]
      &
      \HomotopyGroup
        {2}
        {\Grassmannian{\DimensionIndex}{\Reals^{\BigDimensionIndex}}}
        \ar[r]
      &
      0 
    \end{tikzcd}
  \]
  Hence the upper long exact sequence breaks down to the following:
  \[
    \begin{tikzcd}
      0
        \ar[r]
      &
      \Integers
      \cong
      \HomotopyGroup{2}{\Grassmannian{2}{\Reals^{\DimensionIndex}}}
        \ar[r]
      &
      \HomotopyGroup{2}{\Grassmannian{2}{\TangentBundle{\Manifold}}}
        \ar[r]
      &
      \HomotopyGroup{2}{\Manifold}
        \ar[r]
        \ar[l,"\GrassmannianDifferential{-}"',bend right]
      &
      0 
    \end{tikzcd}
  \]
  Since $\Manifold$ is simply-connected, we have an isomorphism
  $
    \HomotopyGroup
      {2}
      {\Manifold}
    \simeq 
    \HomotopyGroup
      {0}
      {\MappingSpace{\TwoSphere}{\Manifold}}
  $ and this group is isomorphic to
  $\HomotopyGroup{0}{\EmbeddingSpace{\TwoSphere}{\Manifold}}$ by the main 
  result of \cite{H61}. Therefore the split in the aforementioned diagram is 
  given by taking an embedding as a representative and then applying 
  $\GrassmannianDifferential{-}$.
  From here on forth we consider 
  $\HomotopyGroup{0}{\EmbeddingSpace{\TwoSphere}{\Manifold}}$ as a subgroup of 
  $\HomotopyGroup{2}{\Grassmannian{2}{\TangentBundle{\Manifold}}}$.
  
  Consider the the following long exact sequence:
  \[
    \begin{tikzcd}
      \ldots
        \ar[r]
      &
      \HomotopyGroup{2}{\Fiber{\TwoPlane}{\TangentialFibration}}
        \ar[r]
      &
      \HomotopyGroup{2}{\TangentialSpace{\Manifold}}
        \ar[r,"\HomotopyGroupMap{\TangentialFibration}"]
      &
      \HomotopyGroup{2}{\Grassmannian{2}{\TangentBundle{\Manifold}}}
        \ar[r]
      &
      0
    \end{tikzcd}
  \]
  We claim that 
  $
    \apply
      {\HomotopyGroupMap{\TangentialFibration}^{-1}}
      {\HomotopyGroup{0}{\EmbeddingSpace{\TwoSphere}{\Manifold}}}
  $
  is isomorphic to 
  $
    \HomotopyGroup
      {0}
      {\EmbeddingSpaceTangential{\TangentialFibration}{\TwoSphere}{\Manifold}}
  $%
  .
  There is a canonical map 
  $
    \HomotopyGroup
      {0}
      {\EmbeddingSpaceTangential{\TangentialFibration}{\TwoSphere}{\Manifold}}
    \to
    \apply
      {\HomotopyGroupMap{\TangentialFibration}^{-1}}
      {\HomotopyGroup{0}{\EmbeddingSpace{\TwoSphere}{\Manifold}}}
  $%
  , which is evidently surjective. For injectivity of this map suppose two 
  elements 
  $
    \left[\Embedding\right]
    ,
    \left[\Embedding'\right]
    \in
    \HomotopyGroup
      {0}
      {\EmbeddingSpaceTangential{\TangentialFibration}{\TwoSphere}{\Manifold}}
  $
  map to the same element in
  $
    \apply
      {\HomotopyGroupMap{\TangentialFibration}^{-1}}
      {\HomotopyGroup{0}{\EmbeddingSpace{\TwoSphere}{\Manifold}}}
  $%
  . Hence there is a homotopy $\Homotopy$ from  
  $\TangentialStructure{\Embedding}$ to $\TangentialStructure{\Embedding'}$ as 
  maps into $\TangentialSpace{\Manifold}$.
  The main theorem of \cite{H61} implies that 
  $\TangentialFibration\circ \Homotopy$ is actually homotopic via 
  $\bar{\Homotopy}$ relative to $\Embedding$ and $\Embedding'$ to an isotopy. 
  We have the following commutative diagram, where the dashed arrows exists 
  since $\TangentialFibration$ is a fibration:
  \[
    \begin{tikzcd}
      \TwoSphere
      \times
      \left[0,1\right]
      \times 
      \left\{0\right\}
      \cup
      \TwoSphere
      \times 
      \left\{0,1\right\}
      \times 
      \left[0,1\right] 
        \ar[d,hookrightarrow] 
        \ar[rr,"\Homotopy \cup \Embedding \cup \Embedding'"] 
      &
      &
      \TangentialSpace{\Manifold}
        \ar[d]
      \\
      \TwoSphere
      \times
      \left[0,1\right]
      \times
      \left[0,1\right]
        \ar[urr,dashed] 
        \ar[rr,"\bar{\Homotopy}"] 
      &
      &
      \Grassmannian{2}{\TangentBundle{\Manifold}}
    \end{tikzcd}
  \]
  The indicated lift of $\bar{\Homotopy}$, when evaluated at 
  $\left(-,1\right)$, gives us the desired isotopy from 
  $\left[\Embedding\right]$ to $\left[\Embedding'\right]$ as embeddings with 
  $\TangentialFibration$-structure. 
  All in all this implies that 
  $
    \HomotopyGroup
      {0}
      {\EmbeddingSpaceTangential{\TangentialFibration}{\TwoSphere}{\Manifold}}
  $
  has a group structure, which stems from the isomorphism to 
  $
    \apply
      {\HomotopyGroupMap{\TangentialFibration}^{-1}}
      {\HomotopyGroup{2}{\Grassmannian{2}{\TangentBundle{\Manifold}}}}
  $%
  . For notational reasons, let us denote 
  $
    \HomotopyGroup
      {0}
      {\EmbeddingSpaceTangential{\TangentialFibration}{\TwoSphere}{\Manifold}}
  $
  by $\pi^{\TangentialFibration}$.
  
  For the second step, consider the cofibration 
  $
    \SurfaceGBOneSkeleton{\Genus}{\BoundaryComponents}
    \to
    \SurfaceGB{\Genus}{\BoundaryComponents}
  $%
  , where 
  $\SurfaceGBOneSkeleton{\Genus}{\BoundaryComponents}$ denotes a small tubular 
  neighbourhood of the $1$-skeleton of 
  $\SurfaceGB{\Genus}{\BoundaryComponents}$ to which only a single $2$-cell 
  is attached via $\sigma$.
  Since the inclusion $\SurfaceGBOneSkeleton{\Genus}{\BoundaryComponents}\to 
  \SurfaceGB{\Genus}{\BoundaryComponents}$ is a cofibration, 
  $
    \MappingSpace
      {\SurfaceGB{\Genus}{\BoundaryComponents}}
      {\Manifold}
    \to
    \MappingSpace
      {\SurfaceGBOneSkeleton{\Genus}{\BoundaryComponents}}
      {\Manifold}
  $
  is a fibration, with fiber over an embedding $\Embedding$ given by
  $
    \MappingSpace
      {\left(\Ball{2},\partial \Ball{2}\right)}
      {\left(\Manifold,\CellBoundaryCondition\right)}
  $%
  , where $\CellBoundaryCondition$ is the image of the boundary of the single 
  two cell, denoted by $\Ball{2}$, glued to
  $
    \SurfaceGBOneSkeleton{\Genus}{\BoundaryComponents}
  $%
  under $\Embedding$.
  Again by \cite{H61} and transversality, 
  $
    \HomotopyGroup{0}
      {\MappingSpace
        {\left(\Ball{2},\partial \Ball{2}\right)}
        {\left(\Manifold,\CellBoundaryCondition\right)}
      }
  $
  is isomorphic to 
  $
    \HomotopyGroup
      {0}
      {\EmbeddingSpaceBoundaryCondition
        {\Ball{2}}
        {\Manifold
          \setminus
          \apply
            {\Embedding}
            {\SurfaceGBOneSkeleton{\Genus}{\BoundaryComponents}}
        }
        {\CellBoundaryCondition}
      }
  $%
  .
  Similar considerations as before show that the a priori pointed set
  $
    \HomotopyGroup
      {0}
      {\EmbeddingSpaceBoundaryCondition
        {\Ball{2}}
        {\Manifold
          \setminus
          \apply
          {\Embedding}
          {\SurfaceGBOneSkeleton{\Genus}{\BoundaryComponents}}
        }
        {\CellBoundaryCondition}
      }
  $ 
  can be considered as a subgroup of 
  $
    \HomotopyGroup
      {2}
      {\Grassmannian{2}{\TangentBundle{\Manifold}},\CellBoundaryCondition}
  $%
  , where we consider $\CellBoundaryCondition$ (or its image to be more 
  precise) 
  as a subset of $\Grassmannian{2}{\TangentBundle{\Manifold}}$ via the 
  Grassmannian differential.
  
  Since $\Fiber{\TwoPlane}{\TangentialFibration}$ is simply-connected, there 
  exists an up to homotopy unique $\TangentialFibration$-structure on 
  $\CellBoundaryCondition$, which we 
  denote by $\BoundaryConditionTangential{\CellBoundaryCondition}$. 
  
  Let us introduce the shorthand notation 
  $\pi^{\TangentialFibration}_{\text{rel}}$ for
  $
    \apply
      {\HomotopyGroupMap{\TangentialFibration}^{-1}}
      {\HomotopyGroup
        {0}
        {\EmbeddingSpaceBoundaryCondition
          {\Ball{2}}
          {\Manifold
            \setminus
            \apply
            {\Embedding}
            {\SurfaceGBOneSkeleton{\Genus}{\BoundaryComponents}}
          }
          {\CellBoundaryCondition}
        }
      }
  $%
  , here
  $
    \TangentialFibration
    \colon
    \left(
      \TangentialSpace{\Manifold},
      \BoundaryConditionTangential{\CellBoundaryCondition}
    \right)
    \to
    \left(
      \Grassmannian{2}{\TangentBundle{\Manifold}},
      \CellBoundaryCondition
    \right)
  $%
  is considered as a map of pairs. We claim that 
  $
    \pi^{\TangentialFibration}
  $
  considered as a subgroup of $\HomotopyGroup{2}{\TangentialSpace{\Manifold}}$
  acts transitively and freely on
  $
    \pi^{\TangentialFibration}_{\text{rel}}
  $%
  .
  In order to prove this claim, consider the long exact sequence of the pair 
  $
    \left(
      \TangentialSpace{\Manifold}
      ,
      \BoundaryConditionTangential{\CellBoundaryCondition}
    \right)
  $%
  :
  \[
    \begin{tikzcd}
      0
        \ar[r]
      &
      \HomotopyGroup{2}{\TangentialSpace{\Manifold}}
        \ar[r] 
      &
        \HomotopyGroup
          {2}
          {\TangentialSpace{\Manifold},
            \BoundaryConditionTangential{\CellBoundaryCondition}
          }
          \ar[r,"\partial"]
        &
        \HomotopyGroup{1}{\BoundaryConditionTangential{\CellBoundaryCondition}}
    \end{tikzcd}
  \]
  That the action is free follows immediately from this long exact sequence.
  To prove that this action is transitive consider the following diagram given 
  by considering long exact sequences of pairs and the corresponding maps 
  between them:
  \[
    \begin{tikzcd}
      &
      \pi^{\TangentialFibration}
        \ar[d,phantom,"\subsetv"]
        \ar[r]
      &
      \pi^{\TangentialFibration}_{\text{rel}}
      \subset
      \apply
        {\partial^{-1}}
        {\left[\BoundaryConditionTangential{\CellBoundaryCondition}\right]}
        \ar[d,phantom,"\subsetv"]
      \\
      0 
        \ar[r] 
      &
      \HomotopyGroup{2}{\TangentialSpace{\Manifold}}
        \ar[r]
        \ar[d,"\HomotopyGroupMap{\TangentialFibration}"]
      &
      \HomotopyGroup
        {2}
        {\TangentialSpace{\Manifold},
          \BoundaryConditionTangential{\CellBoundaryCondition}
        }
        \ar[r]
        \ar[d,"\HomotopyGroupMap{\TangentialFibration}"] 
      &
      \HomotopyGroup{1}{\BoundaryConditionTangential{\CellBoundaryCondition}}
        \ar[d,"\cong"]
        \ar[r]
      &
      0
      \\
      0
        \ar[r]
      &
      \HomotopyGroup{2}{\Grassmannian{2}{\TangentBundle{\Manifold}}}
        \ar[r]
        \ar[d,
          "\HomotopyGroupMap
            {\Projection{\Grassmannian{2}{\TangentBundle{\Manifold}}}}
        "] 
      & 
      \HomotopyGroup
        {2}
        {\Grassmannian{2}{\TangentBundle{\Manifold}},\CellBoundaryCondition}
        \ar[r]
        \ar[d,
          "\HomotopyGroupMap
            {\Projection{\Grassmannian{2}{\TangentBundle{\Manifold}}}}
        "]
      &
      \HomotopyGroup{1}{\CellBoundaryCondition}
        \ar[d,"\cong"]
        \ar[r]
      &
      0
      \\
      0
        \ar[r]
      & 
      \HomotopyGroup{2}{\Manifold}
        \ar[r]
        \ar[u,"\GrassmannianDifferential{-}",bend left]
      & 
      \HomotopyGroup{2}{\Manifold,\CellBoundaryCondition}
        \ar[r]
        \ar[u,"\GrassmannianDifferential{-}",bend left]
      &
      \HomotopyGroup{1}{\CellBoundaryCondition}
        \ar[r]
      &
      0
    \end{tikzcd}
  \]
  The commutativity of the lower left square gives us that 
  $\HomotopyGroup{0}{\EmbeddingSpace{\TwoSphere}{\Manifold}}$ acts 
  freely and transitively on 
  $
  \HomotopyGroup
    {0}
    {\EmbeddingSpaceBoundaryCondition
      {\Ball{2}}
      {\Manifold
        \setminus
        \apply
        {\Embedding}
        {\SurfaceGBOneSkeleton{\Genus}{\BoundaryComponents}}
      }
      {\CellBoundaryCondition}
    }
  $%
  and taking preimages with respect to $\TangentialFibration$ does not change 
  the transitivity of the action.
  
  Furthermore the same arguments as for $\pi^{\TangentialFibration}$ before 
  show that 
  $
    \pi^{\TangentialFibration}_{\text{rel}}
  $
  is isomorphic to the pointed set
  $
    \HomotopyGroup
      {0}
      {\EmbeddingSpaceBoundaryConditionTangential
        {\TangentialFibration}
        {\Ball{2}}
        {\Manifold
          \setminus 
          \apply
            {\Embedding}
            {\SurfaceGBOneSkeleton{\Genus}{\BoundaryComponents}}
        }
        {\BoundaryConditionTangential{\CellBoundaryCondition}}
      }
  $%
  .
  
  We claim that 
  $
    \pi^{\TangentialFibration}_{\text{rel}}
  $
  is isomorphic to 
  $
    \HomotopyGroup
      {0}
      {\EmbeddingSpaceTangential
        {\TangentialFibration}
        {\SurfaceGB{\Genus}{\BoundaryComponents}}
        {\Manifold}
      }
  $%
  .
  Consider the long exact sequence in homology of the pair 
  $
    \left(
      \TangentialSpace{\Manifold}
      ,
      \BoundaryConditionTangential{\BoundaryCondition}
    \right)
  $%
  (As before $\BoundaryCondition$ is the boundary condition for the spaces of 
  subsurfaces)
  : 
  \[
    \begin{tikzcd}[cramped, column sep=small]
      0
        \ar[r]
      &
      \HomologyOfSpace{2}{\TangentialSpace{\Manifold}}
      \simeq
      \HomotopyGroup{2}{\TangentialSpace{\Manifold}}
        \ar[r]
      &
      \HomologyOfSpace
        {2}
        {\TangentialSpace{\Manifold},
          \BoundaryConditionTangential{\BoundaryCondition}
        }
      \simeq
      \HomotopyGroup
        {2}
        {\TangentialSpace{\Manifold}
          ,\BoundaryConditionTangential{\BoundaryCondition}
        }
        \ar[r,"\partial^{\prime}"]
      &
      \HomologyOfSpace{1}{\BoundaryConditionTangential{\BoundaryCondition}}
        \ar[r]
      &
      0
    \end{tikzcd}
  \]
  Note that $\TangentialSpace{\Manifold}$ is simply-connected since 
  $\Fiber{\TwoPlane}{\TangentialFibration}$ and 
  $\Grassmannian{2}{\TangentBundle{\Manifold}}$ are simply-connected. Likewise 
  the pair 
  $
    \left(
      \TangentialSpace{\Manifold},
      \BoundaryConditionTangential{\CellBoundaryCondition}
    \right)
  $
  is simply-connected. This yields the claimed isomorphisms by the 
  Hurewicz-Theorem.
  The long exact sequence implies that 
  $
    \pi^{\TangentialFibration}
    \subset
    \HomotopyGroup{2}{\TangentialSpace{\Manifold}}
    \cong
    \HomologyOfSpace{2}{\TangentialSpace{\Manifold}}
  $
  acts freely on 
  $
    \apply
      {\partial^{\prime-1}}
      {[\BoundaryConditionTangential{\BoundaryCondition}]}
  $%
  . We have a map from 
  $
    \pi^{\TangentialFibration}_{\text{rel}}
  $
  to 
  $
    \apply
      {\partial^{\prime-1}}
      {[\BoundaryConditionTangential{\BoundaryCondition}]}
  $
  by extending an embedding by 
  $\at{e}{\SurfaceGBOneSkeleton{\Genus}{\BoundaryComponents}}$ and
  considering the image of the fundamental class of 
  $\SurfaceGB{\Genus}{\BoundaryComponents}$. 
  By definition this map factorizes through 
  $
    \HomotopyGroup
      {0}
      {\EmbeddingSpaceBoundaryConditionTangential
        {\TangentialFibration}
        {\SurfaceGB{\Genus}{\BoundaryComponents}}
        {\Manifold}
        {\BoundaryConditionTangential{\BoundaryCondition}}
      }
  $ 
  and it is equivariant with respect to the
  $
    \pi^{\TangentialFibration}
  $%
  -action on $\HomologyOfSpace{2}{\TangentialSpace{\Manifold}}$ and 
  $
    \apply
      {\partial^{\prime-1}}
      {[\BoundaryConditionTangential{\BoundaryCondition}]}
  $%
  . This implies that the map from 
  $
    \pi^{\TangentialFibration}_{\text{rel}}
  $
  to 
  $
    \HomotopyGroup
      {0}
      {\EmbeddingSpaceBoundaryConditionTangential
        {\TangentialFibration}
        {\SurfaceGB{\Genus}{\BoundaryComponents}}
        {\Manifold}
        {\BoundaryConditionTangential{\BoundaryCondition}}
      }
  $ 
  is injective. 
  
  To see that this map is surjective consider the fibration 
  (proven to be a fibration in Lemma~\ref{lem:RestrictionFibration})
  $
    \EmbeddingSpaceBoundaryConditionTangential
      {\TangentialFibration}
      {\SurfaceGB{\Genus}{\BoundaryComponents}}
      {\Manifold}
      {\BoundaryConditionTangential{\BoundaryCondition}}
    \to
    \EmbeddingSpaceBoundaryConditionTangential
      {\TangentialFibration}
      {\SurfaceGBOneSkeleton{\Genus}{\BoundaryComponents}}
      {\Manifold}
      {\BoundaryConditionTangential{\BoundaryCondition}}
  $
  and consider its long exact sequence:
  \[
    \begin{tikzcd}[cramped, column sep=small]
      \pi^{\TangentialFibration}_{\text{rel}}
      =
      \HomotopyGroup
        {0}
        {
          \EmbeddingSpaceBoundaryConditionTangential
            {\TangentialFibration}
            {\Ball{2}}
            {
              \Manifold
              \setminus 
              \apply
                {\Embedding}
                {\SurfaceGBOneSkeleton{\Genus}{\BoundaryComponents}}
            }
            {\BoundaryConditionTangential{\bar{\BoundaryCondition}}}
        }
        \ar[r]
      &
      \HomotopyGroup
        {0}
        {
          \EmbeddingSpaceBoundaryConditionTangential
            {\TangentialFibration}
            {\SurfaceGB{\Genus}{\BoundaryComponents}}
            {\Manifold}
            {\BoundaryConditionTangential{\BoundaryCondition}}
        }
        \ar[r]
      &
      \HomotopyGroup
        {0}
        {
          \EmbeddingSpaceBoundaryConditionTangential
            {\TangentialFibration}
            {\SurfaceGBOneSkeleton{\Genus}{\BoundaryComponents}}
            {\Manifold}
            {\BoundaryConditionTangential{\BoundaryCondition}}
        }
    \end{tikzcd}
  \]
  Hence the map is surjective, as
  $
    \HomotopyGroup
      {0}
      {
        \EmbeddingSpaceBoundaryConditionTangential
          {+}
          {\SurfaceGBOneSkeleton{\Genus}{\BoundaryComponents}}
          {\Manifold}
          {\BoundaryCondition}
      }
  $
  has only one element by the main result of \cite{H61}, and 
  the simply-connectedness of $\Fiber{\TwoPlane}{\TangentialFibration}$.
  
  Let us proceed with the last step. We have proven so far that the 
  $
    \pi^{\TangentialFibration}
  $%
  -equivariant map from
  $
    \pi^{\TangentialFibration}_{\text{rel}}
  $
  to 
  $
    \HomotopyGroup
      {0}
      {
        \EmbeddingSpaceBoundaryConditionTangential
        {\TangentialFibration}
        {\SurfaceGB{\Genus}{\BoundaryComponents}}
        {\Manifold}
        {\BoundaryConditionTangential{\BoundaryCondition}}
      }
  $
  is an isomorphism and that the map from there to 
  $
  \apply
    {\partial^{\prime-1}}
    {[\BoundaryConditionTangential{\BoundaryCondition}]}
  $
  is an injection. But this map factorizes through 
  $
    \HomotopyGroup
      {0}
      {
        \SpaceSubsurfaceBoundaryTangential
          {\TangentialFibration}
          {\Genus}
          {\BoundaryComponents}
          {\Manifold}
          {\BoundaryConditionTangential{\BoundaryCondition}}
      }
  $%
  , since an orientation preserving diffeomorphism does not change the image of 
  the fundamental class. 
  Therefore we conclude that
  $
    \HomotopyGroup
      {0}
      {
        \SpaceSubsurfaceBoundaryTangential
        {\TangentialFibration}
        {\Genus}
        {\BoundaryComponents}
        {\Manifold}
        {\BoundaryConditionTangential{\BoundaryCondition}}
      }
  $ 
  is an affine 
  $
    \pi^{\TangentialFibration}
  $%
  -set. 
\end{proof}

  From this we obtain almost immediately the following 
  corollary.
  \begin{corollary}
\label{crl:PiZeroStabilizes}
  If $\Manifold$ is simply-connected and of dimension at 
  least $5$, $\DistBoundaryManifold{0}{\Manifold}$ is simply-connected as well 
  and the fiber of $\TangentialFibration$ is 
  simply-connected, then $\TangentialFibration$ $\pi_0$-stabilizes at $0$ and 
  $\pi_0$-stabilizes at the boundary at $0$ as well.
\end{corollary}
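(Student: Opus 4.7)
My plan is to reduce both statements to the preceding lemma, which shows that whenever non-empty, the set
$\HomotopyGroup{0}{\SpaceSubsurfaceBoundaryTangential{\TangentialFibration}{\Genus}{\BoundaryComponents}{\Manifold}{\BoundaryConditionTangential{\BoundaryCondition}}}$ is an affine set over the group $\pi^{\TangentialFibration} = \HomotopyGroup{0}{\EmbeddingSpaceTangential{\TangentialFibration}{\TwoSphere}{\Manifold}}$ via the connected-sum action. Since any $G$-equivariant map between two non-empty affine $G$-sets is automatically a bijection, it suffices to verify that (i) every stabilization map of type $\alpha$ or $\beta$ is $\pi^{\TangentialFibration}$-equivariant, and (ii) both source and target are non-empty for every $\Genus \geq 0$.

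For (i), I will realize the action of a class $[\Embedding] \in \pi^{\TangentialFibration}$ by connected sum with a small embedded $\TangentialFibration$-sphere placed in a codimension-$0$ ball disjoint from the bordism $\StabilizationBordism$ defining the stabilization map; such a ball exists because $\dim \Manifold \geq 5$ and $\StabilizationBordism$ is two-dimensional, so general position yields plenty of room. With this choice of representative the connected sum and the union with $\StabilizationBordism$ can be performed independently, and their outputs are isotopic as embedded subsurfaces with tangential structure, which is precisely equivariance on $\pi_0$.

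For (ii), I will use that $\TangentialSpace{\Manifold}$ is simply-connected (this was already observed in the proof of the preceding lemma, since $\Manifold$, $\Grassmannian{2}{\TangentBundle{\Manifold}}$, and the fiber of $\TangentialFibration$ all are). Together with the main theorem of \cite{H61} and the vanishing of the relevant obstruction for extending a tangential structure over a $2$-cell relative to the $1$-skeleton, this shows that any prescribed boundary condition $\BoundaryConditionTangential{\BoundaryCondition}$ can be filled in by some embedding with tangential structure for every $\Genus \geq 0$. When $\Genus' = -1$ the source is empty and the required surjection holds vacuously, so the surjectivity clause at the lower end costs nothing extra.

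Finally, $\pi_0$-stabilization at the boundary at genus $0$ follows from the same argument applied to $\DistBoundaryManifold{0}{\Manifold} \times [0,1]$: it is simply-connected (being a product of a simply-connected manifold with an interval), has dimension $\geq 5$, and carries the pullback tangential structure, whose fiber is unchanged and therefore still simply-connected. Thus the preceding lemma and the above reasoning apply verbatim. I expect the only slightly delicate point to be making the equivariance in step (i) precise at the level of $\pi_0$, i.e.\ checking that the auxiliary choices (ball, isotopy) do not affect the induced map; once the sphere is pushed off $\StabilizationBordism$ this is a routine general-position argument and should not be the main obstacle.
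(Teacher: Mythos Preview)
Your proposal is correct and follows essentially the same approach as the paper's own proof: both invoke the preceding lemma to identify $\pi_0$ of the spaces of subsurfaces as affine $\pi^{\TangentialFibration}$-sets, observe that the stabilization maps are $\pi^{\TangentialFibration}$-equivariant, and conclude bijectivity; the boundary case is handled identically by applying the same reasoning to $\DistBoundaryManifold{0}{\Manifold}\times[0,1]$. The paper's proof is more terse---it asserts equivariance without justification and does not separately address non-emptiness---so your additional detail on these points is welcome, though note that non-emptiness is in fact established within the proof of the lemma itself (via the surjectivity of the restriction-to-the-$1$-skeleton map on $\pi_0$).
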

\begin{proof}
  The stabilization maps are given by gluing in surfaces, 
  which induce 
  $
    \HomotopyGroup
      {0}
      {\EmbeddingSpaceTangential
        {\TangentialFibration}
        {\TwoSphere}
        {\Manifold}
      }
  $%
  -equivariant maps between 
  $
    \HomotopyGroup
      {0}
      {\SpaceSubsurfaceBoundaryTangential
        {\TangentialFibration}
        {\Genus}
        {\BoundaryComponents}
        {\Manifold}
        {\BoundaryConditionTangential{\BoundaryCondition}}
      }
  $
  and
  $
    \HomotopyGroup
    {0}
    {\SpaceSubsurfaceBoundaryTangential
      {\TangentialFibration}
      {\Genus+1}
      {\BoundaryComponents-1}
      {\Manifold}
      {\BoundaryConditionTangential{\BoundaryCondition}}
    }
  $
  or
  $
    \HomotopyGroup
    {0}
    {\SpaceSubsurfaceBoundaryTangential
      {\TangentialFibration}
      {\Genus}
      {\BoundaryComponents+1}
      {\Manifold}
      {\BoundaryConditionTangential{\BoundaryCondition}}
    }
  $%
  . Since both of these spaces are affine 
  $
    \HomotopyGroup
    {0}
    {\EmbeddingSpaceTangential
      {\TangentialFibration}
      {\TwoSphere}
      {\Manifold}
    }
  $%
  -spaces, the map is a bijection. Repeating the same argument for 
  $\DistBoundaryManifold{0}{\Manifold}\times[0,1]$ proves the claim.
\end{proof}

  Applying this corollary to $\Manifold$ together with
  Theorem~\ref{thm:HomologicalStabilityConnectedComponents} for 
  $\PiZeroStabilization_{1}=\PiZeroStabilization_{2}=0$, one obtains 
  Theorem~\ref{thm:HomologicalStabilityFiberSimplyConnected}.
\subsection{Non-relative Tangential Structures}
  As was explained in the introduction, the notion of tangential structure in 
  this paper is a generalization of the usual notion to include relative 
  tangential structures. This subsection will deal with non-relative tangential 
  structures.
  
  There is a (up to homotopy) canonical map 
  $
    \Grassmannian{2}{\TangentBundle{\Manifold}}
    \to 
    \Grassmannian{2}{\Reals^{\infty}}
    \simeq
    \ClassifyingSpace{\SOrthogonalGroup{2}}
  $
  and recall that in the common definition of tangential structure one 
  considers a fibration 
  $
    \TangentialFibration
    \colon
    \TangentialSpace{\Grassmannian{2}{\Reals^{\infty}}}
    \to
    \Grassmannian{2}{\Reals^{\infty}}
  $%
  . In this case one obtains classifying spaces for surface bundles equipped 
  with a $\TangentialFibration$-structure, denoted by 
  $
    \ModuliSpace
      {\TangentialFibration}
      {\SurfaceGB{\Genus}{\BoundaryComponents}}
  $
  (we are a bit sloppy here, since there are also boundary conditions 
  involved). We will not define these classifying spaces here and the reader is 
  advised to consult \cite{RW16} in particular Definition~1.1 in that paper.
  \begin{definition}
  We say that a tangential structure 
  $
    \TangentialFibration
    \colon
    \TangentialSpace{\Manifold}
    \to
    \Grassmannian{2}{\TangentBundle{\Manifold}}
  $
  (in the sense of this paper) is a \introduce{non-relative tangential 
  structure stemming from $\TangentialFibration'$} if there exists a tangential 
  structure (in the common sense)
  \[
    \TangentialFibration'
    \colon
    \TangentialSpace{\Grassmannian{2}{\Reals^{\infty}}}
    \to
    \Grassmannian{2}{\Reals^{\infty}}
  \]
  such that the following diagram is a pullback diagram:
  \[
    \begin{tikzcd}
      \TangentialSpace{\Manifold}
        \ar[d,"\TangentialFibration"]
        \ar[r]
      &
      \TangentialSpace{\Grassmannian{2}{\Reals^{\infty}}}
        \ar[d,"\TangentialFibration'"]
      \\
      \Grassmannian{2}{\TangentBundle{\Manifold}}
        \ar[r]
      &
      \Grassmannian{2}{\Reals^{\infty}}
    \end{tikzcd}
  \]
\end{definition}
  The following theorem states that for the question of whether a non-relative 
  tangential structure fulfils homological stability, it does not matter, 
  whether one considers it as tangential structures of 
  abstract surfaces or subsurfaces of a (finite-dimensional) manifold.
  \begin{theorem}
\label{thm:HomologicalStabilityNonRelativeTangentialStructures}
  Suppose $\Manifold$ is a simply-connected at least $5$-dimensional manifold 
  and that $\DistBoundaryManifold{0}{\Manifold}$ is simply-connected.
  Let
  $
    \TangentialFibration
    \colon
    \TangentialSpace{\TangentBundle{\Manifold}}
    \to 
    \Grassmannian{2}{\TangentBundle{\Manifold}}
  $
  denote a non-relative tangential structure stemming from 
  $\TangentialFibration'$ such that 
  $
    \ModuliSpace
      {\TangentialFibration'}
      {\SurfaceGB{\Genus}{\BoundaryComponents}}
  $
  fulfils homological stability as stated in Theorem~7.1 in \cite{RW16} (Note 
  that the definitions of $F$ and $G$ in the theorem, agree with our definition 
  of $\AlphaBound$ and $\BetaBound$). 
  
  Then $\TangentialFibration$ $\pi_0$-stabilizes at genus $\Genus$, and also 
  $\pi_0$-stabilizes at the boundary at genus $\Genus$, where $\Genus$ is the 
  first degree in which the first homology of 
  $
    \MappingPair{\StabilizationAlpha{\Genus}{\BoundaryComponents}}
  $%
  , of
  $
    \MappingPair{\StabilizationBeta{\Genus}{\BoundaryComponents}}
  $%
  , and of 
  $
  \ MappingPair{\StabilizationGamma{\Genus}{\BoundaryComponents}}
  $
  vanishes, here these stabilization maps are stabilization maps of abstract 
  surfaces in the sense of Subsection~1.3 in \cite{RW16}.
  Hence $\TangentialFibration$ is $2\Genus+1$-trivial and 
  $
    \SpaceSubsurfaceBoundaryTangential
      {\TangentialFibration}
      {\Genus}
      {\BoundaryComponents}
      {\Manifold}
      {\BoundaryConditionTangential{\BoundaryCondition}}
  $
  fulfills homological stability as stated in 
  Theorem~\ref{thm:HomologicalStability}. 
  
  In particular $\TangentialFibration$ fulfils homological stability if 
  $\TangentialFibration'$ stabilizes on connected components of 
  $
    \ModuliSpace
      {\TangentialFibration'}
      {\SurfaceGB{\Genus}{\BoundaryComponents}}
  $
\end{theorem}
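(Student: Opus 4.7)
I transfer the assumed $\pi_{0}$-stabilization of $\ModuliSpace{\TangentialFibration'}{\SurfaceGB{\Genus}{\BoundaryComponents}}$ to the subsurface setting via a natural comparison map, then apply Proposition~\ref{prp:ConnectedKTriviality} and Theorem~\ref{thm:HomologicalStability}.

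Because $\TangentialSpace{\Manifold}$ is by hypothesis the pullback of $\TangentialSpace{\Grassmannian{2}{\Reals^{\infty}}}$ along the classifying map of the tautological bundle on $\Grassmannian{2}{\TangentBundle{\Manifold}}$, the universal property identifies a $\TangentialFibration$-structure on an embedding $\Embedding\colon\SurfaceGB{\Genus}{\BoundaryComponents}\to\Manifold$ with a $\TangentialFibration'$-structure on the abstract surface $\SurfaceGB{\Genus}{\BoundaryComponents}$ whose classifying map for the tangent bundle is the composite of $\GrassmannianDifferential{\Embedding}$ with $\Grassmannian{2}{\TangentBundle{\Manifold}}\to\Grassmannian{2}{\Reals^{\infty}}$. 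Forgetting the embedding but retaining this induced abstract $\TangentialFibration'$-structure yields a continuous, $\DiffeomorphismGroupBoundary{\SurfaceGB{\Genus}{\BoundaryComponents}}$-equivariant map whose quotient is the comparison map
\[
  F
  \colon
  \SpaceSubsurfaceBoundaryTangential
    {\TangentialFibration}
    {\Genus}
    {\BoundaryComponents}
    {\Manifold}
    {\BoundaryConditionTangential{\BoundaryCondition}}
  \longrightarrow
  \ModuliSpace
    {\TangentialFibration'}
    {\SurfaceGB{\Genus}{\BoundaryComponents}}
\]
with boundary condition on the target induced from $\BoundaryConditionTangential{\BoundaryCondition}$. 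Naturality of the pullback ensures that $F$ intertwines the subsurface-level stabilization maps $\StabilizationAlpha{\Genus}{\BoundaryComponents}$, $\StabilizationBeta{\Genus}{\BoundaryComponents}$, $\StabilizationGamma{\Genus}{\BoundaryComponents}$ with the corresponding abstract stabilization maps on the moduli space from Subsection~1.3 of \cite{RW16}.

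The key technical step is to verify that $F$ is a $\pi_{0}$-bijection. For surjectivity, given a $\TangentialFibration'$-structure on $\SurfaceGB{\Genus}{\BoundaryComponents}$ with prescribed boundary data, use the dimension and simple-connectedness hypotheses together with the main result of \cite{H61} to produce an embedding into $\Manifold$ with matching boundary condition, and then homotope the $\TangentialFibration'$-structure so that its underlying Gauss map factors through $\GrassmannianDifferential{\Embedding}$ (possible since both classify $\TangentBundle{\SurfaceGB{\Genus}{\BoundaryComponents}}$); pulling back along the pullback square produces a $\TangentialFibration$-structure realizing the given class. For injectivity, path-connectedness of $\EmbeddingSpaceBoundaryCondition{\SurfaceGB{\Genus}{\BoundaryComponents}}{\Manifold}{\BoundaryConditionPar}$ (again by \cite{H61} and transversality, using that $\Manifold$ is simply-connected of dimension at least $5$ and that $\DistBoundaryManifold{0}{\Manifold}$ is simply-connected) reduces to the case of equal underlying embeddings, after which the homotopy lifting property of $\TangentialFibration$ transports an equivalence of $\TangentialFibration'$-projections to an equivalence of $\TangentialFibration$-structures over this fixed embedding. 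The identical argument applied to the simply-connected manifold $\DistBoundaryManifold{0}{\Manifold}\times[0,1]$ (of dimension at least $5$) handles the boundary-stabilization analogue.

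With $\pi_{0}$-bijectivity of $F$ in hand, the hypothesis on the moduli spaces together with the long exact sequence of the mapping pair yields $\pi_{0}$-bijectivity of the abstract stabilization maps for $\Genus'\geq\Genus$, and the intertwining square transports this to $\pi_{0}$-bijectivity of $\StabilizationAlpha{\Genus'}{\BoundaryComponents}$, $\StabilizationBeta{\Genus'}{\BoundaryComponents}$, $\StabilizationGamma{\Genus'}{\BoundaryComponents}$ both for $\Manifold$ and for $\DistBoundaryManifold{0}{\Manifold}\times[0,1]$. Hence $\TangentialFibration$ $\pi_{0}$-stabilizes and $\pi_{0}$-stabilizes at the boundary at genus $\Genus$; Proposition~\ref{prp:ConnectedKTriviality} upgrades this to $(2\Genus+1)$-triviality, and Theorem~\ref{thm:HomologicalStability} delivers the stated homological stability bounds, from which the ``in particular'' assertion is immediate. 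The main obstacle is the rigorous verification of $\pi_{0}$-injectivity of $F$: the interplay between isotopy of embeddings and homotopy of tangential structures must be controlled carefully, in particular so that the homotopy lifting property of $\TangentialFibration$ indeed produces an equivalence in $\SpaceSubsurfaceBoundaryTangential{\TangentialFibration}{\Genus}{\BoundaryComponents}{\Manifold}{\BoundaryConditionTangential{\BoundaryCondition}}$ after the reduction to a fixed underlying embedding.
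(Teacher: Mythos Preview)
Your argument has a genuine error: the embedding space $\EmbeddingSpaceBoundaryCondition{\SurfaceGB{\Genus}{\BoundaryComponents}}{\Manifold}{\BoundaryConditionPar}$ is \emph{not} path-connected in general. Haefliger's theorem identifies its $\pi_{0}$ with that of the corresponding mapping space rel boundary, and since $(\SurfaceGB{\Genus}{\BoundaryComponents},\partial\SurfaceGB{\Genus}{\BoundaryComponents})$ has a single relative $2$-cell, obstruction theory gives
\[
  \pi_{0}\bigl(\EmbeddingSpaceBoundaryCondition{\SurfaceGB{\Genus}{\BoundaryComponents}}{\Manifold}{\BoundaryConditionPar}\bigr)
  \;\cong\;
  H^{2}\bigl(\SurfaceGB{\Genus}{\BoundaryComponents},\partial\SurfaceGB{\Genus}{\BoundaryComponents};\pi_{2}(\Manifold)\bigr)
  \;\cong\;
  \pi_{2}(\Manifold).
\]
Thus whenever $\pi_{2}(\Manifold)\neq 0$ your comparison map $F$ fails to be $\pi_{0}$-injective: two subsurfaces with the same abstract $\TangentialFibration'$-structure but representing different classes in $H_{2}(\Manifold)$ are not isotopic. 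Consequently you cannot transport $\pi_{0}$-stability of $\ModuliSpace{\TangentialFibration'}{\SurfaceGB{\Genus}{\BoundaryComponents}}$ to $\SpaceSubsurfaceBoundaryTangential{\TangentialFibration}{\Genus}{\BoundaryComponents}{\Manifold}{\BoundaryConditionTangential{\BoundaryCondition}}$ along $F$ directly.

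The paper circumvents this by never asserting that $F$ is a $\pi_{0}$-bijection. Instead it observes that $F$ sits in a pullback square over the forgetful maps to $\SpaceSubsurfaceBoundary{\Genus}{\BoundaryComponents}{\Manifold}{\BoundaryCondition}$ and $\ModuliSpace{+}{\SurfaceGB{\Genus}{\BoundaryComponents}}$, so the two vertical fibrations share a common fibre $F_{\Genus,\BoundaryComponents}$. A first Five-Lemma argument in the homology exact sequence over $\ModuliSpace{+}{\SurfaceGB{\Genus}{\BoundaryComponents}}$ shows that $H_{0}(F_{\Genus,\BoundaryComponents})$ stabilizes, using stability of both $H_{0}$ \emph{and} $H_{1}$ of the moduli spaces $\ModuliSpace{\TangentialFibration'}{-}$ and $\ModuliSpace{+}{-}$. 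A second Five-Lemma argument in the exact sequence over $\SpaceSubsurfaceBoundary{\Genus}{\BoundaryComponents}{\Manifold}{\BoundaryCondition}$ then yields $H_{0}$-stability of $\SpaceSubsurfaceBoundaryTangential{\TangentialFibration}{\Genus}{\BoundaryComponents}{\Manifold}{\BoundaryConditionTangential{\BoundaryCondition}}$, this time invoking the already-established homological stability for the untwisted case $\TangentialFibration=+$. This explains why the hypothesis is phrased in terms of vanishing of $H_{1}$ of the mapping pairs rather than mere $\pi_{0}$-stability of the abstract moduli spaces: the degree-one input is genuinely needed.
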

\begin{proof}
  By Theorem~\ref{thm:HomologicalStabilityConnectedComponents} it suffices to 
  prove that 
  $
    \SpaceSubsurfaceBoundaryTangential
      {\TangentialFibration}
      {\Genus}
      {\BoundaryComponents}
      {\Manifold}
      {\BoundaryConditionTangential{\BoundaryCondition}}
  $
  stabilizes on connected components. 
  There is the following pullback diagram, where the horizontal maps stems from 
  a fixed embedding of $M$ into $\Reals^{\infty}$,
  \[
    \begin{tikzcd}
      \SpaceSubsurfaceBoundaryTangential
        {\TangentialFibration}
        {\Genus}
        {\BoundaryComponents}
        {\Manifold}
        {\BoundaryConditionTangential{\BoundaryCondition}}
        \ar[d]
        \ar[r]
      &
      \ModuliSpace
        {\TangentialFibration'}
        {\SurfaceGB{\Genus}{\BoundaryComponents}}
        \ar[d]
      \\
      \SpaceSubsurfaceBoundary
        {\Genus}
        {\BoundaryComponents}
        {\Manifold}
        {\BoundaryCondition}
        \ar[r]
      &
      \ModuliSpace{+}{\SurfaceGB{\Genus}{\BoundaryComponents}}
    \end{tikzcd}
  \]
  Here $\ModuliSpace{+}{\SurfaceGB{\Genus}{\BoundaryComponents}}$ is the 
  classifying space of oriented surface bundles.
  Let us denote the fiber of the vertical maps of this pullback diagram by 
  $F_{\Genus,\BoundaryComponents}$.
  Let us prove stabilization of connected components for a stabilization map of 
  type $\StabilizationAlpha{\Genus}{\BoundaryComponents}$, the case of another 
  stabilization map is treated analogously.
  
  One obtains the following diagram, where the rows are exact sequences and the 
  vertical maps are steming from stabilization maps 
  $\StabilizationAlpha{\Genus}{\BoundaryComponents}$:
  \[
    \begin{lrbox}{\wideeqbox}
    $\begin{tikzcd}[cramped, column sep=small]
      \HomologyOfSpace
        {1}
        {\ModuliSpace
          {\TangentialFibration'}
          {\SurfaceGB{\Genus}{\BoundaryComponents}}
        }
        \ar[r]
        \ar[d,"\cong\text{$\Genus$ large}"]
      &
      \HomologyOfSpace
        {1}
        {\ModuliSpace{+}{\SurfaceGB{\Genus}{\BoundaryComponents}}}
        \ar[r]
        \ar[d,"\cong\text{$\Genus$ large}"]
      &
      \HomologyOfSpace{0}{F_{\Genus,\BoundaryComponents}}
        \ar[r]
        \ar[d,"\cong\text{ Five Lemma}"]
      &
      \HomologyOfSpace
        {0}
        {\ModuliSpace
          {\TangentialFibration'}
          {\SurfaceGB{\Genus}{\BoundaryComponents}}
        }
        \ar[r]
        \ar[d,"\cong\text{$\Genus$ large}"]
      &
      \HomologyOfSpace
        {0}
        {\ModuliSpace{+}{\SurfaceGB{\Genus}{\BoundaryComponents}}}
        \ar[d,"\cong\text{$\Genus$ large}"]
      \\
      \HomologyOfSpace
        {1}
        {\ModuliSpace
          {\TangentialFibration'}
          {\SurfaceGB{\Genus+1}{\BoundaryComponents-1}}
        }
        \ar[r]
      &
      \HomologyOfSpace
        {1}
        {\ModuliSpace{+}{\SurfaceGB{\Genus+1}{\BoundaryComponents-1}}}
        \ar[r]
      &
      \HomologyOfSpace{0}{F_{\Genus+1,\BoundaryComponents-1}}
        \ar[r]
      &
      \HomologyOfSpace
        {0}
        {\ModuliSpace
          {\TangentialFibration'}
          {\SurfaceGB{\Genus+1}{\BoundaryComponents-1}}
        }
        \ar[r]
      &
      \HomologyOfSpace
      {0}
        {\ModuliSpace{+}{\SurfaceGB{\Genus+1}{\BoundaryComponents-1}}}
    \end{tikzcd}$
    \end{lrbox}
    \makebox[0pt]{\usebox{\wideeqbox}}
  \]
  From this we can we conclude that if 
  $
    \HomologyOfMap
      {\StabilizationAlpha{\Genus}{\BoundaryComponents}}
    \colon
    \HomologyOfSpace
      {1}
      {\ModuliSpace
        {\TangentialFibration'}
        {\SurfaceGB{\Genus}{\BoundaryComponents}}
      }
    \to
    \HomologyOfSpace
      {1}
      {\ModuliSpace
        {\TangentialFibration'}
        {\SurfaceGB{\Genus+1}{\BoundaryComponents-1}}
      }
  $
  and
  $
    \HomologyOfMap
      {\StabilizationAlpha{\Genus}{\BoundaryComponents}}
    \colon
    \HomologyOfSpace
      {0}
      {\ModuliSpace
        {+}
        {\SurfaceGB{\Genus}{\BoundaryComponents}}
      }
    \to
    \HomologyOfSpace
      {0}
      {\ModuliSpace
        {+}
        {\SurfaceGB{\Genus+1}{\BoundaryComponents-1}}
      }
  $
  are isomorphisms, then the induced map between the fibers
  $
    \HomologyOfMap
      {\StabilizationAlpha{\Genus}{\BoundaryComponents}}
    \colon
    \HomologyOfSpace
      {0}
      {F_{\Genus,\BoundaryComponents}}
    \to
    \HomologyOfSpace
      {0}
      {F_{\Genus+1,\BoundaryComponents-1}}
  $
  is an isomorphism as well.
  
  Hence we obtain:
  \[
    \begin{tikzcd}
      \HomologyOfSpace
        {1}
        {\SpaceSubsurfaceBoundary
          {\Genus}
          {\BoundaryComponents}
          {\Manifold}
          {\BoundaryCondition}
        }
        \ar[r]
        \ar[d,"\cong\text{ if $\Genus$ is large enough}"]
      &
      \HomologyOfSpace
        {0}
        {F_{\Genus,\BoundaryComponents}}
        \ar[r]
        \ar[d,"\cong\text{ if $\Genus$ is large enough}"]
      &
      \HomologyOfSpace
        {0}
        {\SpaceSubsurfaceBoundaryTangential
            {\TangentialFibration}
            {\Genus}
            {\BoundaryComponents}
            {\Manifold}
            {\BoundaryConditionTangential{\BoundaryCondition}}
        }
        \ar[r]
        \ar[d,"\cong\text{ Five Lemma}"]
      &
      \HomologyOfSpace
        {0}
        {\SpaceSubsurfaceBoundary
          {\Genus}
          {\BoundaryComponents}
          {\Manifold}
          {\BoundaryCondition}
        }
        \ar[r]
        \ar[d,"\cong\text{ if $\Genus$ is large enough}"]
      &
      0
      \\
      \HomologyOfSpace
        {1}
        {\SpaceSubsurfaceBoundary
          {\Genus}
          {\BoundaryComponents}
          {\Manifold}
          {\BoundaryCondition}
        }
        \ar[r]
      &
      \HomologyOfSpace{0}{F_{\Genus+1,\BoundaryComponents-1}}
        \ar[r]
      &
      \HomologyOfSpace
        {0}
        {\SpaceSubsurfaceBoundaryTangential
          {\TangentialFibration'}
          {\Genus+1}
          {\BoundaryComponents-1}
          {\Manifold}
          {\BoundaryConditionTangential{\BoundaryCondition}}
        }
        \ar[r]
      &
      \HomologyOfSpace
        {0}
        {\SpaceSubsurfaceBoundary
          {\Genus}
          {\BoundaryComponents}
          {\Manifold}
          {\BoundaryCondition}
        }
        \ar[r]
      &
      0
    \end{tikzcd}
  \]
  Repeating the same argument for $\DistBoundaryManifold{0}{\Manifold}\times 
  [0,1]$, we conclude that $\TangentialFibration$ 
  $\pi_0$-stabilizes and $\pi_0$-stabilizes at the boundary. The rest of 
  the proof follows from 
  Theorem~\ref{thm:HomologicalStabilityConnectedComponents}. 
\end{proof}
  As was explained in the introduction, examples of such tangential structures 
  are given by framings of the tangent bundle of the subsurface or spin 
  structures on the subsurfaces (See \cite{RW16} for the corresponding 
  statement about the moduli spaces of abstract surfaces with these tangential 
  structures).
\section{Spaces of Pointed Subsurfaces and their Stabilization}
  \label{scn:PointedStabilization}
While stabilizing at the boundary, as we have done so far, is the most 
intuitive way to stabilize subsurfaces, it is not suitable for many kinds of 
tangential structures.
In particular it is not well suited for symplectic subsurfaces, which we will 
consider in Section~\ref{scn:SymplecticSubsurfaces}. 
In order to deal with this, we will introduce spaces of 
pointedly embedded subsurfaces and construct stabilization maps between 
them that are heuristically given by taking the connected sum with a fixed 
torus and then prove homological stability for these stabilization maps.
\subsection{Spaces of Pointedly Embedded Subsurfaces}
  Let $\Manifold$ denote a closed smooth $\DimensionIndex$-manifold, where 
  $\DimensionIndex\geq 5$ 
  and let us abbreviate $\SurfaceGB{\Genus}{0}$ by $\SurfaceG{\Genus}$. 
  Fix $\Point\in \SurfaceG{\Genus}$, 
  $\Point_{\Manifold}\in \Manifold$ and 
  an oriented $2$-plane with tangential structure 
  $
    \TwoPlane
    \in
    \apply
      {\TangentialFibration^{-1}}
      {\Grassmannian{2}{\TangentSpace{\Point_{\Manifold}}{\Manifold}}}
  $%
  , by abuse of notation we will also write $\TwoPlane$ for the underlying 
  $2$-plane. 
  We will consider the space
  \[
    \EmbeddingSpacePointedTangential
      {\TangentialFibration}
      {\TwoPlane}
      {\SurfaceG{\Genus}}
      {\Manifold}
    \coloneqq
    \left\{
      \Embedding \in 
      \EmbeddingSpaceTangential
        {\TangentialFibration}
        {\SurfaceG{\Genus}}
        {\Manifold}
    \middle| 
      \text{
        $\apply{\Embedding}{\Point}=\Point_{\Manifold}$ 
        and 
        $
          \apply
            {\TangentialStructure{\Embedding}}
            {\Point}
          =
          \TwoPlane
        $
      }
    \right\}
  \]
  equipped with the same topology as in the non-pointed case. 
  Let $\DiffeoMorphismGroupPointed{\Point}{\SurfaceG{\Genus}}$ denote the 
  subgroup of $\DiffeomorphismGroup{\SurfaceG{\Genus}}$ that 
  fixes $\Point$. 
  This group acts on 
  $
    \EmbeddingSpacePointedTangential
      {\TangentialFibration}
      {\TwoPlane}
      {\SurfaceG{\Genus}}
      {\Manifold}
  $
  freely via precomposition and we denote the quotient by 
  $
    \SpaceSubsurfacesPointedTangential
      {\TangentialFibration}
      {\TwoPlane}
      {\SurfaceG{\Genus}}
      {\Manifold}
  $%
  .
  Elements in this space will be called \introduce{pointed subsurfaces of 
  $\Manifold$}
  \begin{remark} 
    In this situation, one could replace $\Manifold$ by a manifold with 
    boundary and $\SurfaceG{\Genus}$ by 
    $\SurfaceGB{\Genus}{\BoundaryComponents}$ and include some 
    boundary condition $\BoundaryConditionTangential{\BoundaryCondition}$ 
    into this notation. 
    Since all proofs presented here, will work verbatim for the pointed case 
    with boundary, the results hold in that case as well.
  \end{remark}

As before the following lemma will play a central role.
\begin{lemma}
    \label{lem:ForgettingTangentialFibrationPointed}
    The forgetful map 
    $
    \Projection{\TangentialFibration}
    \colon 
    \SpaceSubsurfacesPointedTangential
      {\TangentialFibration}
      {\TwoPlane}
      {\SurfaceG{\Genus}}
      {\Manifold}
    \to 
    \SpaceSubsurfacesPointedTangential
      {+}
      {\TwoPlane}
      {\SurfaceG{\Genus}}
      {\Manifold}
    $
    is a Hurewicz fibration..
\end{lemma}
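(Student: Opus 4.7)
The plan is to adapt the proof of Lemma~\ref{lem:ForgettingTangentialFibration} to the pointed setting. The base space $\SpaceSubsurfacesPointedTangential{+}{\TwoPlane}{\SurfaceG{\Genus}}{\Manifold}$ is paracompact and Hausdorff, since it is a closed subspace of $\SpaceSubsurfaces{\Genus}{0}{\Manifold}$ (cut out by the closed conditions of containing $\Point_{\Manifold}$ and having tangent $2$-plane at $\Point_{\Manifold}$ equal to the one underlying $\TwoPlane$), and the latter is paracompact Hausdorff by the results of \cite{CSGA} invoked in Lemma~\ref{lem:ForgettingTangentialFibration}. Hence, by Theorem~13 in Section~2.7 of \cite{spanier}, it suffices to show that $\Projection{\TangentialFibration}$ is a locally trivial fiber bundle.

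For local triviality, I would apply Lemma~\ref{lem:LocallyRetractile} with a pointed analogue of the group $\HomeoTangential{\Manifold}{\TangentialSpace{\Manifold}}$. Let $\Group$ denote the subgroup of $\HomeoTangential{\Manifold}{\TangentialSpace{\Manifold}}$ consisting of those homeomorphisms whose underlying diffeomorphism of $\Manifold$ fixes $\Point_{\Manifold}$, whose induced diffeomorphism of $\Grassmannian{2}{\TangentBundle{\Manifold}}$ fixes the $2$-plane underlying $\TwoPlane$, and which fixes the point $\TwoPlane \in \TangentialSpace{\Manifold}$. This group acts on both pointed spaces via post-composition, and the forgetful map is $\Group$-equivariant, so the claim reduces to showing that $\SpaceSubsurfacesPointedTangential{+}{\TwoPlane}{\SurfaceG{\Genus}}{\Manifold}$ is $\Group$-locally retractile.

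To construct these refined local retractions, I would follow Proposition~\ref{prp:SpaceSubsurfacesLocallyRetractile} but take extra care at $\Point_{\Manifold}$: given a pointed subsurface $\Subsurface$, first choose a tubular neighborhood of $\Subsurface$ whose normal fiber at $\Point_{\Manifold}$ is a fixed complement of the $2$-plane underlying $\TwoPlane$ in $\TangentSpace{\Point_{\Manifold}}{\Manifold}$. For any sufficiently close pointed $\Subsurface'$, the fiberwise projection restricted to $\Subsurface'$ is a diffeomorphism $\Subsurface' \to \Subsurface$ fixing $\Point_{\Manifold}$ whose differential there preserves the underlying $2$-plane, because $\Subsurface$ and $\Subsurface'$ share the same tangent plane at $\Point_{\Manifold}$ by the pointing condition. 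A relative version of the isotopy extension theorem then yields an ambient isotopy whose time-$1$ diffeomorphism fixes $\Point_{\Manifold}$ and whose differential at $\Point_{\Manifold}$ fixes the underlying $2$-plane. Lifting this to $\TangentialSpace{\Manifold}$ follows the non-pointed argument via the contractibility of the chosen neighborhood of $\Subsurface$, with the additional step of normalising the trivialisation of the pulled-back bundle at the point corresponding to $(\TwoPlane, \Subsurface)$ so that the lift fixes $\TwoPlane$.

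The main obstacle is this last normalisation: one must simultaneously arrange that the ambient diffeomorphism fixes $\Point_{\Manifold}$, that its derivative fixes the underlying $2$-plane, and that the lift to $\TangentialSpace{\Manifold}$ fixes the point $\TwoPlane$ itself. The first two conditions follow from the choice of tubular neighborhood and a relative isotopy extension; the third exploits the fact that any two bundle isomorphisms over a contractible base differ by a map into the gauge group of the restricted bundle, so prescribing the value at the single point $(\TwoPlane,\Subsurface)$ pins down the trivialisation and produces a lift with values in $\Group$. The remainder of the argument, including identification of the fiber as a space of sections of $\TangentialSpace{\Subsurface}$ satisfying the prescribed value at $\Point_{\Manifold}$, is a direct translation of the proof of Lemma~\ref{lem:ForgettingTangentialFibration}.
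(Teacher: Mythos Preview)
Your approach is essentially the same as the paper's: reduce to local triviality via Lemma~\ref{lem:LocallyRetractile} by exhibiting the base as locally retractile for a pointed version of $\HomeoTangential{\Manifold}{\TangentialSpace{\Manifold}}$, following the template of Lemma~\ref{lem:ForgettingTangentialFibration}. The paper's proof is literally one sentence pointing to that lemma with groups replaced by pointed analogues.

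There is one worthwhile difference in the choice of group. The paper takes $\DiffeoMorphismGroupPointed{\Point_{\Manifold}}{\Manifold}$ to be the diffeomorphisms fixing $\TangentSpace{\Point_{\Manifold}}{\Manifold}$ \emph{pointwise}, i.e.\ whose $1$-jet at $\Point_{\Manifold}$ is the identity. With this stronger condition the Grassmannian differential is the identity on all of $\Grassmannian{2}{\TangentSpace{\Point_{\Manifold}}{\Manifold}}$, so $\GrassmannianDifferential{\Evaluation}$ and $\Projection{\Neighbourhood{\Subsurface}}$ agree on $\Grassmannian{2}{\TangentSpace{\Point_{\Manifold}}{\Manifold}}\times\Neighbourhood{\Subsurface}$, the homotopy between them is stationary there, and the bundle isomorphism can be chosen to be the identity on that whole subspace. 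The lift then fixes $\TwoPlane$ automatically, with no further normalisation. Your group only fixes the single $2$-plane underlying $\TwoPlane$, which still works but forces the extra step you describe. Your phrasing of that step is slightly off: prescribing the isomorphism at the single point $(\TwoPlane,\Subsurface)$ does not ``pin it down''; what you actually need (and what the stationary-homotopy argument gives) is an isomorphism equal to the identity along the entire slice $\{\TwoPlane\}\times\Neighbourhood{\Subsurface}$. Either choice of group works, but the paper's is cleaner precisely because it sidesteps this issue.
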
 
The proof is verbatim the same as the proof of 
Lemma~\ref{lem:ForgettingTangentialFibration}. One only has to replace the 
occuring homeomorphism and diffeomorphism groups by their pointed analogues and 
note that     
$
  \SpaceSubsurfacesPointedTangential
    {+}
    {\TwoPlane}
    {\SurfaceG{\Genus}}
    {\Manifold}
$
is $\DiffeoMorphismGroupPointed{\Point_{\Manifold}}{\Manifold}$-locally 
retractile, where this is the group of diffeomorphisms fixing 
$\TangentSpace{\Point_{\Manifold}}{\Manifold}$ pointwise.
\subsection{Stabilization Maps for Pointedly Embedded Subsurfaces}
  The goal of this subsection is to construct stabilization maps
  \[
    \PointedStabilizationMap{\Genus}
    \colon
    \SpaceSubsurfacesPointedTangential
      {\TangentialFibration}
      {\TwoPlane}
      {\SurfaceG{\Genus}}
      {\Manifold}
    \to
    \SpaceSubsurfacesPointedTangential
      {\TangentialFibration}
      {\TwoPlane}
      {\SurfaceG{\Genus+1}}
      {\Manifold}
  \]%
  As the first and most complicated step, we will show that there are homotopy 
  equivalent spaces of subsurfaces where the subsurfaces meet a neighbourhood 
  of $\Point_{\Manifold}$ in a prescribed way. From there it will be quite easy 
  to define the stabilization maps.
  
  Let $\CoordinateChart$ denote a coordinate chart centered around 
  $\Point_{\Manifold}$ diffeomorphic to 
  $\Reals^{\DimensionIndex}$ such that the fixed tangent plane $\TwoPlane$ 
  corresponds to $\Reals^{2}$ in $\Reals^{\DimensionIndex}$, where this 
  inclusion is given by specifying the last $\DimensionIndex-2$ coordinates to 
  be zero.
  Fix once and for all $0<\FixedRadius \in \Reals$ and we write 
  $
    \DiskRadius{\FixedRadius}
    \coloneqq
    \Reals^{2}
    \cap 
    \BallRadius{\FixedRadius}{0}
  $%
  .
  \begin{definition}
  $
    \SpaceSubsurfacesPointedFlatTangential
      {\TangentialFibration}
      {\TwoPlane}
      {\SurfaceG{\Genus}}
      {\Manifold}
  $
  is the subspace of 
  $
    \SpaceSubsurfacesPointedTangential
      {\TangentialFibration}
      {\TwoPlane}
      {\SurfaceG{\Genus}}
      {\Manifold}
  $
  consisting of those pointed subsurfaces such that their intersection with the 
  open ball of a radius $\FixedRadius$ in $\CoordinateChart$ equals 
  $\DiskRadius{\FixedRadius}$, and such that the 
  tangential structure is constantly $\TwoPlane$ on 
  $\BallRadius{\frac{\FixedRadius}{2}}{0}$ with respect 
  to some fixed trivialization of 
  $\TangentialSpace{\DiskRadius{\FixedRadius}}$.
\end{definition}

  We will need the following two results to define pointed stabilization 
  maps. The first proposition will deal with the case without tangential 
  structure and its proof will occupy most of this section, the subsequent 
  corollary will extend this to the case with tangential structures.
  \begin{proposition}
  \label{prp:FlatHomotopyEquivalence}
  There is a coordinate chart centered at $\Point_{\Manifold}$ such that  the 
  inclusion 
  \[
    \SpaceSubsurfacesPointedFlatTangential
      {+}
      {\TwoPlane}
      {\SurfaceG{\Genus}}
      {\Manifold}
    \to
    \SpaceSubsurfacesPointedTangential
      {+}
      {\TwoPlane}
      {\SurfaceG{\Genus}}
      {\Manifold}
  \]
  is a homotopy equivalence with homotopy inverse denoted by 
  $\HomotopyInverse{\text{flat}}$.
\end{proposition}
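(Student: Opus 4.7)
The plan is to construct an explicit deformation retraction of $\SpaceSubsurfacesPointedTangential{+}{\TwoPlane}{\SurfaceG{\Genus}}{\Manifold}$ onto the subspace of flat pointed subsurfaces, and then set $\HomotopyInverse{\text{flat}}$ to be the retraction obtained at time one. First I would choose the coordinate chart $\CoordinateChart$ centered at $\Point_{\Manifold}$, identifying a neighborhood of $\Point_{\Manifold}$ with $\Reals^{\DimensionIndex}$ in such a way that the prescribed oriented $2$-plane $\TwoPlane$ corresponds to $\Reals^{2}\times\{0\}\subset\Reals^{\DimensionIndex}$. Given a pointed subsurface $\Subsurface$, the tangent space condition $\TangentSpace{\Point_{\Manifold}}{\Subsurface}=\TwoPlane$ together with the implicit function theorem guarantees that on a sufficiently small ball around the origin, $\Subsurface$ is the image of a graph parametrization $x\mapsto(x,f_{\Subsurface}(x))$ with $f_{\Subsurface}\colon\DiskRadius{r}\to\Reals^{\DimensionIndex-2}$ smooth, satisfying $\apply{f_{\Subsurface}}{0}=0$ and $\apply{\Differential{f_{\Subsurface}}}{0}=0$.

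The retraction is then defined pointwise on the graph: fix a smooth family of cutoffs $\apply{\rho_{t}}{s}$ with $\rho_{0}\equiv 1$, and $\apply{\rho_{1}}{s}=0$ for $s\leq\FixedRadius$ while $\apply{\rho_{1}}{s}=1$ for $s\geq 2\FixedRadius$, interpolating linearly in between. Replace $f_{\Subsurface}$ by $\apply{\rho_{t}}{\EuclideanNorm{x}}\cdot\apply{f_{\Subsurface}}{x}$ on the local chart, and leave $\Subsurface$ unchanged outside the ball of radius $2\FixedRadius$. For each $t$ this produces a smooth pointed subsurface passing through $\Point_{\Manifold}$ with the same tangent plane $\TwoPlane$, and at $t=1$ the subsurface agrees with $\Reals^{2}\cap\BallRadius{\FixedRadius}{0}$ on the inner ball, hence is flat. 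Taking $t=1$ defines the candidate homotopy inverse $\HomotopyInverse{\text{flat}}$, and the $t$-family is a homotopy from the identity to $\HomotopyInverse{\text{flat}}$ within $\SpaceSubsurfacesPointedTangential{+}{\TwoPlane}{\SurfaceG{\Genus}}{\Manifold}$; since flat subsurfaces are fixed throughout this homotopy (the cutoff only modifies a graph that is already identically zero), this realises a strong deformation retraction.

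The main obstacle is making the construction continuous in the $C^{\infty}$-topology uniformly over the whole space of pointed subsurfaces: a priori, the radius on which $\Subsurface$ admits the graph parametrization depends on $\Subsurface$, and one must ensure that a single choice of $\FixedRadius$ works continuously on open neighbourhoods in $\SpaceSubsurfacesPointedTangential{+}{\TwoPlane}{\SurfaceG{\Genus}}{\Manifold}$. I would handle this by combining two ingredients: first, shrinking the coordinate chart so that the prescribed tangent condition forces a uniform lower bound on the graph radius in a neighbourhood of any given subsurface, and second, invoking the $\DiffeoMorphismGroupPointed{\Point_{\Manifold}}{\Manifold}$-local retractility of the space of pointed subsurfaces (the analogue of Proposition~\ref{prp:SpaceSubsurfacesLocallyRetractile} in the pointed setting) to patch the pointwise graph construction into a continuous assignment $\Subsurface\mapsto f_{\Subsurface}$ on a neighbourhood of each point. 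The delicate continuity issue for extracting the graph function smoothly from a family of embeddings with fixed basepoint and tangent plane is precisely the content acknowledged in the acknowledgements as Michor's MathOverflow answer, which I would cite for this step.

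Finally, verifying the homotopy inverse property reduces to checking that the homotopy preserves the flat subspace, which is immediate since on a flat subsurface $f_{\Subsurface}$ already vanishes on $\BallRadius{\FixedRadius}{0}$, so multiplying by $\rho_{t}$ does not change anything. Combined with the homotopy from the identity to the inclusion of $\HomotopyInverse{\text{flat}}$, this shows that the inclusion of the flat subspace is a strong deformation retract, in particular a homotopy equivalence.
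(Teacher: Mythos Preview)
Your single-step graph-flattening idea has the right geometric intuition, but there are two genuine gaps that the paper's proof addresses explicitly and that your outline does not overcome.

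\textbf{No single $\FixedRadius$ works.} The radius on which $\Subsurface$ is a graph over $\Reals^{2}$ has no positive lower bound as $\Subsurface$ varies over the whole space, so your proposed cutoff supported on $\BallRadius{2\FixedRadius}{0}$ simply fails to be defined on surfaces whose graph region is smaller than $2\FixedRadius$. Local retractility only gives you neighbourhoods on which a fixed radius works; it does not patch into a global construction with a single $\FixedRadius$. The paper resolves this by choosing the chart so that it sits inside a larger chart via the componentwise $\arctan$ map, which allows compactly supported ``zoom'' diffeomorphisms $\apply{\AuxiliaryFunctionDiffeomorphism{1}}{t}$ of $\Manifold$ that rescale the inner chart by an arbitrary factor. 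It then proves that the pointed subsurface space is metrizable (this is the actual content of the Michor reference, not the graph extraction you cite it for), and uses paracompactness to build a \emph{continuous} function $\AuxiliaryFunction_{\text{radius}}\colon\SpaceSubsurfacesPointedTangential{+}{\TwoPlane}{\SurfaceG{\Genus}}{\Manifold}\to(0,\FixedRadius]$ bounded above by the maximal graph radius of each surface. Pushing each surface by the zoom diffeomorphism determined by this function lands everything in the subspace where a fixed $\FixedRadius$ works.

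\textbf{Flattening the graph does not make the surface flat.} Even after your cutoff forces the local graph to equal $\DiskRadius{\FixedRadius}$, other parts of $\Subsurface$ far from $\Point_{\Manifold}$ along the surface can still wander back into $\BallRadius{\FixedRadius}{0}$; the flat space requires $\Subsurface\cap\BallRadius{\FixedRadius}{0}=\DiskRadius{\FixedRadius}$ exactly. The paper handles this by factoring through an intermediate ``almost flat'' space (surfaces that \emph{contain} $\DiskRadius{\FixedRadius}$ but may intersect the ball elsewhere), and then in a separate step builds a continuous function $\AuxiliaryFunction(\Subsurface)=\inf_{\Point'\in\Subsurface\setminus\DiskRadius{\FixedRadius}}\EuclideanNorm{\Point'}$ and uses it to push all extraneous pieces radially out of $\BallRadius{\FixedRadius}{0}$. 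Proving continuity of this infimum is itself nontrivial and occupies a lemma. Your outline does not address this phenomenon at all. (A smaller point: your homotopy does not actually fix flat surfaces, since on the annulus $\FixedRadius\leq\lvert x\rvert\leq 2\FixedRadius$ the cutoff $\rho_{t}$ modifies a possibly nonzero $f_{\Subsurface}$.)
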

\begin{remark}
  It is far easier to prove that this inclusion is a weak 
  homotopy equivalence, but we will need a homotopy inverse to the 
  inclusion (i.e. maps that flatten the subsurfaces) to construct the 
  stabilization maps.
\end{remark}

    \begin{corollary}
  \label{crl:FlatHomotopyEquivalenceTangential}
    The inclusion 
    $
      \SpaceSubsurfacesPointedFlatTangential
        {\TangentialFibration}
        {\TwoPlane}
        {\SurfaceG{\Genus}}
        {\Manifold}
      \to
      \SpaceSubsurfacesPointedTangential
        {\TangentialFibration}
        {\TwoPlane}
        {\SurfaceG{\Genus}}
        {\Manifold}
    $ 
    is a homotopy equivalence with homotopy inverse denoted by 
    $\HomotopyInverse{\text{flat}}$ as well.
  \end{corollary}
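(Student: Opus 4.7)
The plan is to deduce the corollary from Proposition~\ref{prp:FlatHomotopyEquivalence} by exploiting the Hurewicz fibration of Lemma~\ref{lem:ForgettingTangentialFibrationPointed}. First I would form the pullback
\[
  P
  \coloneqq
  \SpaceSubsurfacesPointedFlatTangential
    {+}
    {\TwoPlane}
    {\SurfaceG{\Genus}}
    {\Manifold}
  \times_{
    \SpaceSubsurfacesPointedTangential
      {+}
      {\TwoPlane}
      {\SurfaceG{\Genus}}
      {\Manifold}
  }
  \SpaceSubsurfacesPointedTangential
    {\TangentialFibration}
    {\TwoPlane}
    {\SurfaceG{\Genus}}
    {\Manifold},
\]
so that a point of $P$ is a flat pointed subsurface equipped with an \emph{arbitrary} $\TangentialFibration$-structure (subject only to the basepoint condition $\TangentialStructure{\Embedding}(\Point) = \TwoPlane$). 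The projection $P \to \SpaceSubsurfacesPointedTangential{\TangentialFibration}{\TwoPlane}{\SurfaceG{\Genus}}{\Manifold}$ is the pullback of the homotopy equivalence of Proposition~\ref{prp:FlatHomotopyEquivalence} along the Hurewicz fibration of Lemma~\ref{lem:ForgettingTangentialFibrationPointed}, hence it is itself a homotopy equivalence. Denote a homotopy inverse by $q$.

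It then remains to show that the inclusion
\[
  j
  \colon
  \SpaceSubsurfacesPointedFlatTangential
    {\TangentialFibration}
    {\TwoPlane}
    {\SurfaceG{\Genus}}
    {\Manifold}
  \hookrightarrow
  P
\]
is a homotopy equivalence. The only difference between these two spaces is the behaviour of the $\TangentialFibration$-structure on the distinguished flat disk: using the fixed trivialization of $\TangentialSpace{\DiskRadius{\FixedRadius}}$, a point of $P$ restricts over $\DiskRadius{\FixedRadius}$ to any continuous map $s \colon \DiskRadius{\FixedRadius} \to \Fiber{\TwoPlane}{\TangentialFibration}$ satisfying $s(0) = \TwoPlane$, whereas the flat space requires $s$ to be constantly $\TwoPlane$ on $\DiskRadius{\FixedRadius/2}$. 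To produce a deformation retract, fix once and for all a smooth isotopy $\rho_{t} \colon \DiskRadius{\FixedRadius} \to \DiskRadius{\FixedRadius}$, $t\in[0,1]$, with $\rho_{0}= \Identity$, with $\rho_{t}$ equal to the identity in a neighborhood of $\partial \DiskRadius{\FixedRadius}$ for every $t$, and with $\rho_{1}$ collapsing $\DiskRadius{\FixedRadius/2}$ to the origin.

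For a pair $(\Subsurface, \TangentialStructure{\Embedding}) \in P$ with restriction $s$ as above, let $\TangentialStructure{\Embedding}_{t}$ agree with $\TangentialStructure{\Embedding}$ outside $\DiskRadius{\FixedRadius}$ and with $s\circ \rho_{t}$ on $\DiskRadius{\FixedRadius}$; continuity along the overlap is automatic because $\rho_{t}$ is the identity near $\partial \DiskRadius{\FixedRadius}$. The assignment
\[
  \Homotopy_{t}
  \colon
  P \to P,
  \qquad
  (\Subsurface, \TangentialStructure{\Embedding})
  \mapsto
  (\Subsurface, \TangentialStructure{\Embedding}_{t})
\]
is a homotopy from $\Identity_{P}$ to a retraction $r \coloneqq \Homotopy_{1}$ onto $j(\SpaceSubsurfacesPointedFlatTangential{\TangentialFibration}{\TwoPlane}{\SurfaceG{\Genus}}{\Manifold})$, because $s(0)=\TwoPlane$ implies $s\circ\rho_{1}\equiv \TwoPlane$ on $\DiskRadius{\FixedRadius/2}$. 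Restricting $\Homotopy_{t}$ to the flat subspace also gives $r\circ j \simeq \Identity$, so $j$ is a homotopy equivalence.

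Defining $\HomotopyInverse{\text{flat}} \coloneqq r \circ q$ then yields the required homotopy inverse. The main obstacle in executing this plan is purely technical: one must choose $\rho_{t}$ carefully so that the resulting concatenated section $\TangentialStructure{\Embedding}_{t}$ remains $C^{\infty}$ (or continuous, depending on the chosen function space) and depends continuously on $(\Subsurface,\TangentialStructure{\Embedding})$ in the adopted topology, but this is standard once a cutoff function with the correct smoothness is fixed.
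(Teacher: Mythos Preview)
Your argument is essentially the paper's own proof: form the pullback along the Hurewicz fibration of Lemma~\ref{lem:ForgettingTangentialFibrationPointed} (the paper denotes your $P$ by $\SpaceSubsurfacesPointedFlatTangential{\TangentialFibration}{\TwoPlane}{\SurfaceG{\Genus}}{\Manifold}^{\ast}$), invoke that pullbacks of homotopy equivalences along Hurewicz fibrations are homotopy equivalences, and then straighten the tangential structure over the inner disk using the fixed trivialization. One small slip: your $\rho_1$ collapses $\DiskRadius{\FixedRadius/2}$ to a point and so is not a diffeomorphism, hence call $\rho_t$ a homotopy rather than an isotopy, and choose it so that $\rho_t(\DiskRadius{\FixedRadius/2})\subset\DiskRadius{\FixedRadius/2}$ for every $t$ (e.g.\ radial shrinking) so that $\Homotopy_t$ genuinely preserves the flat subspace and your claim $r\circ j\simeq\Identity$ goes through.
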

  \begin{proof}
    We define 
    $
      \SpaceSubsurfacesPointedFlatTangential
        {\TangentialFibration}
        {\TwoPlane}
        {\SurfaceG{\Genus}}
        {\Manifold}
      ^{\ast}
    $
    as 
    $
      \apply
        {\Projection{\TangentialFibration}^{-1}}
        {
          \SpaceSubsurfacesPointedFlatTangential
            {+}
            {\TwoPlane}
            {\SurfaceG{\Genus}}
            {\Manifold}
        }
    $%
    , where 
    \[
      \Projection{\TangentialFibration}
      \colon
      \SpaceSubsurfacesPointedTangential
        {\TangentialFibration}
        {\TwoPlane}
        {\SurfaceG{\Genus}}
        {\Manifold}
      \to
      \SpaceSubsurfacesPointedTangential
        {+}
        {\TwoPlane}
        {\SurfaceG{\Genus}}
        {\Manifold}
    \]
    denotes the projection. 
    Note that the following diagram is a pullback diagram:
    \[
      \begin{tikzcd}
        \SpaceSubsurfacesPointedFlatTangential
          {\TangentialFibration}
          {\TwoPlane}
          {\SurfaceG{\Genus}}
          {\Manifold}
        ^{\ast}
          \ar[r,hook]
          \ar[d]
        &
        \SpaceSubsurfacesPointedTangential
          {\TangentialFibration}
          {\TwoPlane}
          {\SurfaceG{\Genus}}
          {\Manifold}
          \ar[d]
        \\
        \SpaceSubsurfacesPointedFlatTangential
          {+}
          {\TwoPlane}
          {\SurfaceG{\Genus}}
          {\Manifold}
          \ar[r,hook]
        &
        \SpaceSubsurfacesPointedTangential
          {+}
          {\TwoPlane}
          {\SurfaceG{\Genus}}
          {\Manifold}
      \end{tikzcd}
    \]
    It is a standard fact that a pullback of a homotopy equivalence along a 
    Hurewicz-fibration produces an induced map, which is also a homotopy 
    equivalence (see for example Proposition~2.3 in \cite{HomPull}). 
    This fact together with 
    Lemma~\ref{lem:ForgettingTangentialFibrationPointed} 
    (which implies that the right arrow is a fibration) implies that the top 
    map is a homotopy equivalence.
    Furthermore the homotopy inverse is a map over the homotopy inverse of 
    the base map. 
    
    So all that is left to do is trivializing the tangential structure on the 
    disk but this is easily done by using a fixed trivialization of 
    $
      \TangentialSpace{\DiskRadius{\FixedRadius}}
      \cong
      \DiskRadius{\FixedRadius}
      \times
      \apply
        {\TangentialFibration^{-1}}
        {\Grassmannian{2}{\TangentSpace{0}{\DiskRadius{\FixedRadius}}}}
    $ 
    and just continuously extend the tangential structure of $\TwoPlane$ to 
    $\DiskRadius{\frac{\FixedRadius}{2}}$. 
  \end{proof}
  \paragraph{Proof of Proposition~\ref{prp:FlatHomotopyEquivalence}}
  In order to proof Proposition~\ref{prp:FlatHomotopyEquivalence}, we will 
  define two subspaces of 
  $
    \SpaceSubsurfacesPointedTangential
      {+}
      {\TwoPlane}
      {\SurfaceG{\Genus}}
      {\Manifold}
  $%
  , which both contain 
  $
    \SpaceSubsurfacesPointedFlatTangential
      {+}
      {\TwoPlane}
      {\SurfaceG{\Genus}}
      {\Manifold}
  $%
  , and fit in the following diagram 
  \[
    \begin{tikzcd}
      \SpaceSubsurfacesPointedTangential
        {+}
        {\TwoPlane}
        {\SurfaceG{\Genus}}
        {\Manifold}
        \ar[r,"\HomotopyInverse{1}",bend left=]
      &
      \SpaceSubsurfacesPointedRadiusTangential
        {+}
        {\TwoPlane}
        {\FixedRadius}
        {\SurfaceG{\Genus}}
        {\Manifold}
        \ar[l,hook']
        \ar[r,"\HomotopyInverse{2}",bend left=43]
      &
      \SpaceAlmostFlatSubsurfaces
        {+}
        {\TwoPlane}
        {\SurfaceG{\Genus}}
        {\Manifold}
        \ar[r,"\HomotopyInverse{3}",bend left=40]
        \ar[l,hook']
      &
      \SpaceSubsurfacesPointedFlatTangential
        {+}
        {\TwoPlane}
        {\SurfaceG{\Genus}}
        {\Manifold}
        \ar[l,hook']
    \end{tikzcd}
  \]
  The hooked arrows are inclusions and the $\HomotopyInverse{i}$ represent 
  their respective homotopy inverses. 
  We will denote 
  $\HomotopyInverse{3}\circ\HomotopyInverse{2}\circ\HomotopyInverse{1}$ by 
  $\HomotopyInverse{\text{flat}}$
  Let us define these spaces:
  \begin{definition}
  Let 
  $
    \Projection{2}
    \colon
    \Reals^{\DimensionIndex}
    \to 
    \Reals^{2}
  $
  denote the projection onto the first two coordinates i.e. the 
  map that sends 
  $
    (
      \Coordinate_{1},\ldots, \Coordinate_{\DimensionIndex}
    )
  $ 
  to 
  $
    (
      \Coordinate_{1},\Coordinate_{2}
    )
  $%
  . For $\Radius\in (0,\infty)$, we define 
  \[
    \EmbeddingSpacePointedRadiusTangenital
      {+}
      {\TwoPlane}
      {\Radius}
      {\SurfaceG{\Genus}}
      {\Manifold}
    \coloneqq 
    \left\{
      \Embedding
      \in
      \EmbeddingSpacePointedTangential
        {\TangentialFibration}
        {\TwoPlane}
        {\SurfaceG{\Genus}}
        {\Manifold}
    \middle|
      \begin{aligned}
        &
          \exists 
          \Neighbourhood{\Point} 
          \text{ a closed neighborhood of $\Point$ s.th.}
        \\
        &
          \at{\Projection{2}\circ \Embedding}{\Neighbourhood{\Point}} 
          \text{ is a diffeomorphism}
        \\
        & 
          \text{with image }\overline{\BallRadius{\Radius}{0}}\subset \Reals^{2}
      \end{aligned}
    \right\}
  \]
  The action of $\DiffeoMorphismGroupPointed{\Point}{\SurfaceG{\Genus}}$ via 
  precomposition on 
  $
    \EmbeddingSpacePointedTangential
      {\TangentialFibration}
      {\TwoPlane}
      {\SurfaceG{\Genus}}
      {\Manifold}
  $ 
  restricts to a free action on the subspace
  $
    \EmbeddingSpacePointedRadiusTangenital
      {+}
      {\TwoPlane}
      {\Radius}
      {\SurfaceG{\Genus}}
      {\Manifold}
  $
  and we denote the quotient by 
  $
    \SpaceSubsurfacesPointedRadiusTangential
      {+}
      {\TwoPlane}
      {\Radius}
      {\SurfaceG{\Genus}}
      {\Manifold}
  $%
  . 
  For a 
  $
    \Subsurface
    \in 
    \SpaceSubsurfacesPointedRadiusTangential
      {+}
      {\TwoPlane}
      {\Radius}
      {\SurfaceG{\Genus}}
      {\Manifold}
  $
  we will write $\SubsurfaceDisk{\Subsurface}{\Radius}{\Point}$ for the image 
  of $\Neighbourhood{\Point}$ in $\Manifold$.
  
  Let 
  $
    \SpaceAlmostFlatSubsurfaces
      {+}
      {\TwoPlane}
      {\SurfaceG{\Genus}}
      {\Manifold}
  $
  denote the subspace of 
  $
    \SpaceSubsurfacesPointedTangential
      {+}
      {\TwoPlane}
      {\SurfaceG{\Genus}}
      {\Manifold}
  $
  of such subsurfaces $\Subsurface$ such that 
  $
    \DiskRadius{\FixedRadius}
    \subset 
    \Subsurface
    \cap 
    \BallRadius{\FixedRadius}{0}
  $
\end{definition}
\begin{remark}
  Note that the subtle difference between 
  $
    \SpaceAlmostFlatSubsurfaces
      {+}
      {\TwoPlane}
      {\SurfaceG{\Genus}}
      {\Manifold}
  $
  and 
  $
    \SpaceSubsurfacesPointedFlatTangential
      {+}
      {\TwoPlane}
      {\SurfaceG{\Genus}}
      {\Manifold}
  $
  is that subsurfaces in the latter intersect $\BallRadius{\FixedRadius}{0}$ 
  exactly in $\DiskRadius{\FixedRadius}$, while surfaces in the former only 
  have to contain $\DiskRadius{\FixedRadius}$, but there can 
  be more parts of these surfaces that intersect 
  $\BallRadius{\FixedRadius}{0}$.
\end{remark}

  After the following three auxillary lemmas we will prove that all the 
  aforementioned inclusions are indeed homotopy 
  equivalences by constructing their inverse and the necessary homotopies. 
  This will finish the proof of Proposition \ref{prp:FlatHomotopyEquivalence}.
  \begin{lemma}
\label{lem:HomotopyInverseInclusion}
  Let $\iota \colon \TopologicalSpace'\to \TopologicalSpace''$ denote an 
  inclusion and let 
  $
    \HomotopyInverse{\iota}
    \colon 
    \TopologicalSpace''
    \to 
    \TopologicalSpace'
    \subset
    \TopologicalSpace''
  $ 
  be a map such that there exists a homotopy 
  $
    \Homotopy
    \colon
    \TopologicalSpace''
    \times 
    [0,1]
    \to 
    \TopologicalSpace''
  $ 
  such that 
  $
    \apply{\Homotopy}{-,0}
    =
    \Identity_{\TopologicalSpace''}
  $
  and 
  $
    \apply{\Homotopy}{-,1}
    =
    \HomotopyInverse{\iota}
  $
  and
  $
    \apply{\Homotopy}{\TopologicalSpace'\times[0,1]}
    \subset 
    \TopologicalSpace'
  $%
  , then $\iota$ is a homotopy equivalence with homotopy inverse 
  $\HomotopyInverse{\iota}$.
\end{lemma}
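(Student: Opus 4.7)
The plan is to verify directly the two homotopies required by the definition of homotopy equivalence, using $H$ and its restriction to $X' \times [0,1]$ as the two homotopies. Since $\Psi_\iota$ takes values in $X'$ by hypothesis, no additional construction is needed.

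For the composition $\iota \circ \Psi_\iota \colon X'' \to X''$, I would simply note that $H$ itself is, tautologically, a homotopy from $H(-,0) = \mathrm{Id}_{X''}$ to $H(-,1) = \Psi_\iota$ viewed as a map to $X''$ via $\iota$. So $H$ directly witnesses $\iota \circ \Psi_\iota \simeq \mathrm{Id}_{X''}$.

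For the composition $\Psi_\iota \circ \iota \colon X' \to X'$, I would restrict $H$ to $X' \times [0,1]$. By the hypothesis $H(X' \times [0,1]) \subset X'$, this restriction is a well-defined continuous map $X' \times [0,1] \to X'$. At time $0$ it is $\mathrm{Id}_{X'}$, and at time $1$ it is $\Psi_\iota|_{X'} = \Psi_\iota \circ \iota$. Hence this restriction is a homotopy witnessing $\Psi_\iota \circ \iota \simeq \mathrm{Id}_{X'}$.

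There is no real obstacle here: the only subtlety is the continuity of the restricted homotopy, which is immediate from the fact that $X' \subset X''$ carries the subspace topology and $H$ is continuous on all of $X'' \times [0,1]$. Combining the two homotopies yields the claim.
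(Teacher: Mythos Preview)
Your proof is correct and takes essentially the same approach as the paper: use $H$ itself for $\iota \circ \Psi_\iota \simeq \mathrm{Id}_{X''}$, and use the restriction of $H$ to $X' \times [0,1]$ (which lands in $X'$ by hypothesis) for $\Psi_\iota \circ \iota \simeq \mathrm{Id}_{X'}$.
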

\begin{proof}
  $\Homotopy$ is already a homotopy between 
  $
    \iota 
    \circ 
    \HomotopyInverse{\iota}
  $
  and the identity of $\TopologicalSpace''$, 
  therefore we only need to construct a homotopy between 
  $\HomotopyInverse{\iota}\circ \iota$ and the identity on 
  $\TopologicalSpace'$. 
  But 
  $
    \at
      {\Identity_{\TopologicalSpace''}}
      {\TopologicalSpace'}
    =
    \Identity_{\TopologicalSpace'}
  $%
  , therefore the necessary homotopy is again given by $\Homotopy$. 
\end{proof}

  This lemma will be used to prove that the $\HomotopyInverse{i}$ are homotopy 
  inverse to the aforementioned inclusions. 
  The next rather technical lemma will be used towards the end of the proof of 
  Proposition~\ref{prp:FlatHomotopyEquivalence} to show that a  certain map is 
  continuous.
  \begin{lemma}
\label{lem:InfimumContinuous}
  Suppose we have a diagram of the following form:
  \[
    \begin{tikzcd}
      \TopologicalSpace_{1}
      \times 
      \TopologicalSpace_{2} 
        \ar[r,"\ContinuousMap"]
        \ar[d,"\Projection{1}"] 
      &
      \Reals
      \cup 
      \{\infty\}
      \\
      \TopologicalSpace_{1}
        \ar[r,"\ContinuousMap_{\mathrm{inf}}"]
      &
      \Reals
      \cup
      \{\infty\}
    \end{tikzcd}
  \]
  where $\TopologicalSpace_{2}$ is compact and 
  $
    \apply{\ContinuousMap_{\mathrm{inf}}}{\Point_{1}}
    =
    \inf_{\Point_{2} \in \TopologicalSpace_{2}} 
    \apply{\ContinuousMap}{\Point_{1},\Point_{2}}
  $%
  . Then $\ContinuousMap_{\mathrm{inf}}$ is continuous.
\end{lemma}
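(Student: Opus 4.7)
The plan is to prove continuity of $\ContinuousMap_{\mathrm{inf}}$ by showing separately that it is upper and lower semicontinuous, i.e., that the preimages of $(-\infty,c)$ and $(c,\infty]$ under $\ContinuousMap_{\mathrm{inf}}$ are open in $\TopologicalSpace_{1}$ for every $c\in\Reals$. Both halves reduce cleanly to well-known properties of the projection $\Projection{1}\colon \TopologicalSpace_{1}\times\TopologicalSpace_{2}\to\TopologicalSpace_{1}$, and this is where compactness of $\TopologicalSpace_{2}$ enters decisively.

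For upper semicontinuity, I would observe that
\[
  \apply{\ContinuousMap_{\mathrm{inf}}^{-1}}{(-\infty,c)}
  =
  \Projection{1}\!\left(\apply{\ContinuousMap^{-1}}{(-\infty,c)}\right),
\]
since $\apply{\ContinuousMap_{\mathrm{inf}}}{\Point_{1}}<c$ if and only if there exists some $\Point_{2}$ with $\apply{\ContinuousMap}{\Point_{1},\Point_{2}}<c$. Since $\ContinuousMap$ is continuous, $\apply{\ContinuousMap^{-1}}{(-\infty,c)}$ is open, and $\Projection{1}$ is an open map (projections from products are always open), so the image is open.

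For lower semicontinuity, I would use the analogous identity
\[
  \apply{\ContinuousMap_{\mathrm{inf}}^{-1}}{(c,\infty]}
  =
  \TopologicalSpace_{1}
  \setminus
  \Projection{1}\!\left(\apply{\ContinuousMap^{-1}}{[-\infty,c]}\right),
\]
because $\apply{\ContinuousMap_{\mathrm{inf}}}{\Point_{1}}>c$ precisely when $\apply{\ContinuousMap}{\Point_{1},\Point_{2}}>c$ for every $\Point_{2}\in\TopologicalSpace_{2}$. Here $\apply{\ContinuousMap^{-1}}{[-\infty,c]}$ is closed, and the key input is that compactness of $\TopologicalSpace_{2}$ implies $\Projection{1}$ is a closed map (this is the standard tube-lemma characterization: $\TopologicalSpace_{2}$ is compact iff $\Projection{1}\colon \TopologicalSpace_{1}\times\TopologicalSpace_{2}\to\TopologicalSpace_{1}$ is closed for every $\TopologicalSpace_{1}$). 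Therefore the image is closed, and its complement is open, as required.

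The main (and only real) obstacle is the lower semicontinuity step, since it is the one that genuinely uses compactness of $\TopologicalSpace_{2}$; if one wanted to avoid citing the closed-projection characterization, one could argue directly by covering $\{\Point_{1}\}\times\TopologicalSpace_{2}$ by finitely many open boxes on which $\ContinuousMap>c$ and intersecting the $\TopologicalSpace_{1}$-factors. Together, the two semicontinuity statements give continuity of $\ContinuousMap_{\mathrm{inf}}$ as a map to $\Reals\cup\{\infty\}$ with the order topology.
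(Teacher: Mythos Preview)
Your proof is correct and follows essentially the same approach as the paper: both show continuity by checking that preimages of the subbasic sets $(-\infty,c)$ and $(c,\infty]$ are open, using that $\Projection{1}$ is open for the first and closed (by compactness of $\TopologicalSpace_{2}$) for the second. Your write-up is in fact slightly cleaner than the paper's, which contains a minor typo in the formula for $\ContinuousMap_{\mathrm{inf}}^{-1}((C,\infty])$.
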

\begin{proof}
  Since intervals of the form $(C,\infty]$ and $(-\infty,C)$ form a subbasis of 
  the topology of $\Reals\cup\{\infty\}$ it suffices to show that their 
  preimages are open.
  We will first show that 
  $
    \apply{\ContinuousMap_{\mathrm{inf}}^{-1}}{(C,\infty]}
  $
  is open for every $C$. 
  For this one only has to note that 
  $
    \apply{\ContinuousMap_{\mathrm{inf}}^{-1}}{(C,\infty]}
    =
    \apply
      {\Projection{1}}
      {\Complement{\apply{\ContinuousMap^{-1}}{(-\infty,c]}}}
  $
  which is open since $\ContinuousMap$ is continuous and $\Projection{1}$ is 
  closed since $\TopologicalSpace_{2}$ is compact. 
    
  Now we will proof that 
  $\apply{\ContinuousMap_{\mathrm{inf}}^{-1}}{(-\infty,C]}$ is open,
  observe that 
  $
    \apply{\ContinuousMap_{\mathrm{inf}}^{-1}}{(-\infty,C]}
    =
    \apply
      {\Projection{1}}
      {\apply{\ContinuousMap^{-1}}{(-\infty,C)}}
  $%
  . This is open since $\Projection{1}$ is an open map.
\end{proof}
  Lastly we need the following lemma before we can start the proof of 
  Proposition~\ref{prp:FlatHomotopyEquivalence}.
  \begin{lemma}
\label{lem:SpacePointedSubsurfacesUnion}
  $
  \SpaceSubsurfacesPointedRadiusTangential
    {+}
    {\TwoPlane}
    {\Radius}
    {\SurfaceG{\Genus}}
    {\Manifold}
  $ 
  is an open subset of 
  $
    \SpaceSubsurfacesPointedTangential
      {+}
      {\TwoPlane}
      {\SurfaceG{\Genus}}
      {\Manifold}
  $
  for every $\Radius\in (0,\infty)$ and if
  $\epsilon_{k}$ denotes a sequence of positive real numbers that converges 
  to zero, then 
  \[
    \SpaceSubsurfacesPointedTangential
      {+}
      {\TwoPlane}
      {\SurfaceG{\Genus}}
      {\Manifold}
    =
    \bigcup_{k} 
    \SpaceSubsurfacesPointedRadiusTangential
      {+}
      {\TwoPlane}
      {\epsilon_{k}}
      {\SurfaceG{\Genus}}
      {\Manifold}
  \]
\end{lemma}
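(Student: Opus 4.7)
\medskip

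The plan is to handle the two assertions separately, since they use different aspects of the inverse function theorem combined with the $C^\infty$-topology.

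For the openness statement, I would start with a representative embedding $\Embedding$ of a point $\Subsurface \in \SpaceSubsurfacesPointedRadiusTangential{+}{\TwoPlane}{\Radius}{\SurfaceG{\Genus}}{\Manifold}$ together with a closed neighborhood $\Neighbourhood{\Point}$ of $\Point$ on which $\Projection{2} \circ \Embedding$ is a diffeomorphism onto $\overline{\BallRadius{\Radius}{0}}$. The first step is to enlarge $\Neighbourhood{\Point}$ slightly: since $d(\Projection{2}\circ\Embedding)_{\Point}$ is an isomorphism for every point in $\Neighbourhood{\Point}$ (indeed, throughout a small open neighborhood $\Neighbourhood{\Point}' \supset \Neighbourhood{\Point}$ on which the composition is still an embedding with invertible derivative), and the image $(\Projection{2}\circ\Embedding)(\Neighbourhood{\Point}')$ is an open subset of $\Reals^{2}$ containing $\overline{\BallRadius{\Radius}{0}}$, one finds some $\Radius' > \Radius$ with $\overline{\BallRadius{\Radius'}{0}}$ contained in this image.

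The second step uses that being a diffeomorphism onto its image with invertible derivative at every point is an open condition in the $C^\infty$-topology (as used in the paper when citing Proposition~1.4.2 of Chapter~2 of \cite{cerf}). Thus for $\Embedding'$ in a sufficiently small $C^\infty$-neighborhood of $\Embedding$, the restriction $\at{\Projection{2}\circ\Embedding'}{\Neighbourhood{\Point}'}$ is still a diffeomorphism onto an open subset of $\Reals^{2}$, and by continuity this image still contains $\overline{\BallRadius{\Radius}{0}}$. Define $\Neighbourhood{\Point}'' \coloneqq (\Projection{2}\circ\Embedding')^{-1}(\overline{\BallRadius{\Radius}{0}}) \cap \Neighbourhood{\Point}'$; then $\Projection{2}\circ\Embedding'$ restricts to a diffeomorphism $\Neighbourhood{\Point}'' \to \overline{\BallRadius{\Radius}{0}}$, witnessing that $\Embedding'$ represents a point of $\SpaceSubsurfacesPointedRadiusTangential{+}{\TwoPlane}{\Radius}{\SurfaceG{\Genus}}{\Manifold}$. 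Passing to the quotient by $\DiffeoMorphismGroupPointed{\Point}{\SurfaceG{\Genus}}$ gives the openness.

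For the union statement, given any pointed subsurface represented by $\Embedding$, the condition $\apply{\TangentialStructure{\Embedding}}{\Point} = \TwoPlane$ combined with our chart choice means that $d(\Projection{2}\circ\Embedding)_{\Point}$ is the identity on $\Reals^{2}$, hence invertible. The inverse function theorem produces a neighborhood of $\Point$ mapped diffeomorphically onto an open subset of $\Reals^{2}$ containing the origin, which in turn contains $\overline{\BallRadius{\epsilon}{0}}$ for some $\epsilon > 0$; restricting the neighborhood accordingly places $\Embedding$ in $\EmbeddingSpacePointedRadiusTangenital{+}{\TwoPlane}{\epsilon}{\SurfaceG{\Genus}}{\Manifold}$, and hence in $\EmbeddingSpacePointedRadiusTangenital{+}{\TwoPlane}{\epsilon'}{\SurfaceG{\Genus}}{\Manifold}$ for every $0 < \epsilon' \leq \epsilon$. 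Since $\epsilon_{k} \to 0$, some term satisfies $\epsilon_{k} \leq \epsilon$, giving the desired inclusion.

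The only mildly delicate point is the openness: one must be careful that the image of the enlarged neighborhood under the perturbed projection still contains $\overline{\BallRadius{\Radius}{0}}$, rather than just something close to it. This is where the step of first enlarging to $\overline{\BallRadius{\Radius'}{0}}$ with $\Radius' > \Radius$ pays off, since then a small $C^{0}$-perturbation of the boundary is still a loop around $\overline{\BallRadius{\Radius}{0}}$. Lemma~\ref{lem:InfimumContinuous} is not needed for this lemma itself; it will presumably appear in the subsequent construction of $\HomotopyInverse{1}$.
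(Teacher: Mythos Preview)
Your proof is correct and follows essentially the same strategy as the paper: enlarge $\Neighbourhood{\Point}$ to a slightly larger $\Neighbourhood{\Point}'$ on which $\Projection{2}\circ\Embedding$ overshoots $\overline{\BallRadius{\Radius}{0}}$, then use that being a diffeomorphism onto the image is a $C^\infty$-open condition (the paper spells this out via an explicit $C^1$-neighborhood together with a properness/covering argument on $\overline{\Neighbourhood{\Point}'}$, whereas you invoke the same \cite{cerf} reference more directly) to conclude that a nearby $\Embedding'$ still hits $\overline{\BallRadius{\Radius}{0}}$ diffeomorphically on a subdisk of $\Neighbourhood{\Point}'$. (Minor aside: Lemma~\ref{lem:InfimumContinuous} is indeed not used here, but in the paper it enters in the construction of $\HomotopyInverse{3}$, not $\HomotopyInverse{1}$.)
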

\begin{proof}
  By the inverse function theorem, for every 
  $
    \Subsurface 
    \in 
    \SpaceSubsurfacesPointedTangential
      {+}
      {\TwoPlane}
      {\SurfaceG{\Genus}}
      {\Manifold}
  $ 
  there is an $\Radius>0$ such that 
  $
    \at
      {\Projection{2}}
      {\apply
        {\Projection{2}^{-1}}
        {\overline{\BallRadius{\Radius}{0}}\cap\Subsurface}
      }
  $
  restricted to the connected component of 
  $
  \apply
    {\Projection{2}^{-1}}
    {\overline{\BallRadius{\Radius}{0}}\cap\Subsurface}
  $
  containing $\Point_{\Manifold}$ is a diffeomorphism. 
  This implies that every 
  $
    \Subsurface
    \in 
    \SpaceSubsurfacesPointedTangential
      {+}
      {\TwoPlane}
      {\SurfaceG{\Genus}}
      {\Manifold}
  $
  is contained in some 
  $
    \SpaceSubsurfacesPointedRadiusTangential
      {+}
      {\TwoPlane}
      {\Radius}
      {\SurfaceG{\Genus}}
      {\Manifold}
  $ 
  for $\Radius$ small enough.
  
  To finish the proof we have to show that 
  $
    \SpaceSubsurfacesPointedRadiusTangential
      {+}
      {\TwoPlane}
      {\Radius}
      {\SurfaceG{\Genus}}
      {\Manifold}
  $ 
  is an open subset of 
  $
    \SpaceSubsurfacesPointedTangential
      {+}
      {\TwoPlane}
      {\SurfaceG{\Genus}}
      {\Manifold}
  $%
  , which is equivalent to its preimage in 
  $
    \EmbeddingSpacePointedTangential
      {\TangentialFibration}
      {\TwoPlane}
      {\SurfaceG{\Genus}}
      {\Manifold}
  $
  being open. 
  Take some embedding $\Embedding$ in this set. 
  There is some closed neighborhood $\Neighbourhood{\Point}$ of
  $\Point$ in $\SurfaceG{\Genus}$ 
  such that 
  $
    \at
      {\Projection{2}}
      {\apply{\Embedding}{\Neighbourhood{\Point}}}
  $
  is a diffeomorphism onto $\overline{\BallRadius{\Radius}{0}}$. 
  Furthermore this implies that the differential of 
  $\Projection{2}\circ\Embedding$ is an isomorphism on these points. 
  Since the isomorphisms form an open subset of all linear maps we conclude 
  that there is actually an open neighborhood $\Neighbourhood{\Point}'$ of 
  $\Neighbourhood{\Point}$ such that on the closure of this 
  neighborhood the differential of $\Projection{2}\circ\Embedding$ is still an 
  isomorphism. 
  Since 
  $
    \at
      {\Projection{2}\circ\Embedding}
      {\overline{\Neighbourhood{\Point}'}}
  $ 
  is proper because $\overline{\Neighbourhood{\Point}'}$ is compact and a local 
  diffeomorphism onto $\Reals^2$ by the requirement that the differential is an 
  isomorphism, we conclude that it is a covering of its image. 
  Since a closed disk can only cover a closed disk via a diffeomorphism we see 
  that this map is actually a diffeomorphism onto some closed disk in 
  $\Reals^2$ containing $\overline{\BallRadius{\Radius}{0}}$. 
  Let $2\varepsilon$ denote 
  $
    \min_{\Point'\in \partial \Neighbourhood{\Point}'}
    \EuclideanNorm{\apply{\Projection{2}\circ\Embedding}{\Point'}}
    -
    \Radius
    >
    0
  $%
  . Consider the following neighbourhood of $\Embedding$:
  \begin{equation*}
    \Neighbourhood{\Embedding}
    \coloneqq 
    \left\{
      \Embedding' 
      \in
      \EmbeddingSpacePointedTangential
        {+}
        {\TwoPlane}
        {\SurfaceG{\Genus}}
        {\Manifold}
    \middle|
      \begin{aligned}
        &
          \sup_{\Neighbourhood{\Point}'}
          \EuclideanNorm{\Embedding-\Embedding'}
          <\varepsilon
        \\
        & 
          \at
            {\Differential{\Projection{2}\circ\Embedding'}}
            {\TangentBundle{\Neighbourhood{\Point}'}}
            \text{ is an isomorphism}
      \end{aligned}
    \right\}
  \end{equation*}
  With our previous considerations we conclude that 
  $\Neighbourhood{\Embedding}$ lies in the preimage of 
  $
    \SpaceSubsurfacesPointedRadiusTangential
      {+}
      {\TwoPlane}
      {\Radius}
      {\SurfaceG{\Genus}}
      {\Manifold}
  $
  in the embedding space
  $
    \EmbeddingSpacePointedTangential
      {+}
      {\TwoPlane}
      {\SurfaceG{\Genus}}
      {\Manifold}
  $%
  . As it is by definition an open neighbourhood in the $C^1$ topology and the 
  $C^\infty$ topology is finer than all the $C^k$ topologies we conclude that 
  it is in fact an open neighborhood of $\Embedding$.
\end{proof}

  \begin{lemma}
\label{lem:RadiusToPointedHomotopyEquivalence}
  If $\CoordinateChart$ sits inside another coordinate chart 
  $\CoordinateChart'$ and the coordinate change is given by 
  $
    (
      \Coordinate_{1}
      ,
      \ldots
      ,
      \Coordinate_{\DimensionIndex}
    )
    \mapsto 
    (
      \apply{\mathrm{arctan}}{\Coordinate_{1}}
      ,
      \ldots
      ,
      \apply{\mathrm{arctan}}{\Coordinate_{\DimensionIndex}}
    )
  $%
  , then the inclusion 
  $
    \SpaceSubsurfacesPointedRadiusTangential
      {+}
      {\TwoPlane}
      {\Radius}
      {\SurfaceG{\Genus}}
      {\Manifold}
    \to 
    \SpaceSubsurfacesPointedTangential
      {+}
      {\TwoPlane}
      {\SurfaceG{\Genus}}
      {\Manifold}
  $
  is a homotopy equivalence for any $\Radius\in(0,\infty)$ with homotopy 
  inverse $\HomotopyInverse{1}$.
\end{lemma}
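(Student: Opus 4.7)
My plan is to verify the hypotheses of Lemma~\ref{lem:HomotopyInverseInclusion} with $\TopologicalSpace' = \SpaceSubsurfacesPointedRadiusTangential{+}{\TwoPlane}{\Radius}{\SurfaceG{\Genus}}{\Manifold}$ included in $\TopologicalSpace'' = \SpaceSubsurfacesPointedTangential{+}{\TwoPlane}{\SurfaceG{\Genus}}{\Manifold}$. That is, I will construct a homotopy $H\colon \TopologicalSpace''\times[0,1]\to\TopologicalSpace''$ starting at the identity, ending in a map $\HomotopyInverse{1}$ that lands in $\TopologicalSpace'$, and mapping $\TopologicalSpace'\times[0,1]$ into $\TopologicalSpace'$. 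Heuristically, $H$ applies a continuously varying family of diffeomorphisms of $\Manifold$ that radially stretch the tiny disk on which $\Subsurface$ projects diffeomorphically near $\Point_{\Manifold}$ (guaranteed to exist by the inverse function theorem, as exploited in Lemma~\ref{lem:SpacePointedSubsurfacesUnion}) out to a disk containing $\overline{\BallRadius{\Radius}{0}}$.

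Concretely, first define $\epsilon(\Subsurface)$ to be the supremum of radii $r>0$ such that some neighborhood of $\Point$ in $\Subsurface$ projects diffeomorphically under $\Projection{2}$ onto $\overline{\BallRadius{r}{0}}$, so that $\epsilon(\Subsurface)>0$ for every $\Subsurface\in\TopologicalSpace''$ and $\Subsurface\in\TopologicalSpace'$ whenever $\epsilon(\Subsurface)\geq \Radius$. Fix a smooth function $\chi\colon[0,\infty)\to[0,1]$ with $\chi(r)=1$ on $[0,3\Radius]$ and compact support, and consider the vector field $X(\Coordinate)=\chi(\EuclideanNorm{\Coordinate})\,\Coordinate$ on $\CoordinateChart$. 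Thanks to the arctan coordinate change, $\CoordinateChart$ sits inside $\CoordinateChart'$ as a bounded subset, so $X$ has compact support in $\CoordinateChart'$ and extends by zero to a smooth vector field on $\Manifold$; let $\phi_{s}$ denote its time-$s$ flow, which on the ball $\BallRadius{3\Radius}{0}$ is pure radial scaling $\Coordinate\mapsto e^{s}\Coordinate$ whenever the trajectory stays inside that ball. Setting $\lambda(\Subsurface):=\max\{2\Radius/\epsilon(\Subsurface),\,1\}\geq 1$, define
\[
  \apply{H}{\Subsurface, t}:=\apply{\phi_{t\log\lambda(\Subsurface)}}{\Subsurface}.
\]

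Then $\apply{H}{\Subsurface, 0}=\Subsurface$, and since $\lambda(\Subsurface)\geq 1$, the flow acts as pure scaling by factor $\lambda(\Subsurface)^{t}\geq 1$ on a neighborhood of $\Point_{\Manifold}$, so $\apply{H}{\Subsurface, t}$ admits a neighborhood of $\Point$ projecting diffeomorphically onto $\overline{\BallRadius{\lambda(\Subsurface)^{t}\,\epsilon(\Subsurface)}{0}}$; in particular $\apply{H}{\Subsurface, 1}$ projects onto a disk of radius $\geq 2\Radius$ and hence lies in $\TopologicalSpace'$, while $\apply{H}{\Subsurface, t}\in\TopologicalSpace'$ whenever $\Subsurface\in\TopologicalSpace'$ since $\lambda(\Subsurface)^{t}\,\epsilon(\Subsurface)\geq\epsilon(\Subsurface)\geq\Radius$. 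Setting $\HomotopyInverse{1}:=\apply{H}{-,1}$ and invoking Lemma~\ref{lem:HomotopyInverseInclusion} then yields the desired homotopy equivalence. The principal obstacle is establishing continuity of the function $\epsilon\colon\TopologicalSpace''\to(0,\infty]$, on which continuity of $H$ rests; I would express $\epsilon(\Subsurface)$ as an infimum on a compact parameter space of a continuous function on $\TopologicalSpace''\times K$ that records the first radius at which either the differential of $\Projection{2}$ on $\TangentBundle{\Subsurface}$ degenerates or injectivity of $\Projection{2}$ on the component of $\Point$ in the preimage of $\overline{\BallRadius{r}{0}}$ fails, and then apply Lemma~\ref{lem:InfimumContinuous}.
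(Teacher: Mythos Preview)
There is a genuine gap in your main geometric step. Your vector field $X(x)=\chi(\lVert x\rVert)\,x$ is radial in all $n$ coordinates, so $\phi_{s}$ acts as pure scaling only on the $\mathbb{R}^{n}$-ball where the entire trajectory stays inside $\BallRadius{3\Radius}{0}$. But the neighbourhood of $\Point$ in $\Subsurface$ that projects via $\Projection{2}$ onto $\overline{\BallRadius{\epsilon(\Subsurface)}{0}}$ is constrained only in the first two coordinates; it may reach far beyond $\BallRadius{3\Radius}{0}$ in the remaining $n-2$ directions. Concretely, take $\Subsurface$ near $\Point_{\Manifold}$ to be a deep bowl of revolution about the $x_{3}$-axis whose profile runs from $(0,0)$ to $(\epsilon_{0},H)$ with horizontal tangent at the origin and vertical tangent at the rim, where $\epsilon_{0}\ll\Radius$ and $H$ exceeds the support radius of $\chi$. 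The rim lies outside the support of $X$ and is therefore fixed by $\phi_{\log\lambda}$, so $\phi_{\log\lambda}(\Subsurface)$ still carries a vertical tangent at $\Projection{2}$-radius $\epsilon_{0}$; tracing the image profile one finds its $\Projection{2}$-radius never reaches $\Radius$ before folding. Hence $\apply{H}{\Subsurface,1}\notin\SpaceSubsurfacesPointedRadiusTangential{+}{\TwoPlane}{\Radius}{\SurfaceG{\Genus}}{\Manifold}$, and the same obstruction breaks the claim that $H$ maps $\TopologicalSpace'\times[0,1]$ into $\TopologicalSpace'$.

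The paper's proof avoids this (and sidesteps your continuity worry for $\epsilon$) by two different choices. First, rather than proving $\epsilon$ continuous it invokes the metrisability of $\SpaceSubsurfacesPointedTangential{+}{\TwoPlane}{\SurfaceG{\Genus}}{\Manifold}$ together with the open cover of Lemma~\ref{lem:SpacePointedSubsurfacesUnion} to manufacture a continuous function $\AuxiliaryFunction_{\text{radius}}\leq\epsilon$ equal to $\Radius$ on the flat locus. Second, its diffeomorphism family $\apply{\AuxiliaryFunctionDiffeomorphism{1}}{t}$ stretches \emph{only the first two coordinates}, formulated in the larger chart $\CoordinateChart'$ so that compact support is automatic (this is where the arctan hypothesis is actually used). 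Because such a stretch commutes with $\Projection{2}$, the projected disk is enlarged by the intended factor regardless of how far the surface wanders in the normal directions---precisely what your $\mathbb{R}^{n}$-radial flow cannot guarantee.
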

\begin{proof}
  By Theorem 42.3 and the corollary in 27.4 in \cite{CSGA} (together with 
  the fact that the quotient of a nuclear spaces by a closed subspaces
  is a nuclear space) we conclude that 
  $
    \SpaceSubsurfacesPointedTangential
      {+}
      {\TwoPlane}
      {\SurfaceG{\Genus}}
      {\Manifold}
  $ 
  is paracompact and Hausdorff and by 
  Lemma~27.8 in the same source, we conclude that it is actually 
  metrizable (This line of argument was communicated to me in 
  \cite{MichorMathoverflow}). 
  Therefore we can find a continuous function 
  $
    \AuxiliaryFunction_{\text{radius}}
    \colon
    \SpaceSubsurfacesPointedTangential
      {+}
      {\TwoPlane}
      {\SurfaceG{\Genus}}
      {\Manifold}
    \to 
    (0,\Radius]
  $%
  , which is $\Radius$ on 
  $
    \SpaceSubsurfacesPointedFlatTangential
    {+}
    {\TwoPlane}
    {\SurfaceG{\Genus}}
    {\Manifold}
  $ 
  and for every $\Subsurface$ it is smaller or equal than the maximal 
  $\Radius'$ such that 
  $
    \SpaceSubsurfacesPointedRadiusTangential
      {+}
      {\TwoPlane}
      {\FixedRadius'}
      {\SurfaceG{\Genus}}
      {\Manifold}
  $ 
  contains $\Subsurface$ 
  (This uses the fact that 
  $
    \SpaceSubsurfacesPointedFlatTangential
      {+}
      {\TwoPlane}
      {\SurfaceG{\Genus}}
      {\Manifold}
  $ 
  is closed in 
  $
  \SpaceSubsurfacesPointedTangential
    {+}
    {\TwoPlane}
    {\SurfaceG{\Genus}}
    {\Manifold}
  $%
  ).
  
  Let us now construct the homotopy inverse to the inclusion from 
  $
    \SpaceSubsurfacesPointedRadiusTangential
      {+}
      {\TwoPlane}
      {\FixedRadius}
      {\SurfaceG{\Genus}}
      {\Manifold}
    \to
    \SpaceSubsurfacesPointedTangential
      {+}
      {\TwoPlane}
      {\SurfaceG{\Genus}}
      {\Manifold}
  $%
  . Let 
  $
    \AuxiliaryFunctionDiffeomorphism{1}
    \colon
    (0,\infty)
    \to
    \DiffeomorphismGroupCompactSupport{\Manifold}
  $
  denote the following continuous map into the compactly supported 
  diffeomorphisms: 
  Fix a smooth monotone function 
  $
    \AuxiliaryFunction
    \colon 
    \Reals 
    \to 
    [0,1]
  $ 
  such that 
  $
    \AuxiliaryFunction
    \equiv 
    0
  $
  on an open neighborhood of 
  $
    \left[
      -\frac{\pi}{2}
      ,
      \frac{\pi}{2}
    \right]
  $ 
  and 
  $
    \AuxiliaryFunction 
    \equiv
    1
  $ 
  outside of 
  $(-\pi,\pi)$. 
  We define $\AuxiliaryFunctionDiffeomorphism{1}$ via the following formula:
  \[
    \apply
      {\apply{\AuxiliaryFunctionDiffeomorphism{1}}{t}}
      {((\Coordinate_{1},\ldots,\Coordinate_{\DimensionIndex})}
    =
    \left(
      (
        \frac{\Radius}{t}
        \apply{\AuxiliaryFunction}{\Coordinate_{1}}
        +
        (
          1
          -
          \apply{\AuxiliaryFunction}{\Coordinate_{1}}
        )
      )
      \Coordinate_{1}
      ,
      (
        \frac{\Radius}{t}
        \apply{\AuxiliaryFunction}{\Coordinate_{2}}
        +
        (
          1
          -
          \apply{\AuxiliaryFunction}{\Coordinate_{2}}
        )
      )
      \Coordinate_{2}
      ,
      \Coordinate_{3}
      ,
      \ldots
      ,
      \Coordinate_{\DimensionIndex}
    \right)
  \]
  Since this is the identity outside the cube $(-\pi,\pi)^{\DimensionIndex}$ we 
  can consider this as a compactly supported diffeomorphism of $\Manifold$ by 
  extending it via the identity.
  Let $\HomotopyInverse{1}$ denote 
  \begin{align*}
    \HomotopyInverse{1}
    \colon
    \SpaceSubsurfacesPointedTangential
      {+}
      {\TwoPlane}
      {\SurfaceG{\Genus}}
      {\Manifold}
    &
    \to
    \SpaceSubsurfacesPointedTangential
      {+}
      {\TwoPlane}
      {\SurfaceG{\Genus}}
      {\Manifold}
    \\
    \Subsurface
    &
    \mapsto 
    \apply
      {\apply
        {\AuxiliaryFunctionDiffeomorphism{1}}
        {\apply
          {\AuxiliaryFunction_{\text{radius}}}
          {\Subsurface}  
        }
      }
      {\Subsurface}
  \end{align*}
  This maps 
  $
    \SpaceSubsurfacesPointedTangential
      {+}
      {\TwoPlane}
      {\SurfaceG{\Genus}}
      {\Manifold}
  $
  into
  $
    \SpaceSubsurfacesPointedRadiusTangential
      {+}
      {\TwoPlane}
      {\Radius}
      {\SurfaceG{\Genus}}
      {\Manifold}
  $%
  . Furthermore this gives us a homotopy equivalence between 
  $
    \SpaceSubsurfacesPointedRadiusTangential
      {+}
      {\TwoPlane}
      {\Radius}
      {\SurfaceG{\Genus}}
      {\Manifold}
  $ 
  and 
  $
    \SpaceSubsurfacesPointedTangential
      {+}
      {\TwoPlane}
      {\SurfaceG{\Genus}}
      {\Manifold}
  $
  since $\AuxiliaryFunction_{\text{radius}}$ can be homotoped to be the map 
  that is constantly $\Radius$ via linear interpolation and such a homotopy 
  induces maps 
  $
    \SpaceSubsurfacesPointedTangential
      {+}
      {\TwoPlane}
      {\SurfaceG{\Genus}}
      {\Manifold}
    \times 
    [0,1]
    \to
    \SpaceSubsurfacesPointedTangential
      {+}
      {\TwoPlane}
      {\SurfaceG{\Genus}}
      {\Manifold}
  $
  as in Lemma~\ref{lem:HomotopyInverseInclusion}. 
  Hence $\HomotopyInverse{1}$ is indeed a homotopy inverse to the inclusion.
\end{proof}

  \begin{lemma}
  If $\CoordinateChart$ sits inside another coordinate chart 
  $\CoordinateChart'$ and the coordinate change is given by 
  $
  (
    \Coordinate_{1}
    ,
    \ldots
    ,
    \Coordinate_{\DimensionIndex}
  )
    \mapsto 
  (
    \apply{\mathrm{arctan}}{\Coordinate_{1}}
    ,
    \ldots
    ,
    \apply{\mathrm{arctan}}{\Coordinate_{\DimensionIndex}}
  )
  $%
  , then
  the inclusion 
  $
    \SpaceAlmostFlatSubsurfaces
      {+}
      {\TwoPlane}
      {\SurfaceG{\Genus}}
      {\Manifold}
    \to
    \SpaceSubsurfacesPointedRadiusTangential
      {+}
      {\TwoPlane}
      {\Radius}
      {\SurfaceG{\Genus}}
      {\Manifold}
  $
  is a homotopy equivalence with homotopy inverse denoted by 
  $\HomotopyInverse{2}$.
\end{lemma}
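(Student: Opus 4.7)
The plan is to apply Lemma~\ref{lem:HomotopyInverseInclusion}: I need to construct a continuous map $\HomotopyInverse{2}$ from $\SpaceSubsurfacesPointedRadiusTangential{+}{\TwoPlane}{\Radius}{\SurfaceG{\Genus}}{\Manifold}$ to $\SpaceAlmostFlatSubsurfaces{+}{\TwoPlane}{\SurfaceG{\Genus}}{\Manifold}$, together with a homotopy from the identity of the larger space to $\HomotopyInverse{2}$ (composed with the inclusion) that restricts to a homotopy of the almost flat subspace to itself.

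The geometric idea is the following. For $\Subsurface$ in the radius-$\Radius$ space, by definition there is a neighbourhood $\Neighbourhood{\Point}$ of $\Point$ such that $\apply{\Embedding}{\Neighbourhood{\Point}}$ is, within the coordinate chart $\CoordinateChart$, the graph of a smooth function $\Phi_\Subsurface \colon \overline{\BallRadius{\Radius}{0}} \to \Reals^{\DimensionIndex-2}$ with $\apply{\Phi_\Subsurface}{0} = 0$ and vanishing differential at the origin (the latter holds because the tangent plane of $\Subsurface$ at $\Point_\Manifold$ is the fixed plane $\TwoPlane = \Reals^{2}$). I will construct an ambient isotopy of $\Manifold$, depending continuously on $\Subsurface$ and supported inside $\CoordinateChart$, that contracts this graph to the flat disk on a neighbourhood of the origin.

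Concretely, I would first fix a bump function $\rho_1\colon \Reals^2 \to [0,1]$ supported in $\BallRadius{\Radius}{0}$ and equal to $1$ on $\BallRadius{\FixedRadius}{0}$ (after possibly pre-composing with $\HomotopyInverse{1}$ to arrange $\Radius$ large relative to $\FixedRadius$). To prevent the ambient isotopy from interacting with other sheets of $\Subsurface$ lying above $\BallRadius{\Radius}{0}$, I introduce a second cut-off $\rho_2^\Subsurface$ in the normal directions, supported in a tubular neighbourhood of the graph whose radius is less than the distance from the graph to the complement $\Subsurface \setminus \apply{\Embedding}{\Neighbourhood{\Point}}$. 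Lemma~\ref{lem:InfimumContinuous} guarantees that this distance depends continuously on $\Subsurface$, so $\rho_2^\Subsurface$ can be chosen to vary continuously as well. I then define, for $y = (x_3, \ldots, x_\DimensionIndex)$,
\[
  \apply{\Psi_t^\Subsurface}{x_1, x_2, y} = \bigl(x_1,\, x_2,\, y - (1-t)\,\apply{\rho_1}{x_1,x_2}\,\apply{\rho_2^\Subsurface}{y - \apply{\Phi_\Subsurface}{x_1,x_2}}\,\apply{\Phi_\Subsurface}{x_1,x_2}\bigr),
\]
which is a compactly supported diffeomorphism of $\CoordinateChart$ extending by the identity to a diffeomorphism of $\Manifold$ via the arctan embedding $\CoordinateChart \hookrightarrow \CoordinateChart'$. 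I set $\apply{\HomotopyInverse{2}}{\Subsurface} := \apply{\Psi_0^\Subsurface}{\Subsurface}$ and $\apply{H_t}{\Subsurface} := \apply{\Psi_{1-t}^\Subsurface}{\Subsurface}$.

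The verification has two parts: at $t=0$ the isotopy collapses the graph to the flat disk on $\BallRadius{\FixedRadius}{0}$, so $\apply{\HomotopyInverse{2}}{\Subsurface}$ is almost flat; and if $\Subsurface$ is already almost flat, then $\Phi_\Subsurface \equiv 0$ on the disk $\DiskRadius{\FixedRadius}$, so the isotopy is the identity there and $\apply{\Psi_t^\Subsurface}{\Subsurface}$ stays in the almost flat subspace for every $t$. The main obstacle is ensuring continuous dependence of the entire construction on $\Subsurface$: the continuous selection of the graph function $\Phi_\Subsurface$ follows from a parametric version of the inverse function theorem combined with the openness statement of Lemma~\ref{lem:SpacePointedSubsurfacesUnion}, while the continuous choice of the normal cut-off $\rho_2^\Subsurface$ avoiding the other sheets of $\Subsurface$ is precisely the application of Lemma~\ref{lem:InfimumContinuous} that makes the argument go through.
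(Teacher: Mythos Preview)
Your overall strategy---build a family of compactly supported ambient diffeomorphisms that flatten the graph sheet and then invoke Lemma~\ref{lem:HomotopyInverseInclusion}---is exactly the paper's. The difference lies in your normal cut-off $\rho_2^{\Subsurface}$, and this is where the argument breaks down.

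The map $\Psi_t^{\Subsurface}$ need not be a diffeomorphism. Freezing $(x_1,x_2)$ and writing $z=y-\Phi_{\Subsurface}(x_1,x_2)$, the restriction to the normal fibre is $z\mapsto z-(1-t)\,\rho_1(x_1,x_2)\,\rho_2^{\Subsurface}(z)\,\Phi_{\Subsurface}(x_1,x_2)$. Here $\rho_2^{\Subsurface}$ is supported in a ball of radius at most $\epsilon$, the distance from the graph to the remaining sheets, whereas the displacement has length $\lvert\Phi_{\Subsurface}(x_1,x_2)\rvert$, the height of the graph above $\Reals^2$. These two numbers are unrelated: the graph can sit far from $\Reals^2$ while another sheet of $\Subsurface$ lies arbitrarily close to it. As soon as $\lvert\Phi_{\Subsurface}(x_1,x_2)\rvert>\epsilon$, the point $z=0$ is sent outside the support of $\rho_2^{\Subsurface}$, where it collides with the image of a fixed point of the map---so injectivity fails. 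Lemma~\ref{lem:InfimumContinuous} only gives continuity of $\epsilon$; it does nothing for this size mismatch.

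The paper avoids this by dropping the normal cut-off altogether: it translates \emph{all} of $\Manifold$ in the normal direction by the graph height, so the other sheets get carried along harmlessly (any ambient diffeomorphism sends subsurfaces to subsurfaces). Compact support is obtained instead via a \emph{fixed} radial cut-off $\varphi_2(\lVert\Point'\rVert)$ in the larger chart $\CoordinateChart'$. This is precisely where the arctan hypothesis enters: in $\CoordinateChart'$-coordinates the graph height is uniformly bounded (since $\CoordinateChart$ sits inside the cube $(-\tfrac{\pi}{2},\tfrac{\pi}{2})^{\DimensionIndex}$), so one can choose $\varphi_2$ once and for all with derivative small enough that the resulting map is a diffeomorphism for every $\Subsurface$. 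Your construction never exploits the arctan change of coordinates; that hypothesis is exactly what should replace your problematic $\rho_2^{\Subsurface}$.
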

\begin{proof}
  Fix a smooth function 
  $
    \AuxiliaryFunction_{1}
    \colon 
    \Reals^{2}
    \to 
    \Reals
  $ 
  that is invariant under rotation and $1$ on 
  $\overline{\BallRadius{\FixedRadius}{0}}$ and $0$ 
  outside of $\BallRadius{2\FixedRadius}{0}$. 
  We define a map 
  \[
    \Homotopy
    \colon 
    \SpaceSubsurfacesPointedRadiusTangential
      {+}
      {\TwoPlane}
      {2\FixedRadius}
      {\SurfaceG{\Genus}}
      {\Manifold}
    \times
    [0,1]
    \to
    \SpaceSubsurfacesPointedRadiusTangential
      {+}
      {\TwoPlane}
      {2\FixedRadius}
      {\SurfaceG{\Genus}}
      {\Manifold}
  \]
  as follows: 
  Note that $\Projection{2}$ extends to $\CoordinateChart'$. 
  For an 
  $
    \Point'
    \in 
    \BallRadius{2\FixedRadius}{0}
  $
  we will denote the corresponding point in 
  $
    \SubsurfaceDisk{\Subsurface}{2\FixedRadius}{\Point_{\Manifold}}
  $
  by 
  $
    \ProjectedPoint{\Subsurface}{\Point'}
  $%
  . Fix a smooth monotone function 
  $
    \AuxiliaryFunction_{2}
    \colon [
    [0,\infty) 
    \to
    \Reals
  $ 
  such that 
  $
    \apply{\AuxiliaryFunction_{2}^{-1}}{1}
    =
    [0,\sqrt{\DimensionIndex}\frac{\pi}{4}]
  $
  and 
  $
    \apply{\AuxiliaryFunction_{2}^{-1}}{0}
    =
    (2\sqrt{\DimensionIndex}\pi,\infty)
  $%
  . Now we define
  \begin{align*}
      \AuxiliaryFunctionDiffeomorphism{2}
      \colon
      \SpaceSubsurfacesPointedRadiusTangential
        {+}
        {\TwoPlane}
        {2\FixedRadius}
        {\SurfaceG{\Genus}}
        {\Manifold}
      \times 
      [0,1]
    &
      \to 
      \DiffeomorphismGroupCompactSupport{\Manifold}
    \\
      \apply
        {\apply{\AuxiliaryFunctionDiffeomorphism{2}}{\Subsurface,t}}
        {\Point'}
    &
      =
      \Point'
      +
      \apply{\AuxiliaryFunction_{2}}{\EuclideanNorm{\Point'}}
      \apply{\AuxiliaryFunction_{1}}{\apply{\Projection{2}}{\Point'}}
      \left(
        \apply{\Projection{2}{\Point'}}
        -
        \ProjectedPoint{\Subsurface}{\Point'}
      \right)
  \end{align*}
  Note that $\ProjectedPoint{\Subsurface}{\Point'}$ is only defined when 
  $
    \EuclideanNorm
      {\apply{\Projection{2}}{\Point'}}
    \leq
    2\FixedRadius
  $
  but in that case
  $
    \apply
      {\AuxiliaryFunction_{1}}
      {\apply{\Projection{2}}{\Point'}}
    =
    0
  $%
  . The map $\AuxiliaryFunctionDiffeomorphism{1}$ is well-defined i.e. the 
  image is indeed a diffeomorphism, since a map 
  \begin{align*}
      \Reals^{\DimensionIndex-2}
    &
      \to
      \Reals^{\DimensionIndex-2}
    \\
      \Point' 
    &
      \mapsto 
      \Point' 
      + 
      \apply{\ContinuousMap}{\EuclideanNorm{\Point'}}
      v
  \end{align*}
  for $v$ some vector and a positive, monotonously decreasing and bounded 
  function $\ContinuousMap$, is a diffeomorphism if 
  $
    \max \EuclideanNorm{\ContinuousMap}
    \EuclideanNorm{v}
    <
    1
  $
  because this ensure the derivative to be an isomorphism which implies that 
  the map is locally injective. 
  Furthermore such a map extends to a degree $1$ map from 
  $\Sphere{2\DimensionIndex-1}$ 
  into itself and therefore it also has to be surjective. Hence it is a 
  diffeomorphism, if the derivative of $\ContinuousMap$ is small enough, but we 
  can chose $\ContinuousMap$ a posteriori with a small enough derivative.
  
  Lastly we have to argue that $\AuxiliaryFunctionDiffeomorphism{2}$ is a 
  continuous map as can be seen since it can be written in a neighborhood of 
  $\Subsurface$ in terms of the image of $\Subsurface$ 
  and the sections of the "normal" bundle of $\Subsurface$ (where we require 
  the "normal" bundle to have $\{0\}\times \Reals^{\DimensionIndex-2}$ as fiber 
  over $\SubsurfaceDisk{\Subsurface}{2\FixedRadius}{\Point_{\Manifold}}$), 
  which is a neighborhood of $\Subsurface$ in 
  $
    \SpaceSubsurfacesPointedRadiusTangential
      {+}
      {\TwoPlane}
      {2\FixedRadius}
      {\SurfaceG{\Genus}}
      {\Manifold}
  $%
  .
  
  We define 
  $
    \apply{\Homotopy}{\Subsurface,t}
    =
    \apply
      {\apply{\AuxiliaryFunctionDiffeomorphism{2}}{\Subsurface,t}}
      {\Subsurface}
  $%
  . Note that 
  $
    \SpaceSubsurfacesPointedRadiusTangential
      {+}
      {\TwoPlane}
      {\FixedRadius}
      {\SurfaceG{\Genus}}
      {\Manifold}
  $ 
  includes into 
  $
    \SpaceSubsurfacesPointedRadiusTangential
      {+}
      {\TwoPlane}
      {\FixedRadius}
      {\SurfaceG{\Genus}}
      {\Manifold}
  $
  and with a similar construction as in 
  Lemma~\ref{lem:RadiusToPointedHomotopyEquivalence} we can see 
  that this inclusion is actually a homotopy equivalence with homotopy inverse, 
  which is given by postcomposing with a diffeomorphism of $\Manifold$,
  denoted by $\HomotopyInverse{\FixedRadius,2\FixedRadius}$. 
  Note that $\Homotopy$ fixes 
  $
    \apply
      {\HomotopyInverse{\FixedRadius,2\FixedRadius}}
      {
        \SpaceAlmostFlatSubsurfaces
          {+}
          {\TwoPlane}
          {\SurfaceG{\Genus}}
          {\Manifold}
      }
  $ 
  and hence by Lemma~\ref{lem:HomotopyInverseInclusion}, we conclude that the 
  inclusion
  $
    \SpaceAlmostFlatSubsurfaces
      {+}
      {\TwoPlane}
      {\SurfaceG{\Genus}}
      {\Manifold}
    \to
    \SpaceSubsurfacesPointedRadiusTangential
      {+}
      {\TwoPlane}
      {\FixedRadius}
      {\SurfaceG{\Genus}}
      {\Manifold}
  $ 
  followed by $\HomotopyInverse{\FixedRadius,2\FixedRadius}$ is a homotopy 
  equivalence with homotopy inverse 
  $\apply{\Homotopy}{-,1}$.
  We get the following diagram:
  \begin{center}
    \begin{tikzcd}
      &
      \SpaceSubsurfacesPointedRadiusTangential
        {+}
        {\TwoPlane}
        {2\FixedRadius}
        {\SurfaceG{\Genus}}
        {\Manifold}
        \ar[rd,"\apply{\Homotopy}{-,1}","\simeq"']
        \ar[ld,hook',"\simeq"]
      \\
      \SpaceSubsurfacesPointedRadiusTangential
        {+}
        {\TwoPlane}
        {\FixedRadius}
        {\SurfaceG{\Genus}}
        {\Manifold}
        \ar[ur,"\HomotopyInverse{\FixedRadius,2\FixedRadius}",bend left]
      &
      &
      \SpaceAlmostFlatSubsurfaces
        {+}
        {\TwoPlane}
        {\SurfaceG{\Genus}}
        {\Manifold}
      \ar[ll,hook']
    \end{tikzcd}
  \end{center}
  This diagram shows that 
  $
    \apply{\Homotopy}{-,1}
    \circ 
    \HomotopyInverse{\FixedRadius,2\FixedRadius}
  $
  is a homotopy inverse to the inclusion of 
  $
    \SpaceAlmostFlatSubsurfaces
      {+}
      {\TwoPlane}
      {\SurfaceG{\Genus}}
      {\Manifold}
  $
  into
  $
    \SpaceSubsurfacesPointedRadiusTangential
      {+}
      {\TwoPlane}
      {\FixedRadius}
      {\SurfaceG{\Genus}}
      {\Manifold}
  $
  and we denote it by $\HomotopyInverse{2}$.
\end{proof}

  Lastly we have the following lemma which finishes the proof of
  Proposition~\ref{prp:FlatHomotopyEquivalence}.
  \begin{lemma}
    The inclusion 
    $
      \SpaceSubsurfacesPointedFlatTangential
        {+}
        {\TwoPlane}
        {\SurfaceG{\Genus}}
        {\Manifold}
      \to
      \SpaceAlmostFlatSubsurfaces
        {+}
        {\TwoPlane}
        {\SurfaceG{\Genus}}
        {\Manifold}
    $
    is a homotopy equivalence with homotopy inverse denoted by 
    $\HomotopyInverse{3}$.
\end{lemma}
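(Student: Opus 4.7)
The strategy is to construct a family, continuously indexed by $\Subsurface \in \SpaceAlmostFlatSubsurfaces{+}{\TwoPlane}{\SurfaceG{\Genus}}{\Manifold}$, of compactly supported diffeomorphisms of $\Manifold$ that fix $\DiskRadius{\FixedRadius}$ and push the portion of $\Subsurface$ lying in $\BallRadius{\FixedRadius}{0}\setminus \DiskRadius{\FixedRadius}$ out of $\BallRadius{\FixedRadius}{0}$. The time-$1$ maps will define $\HomotopyInverse{3}$, and the corresponding isotopy will verify the hypotheses of Lemma~\ref{lem:HomotopyInverseInclusion}, following the same pattern as in the two preceding lemmas.

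\textbf{The isotopy.} Writing coordinates on $\CoordinateChart$ as $(y,z) \in \Reals^{2} \times \Reals^{\DimensionIndex-2}$, one selects a smooth vector field $V$ on $\Manifold$, compactly supported in a slightly enlarged ball around $\overline{\BallRadius{\FixedRadius}{0}}$, with the following properties: (i) $V$ vanishes on a neighbourhood of $\DiskRadius{\FixedRadius}$ so that its flow $\Phi^{t}$ pointwise fixes $\DiskRadius{\FixedRadius}$; (ii) $V$ is outward-pointing or tangent on $\partial \BallRadius{\FixedRadius}{0}$, so that the complement of $\BallRadius{\FixedRadius}{0}$ is forward-invariant under $\Phi^{t}$ and no point outside of the ball can re-enter it; and (iii) every point of $\overline{\BallRadius{\FixedRadius}{0}}\setminus \DiskRadius{\FixedRadius}$ is pushed outside of $\BallRadius{\FixedRadius}{0}$ in finite time by $\Phi^{t}$. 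Choosing a continuous time function $T\colon \SpaceAlmostFlatSubsurfaces{+}{\TwoPlane}{\SurfaceG{\Genus}}{\Manifold} \to [0,\infty)$ large enough that $\Phi^{T(\Subsurface)}(\Subsurface) \cap \BallRadius{\FixedRadius}{0} = \DiskRadius{\FixedRadius}$, one sets $\HomotopyInverse{3}(\Subsurface) \coloneqq \Phi^{T(\Subsurface)}(\Subsurface)$ together with the homotopy $\Homotopy(\Subsurface,t)\coloneqq \Phi^{tT(\Subsurface)}(\Subsurface)$.

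\textbf{Applying Lemma~\ref{lem:HomotopyInverseInclusion}.} By construction $\Homotopy(-,0) = \Identity$ and $\Homotopy(-,1) = \HomotopyInverse{3}$ takes values in $\SpaceSubsurfacesPointedFlatTangential{+}{\TwoPlane}{\SurfaceG{\Genus}}{\Manifold}$. For a flat $\Subsurface$ the intersection $\Subsurface \cap \BallRadius{\FixedRadius}{0} = \DiskRadius{\FixedRadius}$ is fixed pointwise by property (i), while no external part of $\Subsurface$ can enter the ball by property (ii); hence $\Homotopy$ restricts to a homotopy within the flat subspace, and Lemma~\ref{lem:HomotopyInverseInclusion} yields the desired homotopy equivalence.

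\textbf{Main obstacle.} The delicate step is the simultaneous continuity and sufficiency of $T$: since $V$ necessarily vanishes in a neighbourhood of $\DiskRadius{\FixedRadius}$, excess points of $\Subsurface$ lying close to $\DiskRadius{\FixedRadius}$ require long flow times to escape $\BallRadius{\FixedRadius}{0}$, and this required time tends to diverge as $\Subsurface$ approaches the flat subspace. To handle this, the plan is to let the vanishing locus of the cutoff defining $V$ depend continuously on $\Subsurface$, shrinking toward $\DiskRadius{\FixedRadius}$ as the excess does, so that the escape time stays bounded. Packaging this into a single continuous construction then combines Lemma~\ref{lem:InfimumContinuous} with the metrizability of the subsurface space already established in the proof of Lemma~\ref{lem:RadiusToPointedHomotopyEquivalence}.
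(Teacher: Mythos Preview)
Your overall strategy --- construct a $\Subsurface$-dependent compactly supported isotopy of $\Manifold$ that expels the excess from $\BallRadius{\FixedRadius}{0}$ and then invoke Lemma~\ref{lem:HomotopyInverseInclusion} --- is exactly the paper's. The difference lies in the choice of isotopy, and the obstacle you flag is an artefact of your particular choice.

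Your conditions (i) and (iii) on a single vector field $V$ are literally incompatible: if $V$ vanishes on an open neighbourhood of $\DiskRadius{\FixedRadius}$, points of that neighbourhood off $\DiskRadius{\FixedRadius}$ are fixed by the flow and never leave the ball. You recognise this and propose to let the vanishing locus shrink with $\Subsurface$; that can be made to work, but the constraint driving it --- that $\DiskRadius{\FixedRadius}$ be fixed \emph{pointwise} --- is unnecessary and is the sole source of the difficulty.

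The paper drops it. It uses the purely radial diffeomorphisms $\AuxiliaryFunctionDiffeomorphism{3}(t_{1},t)$ of $\Reals^{\DimensionIndex}$ (identity outside $\BallRadius{2\FixedRadius}{0}$) which at $t=1$ send $\BallRadius{t_{1}}{0}$ onto $\BallRadius{\FixedRadius}{0}$. These preserve $\Reals^{2}$ \emph{setwise} and are radial expansions, so $\DiskRadius{\FixedRadius}$ is carried to a larger disk in $\Reals^{2}$; the image surface therefore still contains $\DiskRadius{\FixedRadius}$, while nothing from outside re-enters the ball. The only continuous input needed is
\[
  \apply{\AuxiliaryFunction}{\Subsurface}
  \;=\;
  \inf_{\Point' \in \Subsurface \setminus \DiskRadius{\FixedRadius}}
  \EuclideanNorm{\Point'}
  \;\in (0,\FixedRadius],
\]
and one sets $\Homotopy(\Subsurface,t)=\AuxiliaryFunctionDiffeomorphism{3}\bigl(\tfrac{1}{2}\apply{\AuxiliaryFunction}{\Subsurface},t\bigr)(\Subsurface)$. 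Every excess point has norm at least $\apply{\AuxiliaryFunction}{\Subsurface}>\tfrac12\apply{\AuxiliaryFunction}{\Subsurface}$, hence is pushed past radius $\FixedRadius$ at $t=1$; for flat $\Subsurface$ one has $\apply{\AuxiliaryFunction}{\Subsurface}=\FixedRadius$ and the radial expansion keeps the surface flat throughout. No $\Subsurface$-dependent cutoff, no unbounded escape times.

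Your instinct about the continuity step is right: the paper proves $\AuxiliaryFunction$ continuous by realising the family $\Subsurface\mapsto \Subsurface\setminus\DiskRadius{\FixedRadius}$ as a locally trivial fibre bundle with compact fibre over $\SpaceAlmostFlatSubsurfaces{+}{\TwoPlane}{\SurfaceG{\Genus}}{\Manifold}$ and applying Lemma~\ref{lem:InfimumContinuous} to $\EuclideanNorm{\,\cdot\,}\circ\Evaluation$. Metrizability is not used in this step.
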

\begin{proof}
  Consider the function 
  $
    \AuxiliaryFunction 
    \colon
    \SpaceAlmostFlatSubsurfaces
      {+}
      {\TwoPlane}
      {\SurfaceG{\Genus}}
      {\Manifold}
    \to 
    \Reals
  $ 
  that sends $\Subsurface$ to 
  $
    \inf_{\Point'\in \Subsurface\setminus \DiskRadius{\FixedRadius}} 
    \EuclideanNorm{\Point'}
  $%
  . Note that $\AuxiliaryFunction$ is bigger than zero for every surface and 
  bounded by $\FixedRadius$. 
  Assume for now that this function is continuous. 
  We define 
  $
    \AuxiliaryFunctionDiffeomorphism{3}
    \colon 
    [0,\FixedRadius]
    \times 
    [0,1]
    \to
    \DiffeomorphismGroupCompactSupport{\Reals^{\DimensionIndex}}
  $ 
  as follows: 
  $
    \apply
      {\AuxiliaryFunctionDiffeomorphism{3}}
      {t_{1},t_{2}}
  $ 
  is the diffeomorphism that is the identity outside the ball of radius 
  $2\FixedRadius$ and the identity for 
  $t_2=0$, and 
  $
    \apply
      {\AuxiliaryFunctionDiffeomorphism{3}}
      {t_{1},t_{2}}
  $ 
  sends the ball of radius $t_1$ bijectively onto 
  the ball of radius $\FixedRadius t_2+(1-t_2)t_1$ and finally only acts on the 
  norm component if an element in $\Reals^{\DimensionIndex}$ is represented in 
  polar coordinates. 
  We then define 
  \begin{align*}
    \Homotopy
    \colon 
    \SpaceAlmostFlatSubsurfaces
      {+}
      {\TwoPlane}
      {\SurfaceG{\Genus}}
      {\Manifold}
    \times 
    [0,1]
    &
    \to 
    \SpaceAlmostFlatSubsurfaces
      {+}
      {\TwoPlane}
      {\SurfaceG{\Genus}}
      {\Manifold}
    \\
    (\Subsurface,t)
    &
    \mapsto
    \apply{
      \apply
        {\AuxiliaryFunctionDiffeomorphism{3}}
        {\frac{\apply{\AuxiliaryFunction}{\Subsurface}}{2},t}
    }
    {\Subsurface}
  \end{align*}
  Then $\apply{\Homotopy}{-,1}$ is a homotopy inverse to the inclusion of 
  $
    \SpaceSubsurfacesPointedFlatTangential
      {+}
      {\TwoPlane}
      {\SurfaceG{\Genus}}
      {\Manifold}
  $
  into 
  $
    \SpaceAlmostFlatSubsurfaces
      {+}
      {\TwoPlane}
      {\SurfaceG{\Genus}}
      {\Manifold}
  $ 
  by Lemma~\ref{lem:HomotopyInverseInclusion}. We denote it by 
  $\HomotopyInverse{3}$.
  
  So the only thing left to prove is that 
  $
    \AuxiliaryFunction
  $ is continuous. 
  Let 
  $
    \SpaceAlmostFlatEmbeddings
    {+}
    {\TwoPlane}
    {\SurfaceG{\Genus}}
    {\Manifold}
  $ 
  denote the subspace of those embeddings in 
  $
    \EmbeddingSpacePointedTangential
      {+}
      {\TwoPlane}
      {\SurfaceG{\Genus}}
      {\Manifold}
  $ 
  whose $\DiffeoMorphismGroupPointed{\Point}{\SurfaceG{\Genus}}$-equivalence 
  class lie in 
  $
    \SpaceAlmostFlatSubsurfaces
      {+}
      {\TwoPlane}
      {\SurfaceG{\Genus}}
      {\Manifold}
  $ 
  and we denote by $\Projection{\Emb}$ the 
  projection of this space to 
  $
    \SpaceAlmostFlatSubsurfaces
      {+}
      {\TwoPlane}
      {\SurfaceG{\Genus}}
      {\Manifold}
  $%
  . Let 
  $
    \Evaluation
    \colon 
    \MappingSpace{\TopologicalSpace_{1}}{\TopologicalSpace_{2}}
    \times 
    \TopologicalSpace_{1}
    \to 
    \TopologicalSpace_{2}
  $ 
  denote the canonical evaluation map. 
  Let 
  $
    \bar{\Submanifold}
    =
    \apply
      {\Evaluation^{-1}}
      {\Complement{\DiskRadius{\FixedRadius}}}
    \subset
    \SpaceAlmostFlatEmbeddings
      {+}
      {\TwoPlane}
      {\SurfaceG{\Genus}}
      {\Manifold}
    \times
    \SurfaceG{\Genus}
  $%
  . 
  $\DiffeoMorphismGroupPointed{\Point}{\SurfaceG{\Genus}}$ acts on 
  $
    \SpaceAlmostFlatEmbeddings
      {+}
      {\TwoPlane}
      {\SurfaceG{\Genus}}
      {\Manifold}
  $ 
  as well as $\bar{\Submanifold}$ (Here we let 
  $\DiffeoMorphismGroupPointed{\Point}{\SurfaceG{\Genus}}$ from the left on the 
  $\SurfaceG{\Genus}$ component of $\bar{\Submanifold}$). 
  We denote the quotient of this action of $\bar{\Submanifold}$ by 
  $\Submanifold$.
  
  The projection onto the first factor from $\bar{\Submanifold}$ to 
  $
    \SpaceAlmostFlatEmbeddings
      {+}
      {\TwoPlane}
      {\SurfaceG{\Genus}}
      {\Manifold}
  $
  descends to a map from $\Submanifold$ to 
  $
    \SpaceAlmostFlatSubsurfaces
      {+}
      {\TwoPlane}
      {\SurfaceG{\Genus}}
      {\Manifold}
  $ 
  which we denote by $\Projection{\Submanifold}$ and we claim that
  this map is a fiber bundle. 
  Let 
  $
    \Subsurface
    \in 
    \SpaceAlmostFlatSubsurfaces
      {+}
      {\TwoPlane}
      {\SurfaceG{\Genus}}
      {\Manifold}
  $
  denote an embedded subsurface and 
  $\TubularNeighbourhood{\Subsurface}{\Manifold}$ 
  a tubular neighborhood of $\Subsurface$ with corresponding projection 
  $\Projection{\Subsurface}$.
  Consider the following set 
  \[
    \Neighbourhood{\Subsurface}
    =
    \left\{
      \Subsurface'
      \in 
      \SpaceAlmostFlatSubsurfaces
        {+}
        {\TwoPlane}
        {\SurfaceG{\Genus}}
        {\Manifold}
    \middle| 
      \Subsurface'
      \subset
      \TubularNeighbourhood{\Subsurface}{\Manifold} 
      \text{ and } 
      \at{\Projection{\Subsurface}}{\Subsurface'} 
      \text{ is a diffeomorphism}
    \right\}
  \]
  which is open because $\TubularNeighbourhood{\Subsurface}{\Manifold}$ is open 
  and the space of diffeomorphisms is an open subset of 
  $\MappingSpace{\Subsurface'}{\Subsurface}$ (see  Section~1.2 of the second 
  chapter of \cite{cerf}). 
  We will proof that $\Projection{\Submanifold}$ is a locally trivial 
  fibration. 
  Note that
    \begin{align*}
    \apply{\Projection{\Submanifold}^{-1}}{\Neighbourhood{\Subsurface}}
    &
    \to 
    \apply{\Projection{\Submanifold}^{-1}}{\{\Subsurface\}}
    \times 
    \Neighbourhood{\Subsurface}
    &
    \text{ and \phantom{gur}} 
    &
    \apply{\Projection{\Submanifold}^{-1}}{\{\Subsurface\}}
    \times 
    \Neighbourhood{\Subsurface}
    &
    \to 
    \apply{\Projection{\Submanifold}^{-1}}{\Neighbourhood{\Subsurface}}
    \\
    [\Embedding,\Point']
    &
    \mapsto 
    \left(
      [\Embedding_{\Subsurface},\apply{\Projection{\Subsurface}}{\Point'}]
      ,
      [\Embedding]
    \right) 
    &
    &
    \left(
      [\Embedding_{\Subsurface},\Point']
      ,
      [\Embedding]
    \right)
    &
    \mapsto 
    [
      \Embedding
      ,
      \apply
        {\at{\Projection{\Subsurface}}{\Subsurface'}^{-1}}
        {\Point'}
    ]
  \end{align*}
  are inverses of another.
  Here $\Embedding_{\Subsurface}$ is any representative of $\Subsurface$. 
  Therefore we conclude that $\Projection{\Submanifold}$ is a locally trivial 
  fibration, whose fiber is given by $\SurfaceG{\Genus}$ with an 
  open disc removed.
  
  So we have the following diagram:
  \[
    \begin{tikzcd}
      \Submanifold
        \ar[d,"\Projection{\Submanifold}"]
        \ar[r,"\Evaluation"]
      &
      \Manifold
        \ar[r,"\EuclideanNorm{-}"]
      &
      \Reals\cup\{\infty\}
      \\
      \SpaceAlmostFlatSubsurfaces
        {+}
        {\TwoPlane}
        {\SurfaceG{\Genus}}
        {\Manifold}
        \ar[rr,"\AuxiliaryFunction"]
      &
      &
      \Reals\cup\{\infty\}
    \end{tikzcd}
  \]
  where $\Evaluation$ denotes again the evaluation map, which descends from 
  $
    \SpaceAlmostFlatEmbeddings
      {+}
      {\TwoPlane}
      {\SurfaceG{\Genus}}
      {\Manifold}
    \times 
    \SurfaceG{\Genus}
  $ 
  to $\Submanifold$ and $\EuclideanNorm{-}$ denotes the 
  norm mapping in the chart $\CoordinateChart$ extended by $\infty$ to the 
  complement of $\CoordinateChart$. 
  We give $\Reals\cup \{\infty\}$ the topology coming from open sets 
  in $\Reals$ and neighborhoods of $\infty$ have the form $(C,\infty]$. 
  Note that $\apply{\AuxiliaryFunction}{\Subsurface}$ is given by taking the 
  infimum over the composition of the horizontal maps for all elements in 
  $
    \apply{\Projection{\Submanifold}}{\Subsurface}
  $ 
  and this always lands in $(0,\FixedRadius]$. 
  Since $\Projection{\Submanifold}$ is locally a trivial fibration, 
  Lemma~\ref{lem:InfimumContinuous} shows that $\AuxiliaryFunction$ is locally 
  continuous and therefore continuous.
\end{proof}

  This finishes the proof of Proposition~\ref{prp:FlatHomotopyEquivalence}.
  \begin{remark}
    At no point in the proof did we use that the submanifolds were 
    two-dimensional. The corollary would also hold true for higher dimensional 
    submanifolds.
  \end{remark}
  To proceed with the construction of the stabilization maps we need the 
  following corollary of our previous considerations:
  \begin{corollary}
\label{crl:TakingOutBallHomotopyEquivalence}
  There is a homotopy equivalence
  \[
    \BallRemovalMap
    \colon
    \SpaceSubsurfacesPointedFlatTangential
      {\TangentialFibration}
      {\TwoPlane}
      {\SurfaceG{\Genus}}
      {\Manifold}
    \to
    \SpaceSubsurfaceBoundaryTangential
      {\TangentialFibration}
      {\Genus}
      {1}
      {
        \overline
          {
            \Manifold
            \setminus 
            \BallRadius{\frac{\FixedRadius}{2}}{\Point_{\Manifold}}
          }
      }
      {\BoundaryConditionTangential{\BoundaryCondition}}
  \]
  where $\BallRadius{\frac{\FixedRadius}{2}}{\Point_{\Manifold}}$ is the 
  preimage of
  $
    \BallRadius{\frac{\FixedRadius}{2}}{\Point_{\Manifold}}
    \subset
    \CoordinateChart
  $
  in $\Manifold$ and $\BoundaryConditionTangential{\BoundaryCondition}$ is the 
  preimage of the intersection of $\TwoPlane$
  with the sphere of radius $\frac{\FixedRadius}{2}$ in $\CoordinateChart$ with 
  the tangential structure stemming from the constant one. We denote its 
  homotopy inverse by $\HomotopyInverse{+\TwoPlane}$
\end{corollary}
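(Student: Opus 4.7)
The plan is to exhibit $\BallRemovalMap$ and $\HomotopyInverse{+\TwoPlane}$ as literal mutual inverses on representatives, which will give a homeomorphism and hence in particular a homotopy equivalence. The key observation is that for a flat pointed subsurface the intersection with $\BallRadius{\FixedRadius/2}{\Point_{\Manifold}}$ is completely rigid: it equals $\DiskRadius{\FixedRadius/2}$ as a submanifold, and its tangential structure is the constant $\TwoPlane$. So cutting along the sphere of radius $\FixedRadius/2$ loses no information, and gluing back the standard flat disk with constant tangential structure inverts this cut literally.

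First I would define $\BallRemovalMap$ at the level of representatives: given a flat embedding $(\Embedding, \TangentialStructure{\Embedding})$ representing a class in $\SpaceSubsurfacesPointedFlatTangential{\TangentialFibration}{\TwoPlane}{\SurfaceG{\Genus}}{\Manifold}$, flatness forces $\apply{\Embedding^{-1}}{\BallRadius{\FixedRadius/2}{\Point_{\Manifold}}}$ to be a standard disk $\Ball{2}_{\FixedRadius/2} \subset \SurfaceG{\Genus}$ around $\Point$. Fix once and for all a diffeomorphism identifying $\overline{\SurfaceG{\Genus} \setminus \Ball{2}_{\FixedRadius/2}}$ with $\SurfaceGB{\Genus}{1}$; restricting $\Embedding$ and $\TangentialStructure{\Embedding}$ to the complement produces an embedding with $\TangentialFibration$-structure of $\SurfaceGB{\Genus}{1}$ into $\overline{\Manifold \setminus \BallRadius{\FixedRadius/2}{\Point_{\Manifold}}}$. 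Its infinite jet along the boundary is determined by the flat annulus $\DiskRadius{\FixedRadius} \setminus \DiskRadius{\FixedRadius/2}$ with constant $\TwoPlane$-structure, giving exactly the boundary condition $\BoundaryConditionTangential{\BoundaryCondition}$. Conversely, I would define $\HomotopyInverse{+\TwoPlane}$ by gluing: for a representative of the right-hand side, attach $\DiskRadius{\FixedRadius/2}$ along the boundary (using the standard parametrization forced by $\BoundaryConditionTangential{\BoundaryCondition}$) equipped with the constant tangential structure $\TwoPlane$. The resulting embedding of $\SurfaceG{\Genus}$ is flat by construction.

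On the level of subsets of $\Manifold$ equipped with tangential structures these constructions are obviously mutually inverse: removing $\BallRadius{\FixedRadius/2}{\Point_{\Manifold}}$ and gluing back the flat disk recovers the original subsurface, and symmetrically for the other composition. Continuity in both directions follows from the fact that restriction of $C^\infty$-embeddings to a fixed subset and gluing of $C^\infty$-embeddings along a fixed boundary jet are continuous operations in the relevant $C^\infty$-topologies (combined with continuity of the analogous restriction/extension maps for sections of $\TangentialSpace{\Manifold}$, using that the fibration $\TangentialFibration$ is insensitive to these operations since the tangential structure on the glued disk is constant and specified).

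The main technical obstacle is verifying that these maps descend compatibly to the quotients by the two different diffeomorphism groups ($\DiffeoMorphismGroupPointed{\Point}{\SurfaceG{\Genus}}$ on the left, $\DiffeomorphismGroupBoundary{\SurfaceGB{\Genus}{1}}$ on the right). To handle this I would factor through the intermediate embedding spaces using the fibration structure provided by Proposition~\ref{prp:EmbeddingLocallyRetractile} and Proposition~\ref{prp:SpaceSubsurfacesLocallyRetractile}: a diffeomorphism of $\SurfaceG{\Genus}$ fixing $\Point$ and preserving the flat disk is, up to an element of the contractible space of standard parametrizations of $\Ball{2}_{\FixedRadius/2}$ fixing the origin and agreeing with a boundary-fixing diffeomorphism on the complement, exactly a boundary-fixing diffeomorphism of $\SurfaceGB{\Genus}{1}$ under our fixed identification. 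Using Lemma~\ref{lem:ForgettingTangentialFibrationPointed} together with its unpointed analogue, one reduces the case of general $\TangentialFibration$ to $\TangentialFibration = +$, where the statement follows from this direct bijection between subsets. This gives the desired homotopy equivalence with the prescribed inverse $\HomotopyInverse{+\TwoPlane}$.
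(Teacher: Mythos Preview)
Your approach has a genuine gap: the gluing map $\HomotopyInverse{+\TwoPlane}$ you describe does \emph{not} land in the flat space. Recall that membership in $\SpaceSubsurfacesPointedFlatTangential{\TangentialFibration}{\TwoPlane}{\SurfaceG{\Genus}}{\Manifold}$ requires the intersection of the subsurface with the \emph{full} ball $\BallRadius{\FixedRadius}{0}$ to equal $\DiskRadius{\FixedRadius}$. A generic element $\Subsurface$ of the target space $\SpaceSubsurfaceBoundaryTangential{\TangentialFibration}{\Genus}{1}{\overline{\Manifold\setminus\BallRadius{\FixedRadius/2}{\Point_{\Manifold}}}}{\BoundaryConditionTangential{\BoundaryCondition}}$ is only constrained by the germ $\BoundaryConditionTangential{\BoundaryCondition}$ at the sphere of radius $\FixedRadius/2$; away from that germ-neighbourhood, $\Subsurface$ is free to wander through the annular region $\BallRadius{\FixedRadius}{0}\setminus\BallRadius{\FixedRadius/2}{0}$ in an arbitrary way. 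Gluing $\DiskRadius{\FixedRadius/2}$ back on then produces a closed subsurface whose intersection with $\BallRadius{\FixedRadius}{0}$ may strictly contain $\DiskRadius{\FixedRadius}$, so it is not flat. Consequently $\BallRemovalMap$ and your $\HomotopyInverse{+\TwoPlane}$ are not mutual inverses, and no homeomorphism results.

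The paper's proof handles exactly this discrepancy. It observes that $\BallRemovalMap$ is a homeomorphism onto its image, the subspace $\SpaceSubsurfaceBoundaryTangential{\TangentialFibration}{\Genus}{1}{\overline{\Manifold\setminus\BallRadius{\FixedRadius/2}{\Point_{\Manifold}}}}{\BoundaryConditionTangential{\BoundaryCondition}}^{\text{flat}}$ of subsurfaces that meet $\CoordinateChart\setminus\BallRadius{\FixedRadius/2}{0}$ exactly as $\TwoPlane$ does. Then a separate collar argument shows that the inclusion of this ``flat'' subspace into the full target is a homotopy equivalence. Your argument recovers the first step but omits the second, which is where the actual homotopy (as opposed to homeomorphism) content lies.
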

\begin{proof}
  Removing $\BallRadius{\frac{\FixedRadius}{2}}{\Point_{\Manifold}}$ yields
  a map
  $
    \BallRemovalMap
    \colon
    \SpaceSubsurfacesPointedFlatTangential
      {\TangentialFibration}
      {\TwoPlane}
      {\SurfaceG{\Genus}}
      {\Manifold}
    \to
    \SpaceSubsurfaceBoundaryTangential
      {\TangentialFibration}
      {\Genus}
      {1}
      {
        \overline
        {
          \Manifold
          \setminus 
          \BallRadius{\frac{\FixedRadius}{2}}{\Point_{\Manifold}}
        }
      }
      {\BoundaryConditionTangential{\BoundaryCondition}}
  $
  its image is the subspace of
  $
    \SpaceSubsurfaceBoundaryTangential
      {\TangentialFibration}
      {\Genus}
      {1}
      {
        \overline
        {
          \Manifold
          \setminus 
          \BallRadius{\frac{\FixedRadius}{2}}{\Point_{\Manifold}}
        }
      }
      {\BoundaryConditionTangential{\BoundaryCondition}}
  $
  of those subsurfaces that intersect 
  $
    \CoordinateChart
    \setminus 
    \BallRadius{\frac{\FixedRadius}{2}}{0}
  $
  like 
  $
    \TwoPlane
    \cap 
    \CoordinateChart
    \setminus 
    \BallRadius{\frac{\FixedRadius}{2}}{0}
  $%
  , and we will denote it by 
  $
    \SpaceSubsurfaceBoundaryTangential
      {\TangentialFibration}
      {\Genus}
      {1}
      {
        \overline
        {
          \Manifold
          \setminus 
          \BallRadius{\frac{\FixedRadius}{2}}{\Point_{\Manifold}}
        }
      }
      {\BoundaryConditionTangential{\BoundaryCondition}}
    ^{\text{flat}}
  $%
  . Evidently $\BallRemovalMap$ is a homeomorphism onto its image. 
  Furthermore the inclusion 
  $
    \SpaceSubsurfaceBoundaryTangential
      {\TangentialFibration}
      {\Genus}
      {1}
      {
        \overline
        {
          \Manifold
          \setminus 
          \BallRadius{\frac{\FixedRadius}{2}}{\Point_{\Manifold}}
        }
      }
      {\BoundaryConditionTangential{\BoundaryCondition}}
    ^{\text{flat}}
    \to
    \SpaceSubsurfaceBoundaryTangential
      {\TangentialFibration}
      {\Genus}
      {1}
      {
        \overline
        {
          \Manifold
          \setminus 
          \BallRadius{\frac{\FixedRadius}{2}}{\Point_{\Manifold}}
        }
      }
      {\BoundaryConditionTangential{\BoundaryCondition}}
  $
  is a homotopy equivalence, as can be seen using collars.
\end{proof}

  We define the \introduce{pointed stabilization maps}
$\PointedStabilizationMap{\Genus}$ 
via the composition indicated in the following diagram:
\[
  \begin{tikzcd}
    \SpaceSubsurfacesPointedTangential
      {\TangentialFibration}
      {\TwoPlane}
      {\SurfaceG{\Genus}}
      {\Manifold}
      \ar[r,"\HomotopyInverse{\text{flat}}"]
      \ar[dd,"\PointedStabilizationMap{\Genus}"]
    &
    \SpaceSubsurfacesPointedFlatTangential
      {+}
      {\TwoPlane}
      {\SurfaceG{\Genus}}
      {\Manifold}
      \ar[r,"\BallRemovalMap"]
    &
    \SpaceSubsurfaceBoundaryTangential
      {\TangentialFibration}
      {\Genus}
      {1}
      {
        \overline
        {
          \Manifold
          \setminus 
          \BallRadius{\frac{\FixedRadius}{2}}{\Point_{\Manifold}}
        }
      }
      {\BoundaryConditionTangential{\BoundaryCondition}}
      \ar[d,"\StabilizationBeta{\Genus}{1}"]
    \\
    &
    &
    \SpaceSubsurfaceBoundaryTangential
      {\TangentialFibration}
      {\Genus}
      {2}
      {
        \Elongation
        {
          \overline
          {
            \Manifold
            \setminus 
            \BallRadius{\frac{\FixedRadius}{2}}{\Point_{\Manifold}}
          }
        }
        {1}
      }
      {\BoundaryConditionTangential{\BoundaryCondition'}}
      \ar[d,"\StabilizationAlpha{\Genus}{2}"]
    \\
    \SpaceSubsurfacesPointedTangential
      {\TangentialFibration}
      {\TwoPlane}
      {\SurfaceG{\Genus+1}}
      {\Manifold}
    &
    \SpaceSubsurfacesPointedFlatTangential
      {+}
      {\TwoPlane}
      {\SurfaceG{\Genus+1}}
      {\Manifold}
      \ar[l,hook]
    &
    \SpaceSubsurfaceBoundaryTangential
      {\TangentialFibration}
      {\Genus}
      {1}
      {
        \Elongation
        {
          \overline
          {
            \Manifold
            \setminus 
            \BallRadius{\frac{\FixedRadius}{2}}{\Point_{\Manifold}}
          }
        }
        {2}
      }
      {\BoundaryConditionTangential{\BoundaryCondition}}
      \ar[l,"\HomotopyInverse{+\TwoPlane}"]
  \end{tikzcd} 
\]
Note that we required the boundary condition of the lower right space of 
subsurfaces to agree with the boundary condition of the top right space.

  In the pointed case, we say that a space of $\TangentialFibration$-structures 
  $\pi_0$-stabilizes at $\Genus$ if $\PointedStabilizationMap{\Genus'}$ induces 
  an isomorphism on $\pi_0$ provided that $\Genus'\geq\Genus$ and a surjection 
  for all $\Genus'\geq \Genus-1$. 
  Since $\PointedStabilizationMap{\Genus}$ is defined in terms of homotopy 
  equivalences (Corollary~\ref{crl:FlatHomotopyEquivalenceTangential} and 
  Corollary~\ref{crl:TakingOutBallHomotopyEquivalence}) and stabilization maps 
  of type $\alpha$ and $\beta$, one immediately obtains (using 
  Theorem~\ref{thm:HomologicalStabilityFiberSimplyConnected} for the second 
  part):
  \begin{theorem}
  \label{thm:HomologicalStabilityPointed}
  Suppose $(\Manifold,\Point_{\Manifold})$ is an at least $5$-dimensional, 
  simply-connected, pointed manifold with a $\TrivialityIndex$-trivial space of 
  $\TangentialFibration$-structures of subplanes of $\TangentBundle{\Manifold}$ 
  $
    \TangentialFibration
    \colon
    \TangentialSpace{\Manifold}
    \to
    \Grassmannian{2}{\TangentBundle{\Manifold}}
  $
  which $\pi_0$ stabilizes at $\PiZeroStabilization$, then
  the homology of $\MappingPair{\PointedStabilizationMap{\Genus}}$ 
  vanishes in degrees 
  $
    \ast
    \leq 
    \min
      \left\{
        \apply{\AlphaBound}{\Genus}
        ,
        \apply{\BetaBound}{\Genus}
      \right\}
  $
  where $\AlphaBound$ and $\BetaBound$ are defined as in  
  Definition~\ref{dfn:StabilityBounds}.

  In particular if $\Fiber{\TwoPlane}{\TangentialFibration}$ is 
  simply-connected, then the homology of
  $\MappingPair{\PointedStabilizationMap{\Genus}}$ vanishes in degrees
  $
    \ast
    \leq
    \left\lfloor
      \frac{2}{3}\Genus
    \right\rfloor
  $%
  .
\end{theorem}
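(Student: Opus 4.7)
The plan is to exploit the definition of $\PointedStabilizationMap{\Genus}$ as a composition, and reduce the statement to Theorem~\ref{thm:HomologicalStability} applied to the manifold $\Manifold' \coloneqq \overline{\Manifold \setminus \BallRadius{\frac{\FixedRadius}{2}}{\Point_{\Manifold}}}$. By construction, the defining diagram of $\PointedStabilizationMap{\Genus}$ exhibits it as a composition of the homotopy equivalences $\HomotopyInverse{\text{flat}}$ (Corollary~\ref{crl:FlatHomotopyEquivalenceTangential}), $\BallRemovalMap$ (Corollary~\ref{crl:TakingOutBallHomotopyEquivalence}), their respective inverses, and the honest stabilization maps $\StabilizationBeta{\Genus}{1}$ and $\StabilizationAlpha{\Genus}{2}$ of subsurfaces in $\Manifold'$. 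Consequently, the mapping pair $\MappingPair{\PointedStabilizationMap{\Genus}}$ is weakly equivalent to the mapping pair of the composition $\StabilizationAlpha{\Genus}{2} \circ \StabilizationBeta{\Genus}{1}$, so it suffices to bound the homology of the latter.

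To apply Theorem~\ref{thm:HomologicalStability} to $\Manifold'$, I first verify that $\Manifold'$ satisfies the ambient hypotheses: it is still at least $5$-dimensional and simply-connected (removing an open ball from a simply-connected manifold of dimension $\geq 3$ preserves simple connectivity), and the newly created boundary component $\Sphere{\DimensionIndex - 1}$ is a simply-connected codimension-$0$ piece of $\BoundaryManifold{\Manifold'}$, which can serve as $\DistBoundaryManifold{0}{\Manifold'}$. The tangential structure $\TangentialFibration$ pulls back to a tangential structure on $\Manifold'$ which inherits both $\TrivialityIndex$-triviality and $\pi_{0}$-stabilization from the corresponding properties on $\Manifold$ (one uses that the isotopies and bordisms appearing in Definitions~\ref{dfn:Absorbs} and~\ref{dfn:kTriviality} can be arranged away from a small ball around $\Point_{\Manifold}$, so the data defining these properties transport to $\Manifold'$). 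Theorem~\ref{thm:HomologicalStability} then gives that $\HomologyOfSpace{\ast}{\MappingPair{\StabilizationAlpha{\Genus}{2}}} = 0$ for $\ast \leq \apply{\AlphaBound}{\Genus}$ and $\HomologyOfSpace{\ast}{\MappingPair{\StabilizationBeta{\Genus}{1}}} = 0$ for $\ast \leq \apply{\BetaBound}{\Genus}$.

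The final combinatorial step is standard: for any composition $g \circ f$ there is a cofiber sequence of pairs
\[
  \MappingPair{f} \longrightarrow \MappingPair{g \circ f} \longrightarrow \MappingPair{g},
\]
inducing a long exact sequence in homology. Since the outer terms have vanishing homology in degrees $\leq \apply{\BetaBound}{\Genus}$ and $\leq \apply{\AlphaBound}{\Genus}$ respectively, the middle term vanishes in degrees up to $\min\{\apply{\AlphaBound}{\Genus}, \apply{\BetaBound}{\Genus}\}$. This establishes the general bound. For the ``in particular'' clause, rather than checking $\TrivialityIndex$-triviality by hand, I would apply Theorem~\ref{thm:HomologicalStabilityFiberSimplyConnected} directly to $\Manifold'$, noting that the homotopy fiber of $\TangentialFibration$ is unchanged by pullback along $\Manifold' \hookrightarrow \Manifold$ and remains simply-connected; this yields the explicit bounds $\apply{\AlphaBound}{\Genus} = \lfloor \tfrac{1}{3}(2\Genus + 1) \rfloor$ and $\apply{\BetaBound}{\Genus} = \lfloor \tfrac{2}{3}\Genus \rfloor$, whose minimum is $\lfloor \tfrac{2}{3}\Genus \rfloor$.

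The main technical obstacle is ensuring the clean identification of $\PointedStabilizationMap{\Genus}$ with the composition $\StabilizationAlpha{\Genus}{2} \circ \StabilizationBeta{\Genus}{1}$ at the level of mapping pairs: one must be careful that the homotopy equivalences $\HomotopyInverse{\text{flat}}$ and $\HomotopyInverse{+\TwoPlane}$, which bracket the stabilization maps in the defining diagram, genuinely induce weak equivalences on mapping pairs. This follows from the two-out-of-three property applied to the horizontal homotopy equivalences in the defining diagram, combined with the fact that homotopy equivalences of maps induce weak equivalences on mapping cones (and hence on mapping pairs up to the standard shift). Verifying the transfer of $\TrivialityIndex$-triviality from $\Manifold$ to $\Manifold'$ is the other subtle point, but the flexibility in isotoping stabilizing bordisms away from the excised ball in the proof of Proposition~\ref{prp:ConnectedKTriviality} makes this routine.
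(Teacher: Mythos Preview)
Your proposal is correct and follows essentially the same route as the paper: the paper's argument (stated in the sentence immediately preceding the theorem) is precisely that $\PointedStabilizationMap{\Genus}$ factors through homotopy equivalences and the stabilizations $\StabilizationBeta{\Genus}{1}$, $\StabilizationAlpha{\Genus}{2}$ on $\Manifold' = \overline{\Manifold \setminus \BallRadius{\frac{\FixedRadius}{2}}{\Point_{\Manifold}}}$, so the bound follows from Theorem~\ref{thm:HomologicalStability} (and Theorem~\ref{thm:HomologicalStabilityFiberSimplyConnected} for the second part). You have supplied more detail than the paper does---in particular the cofiber-sequence bookkeeping and the verification that the hypotheses transfer to $\Manifold'$---but the underlying strategy is identical.
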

\section{Homological Stability for Symplectic Subsurfaces}
  \label{scn:SymplecticSubsurfaces}
In this last section we will explain how to use 
Theorem~\ref{thm:HomologicalStabilityPointed} to prove homological stability 
for pointed symplectic subsurfaces of a given closed simply-connected 
symplectic manifold $(\Manifold,\SymplecticForm)$ (i.e. $\SymplecticForm$ is a 
point-wise non-degenerate $2$-form on $\Manifold$) of dimension 
$2\DimensionIndex\geq 6$.
The proof will use a classical theorem, that is proven via the h-principle, to 
relate symplectic subsurfaces to tangential structures as they were defined in 
the previous parts of this paper. 

Let 
$
  \ProjectionGrassmannian
  \colon
  \Grassmannian{2}{\TangentBundle{\Manifold}}
  \to
  \Manifold
$
denote the Grassmannian of oriented two-planes in $\TangentBundle{\Manifold}$ 
and $\SurfaceG{\Genus}$ an oriented, connected, closed surface of genus 
$\Genus$.
Fix 
$\Point \in \SurfaceG{\Genus}$, $\Point_{\Manifold}\in \Manifold$ and 
$
  \SymplecticTwoPlane
  \in
  \Grassmannian{2}{\TangentSpace{\Point_{\Manifold}}{\Manifold}}
$
such that $\at{\SymplecticForm}{\SymplecticTwoPlane}$ is 
non-degenerate and the orientation of $\SymplecticTwoPlane$ agrees with the 
orientation 
induced by $\at{\SymplecticForm}{\SymplecticTwoPlane}$.
\begin{definition}
  \label{dfn:PointedSymplecticEmbedding}
  We say an embedding 
  $
    \Embedding
    \colon
    \SurfaceG{\Genus}
    \to
    \Manifold
  $
  is an \introduce{oriented, pointed symplectic embedding} if 
  \begin{enumerate}[(i)]
  \item 
    $
      \apply{\Embedding}{\Point}=\Point_{\Manifold}
    $
    and 
    $
      \apply
        {\Differential{\Embedding}}
        {\TangentSpace{\Point}{\SurfaceG{\Genus}}}
      =
      \SymplecticTwoPlane
    $
  \item 
    $
      \at
        {\SymplecticForm}
        {\apply
          {\Differential{\Embedding}}
          {\TangentBundle{\SurfaceG{\Genus}}}
        }
    $
    is non-degenerate.
  \item
  \label{itm:PositiveSymplecticForm}
    $
      \int_{\SurfaceG{\Genus}} 
      \Embedding^{*}\SymplecticForm
      >
      0
    $
  \end{enumerate}
  We equip the set of symplectic embeddings with the $C^\infty$-topology and we 
  denote this space by 
  $\SpaceSymplecticEmbeddings{\SymplecticTwoPlane}{\SurfaceG{\Genus}}{\Manifold}$.
  
  Similarly we say that a smooth map 
  $
    \FormalSymplecticEmbedding
    \colon
    \SurfaceG{\Genus}
    \to
    \Grassmannian{2}{\TangentBundle{\Manifold}}
  $ 
  is a \introduce{formal solution to the oriented symplectic embedding problem}
  if 
  \begin{enumerate}[(i)]
  \item 
    $\ProjectionGrassmannian \circ \FormalSymplecticEmbedding$ is an embedding 
    such that 
    $
      \apply{(\ProjectionGrassmannian\circ \FormalSymplecticEmbedding)}{\Point}
      =
      \Point_{\Manifold}
    $
    and
    $
      \apply
        {\Differential{(\ProjectionGrassmannian\circ\FormalSymplecticEmbedding)}}
        {\TangentSpace{\Point}{\SurfaceG{\Genus}}}
      =
      \apply{\FormalSymplecticEmbedding}{\Point}
      =
      \SymplecticTwoPlane
    $
  \item 
    $
      \at
        {\SymplecticForm}
        {\apply{\FormalSymplecticEmbedding}{\Point'}}$ is non-degenerate for 
        every point $\Point'\in\SurfaceG{\Genus}$
  \item 
    $
      \int_{\SurfaceG{\Genus}} 
      (
        \ProjectionGrassmannian
        \circ
        \FormalSymplecticEmbedding
      )^*
      \SymplecticForm
      >
      0
    $
  \end{enumerate}
\end{definition}

Note that the last condition ensures that the orientations of $\Sigma_g$ 
induced by $\omega$ and by the orientation of $\Sigma_g$ agree.
\begin{remark}
  For an oriented, pointed symplectic embedding 
  $
    \Embedding
    \colon 
    \SurfaceG{\Genus}
    \to 
    \Manifold
  $%
  , the Grassmannian differential 
  $
    \GrassmannianDifferential{\Embedding}
    \colon
    \SurfaceG{\Genus}
    \to
    \Grassmannian{2}{\TangentBundle{\Manifold}}
  $
  is a formal solution to the oriented symplectic embedding problem.
\end{remark}
\begin{definition}
  We call a continuous map 
  $
    \SolutionHomotopy
    \colon
    \SurfaceG{\Genus}
    \times 
    [0,1]
    \to 
    \Grassmannian{2}{\TangentBundle{\Manifold}}
  $ 
  a \introduce{solution homotopy} if 
  \begin{enumerate}[(i)]
  \item
    There exists an embedding 
    $
      \Embedding
      \colon
      \SurfaceG{\Genus}
      \to
      \Manifold
    $ 
    such that 
    $
      \apply{\SolutionHomotopy}{-,0}
    $
    agrees with $\GrassmannianDifferential{\Embedding}$, 
    $\apply{\Embedding}{\Point}=\Point_{\Manifold}$ and 
    $
      \apply
        {\Differential{\Embedding}}
        {\TangentSpace{\Point}{\Manifold}}
      =
      \SymplecticTwoPlane
    $ 
  \item 
    The following diagram commutes: 
    \[
      \begin{tikzcd}
        \SurfaceG{\Genus}
        \times
        [0,1]
          \ar[d,"\Projection{\SurfaceG{\Genus}}"]
          \ar[r,"\SolutionHomotopy"]
        &
        \Grassmannian{2}{\TangentBundle{\Manifold}}
          \ar[d,"\ProjectionGrassmannian"]
        \\
        \SurfaceG{\Genus}
          \ar[r,"\Embedding"]
        &
        \Manifold
      \end{tikzcd}
    \]
  \item
    For all $t\in[0,1]$, we have
    $\apply{\SolutionHomotopy}{\Point,t}=\SymplecticTwoPlane$
  \item
    $\apply{\SolutionHomotopy}{-,1}$ is a formal solution to the oriented 
    symplectic embedding problem
  \end{enumerate}
  The set of solution homotopies is a subspace of 
  $
    \EmbeddingSpace{\SurfaceG{\Genus}}{\Manifold}
    \times
    \MappingSpace
      {\SurfaceG{\Genus}}
      {\MappingSpace{[0,1]}{\Grassmannian{2}{\TangentBundle{\Manifold}}}}
  $
  and hence inherits a topology, where the first factor is equipped with the 
  $C^\infty$ topology and the second one with the compact-open topology. 
\end{definition}
Note that there is an inclusion from the space of symplectic embeddings into 
the space of solution homotopies by sending an embedding to the constant 
solution homotopy over that embedding. The proof of the following theorem can 
be found in Section~12 of \cite{Eliashberg}.
\begin{theorem}
\label{thm:FormalSymplectichPrinciple}
  If 
  $
    (\Manifold,\SymplecticForm)$ is an at least six-dimensional symplectic 
    manifold, then the inclusion of the symplectic embeddings into the space of 
    solution homotopies is a weak equivalence. 
\end{theorem}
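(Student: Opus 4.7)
The plan is to deduce the theorem from Gromov's h-principle for symplectic immersions, passing from immersions to embeddings by a transversality argument available because $\dim \Manifold \geq 6$. First I would introduce the auxiliary space
$
  \text{SImm}_{\SymplecticTwoPlane}\!\left(\SurfaceG{\Genus},\Manifold\right)
$
of \emph{pointed oriented symplectic immersions}, defined exactly as in Definition~\ref{dfn:PointedSymplecticEmbedding} but dropping the injectivity requirement on $\Embedding$, together with the corresponding space of formal solutions where $\ProjectionGrassmannian\circ\FormalSymplecticEmbedding$ is only required to be an immersion. Since $\dim \Manifold \geq 6$ and $\dim \SurfaceG{\Genus}=2$, generic immersions of $\SurfaceG{\Genus}$ into $\Manifold$ are embeddings (self-intersections have expected dimension $2+2-\dim\Manifold \leq -2$), and the same holds parametrically for compact parameter spaces of dimension $\leq \dim\Manifold-5$. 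An Isotopy Extension / multi-jet transversality argument on the evaluation map therefore shows that the inclusions
$
  \SpaceSymplecticEmbeddings{\SymplecticTwoPlane}{\SurfaceG{\Genus}}{\Manifold}
  \hookrightarrow
  \text{SImm}_{\SymplecticTwoPlane}\!\left(\SurfaceG{\Genus},\Manifold\right)
$
and the corresponding inclusion on the formal side are both weak equivalences. Thus it suffices to prove the h-principle at the level of immersions.

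Next I would apply Gromov's parametric h-principle (the relevant version being essentially Theorem~12.1.1 of \cite{Eliashberg}) to the first order partial differential relation $\mathcal{R}\subset J^{1}(\SurfaceG{\Genus},\Manifold)$ whose holonomic sections are pointed oriented symplectic immersions. The condition that $\at{\SymplecticForm}{\apply{\Differential{\Embedding}}{TX}}$ be non-degenerate is \emph{open} and \emph{$\Diff$-invariant} on $J^{1}$, and the underlying map-jet relation (1-jets of immersions) is ample in the sense required by convex integration; these are exactly the hypotheses under which Gromov's convex integration theorem produces a weak equivalence between the space of holonomic solutions and the space of formal solutions on any open manifold. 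The source $\SurfaceG{\Genus}$ itself is closed, but since we work pointedly, we can restrict to the open manifold $\SurfaceG{\Genus}\setminus\{\Point\}$ where the h-principle applies directly, and then extend across $\Point$ using the fact that both the data and the solution are prescribed exactly at $\Point$ (the $1$-jet is fixed by $\SymplecticTwoPlane$). This gives the weak equivalence between symplectic immersions with fixed $1$-jet at $\Point$ and formal solutions with fixed value $\SymplecticTwoPlane$ at $\Point$.

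Finally one has to check that the orientation/positivity condition~(\ref{itm:PositiveSymplecticForm}) of Definition~\ref{dfn:PointedSymplecticEmbedding} is compatible with this equivalence. Because $\int_{\SurfaceG{\Genus}} \Embedding^{*}\SymplecticForm$ is a continuous real-valued function on both spaces and the inclusion maps one positivity locus into the other, the equivalence restricts to the positive components on both sides; this is a component-level check, not an obstruction.

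The main obstacle I anticipate is the combination of the \emph{closed source} $\SurfaceG{\Genus}$ with the \emph{parametric and pointed} setting: the standard formulations of Gromov's theorem require either an open source, a microflexibility hypothesis, or the relation to be over a thickening. Handling this requires trading the point $\Point$ for a small disk whose $1$-jet data is controlled, applying h-principle on the complement, and then gluing back across the disk using the relative form of convex integration; keeping track of the parametric and pointed nature of the statement simultaneously is where the proof in \cite{Eliashberg} becomes technical, and adapting it to the present setup (in particular preserving the base-point condition $\apply{\TangentialStructure{\Embedding}}{\Point}=\SymplecticTwoPlane$ throughout the homotopy) is the step requiring the most care.
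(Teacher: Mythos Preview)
The paper does not supply its own proof of this theorem: immediately before the statement it says ``The proof of the following theorem can be found in Section~12 of \cite{Eliashberg}.'' So there is nothing to compare your argument against beyond that citation, and your proposal is essentially an attempt to unpack what that reference contains.

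Your outline is in the right spirit, but a couple of points deserve sharpening. First, the relevant statement in \cite{Eliashberg} (Theorem~12.1.1 and its neighbours) is formulated for \emph{isosymplectic} immersions, i.e.\ one fixes a symplectic form on the source and requires $\Embedding^{*}\SymplecticForm$ to equal it, together with the obvious cohomological constraint; the condition in the present paper is only that $\Embedding^{*}\SymplecticForm$ be non-degenerate. For surfaces these are reconciled via Moser's argument (any two area forms of the same total area are isotopic), and the positivity condition~(\ref{itm:PositiveSymplecticForm}) is exactly what guarantees the cohomological hypothesis, so this is fine but should be said explicitly rather than treated as a ``component-level check'' at the end. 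Second, your sentence about the immersion relation being ``ample in the sense required by convex integration'' conflates two different mechanisms: the result you want is Gromov's h-principle for open $\Diff$-invariant relations, not convex integration for ample relations; ampleness of the bare immersion relation does not by itself yield the h-principle for the symplectic sub-relation.

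The obstacle you flag at the end---the closed source combined with the pointed, parametric setting---is genuine and is precisely where the work in \cite{Eliashberg} lies; your suggested workaround (remove the point, apply the h-principle on the open complement with the $1$-jet fixed near $\Point$, then glue) is exactly how this is handled there. So your proposal is an accurate high-level summary of the cited proof, modulo the two clarifications above.
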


\subsection{Interpreting Solution Homotopies as Tangential Structures:}
Inspired by Theorem \ref{thm:FormalSymplectichPrinciple}, we want to construct 
a space of $\TangentialFibration$-structures of subplanes of 
$\TangentBundle{\Manifold}$ such that the space of subsurfaces with that 
tangential structures is the space of solution homotopies.

We call an element 
$
  \SymplecticTwoPlane
  \in
  \Grassmannian{2}{\TangentBundle{\Manifold}}
$
a \introduce{symplectic $2$-plane} if 
$\at{\SymplecticForm}{\SymplecticTwoPlane}$ is non-degenerate and we denote by 
$\SymplecticGrassmannian{2}{\TangentBundle{\Manifold}}$ the (open) subspace of 
symplectic two-planes of $\TangentBundle{\Manifold}$. 
Analogously we define 
$\SymplecticGrassmannian{2}{\TangentSpace{\Point'}{\Manifold}}$.
For a $\SymplecticTwoPlane'\in 
\Grassmannian{2}{\TangentSpace{\Point'}{\Manifold}}$
we will denote the homotopy fiber of the inclusion 
$
  \SymplecticGrassmannian{2}{\TangentSpace{\Point'}{\Manifold}}
  \to
  \Grassmannian{2}{\TangentSpace{\Point'}{\Manifold}}
$
by 
$\SymplecticPathSpace{2}{\Manifold}_{\SymplecticTwoPlane'}$. 
More explicitly this is the space of paths in 
$\Grassmannian{2}{\TangentSpace{\Point'}{\Manifold}}$ starting at 
$\SymplecticTwoPlane'$ 
and ending in $\SymplecticGrassmannian{2}{\TangentSpace{\Point'}{\Manifold}}$. 
If we allow $\SymplecticTwoPlane'$ to vary, we obtain the mapping path space 
$\SymplecticPathSpace{2}{\Manifold}_{\Point'}$ of the inclusion 
$
  \SymplecticGrassmannian{2}{\TangentSpace{\Point'}{\Manifold}}
  \to 
  \Grassmannian{2}{\TangentSpace{\Point'}{\Manifold}}
$%
, i.e. 
\[
  \SymplecticPathSpace{2}{\Manifold}_{\Point'}
  \coloneqq
  \left\{
    \Path
    \colon 
    [0,1]
    \to 
    \Grassmannian{2}{\TangentSpace{\Point'}{\Manifold}}
  \middle| 
    \apply{\Path}{1}
    \in 
    \SymplecticGrassmannian{2}{\TangentSpace{\Point'}{\Manifold}}
  \right\}
\]
This space is a subset of the following space, which is equipped with the 
compact open topology
\[
  \SymplecticPathSpace{2}{\Manifold}
  \coloneqq
  \left\{
    \Path
    \colon
    [0,1]
    \to
    \Grassmannian{2}{\TangentBundle{\Manifold}}
  \middle|
    \apply{\Path}{1}
    \in
    \SymplecticGrassmannian{2}{\TangentBundle{\Manifold}}
    \text{ and }
    \ProjectionGrassmannian \circ \Path
    \text{ is constant}
  \right\}
\]
If we trivialize $\Grassmannian{2}{\TangentBundle{\Manifold}}\to \Manifold$ via 
a Darboux chart defined on $U\subset \Manifold$, we conclude that the inclusion 
$
  \SymplecticGrassmannian{2}{\TangentBundle{\Manifold}}
  \to 
  \Grassmannian{2}{\TangentBundle{\Manifold}}
$
is locally equivalent to 
$
  \SymplecticGrassmannian{2}{\TangentSpace{\Point'}{\Manifold}}
  \times
  U
  \to
  \Grassmannian{2}{\TangentSpace{\Point'}{\Manifold}}
  \times
  U
$%
. This implies that 
\begin{align*}
  \SymplecticTangentialStructure
  \colon
  \SymplecticPathSpace{2}{\Manifold}
  &
  \to 
  \Grassmannian{2}{\TangentBundle{\Manifold}}
  \\
  \Path
  &
  \mapsto 
  \apply{\Path}{0}
\end{align*}
is locally equivalent to 
$
  \SymplecticPathSpace{2}{\Manifold}_{\Point'}
  \times
  U
  \to
  \Grassmannian{2}{\TangentSpace{\Point'}{\Manifold}}
  \times
  U
$%
, which implies that this map is a Hurewicz-fibration as it is the product of 
two Hurewicz-fibrations, one being the projection of the mapping path space 
$
  \SymplecticPathSpace{2}{\Manifold}_{\Point'}
  \to 
  \Grassmannian{2}{\TangentSpace{\Point'}{\Manifold}}
$ 
and the other one being the identity.
Using Theorem~13 in Chapter~2.7 of \cite{spanier}, we conclude that it is 
indeed a Hurewicz-fibration as it is locally a Hurewicz-fibration and the 
basespace $\Grassmannian{2}{\TangentBundle{\Manifold}}$ is paracompact.
\begin{lemma}
  The fiber of 
  \begin{align*}
    \SymplecticTangentialStructure
    \colon
    \SymplecticPathSpace{2}{\Manifold}
    &
    \to 
    \Grassmannian{2}{\TangentBundle{\Manifold}}
    \\
    \Path
    &
    \mapsto 
    \apply{\Path}{0}
  \end{align*} 
  is simply-connected.
\end{lemma}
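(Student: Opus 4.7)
Since each path $\Path$ in $\SymplecticPathSpace{2}{\Manifold}$ has constant projection to $\Manifold$, the fiber of $\SymplecticTangentialStructure$ over a point $\TwoPlane_{0} \in \Grassmannian{2}{\TangentSpace{\Point'}{\Manifold}}$ coincides with the homotopy fiber $F$ at $\TwoPlane_{0}$ of the inclusion $i\colon \SymplecticGrassmannian{2}{V} \hookrightarrow \Grassmannian{2}{V}$, where $V \coloneqq \TangentSpace{\Point'}{\Manifold}$. After choosing a symplectic basis of $(V,\SymplecticForm)$, I identify $V$ with $\Reals^{2\DimensionIndex}$ with its standard symplectic form, where $2\DimensionIndex \geq 6$. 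The plan is to verify that $\HomotopyGroup{1}{F} = 0$ via the fiber-sequence long exact sequence
\[
  \HomotopyGroup{2}{\SymplecticGrassmannian{2}{V}}
  \to
  \HomotopyGroup{2}{\Grassmannian{2}{V}}
  \to
  \HomotopyGroup{1}{F}
  \to
  \HomotopyGroup{1}{\SymplecticGrassmannian{2}{V}}
  \to
  \HomotopyGroup{1}{\Grassmannian{2}{V}},
\]
by showing that the two $\pi_{1}$-terms on the right vanish and that the leftmost map is surjective.

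For the oriented Grassmannian, $\Grassmannian{2}{V} = SO(2\DimensionIndex)/(SO(2) \times SO(2\DimensionIndex-2))$; the long exact sequence of this principal fibration, combined with $\HomotopyGroup{1}{SO(k)} = \Integers/2$ and $\HomotopyGroup{2}{SO(k)} = 0$ for $k \geq 3$, yields $\HomotopyGroup{1}{\Grassmannian{2}{V}} = 0$ and $\HomotopyGroup{2}{\Grassmannian{2}{V}} = \Integers$. For the symplectic analog, the symplectic group $Sp(V,\SymplecticForm)$ acts transitively on each connected component of $\SymplecticGrassmannian{2}{V}$ (any symplectic basis of a symplectic $2$-plane extends to one of $V$) with stabilizer $Sp(2,\Reals) \times Sp(2\DimensionIndex-2,\Reals)$ at any chosen basepoint. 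Since $Sp(2k,\Reals)$ deformation retracts onto its maximal compact subgroup $U(k)$, one has $\HomotopyGroup{2}{Sp(2k,\Reals)} = 0$ and $\HomotopyGroup{1}{Sp(2k,\Reals)} = \Integers$ via the determinant; moreover the inclusion-induced map $\Integers \oplus \Integers = \HomotopyGroup{1}{Sp(2,\Reals) \times Sp(2\DimensionIndex-2,\Reals)} \to \HomotopyGroup{1}{Sp(2\DimensionIndex,\Reals)} = \Integers$ is addition $(a,b) \mapsto a+b$. The long exact sequence of this principal fibration then yields $\HomotopyGroup{1}{\SymplecticGrassmannian{2}{V}} = 0$ (the addition is surjective) and $\HomotopyGroup{2}{\SymplecticGrassmannian{2}{V}} = \Integers$ (the kernel of addition) at the chosen basepoint.

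For surjectivity of the leftmost map, I fix a compatible almost complex structure $J$ on $(V,\SymplecticForm)$. Since $U(\DimensionIndex) \subset Sp(2\DimensionIndex,\Reals)$ and $U(1) \times U(\DimensionIndex-1) \subset Sp(2,\Reals) \times Sp(2\DimensionIndex-2,\Reals)$ are inclusions of maximal compact subgroups, the induced map on quotients identifies the selected component of $\SymplecticGrassmannian{2}{V}$ up to homotopy with $U(\DimensionIndex)/(U(1) \times U(\DimensionIndex-1)) = \mathbb{C}P^{\DimensionIndex-1}$, under which $i$ sends a complex line to its underlying real oriented $2$-plane. The pullback of the tautological oriented $2$-plane bundle on $\Grassmannian{2}{V}$ is then the real $2$-plane bundle underlying the tautological complex line bundle on $\mathbb{C}P^{\DimensionIndex-1}$, whose Euler class equals the first Chern class of the complex line bundle and therefore generates $H^{2}(\mathbb{C}P^{\DimensionIndex-1};\Integers)$. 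As the Euler class likewise generates $H^{2}(\Grassmannian{2}{V};\Integers)$, the pullback on $H^{2}$ is an isomorphism; by Hurewicz (both spaces are simply-connected from the previous step), the map $\HomotopyGroup{2}{\SymplecticGrassmannian{2}{V}} \to \HomotopyGroup{2}{\Grassmannian{2}{V}}$ is surjective. Feeding these three inputs into the long exact sequence gives $\HomotopyGroup{1}{F} = 0$.

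The main obstacle is this last step, the surjectivity on $\HomotopyGroup{2}$: the essential ingredient is the compatibility of tautological bundles under $i$, which turns the comparison of $\pi_{2}$-generators into a comparison of Euler classes and hence into an elementary check in $H^{2}$.
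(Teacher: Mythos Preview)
Your argument is correct and reaches the same conclusion as the paper, but by a somewhat different route. Both proofs identify the fiber with the homotopy fiber of the inclusion $i\colon\SymplecticGrassmannian{2}{V}\hookrightarrow\Grassmannian{2}{V}$ (with $V=\TangentSpace{\Point'}{\Manifold}$) and introduce a compatible almost complex structure $J$ to identify the relevant component of $\SymplecticGrassmannian{2}{V}$ with $\mathbb{C}P^{\DimensionIndex-1}$. From there the paper compares the two circle bundles $S^{2\DimensionIndex-1}=V^{\mathbb{C}}_{1}(V)\to\mathbb{C}P^{\DimensionIndex-1}$ and $V^{\Reals}_{2}(V)\to\Grassmannian{2}{V}$: the induced map on fibers $U(1)\to SO(2)$ is an isomorphism and both total spaces are $(2\DimensionIndex-3)$-connected, so a five-lemma comparison of the long exact sequences shows that $i$ induces isomorphisms on $\pi_{j}$ throughout this range, giving that the homotopy fiber is even $2$-connected. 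You instead compute $\pi_{1}$ and $\pi_{2}$ of each Grassmannian separately from its homogeneous-space description and then verify surjectivity on $\pi_{2}$ via an Euler-class comparison; this last step is essentially the cohomological dual of the paper's observation that the map on $S^{1}$-fibers is an isomorphism. The paper's route is shorter and yields the stronger connectivity statement for free, while yours makes the individual homotopy groups explicit and avoids setting up the frame-bundle comparison.
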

\begin{proof}
  The claim that $\SymplecticTangentialStructure$ is indeed a fibration and 
  hence a space of $\SymplecticTangentialStructure$-structures of subplanes was 
  just shown.
  Using Proposition~\ref{prp:ConnectedKTriviality} and 
  Corollary~\ref{crl:PiZeroStabilizes} it suffices to show that the fiber of 
  $\SymplecticTangentialStructure$ is simply-connected:
  
  The fiber over an oriented 2-plane $\SymplecticTwoPlane'$ in 
  $\TangentSpace{\Point'}{\Manifold}$ is given by the set 
  of paths in $\Grassmannian{2}{\TangentSpace{\Point'}{\Manifold}}$ from 
  $\SymplecticTwoPlane'$ to 
  $\SymplecticGrassmannian{2}{\TangentSpace{\Point'}{\Manifold}}$.
  As we have mentioned before, this is the homotopy fiber of 
  $
    \SymplecticGrassmannian{2}{\TangentSpace{\Point'}{\Manifold}}
    \to
    \Grassmannian{2}{\TangentSpace{\Point'}{\Manifold}}
  $%
  .
  Choose some almost complex structure $J$ on $\TangentBundle{\Manifold}$ 
  compatible with the symplectic form $\SymplecticForm$.
  Let 
  $V^{\Reals}_2(\TangentSpace{\Point'}{\Manifold})$ denote 
  the compact $2$-Stiefel manifold i.e. the space of real (orthogonal) 
  $2$-frames of $\TangentSpace{\Point'}{\Manifold}$ and let 
  $V^{symp}_2(\TangentSpace{\Point'}{\Manifold})$ denote the space of 
  $2$-frames $(v_1,v_2)$ such that 
  $
    \apply{\SymplecticForm}{v_1,v_2}
    =
    1
  $%
  . Furthermore define $V^{\mathbb{C}}_1(\TangentSpace{\Point'}{\Manifold})$ as 
  the compact Stiefel manifold of complex $1$-frames (i.e. just non-zero 
  vectors).
  Then we have an inclusion 
  \begin{align*}
    V^{\mathbb{C}}_1(\TangentSpace{\Point'}{\Manifold})
    &
    \to 
    V^{symp}_2(\TangentSpace{\Point'}{\Manifold})
    \\
    v
    &
    \mapsto 
    (v,\apply{J}{v})
  \end{align*}
  and we claim that this is a homotopy equivalence with homotopy inverse given 
  by the projection on the first vector.
  Indeed the homotopy is given by 
  \[
    ((v_1,v_2),t) \mapsto (v_1,(1-t)v_2+tJ(v_1))
  \] 
  which is easily seen to be well-defined. 
  Therefore we get the following maps between fiber sequences:
  \begin{center}
    \begin{tikzcd}
      S^1\simeq Sp(2,\Reals) 
        \ar[d]
        \ar[r] 
      &
      SO(2) \cong S^1
        \ar[d]
      \\
      V^\mathbb{C}_1(\TangentSpace{\Point'}{\Manifold})
      \simeq 
      V^{symp}_2(\TangentSpace{\Point'}{\Manifold}) 
        \ar[r]
        \ar[d]
      &
      V^{\Reals}_2(\TangentSpace{\Point'}{\Manifold})
        \ar[d]
      \\
      \SymplecticGrassmannian{2}{\TangentSpace{\Point'}{\Manifold}}
        \ar[r]
      &
      \Grassmannian{2}{\TangentSpace{\Point'}{\Manifold}}
    \end{tikzcd}
  \end{center}
  Note that 
  $
    V^\mathbb{C}_1(\TangentSpace{\Point'}{\Manifold})
    \cong 
    S^{2\DimensionIndex-1}
  $
  and 
  $
    V^{\Reals}_2(\TangentSpace{\Point'}{\Manifold})
    \cong 
    S\TangentBundle{S^{2\DimensionIndex-1}}
  $
  (the unit sphere bundle of the tangent bundle of $S^{2n-1}$). 
  Since both of these spaces are $(2\DimensionIndex-3)$-connected and the top 
  horizontal map is a homotopy equivalence, we conclude that the map from 
  $
    \SymplecticGrassmannian{2}{\TangentSpace{\Point'}{\Manifold}}
    \to
    \Grassmannian{2}{\TangentSpace{\Point'}{\Manifold}}
  $ 
  induces an isomorphism on $\pi_{i}$ for $i\leq 3$ or in other words 
  $
    \HomotopyGroup
      {i}
      {
        \Grassmannian{2}{\TangentSpace{\Point'}{\Manifold}}
        ,
        \SymplecticGrassmannian{2}{\TangentSpace{\Point'}{\Manifold}}
      }
  $
  for 
  $i\leq 2$, which implies that the homotopy fiber is two-connected, since
  \[
    \HomotopyGroup
      {i+1}
      {
        \Grassmannian{2}{\TangentSpace{\Point'}{\Manifold}}
        ,
        \SymplecticGrassmannian{2}{\TangentSpace{\Point'}{\Manifold}}
      }
    \cong
    \HomotopyGroup
      {i}
      {\HomotopyFiber
        {\SymplecticTwoPlane'}
        {
          \SymplecticGrassmannian{2}{\TangentSpace{\Point'}{\Manifold}}
          \to
          \Grassmannian{2}{\TangentSpace{\Point'}{\Manifold}}
        }
      }
  \]
\end{proof}

\subsection{Spaces of Symplectic Subsurfaces and Homological Stability:}
The group $\DiffeoMorphismGroupPointed{\Point}{\SurfaceG{\Genus}}$ acts freely 
on 
$\SpaceSymplecticEmbeddings{\SymplecticTwoPlane}{\SurfaceG{\Genus}}{\Manifold}$ 
via 
precomposition, since being a symplectic embedding is independent of the 
parametrization. 
We denote the quotient by this group action by 
$\SpaceSymplecticSubsurfaces{\SymplecticTwoPlane}{\SurfaceG{\Genus}}{\Manifold}$.

Elements of this space are referred to as \introduce{pointed symplectic 
subsurfaces}. 
By a slight abuse of notation, let $\SymplecticTwoPlane$ also denote the 
path in $\SymplecticPathSpace{2}{\Manifold}$ that is constantly 
$\SymplecticTwoPlane$. 
The inclusion of the space of symplectic embeddings into the space of solution 
homotopies descends to an inclusion 
\[
  \SpaceSymplecticSubsurfaces{\SymplecticTwoPlane}{\SurfaceG{\Genus}}{\Manifold}
  \to
  \SpaceSubsurfacesPointedTangential
    {\SymplecticTangentialStructure}
    {\SymplecticTwoPlane}
    {\SurfaceG{\Genus}}
    {\Manifold}
\]

Since Condition~(\ref{itm:PositiveSymplecticForm}) in 
Definition~\ref{dfn:PointedSymplecticEmbedding} implies that $\SymplecticForm$ 
evaluates positively on the fundamental class of pointed symplectic 
subsurfaces, this inclusion can not be $\pi_0$-surjective. In order 
to use this inclusion later on we will alter its image. 
We define:
\begin{align*}
  \EmbeddingSpacePointedTangential
    {\SymplecticTangentialStructure}
    {\SymplecticTwoPlane}
    {\SurfaceG{\Genus}}
    {\Manifold}
  ^{\SymplecticForm}
  &
  \coloneqq 
  \left\{ 
    \Embedding
    \in
    \EmbeddingSpacePointedTangential
      {\SymplecticTangentialStructure}
      {\SymplecticTwoPlane}
      {\SurfaceG{\Genus}}
      {\Manifold}
  \middle| 
    \int_{\SurfaceG{\Genus}} \Embedding^* \SymplecticForm >0 
  \right\}
  \\
  \SpaceSubsurfacesPointedTangential
    {\SymplecticTangentialStructure}
    {\SymplecticTwoPlane}
    {\SurfaceG{\Genus}}
    {\Manifold}
  ^{\SymplecticForm}
  &
  \coloneqq 
  \left\{
    \Subsurface
    \in 
    \SpaceSubsurfacesPointedTangential
      {\SymplecticTangentialStructure}
      {\SymplecticTwoPlane}
      {\SurfaceG{\Genus}}
      {\Manifold}
  \middle| 
    \int_{\Subsurface} \SymplecticForm >0
  \right\}
\end{align*}
With these definition the following corollary follows immediately:
\begin{corollary}
  $
    \EmbeddingSpacePointedTangential
      {\SymplecticTangentialStructure}
      {\SymplecticTwoPlane}
      {\SurfaceG{\Genus}}
      {\Manifold}
    ^{\SymplecticForm}
  $
  is the space of solution homotopies.
\end{corollary}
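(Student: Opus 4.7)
The plan is to prove the corollary by unwinding the definitions and establishing an explicit homeomorphism between the two spaces. A pointed $\SymplecticTangentialStructure$-structure on an embedding $\Embedding \colon \SurfaceG{\Genus} \to \Manifold$ is, by definition, a continuous map $\TangentialStructure{\Embedding} \colon \SurfaceG{\Genus} \to \SymplecticPathSpace{2}{\Manifold}$ such that $\SymplecticTangentialStructure\circ\TangentialStructure{\Embedding} = \GrassmannianDifferential{\Embedding}$, together with $\apply{\Embedding}{\Point}=\Point_{\Manifold}$ and $\apply{\TangentialStructure{\Embedding}}{\Point}=\SymplecticTwoPlane$ (understood as the constant path). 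Since elements of $\SymplecticPathSpace{2}{\Manifold}$ are paths in $\Grassmannian{2}{\TangentBundle{\Manifold}}$ whose projection to $\Manifold$ is constant, the exponential law yields a continuous bijection between such maps $\TangentialStructure{\Embedding}$ and continuous maps $\SolutionHomotopy \colon \SurfaceG{\Genus}\times[0,1] \to \Grassmannian{2}{\TangentBundle{\Manifold}}$ via $\apply{\SolutionHomotopy}{p,t} = \apply{\apply{\TangentialStructure{\Embedding}}{p}}{t}$.

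Next I would verify that under this exponential-law correspondence the four axioms defining a solution homotopy are equivalent to the conditions defining an element of $\EmbeddingSpacePointedTangential{\SymplecticTangentialStructure}{\SymplecticTwoPlane}{\SurfaceG{\Genus}}{\Manifold}^{\SymplecticForm}$. Concretely: the condition $\SymplecticTangentialStructure\circ\TangentialStructure{\Embedding}=\GrassmannianDifferential{\Embedding}$ translates to $\apply{\SolutionHomotopy}{-,0}=\GrassmannianDifferential{\Embedding}$, matching axiom (i). The constancy of the projection of each path, combined with $\apply{\Embedding}{\Point}=\Point_{\Manifold}$, gives axiom (ii). The pointed constraint $\apply{\TangentialStructure{\Embedding}}{\Point}=\SymplecticTwoPlane$ (constant path) becomes $\apply{\SolutionHomotopy}{\Point,t}=\SymplecticTwoPlane$ for all $t$, which is axiom (iii). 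The condition $\apply{\Path}{1}\in\SymplecticGrassmannian{2}{\TangentBundle{\Manifold}}$ built into the target of $\SymplecticTangentialStructure$ translates to the non-degeneracy part of formal solvability, and combined with $\ProjectionGrassmannian\circ\apply{\SolutionHomotopy}{-,1}=\Embedding$ and the extra positivity condition encoded in the superscript $\SymplecticForm$, yields axiom (iv).

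Finally, I would check that both directions of the bijection are continuous for the declared topologies: $\EmbeddingSpacePointedTangential{\SymplecticTangentialStructure}{\SymplecticTwoPlane}{\SurfaceG{\Genus}}{\Manifold}$ carries the subspace topology from $\MappingSpace{\SurfaceG{\Genus}}{\SymplecticPathSpace{2}{\Manifold}}\times\EmbeddingSpace{\SurfaceG{\Genus}}{\Manifold}$ with the compact-open topology on the first factor, while the space of solution homotopies carries the subspace topology from $\EmbeddingSpace{\SurfaceG{\Genus}}{\Manifold}\times\MappingSpace{\SurfaceG{\Genus}}{\MappingSpace{[0,1]}{\Grassmannian{2}{\TangentBundle{\Manifold}}}}$. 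Since $\SurfaceG{\Genus}$ is locally compact Hausdorff, the standard exponential-law homeomorphism $\MappingSpace{\SurfaceG{\Genus}}{\MappingSpace{[0,1]}{Y}}\cong\MappingSpace{\SurfaceG{\Genus}\times[0,1]}{Y}$ holds with compact-open topologies, and restricts to a homeomorphism on the respective subspaces cut out by the compatibility and endpoint conditions. The positivity condition $\int_{\SurfaceG{\Genus}}\Embedding^{\ast}\SymplecticForm>0$ picks out the same open set on both sides.

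The only mildly delicate point is the last continuity check, particularly the identification of the topology on $\SymplecticPathSpace{2}{\Manifold}$ (compact-open on paths in $\Grassmannian{2}{\TangentBundle{\Manifold}}$ with constant projection) with the relevant subspace topology after applying the exponential law; this is routine because $[0,1]$ is compact. Once this is verified, the resulting continuous bijection is in fact a homeomorphism, identifying the two spaces.
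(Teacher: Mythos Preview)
Your proposal is correct and takes essentially the same approach as the paper, which simply states that the corollary ``follows immediately'' from the preceding definitions without giving any further argument. Your proof is just an explicit unwinding of what ``immediate'' means here: the exponential-law identification of a map into the path space $\SymplecticPathSpace{2}{\Manifold}$ with a homotopy $\SurfaceG{\Genus}\times[0,1]\to\Grassmannian{2}{\TangentBundle{\Manifold}}$, followed by a condition-by-condition matching, is exactly the intended content.
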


Observe that the stabilization map of 
$
\SpaceSubsurfacesPointedTangential
  {\TangentialFibration}
  {\SymplecticTwoPlane}
  {\SurfaceG{\Genus}}
  {\Manifold}
$
defined in Section \ref{scn:PointedStabilization} restricts to 
$
  \SpaceSubsurfacesPointedTangential
    {\TangentialFibration}
    {\SymplecticTwoPlane}
    {\SurfaceG{\Genus}}
    {\Manifold}
  ^{\SymplecticForm}
$ 
namely 
\[
  \PointedStabilizationMap{\Genus}
  \colon 
  \SpaceSubsurfacesPointedTangential
    {\TangentialFibration}
    {\SymplecticTwoPlane}
    {\SurfaceG{\Genus}}
    {\Manifold}
  ^{\SymplecticForm}
  \to 
  \SpaceSubsurfacesPointedTangential
    {\TangentialFibration}
    {\SymplecticTwoPlane}
    {\SurfaceG{\Genus+1}}
    {\Manifold}
  ^{\SymplecticForm}
\]
since for every 
$
  \Subsurface
  \in 
  \SpaceSubsurfacesPointedTangential
    {\TangentialFibration}
    {\SymplecticTwoPlane}
    {\SurfaceG{\Genus}}
    {\Manifold}
$ 
we have 
$
  [\Subsurface]
  =
  [\apply{\PointedStabilizationMap{\Genus}}{\Subsurface}]
  \in
  \HomologyOfSpace{2}{\Manifold}
$ and if $[\SymplecticForm]$ denotes the real cohomology 
class corresponding to $\SymplecticForm$, then 
$
  \int_{\Subsurface} 
  \SymplecticForm
  >
  0
$
is equivalent to 
$
  \apply{[\SymplecticForm]}{[\Subsurface]_{\Reals}}>0
$%
, where $[\Subsurface]_{\Reals}$ denotes the real fundamental class. 
Since 
$
  \SpaceSubsurfacesPointedTangential
    {\TangentialFibration}
    {\SymplecticTwoPlane}
    {\SurfaceG{\Genus}}
    {\Manifold}
  ^{\SymplecticForm}
$ 
is a union of connected components of 
$
\SpaceSubsurfacesPointedTangential
  {\TangentialFibration}
  {\SymplecticTwoPlane}
  {\SurfaceG{\Genus}}
  {\Manifold}
$
and
$
  (\PointedStabilizationMap{\Genus})_{*}
  \colon 
  \HomotopyGroup
    {0}
    {\SpaceSubsurfacesPointedTangential
      {\TangentialFibration}
      {\SymplecticTwoPlane}
      {\SurfaceG{\Genus}}
      {\Manifold}
    }
  \to
  \HomotopyGroup
  {0}
  {\SpaceSubsurfacesPointedTangential
    {\TangentialFibration}
    {\SymplecticTwoPlane}
    {\SurfaceG{\Genus+1}}
    {\Manifold}
  }
$
is an isomorphism, Theorem~\ref{thm:HomologicalStabilityPointed} implies that 
homological stability holds for 
$
  \SpaceSubsurfacesPointedTangential
    {\TangentialFibration}
    {\SymplecticTwoPlane}
    {\SurfaceG{\Genus}}
    {\Manifold}
  ^{\SymplecticForm}
$ 
as well i.e.
\[
  (\PointedStabilizationMap{\Genus})_{*}
  \colon 
  \HomologyOfSpace
  {\ast}
  {\SpaceSubsurfacesPointedTangential
    {\TangentialFibration}
    {\SymplecticTwoPlane}
    {\SurfaceG{\Genus}}
    {\Manifold}
  ^{\SymplecticForm}
  }
  \to
  \HomologyOfSpace
  {\ast}
  {\SpaceSubsurfacesPointedTangential
    {\TangentialFibration}
    {\SymplecticTwoPlane}
    {\SurfaceG{\Genus+1}}
    {\Manifold}
  ^{\SymplecticForm}
  }
\] 
is an isomorphism for $\ast\leq\frac{2}{3}\Genus-1$ and an epimorphism in the 
next degree.
\begin{remark}
  The same arguments show that even in the general case, homological stability 
  still holds for all kinds of homological constraints we could put on 
  $
  \SpaceSubsurfacesPointedTangential
    {\TangentialFibration}
    {\SymplecticTwoPlane}
    {\SurfaceG{\Genus+1}}
    {\Manifold}
  $ as long as these constraints depend only on 
  $\HomologyOfSpace{2}{\Manifold}$.
\end{remark}
The following proposition allows us to relate homological stability for the 
space of solution homotopies to 
$\SpaceSymplecticSubsurfaces{\SymplecticTwoPlane}{\SurfaceG{\Genus}}{\Manifold}$.
\begin{proposition}
  \label{prp:hPrincipleSpacesOfSubsurfaces}
  $
    \SpaceSymplecticSubsurfaces{\SymplecticTwoPlane}{\SurfaceG{\Genus}}{\Manifold}
    \to
    \SpaceSubsurfacesPointedTangential
      {\SymplecticTangentialStructure}
      {\SymplecticTwoPlane}
      {\SurfaceG{\Genus}}
      {\Manifold}
    ^{\SymplecticForm}
  $
  is a weak equivalence.
\end{proposition}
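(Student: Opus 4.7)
The plan is to deduce the weak equivalence on the level of subsurface spaces from Theorem~\ref{thm:FormalSymplectichPrinciple}, the h-principle by Eliashberg--Mishachev, applied at the level of embedding spaces. Recall that Theorem~\ref{thm:FormalSymplectichPrinciple} says that the inclusion of $\SpaceSymplecticEmbeddings{\SymplecticTwoPlane}{\SurfaceG{\Genus}}{\Manifold}$ into the space of solution homotopies is a weak equivalence, and the corollary just above identifies the space of solution homotopies with $\EmbeddingSpacePointedTangential{\SymplecticTangentialStructure}{\SymplecticTwoPlane}{\SurfaceG{\Genus}}{\Manifold}^{\SymplecticForm}$. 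Thus we have a weak equivalence on the embedding level, and the point of the proposition is to pass to quotients by $\DiffeoMorphismGroupPointed{\Point}{\SurfaceG{\Genus}}$.

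To this end, I would form the commutative square
\[
  \begin{tikzcd}
    \SpaceSymplecticEmbeddings
      {\SymplecticTwoPlane}
      {\SurfaceG{\Genus}}
      {\Manifold}
      \ar[r,"\simeq"]
      \ar[d,"q_{1}"]
    &
    \EmbeddingSpacePointedTangential
      {\SymplecticTangentialStructure}
      {\SymplecticTwoPlane}
      {\SurfaceG{\Genus}}
      {\Manifold}
      ^{\SymplecticForm}
      \ar[d,"q_{2}"]
    \\
    \SpaceSymplecticSubsurfaces
      {\SymplecticTwoPlane}
      {\SurfaceG{\Genus}}
      {\Manifold}
      \ar[r]
    &
    \SpaceSubsurfacesPointedTangential
      {\SymplecticTangentialStructure}
      {\SymplecticTwoPlane}
      {\SurfaceG{\Genus}}
      {\Manifold}
      ^{\SymplecticForm}
  \end{tikzcd}
\]
where $q_{1}$ and $q_{2}$ are the quotients by the free precomposition action of $\DiffeoMorphismGroupPointed{\Point}{\SurfaceG{\Genus}}$. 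The next step is to verify that both $q_{1}$ and $q_{2}$ are principal $\DiffeoMorphismGroupPointed{\Point}{\SurfaceG{\Genus}}$-bundles, and in particular fibrations with the same (discrete topological group) fiber. For $q_{2}$ this is essentially established in Section~\ref{scn:SpacesOfSubsurfaces}: the relevant space of pointed embeddings with tangential structure is $\DiffeoMorphismGroupPointed{\Point}{\SurfaceG{\Genus}}$-locally retractile by the pointed analogue of Proposition~\ref{prp:EmbeddingLocallyRetractile} together with the standard pullback argument used to transfer local retractility across a tangential-structure fibration. For $q_{1}$, since $\SpaceSymplecticEmbeddings{\SymplecticTwoPlane}{\SurfaceG{\Genus}}{\Manifold}^{\SymplecticForm}$ is an open subspace of the pointed embedding space cut out by a $\DiffeoMorphismGroupPointed{\Point}{\SurfaceG{\Genus}}$-invariant open condition, and since local retractions take values in diffeomorphisms of the target $\Manifold$ (not of $\SurfaceG{\Genus}$) so they preserve that open condition, the same local retractility argument applies.

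Once $q_{1}$ and $q_{2}$ are fibrations with identical fibers, the long exact sequences in homotopy together with the five lemma give the desired weak equivalence of the bottom map. The part that requires the most care is verifying the local retractility in the symplectic setting: one needs the $\DiffeomorphismGroupBoundary{\Manifold}$-local retractions of Proposition~\ref{prp:EmbeddingLocallyRetractile} (in the pointed form) to send symplectic embeddings to symplectic embeddings; this is automatic because the retractions are built by composing with ambient diffeomorphisms of $\Manifold$, which do not change the pullback of $\SymplecticForm$ being non-degenerate, and they preserve condition~(iii) of Definition~\ref{dfn:PointedSymplecticEmbedding} as the two embeddings represent the same class in $\HomologyOfSpace{2}{\Manifold}$. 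With that observation in place, the rest of the argument is purely formal.
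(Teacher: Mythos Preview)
Your overall strategy is identical to the paper's: form the commutative square of quotient maps, show both vertical maps are fibrations with common fibre $\DiffeoMorphismGroupPointed{\Point}{\SurfaceG{\Genus}}$, invoke Theorem~\ref{thm:FormalSymplectichPrinciple} for the top horizontal map, and apply the five lemma to the long exact sequences.

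However, your justification that $q_{1}$ is a fibration contains a genuine error. You claim that the local retractions, being built from ambient diffeomorphisms of $\Manifold$, preserve the condition of being a symplectic embedding. This is false: if $\phi$ is a diffeomorphism of $\Manifold$ and $\Embedding$ is symplectic, then $(\phi\circ\Embedding)^{*}\SymplecticForm=\Embedding^{*}\phi^{*}\SymplecticForm$, and there is no reason for $\phi^{*}\SymplecticForm$ to restrict non-degenerately to the image of $\Embedding$ unless $\phi$ happens to be a symplectomorphism. Consequently the group $\DiffeoMorphismGroupPointed{\Point_{\Manifold}}{\Manifold}$ does not act on the symplectic subsurface space, and the local-retractility machinery cannot be applied to it directly. (There is also a smaller muddle in your $q_{2}$ discussion: it is the pointed \emph{subsurface} space that is locally retractile with respect to diffeomorphisms of $\Manifold$ fixing $\TangentSpace{\Point_{\Manifold}}{\Manifold}$, not the embedding space with respect to $\DiffeoMorphismGroupPointed{\Point}{\SurfaceG{\Genus}}$; the conclusion you need is the pointed analogue of the corollary following Proposition~\ref{prp:SpaceSubsurfacesLocallyRetractile}.)

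The paper avoids the issue by observing that the square is a pullback: the preimage under $q_{2}$ of the image of the bottom inclusion is exactly the space of symplectic embeddings (viewed inside the solution-homotopy space as embeddings with constant tangential structure). Hence $q_{1}$ is the restriction of the fibration $q_{2}$ over a subspace of its base and is therefore automatically a fibration. With that single correction your argument is complete and coincides with the paper's.
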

\begin{proof}
  We have the following map between fibrations:
  \[
    \begin{tikzcd}
      \DiffeoMorphismGroupPointed{\Point}{\SurfaceG{\Genus}}
        \ar[d]
        \ar[r,"\Identity"]
      &
      \DiffeoMorphismGroupPointed{\Point}{\SurfaceG{\Genus}}
        \ar[d]
      \\
      \SpaceSymplecticEmbeddings
        {\SymplecticTwoPlane}
        {\SurfaceG{\Genus}}
        {\Manifold}
        \ar[r]
        \ar[d]
      & 
      \EmbeddingSpacePointedTangential
        {\SymplecticTangentialStructure}
        {\SymplecticTwoPlane}
        {\SurfaceG{\Genus}}
        {\Manifold}
      ^{\SymplecticForm}
        \ar[d]
      \\
      \SpaceSymplecticSubsurfaces
        {\SymplecticTwoPlane}
        {\SurfaceG{\Genus}}
        {\Manifold}
        \ar[r]
      & 
      \SpaceSubsurfacesPointedTangential
        {\SymplecticTangentialStructure}
        {\SymplecticTwoPlane}
        {\SurfaceG{\Genus}}
        {\Manifold}
      ^{\SymplecticForm}
    \end{tikzcd}
  \]
  Note that the lower right map is a fibration by 
  \ref{lem:ForgettingTangentialFibrationPointed}, the lower map on the left 
  side is a fibration as a restriction of this fibration and the horizontal 
  arrow in the middle is a weak equivalence by 
  Theorem~\ref{thm:FormalSymplectichPrinciple}. 
  Therefore we conclude from the $5$-Lemma and the long exact sequence of 
  homotopy groups for a fibration that the inclusion is in fact a weak 
  equivalence.
\end{proof}

This proposition together with 
Theorem~\ref{thm:HomologicalStabilityPointed} 
and the previously discussed fact that 
$
  \SpaceSubsurfacesPointedTangential
    {\TangentialFibration}
    {\SymplecticTwoPlane}
    {\SurfaceG{\Genus}}
    {\Manifold}
  ^{\SymplecticForm}
$
also fulfils homological stability immediately implies:
\begin{theorem}
  Let $(\Manifold,\SymplecticForm)$ denote a simply-connected symplectic 
  manifold of dimension at least $6$. 
  For every $\Point_{\Manifold}\in \Manifold$ and every symplectic $2$-plane 
  $\SymplecticTwoPlane$ in $\TangentSpace{\Point_{\Manifold}}{\Manifold}$. 
  There is a homomorphism of integral homology
  \[
    \ContinuousMap_{\Genus}
    \colon
    \HomologyOfSpace
      {\ast}
      {\SpaceSymplecticSubsurfaces
        {\SymplecticTwoPlane}
        {\SurfaceG{\Genus}}
        {\Manifold}
      }
      \to
      \HomologyOfSpace
        {\ast}
        {\SpaceSymplecticSubsurfaces
          {\SymplecticTwoPlane}
          {\SurfaceG{\Genus+1}}
          {\Manifold}
        }
  \]
  And this homomorphism induces an isomorphism for 
  $\ast \leq \frac{2}{3}\Genus-1$ and an epimorphism for 
  $\ast\leq \frac{2}{3}\Genus$.
\end{theorem}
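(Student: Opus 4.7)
The plan is to combine the weak equivalence provided by Proposition~\ref{prp:hPrincipleSpacesOfSubsurfaces} with the homological stability statement of Theorem~\ref{thm:HomologicalStabilityPointed} applied to the tangential structure $\SymplecticTangentialStructure$. More concretely, the map $\ContinuousMap_{\Genus}$ will not be realized on the level of spaces directly, but only on the level of homology, via the zig-zag
\[
  \HomologyOfSpace{\ast}{\SpaceSymplecticSubsurfaces{\SymplecticTwoPlane}{\SurfaceG{\Genus}}{\Manifold}}
  \xrightarrow{\cong}
  \HomologyOfSpace{\ast}{\SpaceSubsurfacesPointedTangential{\SymplecticTangentialStructure}{\SymplecticTwoPlane}{\SurfaceG{\Genus}}{\Manifold}^{\SymplecticForm}}
  \xrightarrow{(\PointedStabilizationMap{\Genus})_{\ast}}
  \HomologyOfSpace{\ast}{\SpaceSubsurfacesPointedTangential{\SymplecticTangentialStructure}{\SymplecticTwoPlane}{\SurfaceG{\Genus+1}}{\Manifold}^{\SymplecticForm}}
  \xrightarrow{\cong}
  \HomologyOfSpace{\ast}{\SpaceSymplecticSubsurfaces{\SymplecticTwoPlane}{\SurfaceG{\Genus+1}}{\Manifold}},
\]
where the outer isomorphisms come from Proposition~\ref{prp:hPrincipleSpacesOfSubsurfaces} (which uses the h-principle, Theorem~\ref{thm:FormalSymplectichPrinciple}).

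First I would verify that $\SymplecticTangentialStructure$ qualifies as a space of tangential structures in the sense of this paper and that its homotopy fiber is simply connected; this is exactly what was established in the lemma just before Proposition~\ref{prp:hPrincipleSpacesOfSubsurfaces}. Next, I would apply Theorem~\ref{thm:HomologicalStabilityPointed} to obtain that
\[
  (\PointedStabilizationMap{\Genus})_{\ast}
  \colon
  \HomologyOfSpace{\ast}{\SpaceSubsurfacesPointedTangential{\SymplecticTangentialStructure}{\SymplecticTwoPlane}{\SurfaceG{\Genus}}{\Manifold}}
  \to
  \HomologyOfSpace{\ast}{\SpaceSubsurfacesPointedTangential{\SymplecticTangentialStructure}{\SymplecticTwoPlane}{\SurfaceG{\Genus+1}}{\Manifold}}
\]
is an isomorphism for $\ast\leq \tfrac{2}{3}\Genus - 1$ and an epimorphism for $\ast\leq \tfrac{2}{3}\Genus$.

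The next step, and arguably the only one that is not entirely formal, is to pass this stability statement to the subspaces cut out by the condition $\int_{\Subsurface}\SymplecticForm > 0$. The key observation is that this condition depends only on the fundamental class $[\Subsurface] \in \HomologyOfSpace{2}{\Manifold}$ paired with the cohomology class $[\SymplecticForm]$, hence it is constant on path components. Since $\PointedStabilizationMap{\Genus}$ is heuristically a connected sum with a torus contained in an arbitrarily small neighborhood of $\Point_{\Manifold}$, it preserves the class $[\Subsurface] \in \HomologyOfSpace{2}{\Manifold}$, so $\SpaceSubsurfacesPointedTangential{\SymplecticTangentialStructure}{\SymplecticTwoPlane}{\SurfaceG{\Genus}}{\Manifold}^{\SymplecticForm}$ is a union of path components that is mapped into the corresponding subspace at genus $\Genus+1$. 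Since homology is additive over disjoint unions of path components, $\PointedStabilizationMap{\Genus}$ restricts to a map with the same connectivity bounds on homology.

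Finally I would assemble the zig-zag. The main technical obstacle, once everything is set up, is really only the verification that $\SpaceSubsurfacesPointedTangential{\SymplecticTangentialStructure}{\SymplecticTwoPlane}{\SurfaceG{\Genus}}{\Manifold}^{\SymplecticForm}$ is stable under $\PointedStabilizationMap{\Genus}$; this reduces to tracking through the construction of $\PointedStabilizationMap{\Genus}$ in Section~\ref{scn:PointedStabilization} to see that the subsurface and its stabilization represent the same second homology class in $\Manifold$. All other ingredients (the h-principle equivalence, the simple-connectedness of the fiber of $\SymplecticTangentialStructure$, and the general pointed homological stability theorem) are already in place, so the rest of the proof is bookkeeping.
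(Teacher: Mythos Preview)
Your proposal is correct and follows essentially the same route as the paper: establish that the fiber of $\SymplecticTangentialStructure$ is simply connected, apply Theorem~\ref{thm:HomologicalStabilityPointed}, observe that the $\omega$-positivity condition cuts out a union of path components preserved by $\PointedStabilizationMap{\Genus}$ (since it depends only on $[\Subsurface]\in H_2(\Manifold)$), and then transport the stability range across the weak equivalence of Proposition~\ref{prp:hPrincipleSpacesOfSubsurfaces}. The paper phrases the restriction step slightly differently, invoking the $\pi_0$-bijection of $\PointedStabilizationMap{\Genus}$ explicitly, but your direct-sum argument over path components accomplishes the same thing.
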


The main problem here is that the present text does not provide an actual 
stabilization map for symplectic subsurfaces in the spirit of the pointed 
stabilization maps $\PointedStabilizationMap{\Genus}$, but only an abstract one 
that uses the inverses of the aforementioned weak equivalence and therefore 
only exists on the level of homology. 

Nevertheless the author is confident that the construction in 
Section~\ref{scn:PointedStabilization} can be adapted to work in the symplectic 
setting. 
One would have to be more careful in choosing the deformations so 
that the embedded subsurfaces stay symplectic.

Furthermore there are nice symplectic embeddings of tori with a hole into 
$\Reals^{2\DimensionIndex}$, which should provide the necessary tori in 
$\BallRadius{\FixedRadius}{0}$ to provide an actual stabilization map in the 
symplectic case, which would be defined like $\PointedStabilizationMap{\Genus}$ 
i.e. it would be defined by taking the connected sum with one of these 
symplectic tori and map symplectic subsurfaces to symplectic 
subsurfaces.
\bibliographystyle{amsalpha}
\bibliography{sources}
\end{document}